\documentclass[10pt]{book}

\usepackage[ngerman,french,english]{babel}

\usepackage{graphicx}
\usepackage{tikz}
\usepackage{atbegshi}

\AtBeginShipoutFirst{%
    \begin{tikzpicture}[remember picture, overlay]
        \node[anchor=north west, xshift=1in, yshift=-1.5cm] at (current page.north west) {%
            \includegraphics[width=4cm]{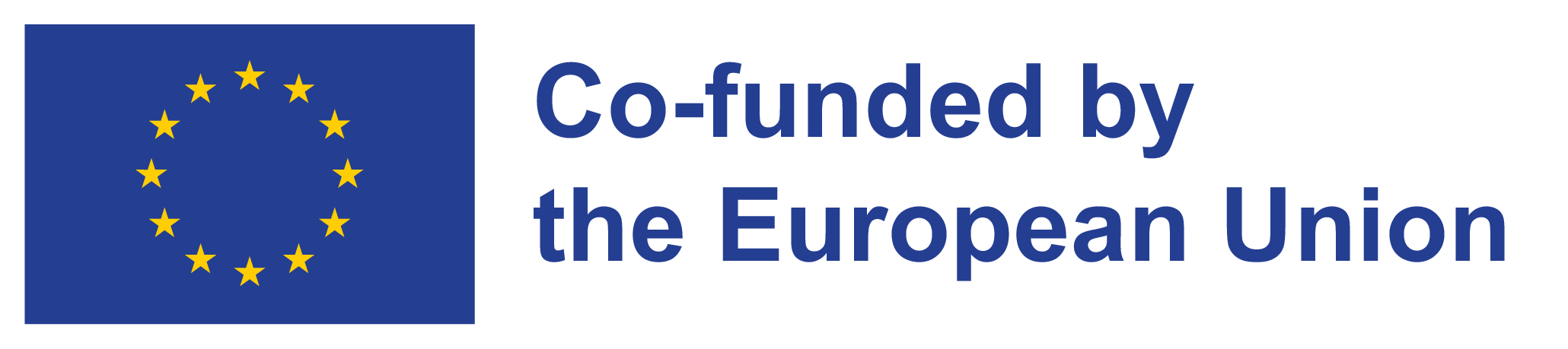} 
        };
    \end{tikzpicture}%
}

\usepackage{pdfsync}
\usepackage{tikz}
\usepackage{framed}
\usepackage{amsmath}                                                        
\numberwithin{equation}{section}
\usepackage{graphicx}                                                       
\usepackage{subfigure}
\usepackage{enumitem}                                                    
\usepackage{hyperref}
\usepackage{amssymb}                                                        
\usepackage[mathscr]{eucal}                                             
\usepackage{cancel}                                                             
\usepackage[normalem]{ulem}                                                                 
\usepackage{pstricks}
\usepackage{rotating}
\usepackage{lscape}
\usepackage[paperwidth=8.5in,paperheight=11in,top=1.00in, bottom=1.00in, left=1.00in, right=1.00in]{geometry}
\usepackage{mathtools}                                                      
\mathtoolsset{showonlyrefs=true}                                    
\usepackage{fixltx2e,amsmath}                                           
\MakeRobust{\eqref}

\usepackage{dsfont}

\usepackage{mathdots}
\usepackage{amsthm}                                                             
\allowdisplaybreaks

\theoremstyle{plain}
\newtheorem{theorem}{Theorem}
\numberwithin{theorem}{section}

\newtheorem{lemma}[theorem]{Lemma}                              
\newtheorem{proposition}[theorem]{Proposition}

\newtheorem{corollary}[theorem]{Corollary}
\theoremstyle{definition}
\newtheorem{definition}[theorem]{Definition}
\newtheorem{example}[theorem]{Example}
\newtheorem{notation}[theorem]{Notation}
\newtheorem{remark}[theorem]{Remark}

\newtheorem{assumption}[theorem]{Assumption}

\counterwithin{table}{chapter}

\def \R {\mathbb{R}}

\newcommand\Eb{\mathbb{E}}

\newcommand\Pb{\mathbb{P}}

\newcommand\Rd{\mathbb{R}^d}
\newcommand\Rdd{\mathbb{R}\times \mathbb{R}^d}

\newcommand\Pcc{\mathscr{P}}

\newcommand\Kcc{\mathscr{K}}

\newcommand\Bc{\mathcal{B}}
\newcommand\Cc{\mathcal{C}}
\newcommand\Fc{\mathcal{F}}

\newcommand\Pc{\mathcal{P}}

\newcommand\eps{\varepsilon}

\newcommand\dd{d}

\def \R  {{\mathbb {R}}}

\def \eps {{\varepsilon}}

\def \It\^o {It\^o }

\def \R {{\mathbb {R}}}
\def \N {{\mathbb {N}}}

\def \eps {{\varepsilon}}

\def \tilde {\widetilde}

\def \Ã  {{\`a }}
\def \Ã¨ {{\`e }}
\def \Ã² {{\`o }}
\def \Ã¹ {{\`u }}

\def \rr {\mathfrak{r}}

\begin{document}

\title{\bf McKean-Vlasov Differential Equations:\\ 
{An introduction and a focus on some kinetic models}}

\author{
Stefano Pagliarani*
\thanks{Dipartimento di Matematica, Universit\`a di Bologna, Bologna, Italy. 
\textbf{e-mail}: stefano.pagliarani9@unibo.it.}
\\ 
\vspace{0pt}\\
 \textit{*These notes are dedicated to my father.}
 \\ \\}

\date{This version: \today}

\maketitle




%
%


\chapter*{Introduction}

These notes were prepared for a series of lectures delivered at the 43rd Finnish Summer School on Probability and Statistics, held in Lammi, Finland, from May 26 to 30, 2025. Their purpose is to give a partial account of the theory of McKean-Vlasov differential equations, focusing: first, on its probabilistic foundations within the theory of diffusion processes and its connection with non-linear parabolic partial differential equations (PDEs); second, on a subclass of kinetic-type models characterized by degenerate noise and irregular coefficients. McKean-Vlasov stochastic differential equations (MKV SDEs), also known as \emph{distribution-dependent} SDEs, are formal expressions of the form
\begin{equation}\label{eq:mkv_SDE_intro}
dX_t = {\bf b}(t,X_t,{\boldsymbol\mu}_t)\, dt + {\boldsymbol\sigma}(t,X_t, {\boldsymbol\mu}_t) \, d W_t , \qquad {\boldsymbol\mu}_t = [X_t],
\end{equation}
which differ from ``standard" It\^o SDEs in that the coefficients of the equation explicitly depend on the law of the solution. 

Historically, MKV SDEs were introduced by McKean in his seminal paper \emph{A class of Markov processes associated with nonlinear parabolic equations} (\cite{mckean1966class}). The title of this paper is highly suggestive of the purpose of the author, which was to provide a probabilistic representation of the solutions to non-linear PDEs arising from physics, such as Boltzmann and Vlasov equations, mimicking the well-known connection between linear parabolic PDEs and diffusion processes. The easiest way to see this connection is by means of the It\^o formula. Indeed, under very mild assumptions, the latter promptly yields
\begin{equation}
\int \varphi(y) {\boldsymbol\mu}_t (dy) =  \int \varphi(y) {\boldsymbol\mu}_0 (dy) + \int_0^t \int \Big( {\bf b}(t,y,{\boldsymbol\mu}_r) \varphi'(y) + \frac{1}{2} {\boldsymbol\sigma}(t,y, {\boldsymbol\mu}_r)\varphi''(y) \Big) {\boldsymbol\mu}_r(dy) \, dr,
\end{equation}
for any test function $\varphi$, which amounts to saying that the flow of laws $(t\mapsto [X_t])$ solves, in the distributional sense, the non-linear Fokker-Planck (FP) equation
\begin{equation}
\partial_t {\boldsymbol\mu}_t = - \partial_y \big( {\bf b}(t,y,\textcolor{black}{{\boldsymbol\mu}_t}) \,  {\boldsymbol\mu}_t \big)  +\frac{1}{2} \partial_y^2 \big(    {\boldsymbol\sigma}{\boldsymbol\sigma}^\top(t,y,\textcolor{black}{{\boldsymbol\mu}_t}) \, {\boldsymbol\mu}_t \big).
\end{equation}
The reverse implication is also true and can be proved by means of so-called superposition principles. Namely, it can be shown that if the FP equation has a solution ${\boldsymbol\nu}$, then the MKV SDE has a solution $X$ with ${\boldsymbol\nu}_t = [X_t]$. 
This important connection with non-linear PDEs earned the solutions to the MKV SDEs the name \emph{non-linear diffusions}. 

In his subsequent paper, \emph{Propagation of chaos for a class of non-linear parabolic equations} (\cite{mckean1967propagation}), building upon the pioneering works by Kac on kinetic theory (see \cite{kac1956foundations}), McKean revealed the connection of this ``new" family of diffusions with so-called mean-field interacting particle systems of the type 
\begin{equation}
dX^{i}_t = {\bf b}\big(t,X^{i}_t,\mu^{\text{emp}}_{ N,t}\big) dt + {\boldsymbol\sigma}\big(t,X^{i}_t,\mu^{\text{emp}}_{ N,t}\big) dW^i_t ,\qquad i\in \N, 
\end{equation}
where $\mu^{\text{emp}}_{ N,t}$ is the empirical law of the first $N$ particles, i.e.
\begin{equation}
\mu^{\text{emp}}_{N,t}:= \frac{1}{N} \sum_{j=1}^N \delta_{X^{j}_t}.
\end{equation}
The link of the MKV SDE with the particle system is technically more difficult to establish than the one with the nonlinear PDE. Heuristically, it can be seen as follows. For $M\in \N$, 
\begin{equation}
\mu^{\text{emp}}_{N,t} \approx \frac{1}{N} \sum_{j=M+1}^N \delta_{X^{j}_t}, \qquad 
N>>1,
\end{equation}
and thus the interaction among the first $M$ particles vanishes as $N$ becomes large. Therefore, if the initial data and the driving noises are independent, the particles $X^1 , \dots , X^M$ are asymptotically independent as $N\to \infty$. Furthermore, the system is exchangeable and thus all particles have the same law. Therefore, Glivenko-Cantelli's theorem yields 
\begin{equation}
[X^i_t] \longrightarrow {\boldsymbol\mu}_t, \qquad \text{as}\quad N\to \infty,
\end{equation}  
where ${\boldsymbol\mu}_t$ is the law of the solution to the MKV SDE above. 

To sum up, MKV SDEs provide a link between mean-field particle systems, which represent the microscopic description of a system, and non-linear partial differential equations, which describe macroscopic evolutions:

\begin{center}
\begin{tikzpicture}[scale=0.3,thick, every node/.style={draw=blue, draw, rounded corners, minimum width=3.5cm, minimum height=0.7cm, align=center, font=\large}]
  \node (A) at (0,4) {MKV SDE};
  \node (B) at (-10,-1.6) {Mean-field system};
  \node (C) at (10,-1.6) {Non-linear PDE};
  \draw[<->] (A) -- (B);
  \draw[<->] (C) -- (A);
\end{tikzpicture}
\end{center} 
This connection has provided the foundation for numerous mathematical theories, computational tools, and applications developed over the last decades. On the mathematical side, this framework has led to a rich theory of propagation of chaos and mean-field limits, and has played a central role in the development of mean-field control and mean-field games (\cite{carmona2018probabilistic}). Its range of applications is remarkably broad: in the natural and physical sciences, mean-field models arise in kinetic theory, statistical mechanics, collective dynamics, and models of interacting populations; in neuroscience, they describe the collective activity of large networks of interacting neurons (see \cite{galves2024probabilistic}); and in the social sciences, they are used to model phenomena such as opinion formation, crowd and population dynamics, economic interactions, and systemic effects in financial markets. 

\vspace{2pt} 

Despite the rapidly growing number of research papers connected to McKean-Vlasov diffusions, there are relatively few textbooks and notes that are entirely, or even partially, dedicated to this topic. Among the existing contributions, we recall: the book by Kolokoltsov \cite[Ch. 6.8]{kolokoltsov2019differential}, where MKV SDEs are introduced from a PDE point of view; the book by Gobet \cite[Ch. 9]{gobet2016monte}, where a concise all-around presentation covers, in the Lipschitz framework, well-posedness, propagation of chaos and numerical approximations (see also \cite{antonelli2002rate}, \cite{dos2022simulation} among others); the book by Carmona and Delarue \cite{carmona2018probabilistic}, in which MKV SDEs are presented as fundamental building blocks of stochastic mean-field games; the lecture notes by Carmona \cite{carmona2016lectures}, again in the Lipschitz framework and with a focus on financial applications. We also recall the classical lecture notes by Sznitman \cite{Sznitman91}, on propagation of chaos; and the extensive survey by Chaintron \cite{chaintron2022propagation,chaintron2022propagationb}.

The purpose of these notes is twofold: 
\begin{itemize}
\item[(i)] Provide a general probabilistic framework for McKean-Vlasov equations, with the language of stochastic differential calculus and partial differential equations. The plan is to extend the ``linear" theory of weak and pathwise solutions and their link with parabolic PDEs, with definitions and results that are entirely model-free and require no assumptions on the coefficients \footnote{Only some basic measurability and local integrability assumptions will be made.}. In particular, the class of density-dependent MKV SDEs, connected to well-known non-linear PDEs such as Burgers-type equations, is regarded as a particular instance of the general framework. Though these notions are well-known to specialists, we believe it might be useful for students, as well as for researchers who approach this topic for the first time, to have these concepts summarized in one single text, with coherent and uniform notation.
\item[(ii)] Provide a focus on a class of density-dependent McKean-Vlasov SDEs. 
As the coefficients depend on the distribution through the pointwise evaluation of the density at the current state, these models exhibit low regularity with respect to the measure variable. Therefore their analysis must rely on the regularization properties of the underlying PDEs. The plan is to provide a fairly general well-posedness and regularity result, with a full proof, in a kinetic setting where the diffusion is degenerate but the noise is restored by means of a suitable H\"ormander-type condition. The latter induces an intrinsic, non-Euclidean, geometry in which the regularity assumptions and results are naturally stated. Though these results can by no means be considered a break-through in the field, we believe they slightly sharpen and generalize some existing ones. More importantly, we hope the way the results are presented can shed some light on the interplay between well-known PDE techniques and superposition results.
\end{itemize}

In these notes we will always consider diffusive equations, namely integral equations of It\^o type, although different types of noise and stochastic integrals could be considered as well, for instance including processes driven by Poisson jump measures. Furthermore, more general models may include infinite-dimensional noise, or dependence not only on current time-marginals but on the whole law of the process. The document is divided into two chapters, basically in correspondence with the two goals above, and one appendix. 

\emph{Chapter 1.} The class of McKean-Vlasov diffusions is introduced in Section \ref{sec:mkv_diffusions}. In particular, the concepts of general, weak and pathwise solutions are introduced, in terms of the so-called \emph{frozen equation}. The latter allows one to connect the class of MKV diffusions to the standard theory of diffusion processes, in particular to the standard theory of It\^o SDEs and to the Stroock-Varadhan theory of martingale problems. In particular, the application of the Watanabe-Yamada Theorem identifies a pathway to pathwise well-posedness, which consists in establishing first weak well-posedness for the MKV SDE and then pathwise uniqueness for the frozen SDE. In Section \ref{sec:nonlin_fp} we establish the two-way link between MKV SDEs and non-linear Fokker-Planck equations. After giving the definition of weak (or distributional) solution to the latter, we state and prove the double implication that is well known in the linear case, which roughly reads as follows: \emph{a flow of non-negative finite measures is a solution to the FP equation if and only if it is the flow of time-marginals corresponding to a solution to the associated stochastic differential equation}. Once more, the frozen equation allows one to make use of standard It\^o theory and Ambrosio-Figalli-type superposition principles. In Section \ref{sec:part_sys} we provide a brief glance at the link between MKV diffusions and mean-field particle systems, referring to the classic reference \cite{Sznitman91} and to the survey \cite{chaintron2022propagation,chaintron2022propagationb} for detailed presentations on propagation of chaos results. Section \ref{sec:den_dep_mkv} is devoted to casting the class of density-dependent MKV SDEs into the general framework. While, conceptually, the latter can be promptly seen as a particular subclass, where the dependence on the law takes place through the pointwise evaluation of the density, one has to make sure that all the definitions are independent of the specific version of the density that one selects. While checking this is a simple exercise, it is necessary in order to apply all the results in Sections \ref{sec:mkv_diffusions} and \ref{sec:nonlin_fp} to this subclass.
 
 \emph{Chapter 2.} In Section \ref{sec:kin_mkv} we consider a class of degenerate MKV SDEs, where the noise is present only in some directions, and state a H\"ormander-type assumption on the drift which allows one to restore the regularity of the underlying semigroups. We refer to these equations as \emph{kinetic-type} models, the classical Langevin kinetic model a particular case. In Section \ref{sec:kinet_semigroups}, we introduce the kinetic-type semigroups that play the same role as the heat semigroup in the theory of uniformly parabolic equations. In Section \ref{sec:ani_int} we study the non-Euclidean geometry induced by the H\"ormander condition and the H\"older spaces that are naturally associated to this geometry. These spaces can be introduced in different ways. Here we present them as the natural spaces in which Schauder-type estimates for the heat semigroup can be extended to the kinetic-type semigroups. In particular, in Section \ref{sec:anisot_spaces} we introduce the so-called \emph{anisotropic spaces}, which are relevant as far as space regularity is concerned, while in Section \ref{sec:intrinsic_spaces} we extend them to the so-called \emph{intrinsic spaces}, which arise when regularity in both space and time is concerned. In both cases, rigorous definitions and results are preceded by heuristic derivations in the prototypical kinetic setting. In Section \ref{sec:kin_den_result} we study a kinetic-type density-dependent MKV SDE, with measurable drift and H\"older-continuous diffusion. Under two boundedness assumptions on the drift, a global and a local one, we prove that there exists a unique (in law) weak solution and, in the case of constant diffusion and  $\gamma-$H\"older continuous initial datum, that the corresponding flow of time-marginal densities is intrinsically (almost) $\gamma-$H\"older continuous. Owing to the non-linear superposition theorem in Section \ref{sec:den_dep_mkv}, the proof mainly relies on the study of weak solutions to the associated Fokker-Planck PDE. Existence and uniqueness of the latter, together with the relevant regularity estimates, are proved in Section \ref{sec:proof_prop_fp} by means of tools known as \emph{mild solutions}. In the case of non-degenerate noise, we also prove existence and (almost sure) uniqueness of pathwise solutions. 
 
   
The appendix contains a review of the basic elements of $d$-dimensional diffusion theory, including It\^o stochastic differential equations, martingale problems and linear Fokker-Planck PDEs.

\vspace{2pt} Originally, these notes were supposed to contain one more chapter, placed between Chapter 1 and Chapter 2, devoted to the Lipschitz theory for MKV SDEs. Unfortunately, due to time constraints, the writing of this chapter was not possible. While we plan to fill this gap in future versions, we refer here to the classic references \cite{Sznitman91}, \cite{carmona2016lectures}, \cite{carmona2018probabilistic}, \cite{gobet2016monte}, among others.

\vspace{10pt} 

{\noindent\textbf{Acknowledgements}: The author has received funding from the European Union’s Horizon Europe research and innovation programme under the Marie Sklodowska-Curie Actions Staff Exchanges (Grant Agreement No. 101183168, Call: HORIZON-MSCA-2023-SE-01).}

\vspace{10pt}

{\noindent\textbf{Declaration on the use of AI:}  No AI tool was used significantly, at any stage, for the writing of these notes.

\newpage
\section*{General notations}
In these notes the following notations will be employed:
\begin{itemize}
\item $\mathcal{M}^{d\times q}$: space of ($d\times q$)-matrices with real entries
        \item $\mathcal{B}_d$ (or simply $\mathcal{B}$): Borel $\sigma$-algebra of the Euclidean space $\R^d$. 
          \item $\Bc(S)$: Borel $\sigma$-algebra on $S\subset \R^d$
	\item $m\mathcal{F}$: measurable functions from a measurable space $(\Omega, \mathcal{F})$ to a Euclidean space $(\R^d , \mathcal{B})$
\item $m\mathcal{F}^+$: non-negative real-valued functions in $m\mathcal{F}$
	\item $bm\mathcal{F}$: bounded functions in $m\mathcal{F}$
	\item $m\mathcal{B}$: Borel measurable-functions between two Euclidean spaces $\R^d$ and $\R^q$
	\item $m\mathcal{B}^+$: non-negative real-valued functions in $m\mathcal{B}$
	\item $bm\mathcal{B}$: bounded functions in $m\mathcal{B}$
\item \textcolor{black}{$[X]$}: law of a random variable/stochastic process $X$
\item \textcolor{black}{$\mathcal{P}(\Rd)$}: probability distributions on $(\Rd,\Bc)$, equipped with the weak topology
\item \textcolor{black}{$\mathcal{P}^p(\Rd)$}: elements of $\mathcal{P}(\Rd)$ with finite $p$-th moment
\item \textcolor{black}{$\boldsymbol\nu$, $\boldsymbol\mu$}: measurable functions from $[0,\infty)$ to $\mathcal{P}(\Rd)$
\item \textcolor{black}{$\mathcal{M}(\Rd)$ and $\mathcal{M}_{+}(\Rd)$:} respectively, signed and non-negative finite measures on $(\Rd,\Bc)$
\item \textcolor{black}{$(\Fc_t)_{t}$}: complete and right-continuous filtration on a probability space $(\Omega,\Fc,\Pb)$
\item \textcolor{black}{$W=(W_t)_t$}: a (multi-dimensional) Brownian motion, either defined on a filtered probability space $(\Omega,\Fc,\Pb, (\Fc_t)_{t})$, or utilized as a dummy variable in a stochastic differential equation
\item \textcolor{black}{$(C_{[t,T)}, \Bc )$}: canonical space of continuous functions from $[t,T)$ to $\Rd$, with $0\leq t < T\leq \infty$, equipped with the Borel $\sigma$-algebra of the topology induced by uniform convergence on compacts. In particular, $(C_{T}, \Bc ) := (C_{[0,T)}, \Bc )$
\item \textcolor{black}{$\Cc_b = \Cc_b(\Rd)$}: space of bounded continuous functions from $\Rd$ to $\R$, equipped with the topology of uniform convergence
\item \textcolor{black}{$C_T \Cc_b$}: space of continuous functions from $[0,T]$ to $\Cc_b$, with $T\in [0,\infty)$
\item \textcolor{black}{$C_0^{\infty}(\Rd)$}: space of smooth functions from $\Rd$ to $\R$, with compact support.
\end{itemize}

\tableofcontents

\chapter{Introduction to McKean-Vlasov equations}\label{ch:1}

We introduce the class of McKean-Vlasov stochastic differential equations and analyze their connection with non-linear Fokker-Planck equations and mean-field particle systems. 

In order to improve readability, all the functions and stochastic processes of this Chapter are defined on the time interval $[0,T)$ with $T=\infty$. However, all the definitions and results naturally extend to the case $T\in (0,\infty)$.

\section{McKean-Vlasov diffusions}\label{sec:mkv_diffusions}


A \emph{McKean-Vlasov stochastic differential equation (MKV-SDE)} is 
a formal expression of the form
\begin{equation}\label{eq:mkv_SDE}
\hspace{-70pt}{\blue\text{[MKV$({\bf b},{\boldsymbol\sigma})$]}}\qquad\qquad
dX_t = {\bf b}(t,X_t,[X_t]) dt + {\boldsymbol\sigma}(t,X_t, [X_t]) dW_t, 
\end{equation}
where the functions 
\begin{equation}\label{eq:coefficients_mkv}
{\bf b}: [0,\infty)\times \Rd \times \mathcal{P}(\Rd)\to \R^d, \qquad {\boldsymbol\sigma}: [0,\infty)\times \Rd \times \mathcal{P}(\Rd)\to \mathcal{M}^{d\times q}
\end{equation}
are called the coefficients of the equation. Here, $\mathcal{P}(\Rd)$ denotes the set of probability measures on $(\R^d,\Bc)$, equipped with the topology of weak convergence. Hereafter, we will assume that the coefficients ${\bf b}, {\boldsymbol\sigma}$ are Borel measurable functions. As with standard SDEs, the MKV-SDE  \eqref{eq:mkv_SDE} can be formulated in the weak sense, where the ($q$-dimensional) Brownian motion is part of the solution and an initial distribution is assigned, or in the pathwise sense, where the Brownian motion is fixed and an initial random variable is assigned.
\begin{framed}
\noindent {\bf(!)} The key distinguishing feature here is that the coefficients ${\bf b}$, ${\boldsymbol\sigma}$ exhibit explicit dependence on  the time-marginal laws of the solution, $[X_t]$.
\end{framed}

MKV-SDEs may be alternatively referred to as mean-field equations and their solutions as non-linear diffusions, the reasons for this terminology being clarified below, in Sections \ref{sec:nonlin_fp} and \ref{sec:part_sys}.

\begin{example}[linear and convolutional MKV-SDEs]\label{ex:linear} Set
\begin{equation}
{\bf b}(t,x,\nu) = \int b(t,x,y) \nu(\dd y), \qquad  {\boldsymbol\sigma}(t,x,\nu) = \int \sigma(t,x,y) \nu(dy),
\end{equation}
with $b: [0,\infty)\times \Rd \times \Rd\to \R^d$ and $\sigma: [0,\infty)\times \Rd \times \Rd\to \mathcal{M}^{d\times q}$ being measurable functions.
In this case, MKV$({\bf b},{\boldsymbol\sigma})$ can then also be written as
\begin{equation}\label{eq:mkv_SDE_linear}
dX_t = \Eb\big[b(t,x,X_t)\big]\big|_{x=X_t} dt + \Eb\big[\sigma(t,x,X_t)\big]\big|_{x=X_t} dW_t.
\end{equation}
As the coefficients ${\bf b}, {\boldsymbol\sigma}$ are linear with respect to the measure variable, we refer to this class of equations as \emph{linear MKV-SDEs}. A particular case of a linear MKV-SDE is given by setting
\begin{equation}
b(t,x,y) = B(t,x-y), \qquad \sigma(t,x,y) = \Sigma(t,x-y),
\end{equation}
with $B: [0,\infty)\times \Rd \to \R^d$ and $\Sigma: [0,\infty)\times \Rd\to \mathcal{M}^{d\times q}$ being measurable functions,
which indeed yields 
\begin{equation}
{\bf b}(t,x,\nu) = [ B(t, \cdot)\ast \nu ] (x), \qquad  {\boldsymbol\sigma}(t,x,\nu) =[ \Sigma(t, \cdot)\ast \nu ] (x).
\end{equation}
In this particular case, MKV$({\bf b},{\boldsymbol\sigma})$ can be also written as
\begin{equation}
dX_t = \big[ B(t, \cdot)\ast [X_t] \big] (X_t) dt + \big[ \Sigma(t, \cdot)\ast [X_t] \big] (X_t) dW_t.
\end{equation}
We refer to this class of equations as \emph{convolutional MKV-SDEs}. 
\end{example}


A crucial tool to define and study the solutions to MKV-SDEs is given by the so-called \emph{frozen MKV-SDE}, which is the standard It\^o SDE
\begin{equation}
\hspace{-110pt}{\blue\text{[SDE$({\bf b}_{\boldsymbol\nu},{\boldsymbol\sigma}_{\boldsymbol\nu})$]}}\qquad\qquad dZ_t = {\bf b}_{\textcolor{black}{\boldsymbol\nu}}(t,Z_t) dt + {\boldsymbol\sigma}_{\textcolor{black}{\boldsymbol \nu}}(t,Z_t) dW_t ,
\end{equation}
where ${\boldsymbol\nu}:[0,\infty) \to \Pc(\Rd)$ is a \underline{fixed} measurable flow of distributions, and where ${\bf b}_{\textcolor{black}{\boldsymbol\nu}}: [0,\infty)\times \Rd \to \R^d$, $ {\boldsymbol\sigma}_{\textcolor{black}{\boldsymbol \nu}}: [0,\infty)\times \Rd\to \mathcal{M}^{d\times q}$ are the measurable functions defined by
\begin{equation}\label{eq:def_frozen_coeff}
{\bf b}_{\textcolor{black}{\boldsymbol\nu}}(t,x) :=   {\bf b}(t, x , {\boldsymbol\nu}_t ), \qquad {\boldsymbol\sigma}_{\textcolor{black}{\boldsymbol\nu}}(t,x) :=   {\boldsymbol\sigma}(t, x , {\boldsymbol\nu}_t ), \qquad (t,x)\in[0,\infty)\times \Rd.
\end{equation}
\begin{remark}\label{rem:frozen_coeff}
If ${\boldsymbol\nu}:[0,\infty) \to \Pc(\Rd)$ is a measurable flow of distributions, then the coefficients ${\bf b}$ and ${\boldsymbol\sigma}$ are jointly (in time-space) Borel measurable on $[0,\infty)\times\R^d$.
\end{remark}
As we mentioned above, sometimes MKV-SDEs are referred to as nonlinear SDEs. For this reason, some authors refer to the frozen equation SDE$({\bf b}_{\boldsymbol\nu},{\boldsymbol\sigma}_{\boldsymbol\nu})$ as the \emph{linearized equation}. In order to avoid confusion with the class of linear MKV-SDEs introduced in Example \ref{ex:linear}, we prefer to avoid this term and only refer to SDE$({\bf b}_{\boldsymbol\nu},{\boldsymbol\sigma}_{\boldsymbol\nu})$ as the frozen equation, to highlight the fact that the measure argument in $\bf b$ and $\boldsymbol\sigma$ is a priori fixed.

We can now give the definition of general solution to a MKV-SDE.

\begin{framed}
\vspace{-10pt}
\begin{definition}[General solution to an MKV-SDE]\label{def:sol_gen_MKV}
A (general) solution to MKV$({\bf b},{\boldsymbol\sigma})$ is a triple $( W, (\Fc_t)_t, X)$ which is a (general) solution to SDE$({\bf b}_{\boldsymbol\mu},{\boldsymbol\sigma}_{\boldsymbol\mu};0)$ (see Definition \ref{def:sol_gen_SDE}), where ${\bf b}_{\boldsymbol\mu},{\boldsymbol\sigma}_{\boldsymbol\mu}$ are as in \eqref{eq:def_frozen_coeff} with
\begin{equation}
{\boldsymbol\mu}_t  = [X_t], \qquad t\geq 0.\vspace{-10pt}
\end{equation}
\end{definition}\end{framed}

We recall that (see Definition \ref{def:sol_gen_SDE}) the triple  $( W, (\Fc_t)_t , X)$ above is given, respectively, by a complete filtration, a Brownian motion relative to the filtration $(\Fc_t)_t$, and a progressively measurable process $X$, all three being defined on an underlying probability space $(\Omega, \Fc , \Pb)$, which is not explicitly included in the notation.

\begin{framed}\noindent
{\bf(!)} For ease of reading, we will sometimes refer to a solution simply by the component $X$ of the triple. However, the reader should bear in mind that the filtration, the Brownian motion, and hence the underlying probability space, are part of the solution.
\end{framed}

\begin{remark}\label{rem:solu}
By Definition \ref{def:sol_gen_MKV}, a solution $X$ to MKV$({\bf b},{\boldsymbol\sigma})$ is, in particular, a solution to a standard SDE (the frozen one). In particular, $X$ is almost surely continuous, and thus the flow $(t\mapsto {\boldsymbol\mu}_t = [X_t])$ is continuous.
Also, by Theorem \ref{th:equiv_MP_weak}, the law of $X$ solves the martingale problem MP$({\bf b}_{\boldsymbol\mu},{\boldsymbol\sigma}_{\boldsymbol\mu};0,[X_0])$ in the sense of Stroock-Varadhan (see Definition \ref{def:MP_lin}). If weak uniqueness holds for SDE$({\bf b}_{\boldsymbol\mu},{\boldsymbol\sigma}_{\boldsymbol\mu})$ (equivalently uniqueness for MP$({\bf b}_{\boldsymbol\mu},{\boldsymbol\sigma}_{\boldsymbol\mu}$)), then we have two important consequences:
\begin{itemize}
\item[(i)] 
If $X'$ is another solution to MKV$({\bf b},{\boldsymbol\sigma})$ with the same time marginals as $X$, namely $[X'_t] =\boldsymbol\mu_t  = [X_t]$ for any $t\geq 0$, then $X'$ solves the same frozen SDE with the same initial distribution, and thus $[X]=[X']$. In other words, the law of a solution $X$ to MKV$({\bf b},{\boldsymbol\sigma})$ is determined by its time-marginal flow ${\boldsymbol\mu}_t = [X_t]$, as long as the solution to the frozen SDE is unique in law.
\item[(ii)] By \cite[Th. 6.2.2]{stroock_multidimensional_1979}, the law $[X]$ is a (strong) Markov process on $(C_{\infty}, \Bc)$ in the standard sense of \cite[Definition 2.2.1]{stroock_multidimensional_1979}, with transition kernel given by 
\begin{equation}
P(t, x; T , \dd y) := \Pb^{\boldsymbol\mu}_{t, \delta_x}(x_T \in dy), \qquad 0\leq t \leq T, \ x\in\Rd, 
\end{equation} 
where we denote by $\Pb^{\boldsymbol\mu}_{t, \delta_x}$ the solution to MP$({\bf b}_{\boldsymbol\mu},{\boldsymbol\sigma}_{\boldsymbol\mu};t,\delta_x)$. Notice instead that the flow property typically fails for MKV-SDEs. This causes the backward Kolmogorov operator associated to a MKV-SDE to include partial derivatives with respect to probability measures (see \cite{de2022well}).
\end{itemize}
We recall (see \cite[Th. 6.2.3]{stroock_multidimensional_1979}) that uniqueness for MP$({\bf b}_{\boldsymbol\mu},{\boldsymbol\sigma}_{\boldsymbol\mu}$) holds if and only if, for any $(t,x)\in[0,\infty)\times \Rd$, the time-marginals of $\Pb^{\boldsymbol\mu}_{t, \delta_x}$ are unique.
\end{remark}

In analogy with standard SDEs, we can now define the notions of weak and pathwise solutions to a MKV-SDE.

\begin{framed}
\vspace{-10pt}
\begin{definition}[Weak solution to a MKV-SDE]\label{def:weak_sol_MKV}
Let \textcolor{black}{$\nu\in \mathcal{P}(\R^d)$}. A (weak) solution to MKV$({\bf b},{\boldsymbol\sigma};\textcolor{black}{\nu})$ 
is a 
(general) solution to MKV$({\bf b},{\boldsymbol\sigma})$ such that
\begin{equation}
[X_0] =  \nu. \vspace{-10pt}
\end{equation}
\end{definition}
\end{framed}

\begin{framed}\vspace{-10pt}
\begin{definition}[Pathwise solution to a MKV-SDE]\label{def:path_sol_MKV}
Let $(W,(\Fc_t)_t)$ be a Brownian motion and \textcolor{black}{$\xi\in m\Fc_0$}. A (pathwise) solution to MKV$({\bf b},{\boldsymbol\sigma};\textcolor{black}{ \xi, W, (\Fc_t)_t})$ 
is a 
process $X$ such that the triple $(\textcolor{black}{W}, (\Fc_t)_t, X)$ 
is a (general) solution to MKV$({\bf b},{\boldsymbol\sigma})$ with 
\begin{equation}
\textcolor{black}{X_0 = \xi}. \vspace{-10pt}
\end{equation}
\end{definition}
\end{framed}


The situation can be summarized as in Table \ref{tab:weak_pathwise}. Consequently, as for standard SDEs, we have a weak and a pathwise notion of well-posedness (existence and uniqueness) for MKV-SDEs.

\begin{table}[h]
\begin{center}
\begin{tabular}{c|c|c}
 &weak formulation (MP) & pathwise formulation\\
   \hline
   Brownian motion & part of the solution & fixed \\
   initial datum & $X_0 \sim \nu$ & $X_0 = \xi$
    \end{tabular}
    \end{center}
  \label{tab:weak_pathwise}
  \caption{Weak and pathwise formulation.}
\end{table} 
\begin{framed}\vspace{-10pt}
\begin{definition}[Weak/pathwise well-posedness for a MKV-SDE]\label{def:well_posed}
We say that MKV$({\bf b},{\boldsymbol\sigma})$ is:
\begin{itemize}
\item[-] \emph{\textcolor{black}{weakly} well-posed}, if there exists a unique (\textcolor{black}{in law}) solution to MKV$({\bf b},{\boldsymbol\sigma};\textcolor{black}{\nu})$ for any $\nu\in \Pc(\R^d)$;
\item[-] \emph{\textcolor{black}{pathwise} well-posed}, if there exists an (\emph{almost surely}) unique solution to MKV$({\bf b},{\boldsymbol\sigma};\textcolor{black}{\xi, W, (\Fc_t)_t})$ for any $(W, (\Fc_t)_t )$ Brownian motion  and any \textcolor{black}{$\xi\in m\Fc_0$}. \vspace{-8pt}
\end{itemize} 
\end{definition}
\end{framed}

\begin{example}
Consider the MKV-SDE
\begin{equation}\label{eq:example_F_E}
dX_t = F \big( \Eb[X_t] \big) dW_t, 
\qquad t>0.
\end{equation}
If $X$ is a solution, then we must have, a priori, that the function $(t\mapsto F \big( \Eb[X_t] \big))$ is square-integrable, and thus $X$ is a martingale. Therefore, we directly obtain
\begin{equation}\label{eq:example_F_E_expected}
\Eb[X_t] = \Eb[X_0] , \qquad t>0.
\end{equation}
This yields 
\begin{equation}
X_t  = X_0 + F\big(\Eb[X_0]\big) W_t, \qquad t>0.
\end{equation}
On the other hand, it is obvious that defining $X$ as above implies \eqref{eq:example_F_E_expected}.
This shows that \eqref{eq:example_F_E} is pathwise well-posed.
\end{example}

The following result can be proved similarly to the case of standard SDEs. We leave the proof as an exercise for the reader. 
\begin{remark}\label{prop:pathwise_weak}
If MKV$({\bf b},{\boldsymbol\sigma})$ is pathwise well-posed, then it is weakly well-posed. 
Notice that this claim does not stem directly from the corresponding result for standard SDEs. In particular, when showing that 
\begin{equation}
\text{pathwise uniqueness}\quad \Longrightarrow\quad \text{weak uniqueness},
\end{equation}
one does not know, a priori, that two solutions $X$, $X'$ solve the same frozen SDE. This is true only if $X$, $X'$ have the same time-marginals (cf. Remark \ref{rem:solu}-(i)), which is part of the weak uniqueness claim.
\end{remark}

The following result, which strongly relies on the Watanabe-Yamada Theorem (see Theorem \ref{th:watanabe_yamada}), provides a useful criterion to infer the pathwise well-posedness from weak well-posedness.

\begin{lemma}\label{lem:wat_yam_mkv}
Let $\Cc$ be a fixed subset of the space of measurable functions on $[0,\infty)$ with values in $\mathcal{P}(\Rd)$. 
Let $\nu\in \Pc(\Rd)$ and assume there exists a (weak) solution $X$ to MKV$({\bf b},{\boldsymbol\sigma};\textcolor{black}{\nu})$, whose time-marginals are unique in $\Cc$. Assume that weak existence and pathwise uniqueness hold for SDE$({\bf b}_{\boldsymbol\mu},{\boldsymbol\sigma}_{\boldsymbol\mu};0)$ (see Definition \ref{def:well_posed_lin}), with ${\boldsymbol\mu} = [X_{\cdot}]$. Then, for any $(W_t , (\Fc_t)_t )$ Brownian motion and any \textcolor{black}{$\xi\in m\Fc_0$} with $[\xi] = \nu$, there exists a (\emph{pathwise}) unique solution to MKV$({\bf b},{\boldsymbol\sigma};\textcolor{black}{\xi, W, (\Fc_t)_t})$ whose time-marginals belong to $\Cc$.
\end{lemma}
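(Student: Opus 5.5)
We reduce everything to the frozen SDE with the fixed flow ${\boldsymbol\mu}_t=[X_t]$ and apply the Watanabe-Yamada Theorem (Theorem \ref{th:watanabe_yamada}) to it. The key observation is that, with ${\boldsymbol\mu}$ fixed, SDE$({\bf b}_{\boldsymbol\mu},{\boldsymbol\sigma}_{\boldsymbol\mu};0)$ is a standard It\^o SDE. By assumption it has weak existence and pathwise uniqueness. Watanabe-Yamada then yields strong existence: for any Brownian motion $(W,(\Fc_t)_t)$ and any $\xi\in m\Fc_0$ there is a solution $Y$ with $Y_0=\xi$. The theorem also gives uniqueness in law for the frozen SDE with any given initial distribution.

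\textbf{Existence.} Fix $(W,(\Fc_t)_t)$ and $\xi\in m\Fc_0$ with $[\xi]=\nu$, and let $Y$ be the strong solution to SDE$({\bf b}_{\boldsymbol\mu},{\boldsymbol\sigma}_{\boldsymbol\mu};0)$ with $Y_0=\xi$. The original $X$ also solves this frozen SDE, since by Definition \ref{def:sol_gen_MKV} it is a solution to SDE$({\bf b}_{\boldsymbol\mu},{\boldsymbol\sigma}_{\boldsymbol\mu};0)$. Moreover $[X_0]=\nu=[Y_0]$, so weak uniqueness gives $[Y]=[X]$, hence $[Y_t]={\boldsymbol\mu}_t$ for every $t$. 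Therefore ${\bf b}_{\boldsymbol\mu}(t,Y_t)={\bf b}(t,Y_t,[Y_t])$, and similarly for ${\boldsymbol\sigma}$, so $Y$ solves MKV$({\bf b},{\boldsymbol\sigma};\xi,W,(\Fc_t)_t)$ in the sense of Definition \ref{def:path_sol_MKV}. Its time-marginal flow is ${\boldsymbol\mu}$, which belongs to $\Cc$ because $X$'s marginals are (by the hypothesis that they are the unique ones in $\Cc$).

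\textbf{Uniqueness.} Let $Y'$ be another pathwise solution on the same set-up with $Y'_0=\xi$ and $[Y'_\cdot]\in\Cc$. Then $Y'$ is, in particular, a weak solution to MKV$({\bf b},{\boldsymbol\sigma};\nu)$ with marginals in $\Cc$. By the hypothesis that the time-marginals of solutions are unique in $\Cc$, we get $[Y'_t]={\boldsymbol\mu}_t$ for all $t$. Hence $Y'$ solves the same frozen SDE SDE$({\bf b}_{\boldsymbol\mu},{\boldsymbol\sigma}_{\boldsymbol\mu};0)$ as $Y$, with the same Brownian motion, the same filtration and the same initial datum $\xi$. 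Pathwise uniqueness for the frozen SDE then gives $Y=Y'$ almost surely.

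\textbf{Main obstacle.} The delicate point is this uniqueness step: a priori two MKV solutions solve different frozen equations, as noted in Remark \ref{prop:pathwise_weak}. The hypothesis on uniqueness of marginals in $\Cc$ is exactly what removes this difficulty, so one must interpret that hypothesis as uniqueness among all weak solutions of MKV$({\bf b},{\boldsymbol\sigma};\nu)$ whose marginal flows lie in $\Cc$. A secondary technical check concerns the Watanabe-Yamada conclusion: it must provide weak uniqueness and strong existence for \emph{every} initial $\xi$ with law $\nu$ on an arbitrary filtered space. This is the standard statement, and it only requires the measurability of ${\bf b}_{\boldsymbol\mu}$ and ${\boldsymbol\sigma}_{\boldsymbol\mu}$ guaranteed by Remark \ref{rem:frozen_coeff}.
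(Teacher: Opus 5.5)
Your proposal is correct and follows essentially the same route as the paper. You apply Watanabe--Yamada to the frozen SDE with ${\boldsymbol\mu}=[X_\cdot]$. You obtain existence by matching the law of the pathwise frozen solution with that of $X$ through weak uniqueness, and uniqueness by using the uniqueness of marginals in $\Cc$ to place any competitor in the same frozen SDE before invoking pathwise uniqueness; you also make explicit, as the paper does not, that the constructed solution's marginal flow lies in $\Cc$.
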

\begin{proof}
By the Yamada-Watanabe Theorem (see Theorem \ref{th:watanabe_yamada}), the frozen SDE$({\bf b}_{\boldsymbol\mu},{\boldsymbol\sigma}_{\boldsymbol\mu};0)$ is pathwise well-posed. Now fix a $(W_t , (\Fc_t)_t )$ Brownian motion and \textcolor{black}{$\xi\in m\Fc_0$} with $[\xi] = \nu$.

\emph{Uniqueness:} Let $Z,\tilde Z$ be two (pathwise) solutions to MKV$({\bf b},{\boldsymbol\sigma};\textcolor{black}{\xi, W, (\Fc_t)_t})$  whose time marginals belong to $\Cc$. By assumption, $[Z_t] = [\tilde Z_t] = [X_t]$. Therefore, $Z$ and $\tilde Z$ are both (pathwise) solutions to SDE$({\bf b}_{\boldsymbol\mu},{\boldsymbol\sigma}_{\boldsymbol\mu} ; \xi, W, (\Fc_t)_t)$, and thus they are equal almost surely.

\emph{Existence:} Let $Z$ be a (pathwise) solution to SDE$({\bf b}_{\boldsymbol\mu},{\boldsymbol\sigma}_{\boldsymbol\mu} ;0, \xi, W, (\Fc_t)_t)$. In particular, both $Z$ and $X$ are weak solutions to SDE$({\bf b}_{\boldsymbol\mu},{\boldsymbol\sigma}_{\boldsymbol\mu} ; \nu)$ and thus they have the same law. Therefore, $[Z_t] = [X_t] = {\boldsymbol\mu}_t $ for any $t\geq 0$, which means that $Z$ is a (pathwise) solution to MKV$({\bf b}_{\boldsymbol\mu},{\boldsymbol\sigma}_{\boldsymbol\mu};\xi, W, (\Fc_t)_t)$.
\end{proof}
The next proposition is a direct consequence of the previous lemma.
\begin{proposition}\label{prop:wea_strong_MKV}
If MKV$({\bf b},{\boldsymbol\sigma})$ is weakly well-posed and all its corresponding frozen SDEs enjoy pathwise uniqueness, then MKV$({\bf b},{\boldsymbol\sigma})$ is pathwise well-posed.
\end{proposition}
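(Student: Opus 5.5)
The plan is to apply Lemma \ref{lem:wat_yam_mkv} with the trivial choice of constraint class, namely $\Cc$ equal to the whole set of measurable flows $[0,\infty)\to\Pc(\Rd)$. With this choice, ``unique in $\Cc$'' becomes plain uniqueness of time-marginals. The restriction ``whose time-marginals belong to $\Cc$'' in the conclusion of the lemma then becomes vacuous, because the time-marginal flow of any solution is continuous by Remark \ref{rem:solu}, hence measurable.

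Fix $\nu\in\Pc(\Rd)$, a Brownian motion $(W,(\Fc_t)_t)$ and $\xi\in m\Fc_0$ with $[\xi]=\nu$; this covers every admissible $\xi$. I will verify the hypotheses of the lemma in order.

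\emph{Existence of a weak solution.} Weak well-posedness gives a weak solution $X$ to MKV$({\bf b},{\boldsymbol\sigma};\nu)$.

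\emph{Uniqueness of time-marginals.} Weak uniqueness means uniqueness in law. Any other weak solution $X'$ with initial law $\nu$ therefore satisfies $[X']=[X]$, and in particular $[X'_t]=[X_t]$ for all $t$. So the time-marginals of $X$ are unique in $\Cc$.

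\emph{Hypotheses on the frozen SDE.} Set ${\boldsymbol\mu}=[X_\cdot]$. Pathwise uniqueness for SDE$({\bf b}_{\boldsymbol\mu},{\boldsymbol\sigma}_{\boldsymbol\mu};0)$ is assumed, since it holds for all frozen SDEs. Weak existence for this frozen SDE is where I expect the only subtlety, because Definition \ref{def:well_posed_lin} presumably requires a weak solution for every initial law, not just $\nu$. For the initial law $\nu$ itself, $X$ provides the solution. For a general initial law $\nu'$ I would argue as follows. Pathwise uniqueness for the frozen SDE together with existence from the data $\nu$ gives, by Watanabe--Yamada, a strong solution functional, but that functional only applies to initial data distributed as $\nu$, so this route does not directly produce solutions from $\nu'$. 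The first thing I would try instead is a conditioning/mixture argument: build solutions from $\delta_x$-type initial data by disintegrating the law of $X$ with respect to $X_0$, and then re-mix these with $\nu'$. This would be carried out at the level of the martingale problem MP$({\bf b}_{\boldsymbol\mu},{\boldsymbol\sigma}_{\boldsymbol\mu};0,\cdot)$ via Theorem \ref{th:equiv_MP_weak}, and it only reaches initial laws absolutely continuous with respect to $\nu$. If the lemma is read as needing only existence for the initial law $\nu$, which is all its proof actually uses when it invokes Theorem \ref{th:watanabe_yamada}, then the solution $X$ already suffices. I will adopt that reading and note that it is exactly what the Yamada--Watanabe step requires.

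\emph{Conclusion.} Lemma \ref{lem:wat_yam_mkv} then yields a pathwise unique solution to MKV$({\bf b},{\boldsymbol\sigma};\xi,W,(\Fc_t)_t)$ among all solutions. Since $\nu$, $W$ and $\xi$ were arbitrary, this is pathwise well-posedness in the sense of Definition \ref{def:well_posed}.
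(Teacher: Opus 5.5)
Your proof is correct and is precisely the paper's argument: the paper just declares the proposition a direct consequence of Lemma~\ref{lem:wat_yam_mkv}, which amounts to applying the lemma with $\Cc$ equal to all measurable flows, as you do. Your remark that the lemma's proof only needs weak existence for the frozen SDE from the initial law $\nu$ (which $X$ itself supplies), rather than from every initial law as Definition~\ref{def:well_posed_lin} would literally require, correctly handles a point the paper passes over, provided Theorem~\ref{th:watanabe_yamada} is used in its fixed-initial-law form.
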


Proposition \ref{prop:wea_strong_MKV} suggests the following pathway to proving pathwise well-posedness for MKV-SDEs:
\begin{framed}\vspace{-0pt}
{\bf ``Stairway" to pathwise well-posedness:}
\begin{itemize}[leftmargin=35pt]

\item[Step 1.] \underline{Prove weak well-posedness for frozen SDEs}: for a fixed measurable flow ${\boldsymbol\nu}:[0,\infty)\to \Pc(\Rd)$, prove weak well-posedness of SDE$({\bf b}_{\boldsymbol\nu},{\boldsymbol\sigma}_{\boldsymbol\nu};0)$.

For this task, it is typically required to check that the frozen coefficients ${\bf b}_{\boldsymbol\mu}(t,\cdot)$, ${\boldsymbol\sigma}_{\boldsymbol\mu}(t,\cdot)$ enjoy suitable regularity and satisfy geometric assumptions (e.g. H\"ormander-type conditions) that ensure non-degeneracy of the noise. Note that the regularity in space of the frozen coefficients is only determined by the regularity of the coefficients ${\bf b}, {\boldsymbol\sigma}$ with respect to the state variable. Also, ${\bf b}_{\boldsymbol\mu}$, ${\boldsymbol\sigma}_{\boldsymbol\mu}$ are necessarily jointly (in time-space) Borel measurable on $[0,\infty)\times\R^d$ (see Remarks \ref{rem:frozen_coeff} and \ref{rem:solu}).

\item[Step 2.] \underline{Prove weak well-posedness for MKV$({\bf b},{\boldsymbol\sigma})$}: by Step 1. and Remark \ref{rem:solu}, this is equivalent to finding a unique fixed point of the map defined on the space of measurable functions from $[0,\infty)$ to $\Pc(\Rd)$ into itself, given by
\begin{equation}
 {\boldsymbol\nu} \mapsto [Z^{\boldsymbol\nu}_{\cdot}], \quad \text{with $Z^{\boldsymbol\nu}$ the unique (weak) solution to SDE$({\bf b}_{\boldsymbol\nu},{\boldsymbol\sigma}_{\boldsymbol\nu};0, {\boldsymbol\nu}_0)$}.
\end{equation}
To produce the estimates that are necessary to show the existence of a unique fixed point, the regularity of the coefficients ${\bf b}$ and ${\boldsymbol\sigma}$, in particular with respect to the measure argument, plays a crucial role.

\item[Step 3.] \underline{Prove pathwise uniqueness for the frozen SDEs}: for any (weak) solution $X$ to MKV$({\bf b},{\boldsymbol\sigma})$, prove that pathwise uniqueness holds for SDE$({\bf b}_{\boldsymbol\mu},{\boldsymbol\sigma}_{\boldsymbol\mu};0)$, with ${\boldsymbol\mu} = [X_{\cdot}]$. 
As in Step 1, this task typically requires checking regularity and non-degeneracy conditions for ${\bf b}_{\boldsymbol\mu}(t,\cdot)$, ${\boldsymbol\sigma}_{\boldsymbol\mu}(t,\cdot)$.

\item[Step 4.] \underline{Infer pathwise well-posedness for MKV$({\bf b},{\boldsymbol\sigma})$}: by the previous two steps, it is sufficient to apply Proposition \ref{prop:wea_strong_MKV}. \vspace{-6pt}
\end{itemize}
\end{framed}

\begin{remark}{\bf(!)} In some irregular cases, it may happen that the MKV-SDE is well posed but its corresponding frozen SDEs are not. In particular, the latter may fail to enjoy even weak uniqueness (see Example \ref{example:counter_linearl} below).
\end{remark}

\begin{example}\label{example:counter_linearl}
Let $ {\bf b} \equiv 0$ and 
\begin{equation}
{\boldsymbol\sigma}(t,x, \nu) = 
\begin{cases}
\text{sgn}(x), & \text{if $\nu(\{0\}) = 0$},  \\
1, & \text{\text{if $\nu(\{0\}) > 0$}},  \\
\end{cases}
\end{equation}
where
\begin{equation}\label{eq:sign_funct}
\text{sgn}(x): = 
\begin{cases}
1 & \text{if } x>0,  \\
0 & \text{if } x=0,  \\
-1 & \text{if } x<0.
\end{cases}\qquad x\in\R.
\end{equation}
Let $X$ be a Brownian motion relative to a given filtration $(\Fc_t)_t$. We can set
\begin{equation}
W_t : = \int_0^t \text{sgn}(X_s) dX_s, \qquad t\geq 0,
\end{equation} 
which is again a Brownian motion relative to $(\Fc_t)_t$, for $|\text{sgn}(X_t)| = 1$ almost surely. Furthermore, we have
\begin{equation}
X_t = \int_0^t (\text{sgn}(X_t))^2 d X_t = \int_0^t \text{sgn}(X_t) d W_t =  \int_0^t {\boldsymbol\sigma}(t,X_t, [X_t]) dW_s,
\end{equation}
which means that $(X,(\Fc_t)_t,W)$ is a solution to MKV$({\bf b},{\boldsymbol\sigma}; \delta_0)$. 

Conversely, let $X$ be a solution to the latter problem. Then, we have
\begin{equation}
X_t = \int_0^t {\boldsymbol\sigma}(t,X_t, [X_t]) dW_s, \qquad t\geq 0,
\end{equation}
and thus, owing to the fact that $|{\boldsymbol\sigma}(t,X_t, [X_t])|=1$ almost surely, $X$ is necessarily a Brownian motion. 

To sum up, there exists a solution to MKV$({\bf b},{\boldsymbol\sigma};\delta_0)$; each solution has the same law (the Wiener measure) and must solve the standard SDE
\begin{equation}\label{eq:tanaka_bis}
dZ_t = \text{sgn}(Z_t) dW_t, \qquad Z_0 \sim \delta_0.
\end{equation}
However, the latter has one more solution, which is the null process. Note that the sign function in \eqref{eq:sign_funct} slightly differs from the well-known Tanaka's example, where $\text{sgn}(0) := 1$. With this modification, the solution to \eqref{eq:tanaka_bis} is unique in law as the null process is no longer a solution.
\end{example}

We conclude this section with a useful characterization of weak solutions to MKV-SDEs. In particular, being the latter solutions to the frozen (standard) SDE, one can exploit the well-known correspondence between the solutions to an It\^o SDE and to the associated martingale problem.

\begin{definition}[Non-linear martingale problem]\label{def:non_lin_MP}
Let $\nu\in\Pc(\Rd)$. A probability measure $\Pb_{\nu}$, on $(C_{\infty},\Bc)$, is a solution to NL-MP$({\bf b},{\boldsymbol\sigma};\nu)$ if it is a solution to MP$({\bf b}_{\boldsymbol\mu}, {\boldsymbol\sigma}_{\boldsymbol\mu};0,\nu)$ (see Definition \ref{def:MP_lin}), with 
\begin{equation}\label{eq:cond_non_lin_mp}
\boldsymbol\mu_t = \Pb_{\nu}(x_t \in \dd y), \qquad t\geq 0,
\end{equation}
where $(x_t)_{t\in[0,\infty)}$ denotes the canonical process.
\end{definition}


\begin{proposition}\label{prop:equivalence_nonlinear_mp_weak_sol}
Let $\nu\in\Pc(\Rd)$, and let $\Pb_{\nu}$ be a probability measure on $(C_{\infty}, \Bc )$, such that
\begin{equation}\label{eq:prop_l1_loc}
\big[(t,x)\mapsto {\bf b}(t,x,{\boldsymbol\mu}_t)\big], \ \big[ (t,x)\mapsto {\boldsymbol\sigma} {\boldsymbol\sigma}^\top(t,x,{\boldsymbol\mu}_t)\big] \in L^{1}_{\text{loc}}([0,\infty) \times \Rd , \overline{\boldsymbol\mu}), 
\end{equation}
with ${\boldsymbol\mu}_t:= \Pb_{\nu}(x_t \in \dd y)$ and $\overline{\boldsymbol\mu}$ as in Notation \ref{not:measure_dt_dx}. 

Then $\Pb_{\nu}$ is a solution to NL-MP$({\bf b},{\boldsymbol\sigma};\nu)$ if and only if there exists a (weak) solution $X$ to MKV$({\bf b},{\boldsymbol\sigma};\nu)$ such that $[X] =\Pb_{\nu} $.
\end{proposition}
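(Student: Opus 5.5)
The plan is to reduce the statement to the linear equivalence between weak solutions of It\^o SDEs and solutions of the martingale problem, Theorem \ref{th:equiv_MP_weak}, applied to the frozen equation with the \emph{fixed} flow ${\boldsymbol\mu}_t := \Pb_{\nu}(x_t\in \dd y)$. This flow is determined by $\Pb_\nu$ alone, so the coefficients ${\bf b}_{\boldsymbol\mu},{\boldsymbol\sigma}_{\boldsymbol\mu}$ in \eqref{eq:def_frozen_coeff} are well defined before we know anything about solutions. By Remark \ref{rem:frozen_coeff} they are jointly Borel measurable on $[0,\infty)\times\Rd$. Measurability of $t\mapsto{\boldsymbol\mu}_t$ follows from continuity of the canonical paths: $t\mapsto \int\varphi\, d{\boldsymbol\mu}_t = \Eb^{\Pb_\nu}[\varphi(x_t)]$ is continuous for $\varphi\in\Cc_b$. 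Finally, \eqref{eq:prop_l1_loc} is exactly the local integrability hypothesis under which Theorem \ref{th:equiv_MP_weak} is stated, with $\overline{\boldsymbol\mu}$ the time–space measure built from the marginals of $\Pb_\nu$.

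($\Rightarrow$) Assume $\Pb_\nu$ solves NL-MP$({\bf b},{\boldsymbol\sigma};\nu)$. By Definition \ref{def:non_lin_MP}, $\Pb_\nu$ solves MP$({\bf b}_{\boldsymbol\mu},{\boldsymbol\sigma}_{\boldsymbol\mu};0,\nu)$. Theorem \ref{th:equiv_MP_weak} then provides a weak solution $(W,(\Fc_t)_t,X)$ of SDE$({\bf b}_{\boldsymbol\mu},{\boldsymbol\sigma}_{\boldsymbol\mu};0)$ with $[X]=\Pb_\nu$. This may require enlarging the probability space when ${\boldsymbol\sigma}$ is degenerate, which the theorem handles. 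Consequently $[X_t]=\Pb_\nu(x_t\in\dd y)={\boldsymbol\mu}_t$ for every $t$. Hence the frozen coefficients built from $X$'s own marginals coincide with ${\bf b}_{\boldsymbol\mu},{\boldsymbol\sigma}_{\boldsymbol\mu}$, and by Definitions \ref{def:sol_gen_MKV} and \ref{def:weak_sol_MKV}, with $[X_0]=\nu$, the triple is a weak solution of MKV$({\bf b},{\boldsymbol\sigma};\nu)$.

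($\Leftarrow$) Let $X$ be a weak solution of MKV$({\bf b},{\boldsymbol\sigma};\nu)$ with $[X]=\Pb_\nu$. Then $[X_t]={\boldsymbol\mu}_t$, so $X$ solves SDE$({\bf b}_{\boldsymbol\mu},{\boldsymbol\sigma}_{\boldsymbol\mu};0)$ with $[X_0]=\nu$. The converse direction of Theorem \ref{th:equiv_MP_weak} (It\^o's formula applied to $\varphi(X_t)$ with $\varphi\in C^\infty_0$, where \eqref{eq:prop_l1_loc} ensures the drift integrals are finite) shows that $[X]=\Pb_\nu$ solves MP$({\bf b}_{\boldsymbol\mu},{\boldsymbol\sigma}_{\boldsymbol\mu};0,\nu)$. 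Condition \eqref{eq:cond_non_lin_mp} holds by construction, so $\Pb_\nu$ solves NL-MP$({\bf b},{\boldsymbol\sigma};\nu)$.

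The only delicate point is the self-consistency of the flow. Theorem \ref{th:equiv_MP_weak} must be applied to a flow fixed \emph{in advance}, and we then check that the marginals of the produced solution coincide with it. This works because in both directions the law is $\Pb_\nu$, so the marginals are ${\boldsymbol\mu}$ by definition. Beyond this, the main thing to verify is that the integrability hypothesis of the linear theorem is precisely \eqref{eq:prop_l1_loc}; no further obstacle is expected.
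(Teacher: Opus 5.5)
Your proposal is correct and follows the paper's proof. Both arguments apply Theorem \ref{th:equiv_MP_weak} to the frozen coefficients built from the flow ${\boldsymbol\mu}_t=\Pb_\nu(x_t\in\dd y)$, which is fixed in advance, note that \eqref{eq:prop_l1_loc} is exactly that theorem's integrability hypothesis, and use $[X]=\Pb_\nu$ to close the self-consistency loop. Your additional checks (weak continuity, hence measurability, of the flow, and the self-consistency of the marginals) are correct details that the paper leaves implicit.
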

\begin{proof}
$\Pb_{\nu}$ is a solution to NL-MP$({\bf b},{\boldsymbol\sigma};\nu)$ if and only if it solves MP$({\bf b}_{\boldsymbol\mu}, {\boldsymbol\sigma}_{\boldsymbol\mu};0,\nu)$ with 
\eqref{eq:cond_non_lin_mp}. Therefore, by Theorem \ref{th:equiv_MP_weak}, the latter is true if and only if there exists a solution $X$ to SDE$({\bf b}_{\boldsymbol\mu}, {\boldsymbol\sigma}_{\boldsymbol\mu};\nu)$ such that $[X] = \Pb_{\nu}$, 
which is equivalent to saying that $X$ is a (weak) solution to MKV$({\bf b},{\boldsymbol\sigma};\nu)$ with $[X] = \Pb_{\nu}$. 
%
\end{proof}
The following characterization of weak well-posedness is a direct consequence of Proposition \ref{prop:equivalence_nonlinear_mp_weak_sol}.
\begin{corollary}
Assume that \eqref{eq:prop_l1_loc} holds for any measurable flow ${\boldsymbol\mu}:[0,\infty)\to \Pc(\Rd)$.
Then, MKV$({\bf b},{\boldsymbol\sigma})$ is weakly well-posed if and only if NL-MP$({\bf b},{\boldsymbol\sigma};\nu)$ admits a unique solution for any $\nu\in\Pc(\Rd)$.
\end{corollary}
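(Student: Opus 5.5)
The corollary follows from Proposition \ref{prop:equivalence_nonlinear_mp_weak_sol} applied for each fixed $\nu\in\Pc(\Rd)$. The standing assumption that \eqref{eq:prop_l1_loc} holds for every measurable flow ${\boldsymbol\mu}$ is exactly what lets us apply that proposition to any probability measure on $(C_\infty,\Bc)$. For such a measure the marginal flow ${\boldsymbol\mu}_t=\Pb(x_t\in dy)$ is measurable in $t$, since the canonical process is continuous and hence $t\mapsto \int\varphi\, d{\boldsymbol\mu}_t$ is measurable for bounded continuous $\varphi$. So the integrability hypothesis is automatically satisfied and never needs to be checked case by case.

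\emph{Existence.} Suppose NL-MP$({\bf b},{\boldsymbol\sigma};\nu)$ has a solution $\Pb_\nu$. By the ``if'' direction of Proposition \ref{prop:equivalence_nonlinear_mp_weak_sol}, there is a weak solution $X$ to MKV$({\bf b},{\boldsymbol\sigma};\nu)$ with $[X]=\Pb_\nu$. Conversely, let $X$ be a weak solution. By Remark \ref{rem:solu}, $X$ is a.s. continuous, so $\Pb:=[X]$ is a probability measure on $(C_\infty,\Bc)$ whose marginals are $[X_t]$. By the assumption, \eqref{eq:prop_l1_loc} holds for this flow. The ``only if'' direction then shows that $\Pb$ solves NL-MP$({\bf b},{\boldsymbol\sigma};\nu)$.

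\emph{Uniqueness.} Suppose NL-MP has a unique solution, and let $X,X'$ be two weak solutions to MKV$({\bf b},{\boldsymbol\sigma};\nu)$. By the previous step, $[X]$ and $[X']$ both solve NL-MP$({\bf b},{\boldsymbol\sigma};\nu)$, hence they coincide, which is uniqueness in law. Conversely, suppose uniqueness in law holds for MKV, and let $\Pb,\Pb'$ solve NL-MP$({\bf b},{\boldsymbol\sigma};\nu)$. The proposition gives weak solutions $X,X'$ with $[X]=\Pb$ and $[X']=\Pb'$. Uniqueness in law gives $[X]=[X']$, hence $\Pb=\Pb'$.

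Combining both parts for every $\nu$ gives the equivalence with weak well-posedness as stated in Definition \ref{def:well_posed}. The only point needing care is measurability of the marginal flow induced by an arbitrary path measure, since that is what makes the standing assumption applicable; this is handled by the continuity of the canonical process as noted above. Everything else is direct bookkeeping through Proposition \ref{prop:equivalence_nonlinear_mp_weak_sol}.
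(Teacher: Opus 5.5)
Your proposal is correct and follows exactly the route the paper intends: the corollary is stated there as a direct consequence of Proposition \ref{prop:equivalence_nonlinear_mp_weak_sol}, applied for each fixed $\nu$. The global integrability assumption, together with the measurability (indeed weak continuity) of the marginal flow of any law on $(C_\infty,\Bc)$, guarantees the proposition's hypothesis for every path measure you need. The only slip is cosmetic. You have swapped the labels of the two directions: NL-MP $\Rightarrow$ MKV is the ``only if'' part of the proposition and MKV $\Rightarrow$ NL-MP the ``if'' part. This is harmless, since you use both directions.
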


\section{The non-linear Fokker Planck equation}\label{sec:nonlin_fp}

When McKean introduced McKean-Vlasov diffusions, in \cite{mckean1966class}, his original goal was to provide a stochastic representation of the solutions to non-linear PDEs arising from physical applications, such as Boltzmann and Vlasov equations, which can be written as non-linear Fokker-Planck Cauchy problems of the form
\begin{equation}\label{eq:nl_fp_cp}
\hspace{-0pt}{\blue\text{[NL-FP$({\bf b},{\boldsymbol\sigma};\nu)$]}}\qquad\qquad
\begin{cases}
 \partial_t {\boldsymbol\mu}_t = \text{div}_y \Big(\!- {\bf b}(t,y,\textcolor{black}{{\boldsymbol\mu}_t}) \,  {\boldsymbol\mu}_t  +\frac{1}{2}\text{\bf div}_y \big(    {\boldsymbol\sigma}{\boldsymbol\sigma}^\top(t,y,\textcolor{black}{{\boldsymbol\mu}_t}) \, {\boldsymbol\mu}_t \big)   \Big), \quad t> 0, \\
 {\boldsymbol\mu}_0 = \nu \in  \mathcal{M}(\Rd),
 \end{cases}
\end{equation}
where $\text{div}_y$ and $\text{\bf div}_y$ denote the standard and matrix divergence operators defined in \eqref{eq:div_stand} and \eqref{eq:div_matrix}.
The coefficients in \eqref{eq:nl_fp_cp} are measurable functions 
\begin{equation}\label{eq:b_sig_fp}
{\bf b}: [0,\infty)\times \Rd \times \mathcal{M}(\Rd)\to \R^d, \qquad {\boldsymbol\sigma}: [0,\infty)\times \Rd \times \mathcal{M}(\Rd)\to \mathcal{M}^{d\times q}.
\end{equation}
Here, $\mathcal{M}(\Rd)$ denotes the set of finite (non-negative) measures on $(\R^d,\Bc)$, equipped with the topology of weak convergence. 
\begin{framed}
\noindent {\bf(!)} 
Even though our main focus is the connection with MKV-SDE \eqref{eq:mkv_SDE}, in which the measure argument of ${\bf b}$ and ${\boldsymbol\sigma}$ is a probability measure, it is convenient to consider in \eqref{eq:nl_fp_cp} a measure argument which is, more generally, a non-negative finite measure. The reason is that, in some cases, the solutions to \eqref{eq:nl_fp_cp} may naturally be identified in a subspace of $\mathcal{M}(\Rd)$ that is not included in the set of probability measures, even if $\nu\in \mathcal{P}(\Rd)$. A posteriori, superposition results (see below) ensure that the solution ${\boldsymbol\mu}_t$ is indeed a probability measure. 
\end{framed}
\begin{notation}[Abuse !]
Given ${\bf b}$, ${\boldsymbol\sigma}$ as in \eqref{eq:b_sig_fp}, we denote by ${\bf b}$, ${\boldsymbol\sigma}$ also their restrictions to $[0,\infty)\times\Rd \times \Pc(\Rd)$. 
\end{notation}

In analogy with the connection between linear Fokker-Planck equations and standard SDEs, the link between non-linear Fokker-Planck equations and the McKean-Vlasov SDEs has a twofold direction: 
\begin{itemize}
\item[(i)] On the one hand, solutions to MKV$({\bf b},{\boldsymbol\sigma};\nu)$ are It\^o processes, and thus It\^o formula readily yields that their time marginals are distributional (hereafter \emph{weak}) solutions to NL-FP$({\bf b},{\boldsymbol\sigma};\nu)$. 
\item[(ii)] On the other hand, the Ambrosio-Figalli-Trevisan superposition principle (e.g. \cite{figalli2008existence} under the assumption of bounded coefficients, and \cite{trevisan2016well} under weaker conditions) shows that for any weak solution ${\boldsymbol\mu}$ to NL-FP$({\bf b},{\boldsymbol\sigma};\nu)$, there exists a solution to NL-MP$({\bf b},{\boldsymbol\sigma};\nu)$ whose time-marginals coincide with ${\boldsymbol\mu}_t$, and thus a solution to MKV$({\bf b},{\boldsymbol\sigma};\nu)$ with time marginals ${\boldsymbol\mu}_t$.
\end{itemize}
We start by defining the notion of weak solution to the non-linear Fokker-Planck equation.

\begin{framed}
\vspace{-10pt}
\begin{definition}[Weak solution to non-linear Fokker-Planck Cauchy problem]\label{def:sol_nl_fp}
Let $\nu\in \mathcal{M}(\Rd)$. A measurable flow ${\boldsymbol\mu}:[0,\infty)\to \mathcal{M}(\Rd)$ is a (weak) solution to NL-FP$({\bf b},{\boldsymbol\sigma};\nu)$ if it is a weak solution to FP$({\bf b}_{\boldsymbol\mu},{\boldsymbol\sigma}_{\boldsymbol\mu};0,\nu)$ (see Definition \ref{def:sol_lin_fp}), with ${\bf b}_{\boldsymbol\mu},{\boldsymbol\sigma}_{\boldsymbol\mu}$ as in \eqref{eq:def_frozen_coeff}.
%

${}$
\vspace{-8pt}
\end{definition}\end{framed}
\begin{remark}
In Definition \ref{def:sol_nl_fp}, it is implicit that the solution ${\boldsymbol\mu}$ is continuous (see Remark \ref{rem:sol_weak_fp}). 
\end{remark}


\begin{lemma}\label{lem:fokker_superpo}
Let $\nu\in\Pc(\Rd)$ and let ${\boldsymbol\mu}:[0,\infty)\to \mathcal{M}_{+}(\Rd)$ (with ${\boldsymbol\mu}_0 = \nu$) be such that
\begin{equation}\label{eq:cond_L1_lo}
\big[(t,x)\mapsto {\bf b}(t,x,{\boldsymbol\mu}_t)\big],  \ \big[ (t,x)\mapsto {\boldsymbol\sigma} {\boldsymbol\sigma}^\top(t,x,{\boldsymbol\mu}_t)\big] \in L^{1}_{\text{loc}}([0,\infty) \times \Rd , \overline{\boldsymbol\mu}),
\end{equation}
with $\overline{\boldsymbol\mu}$ as in Notation \ref{not:measure_dt_dx}.
Then:
\begin{itemize}
\item[(i)] If $X$ is a solution to MKV$({\bf b},{\boldsymbol\sigma};\nu)$, with $[X_t]={\boldsymbol\mu}_t $ for $t\geq0$, then ${\boldsymbol\mu}$ is a weak solution to NL-FP$({\bf b},{\boldsymbol\sigma};\nu)$.
\item[(ii)] [Superposition] Under the additional assumption that 
\begin{equation}\label{eq:cond_superpo}
\Big[ t \mapsto \int_{\Rd} \Big( |{\bf b}(t,x,{\boldsymbol\mu}_t)| + \| {\boldsymbol\sigma}{\boldsymbol\sigma}^\top(t,x,{\boldsymbol\mu}_t) \| \Big) {\boldsymbol\mu}_t (\dd x)\Big], \  \big[ t \mapsto {\boldsymbol\mu}_{t} (\Rd) \big]  \in L^{\infty}_{\text{loc}}([0,\infty) ,
\end{equation}
if ${\boldsymbol\mu}$ is a (weak) solution to NL-FP$({\bf b},{\boldsymbol\sigma};\nu)$, 
 then there exists a solution $X$ to MKV$({\bf b},{\boldsymbol\sigma};\nu)$ such that $ [X_t] = {\boldsymbol\mu}_t $.
\end{itemize}
\end{lemma}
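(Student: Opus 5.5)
The plan is to reduce both statements to their linear counterparts for the frozen equation, applied with the flow ${\boldsymbol\mu}$ held fixed. The appendix provides the linear theory: the It\^o-formula derivation of the Fokker--Planck equation and the Ambrosio--Figalli--Trevisan superposition principle. The key observation is that, once ${\boldsymbol\mu}$ is fixed, the frozen coefficients ${\bf b}_{\boldsymbol\mu}, {\boldsymbol\sigma}_{\boldsymbol\mu}$ are jointly Borel measurable (Remark \ref{rem:frozen_coeff}). The hypotheses \eqref{eq:cond_L1_lo} and \eqref{eq:cond_superpo} are then exactly the integrability hypotheses of the linear results, written for the coefficients ${\bf b}_{\boldsymbol\mu}$ and ${\boldsymbol\sigma}_{\boldsymbol\mu}{\boldsymbol\sigma}_{\boldsymbol\mu}^\top$ against the measure $\overline{\boldsymbol\mu}$.

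For (i), let $X$ solve MKV$({\bf b},{\boldsymbol\sigma};\nu)$ with $[X_t]={\boldsymbol\mu}_t$. By Definition \ref{def:sol_gen_MKV}, $X$ is a weak solution of SDE$({\bf b}_{\boldsymbol\mu},{\boldsymbol\sigma}_{\boldsymbol\mu};0,\nu)$. I then apply the linear result from the appendix stating that the marginals of an SDE solution solve the linear FP equation. If I have to argue it directly, I fix $\varphi\in C_0^\infty(\Rd)$ (or a test function in time--space, following Definition \ref{def:sol_lin_fp}) and apply It\^o's formula to $\varphi(X_t)$. The stochastic integral has a bounded integrand up to the localisation given by the support of $\varphi$. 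To pass to expectations I localise with exit times and use \eqref{eq:cond_L1_lo}: on the compact support of $\varphi$ it gives $\Eb\int_0^t |{\bf b}_{\boldsymbol\mu}(s,X_s)|\,\mathbf 1_{\mathrm{supp}\,\varphi}(X_s)\,ds<\infty$, and similarly for ${\boldsymbol\sigma}{\boldsymbol\sigma}^\top$. Fubini then turns the expectations into integrals against ${\boldsymbol\mu}_s$, which yields the weak formulation of FP$({\bf b}_{\boldsymbol\mu},{\boldsymbol\sigma}_{\boldsymbol\mu};0,\nu)$. By Definition \ref{def:sol_nl_fp}, this is precisely the statement that ${\boldsymbol\mu}$ solves NL-FP$({\bf b},{\boldsymbol\sigma};\nu)$.

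For (ii), suppose ${\boldsymbol\mu}$ weakly solves NL-FP$({\bf b},{\boldsymbol\sigma};\nu)$. By definition, it is then a weak solution of the linear problem FP$({\bf b}_{\boldsymbol\mu},{\boldsymbol\sigma}_{\boldsymbol\mu};0,\nu)$. I invoke the linear superposition principle of Trevisan type, stated in the appendix. Its assumptions are measurability of the coefficients, the integrability bound \eqref{eq:cond_superpo} (the global-in-space, locally-bounded-in-time bound of the coefficients against ${\boldsymbol\mu}_t$), and local boundedness of the mass. The principle yields a probability measure $\Pb$ on $(C_\infty,\Bc)$ that solves MP$({\bf b}_{\boldsymbol\mu},{\boldsymbol\sigma}_{\boldsymbol\mu};0,\nu)$ and satisfies $\Pb(x_t\in dy)={\boldsymbol\mu}_t$ for every $t$. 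One subtle point is that ${\boldsymbol\mu}_t$ is a priori only in $\mathcal M_+$. Testing the weak equation with functions approximating $1$, with cutoff error controlled by \eqref{eq:cond_superpo}, shows that the mass is conserved, so ${\boldsymbol\mu}_t(\Rd)=\nu(\Rd)=1$. This is consistent with the output of the superposition principle. Because the marginals of $\Pb$ are exactly ${\boldsymbol\mu}$, the measure $\Pb$ solves MP$({\bf b}_{\boldsymbol\mu},{\boldsymbol\sigma}_{\boldsymbol\mu};0,\nu)$ where the frozen flow equals its own marginal flow. In other words, $\Pb$ solves NL-MP$({\bf b},{\boldsymbol\sigma};\nu)$ (Definition \ref{def:non_lin_MP}). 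Proposition \ref{prop:equivalence_nonlinear_mp_weak_sol}, whose hypothesis \eqref{eq:prop_l1_loc} is \eqref{eq:cond_L1_lo}, then provides a weak solution $X$ of MKV$({\bf b},{\boldsymbol\sigma};\nu)$ with $[X]=\Pb$, and in particular $[X_t]={\boldsymbol\mu}_t$.

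The only genuinely deep ingredient is the linear superposition principle, which I take from the appendix and the cited literature. The main care point is therefore bookkeeping. One must check that the fixed-point consistency holds, namely that the flow used to freeze the coefficients coincides with the marginals of the constructed process; superposition gives this directly. One must also check that the integrability hypotheses transfer verbatim to the frozen coefficients, and that the probabilistic normalisation of ${\boldsymbol\mu}_t$ holds.
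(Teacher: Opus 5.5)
Your proposal is correct and follows the paper's route. You freeze the flow ${\boldsymbol\mu}$, observe that \eqref{eq:cond_L1_lo} and \eqref{eq:cond_superpo} give the hypotheses of the linear Proposition \ref{th:lin_FP} for ${\bf b}_{\boldsymbol\mu},{\boldsymbol\sigma}_{\boldsymbol\mu}$, and transfer its conclusions through the definitions of MKV and NL-FP solutions; your detour in (ii) through NL-MP and Proposition \ref{prop:equivalence_nonlinear_mp_weak_sol} simply unpacks Proposition \ref{th:lin_FP}-(ii) (Lemma \ref{lem:super_lin} plus Theorem \ref{th:equiv_MP_weak}), and your mass-conservation check is a harmless extra, since the superposition principle already produces probability marginals.
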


\begin{proof} Conditions \eqref{eq:cond_L1_lo} and \eqref{eq:cond_superpo} imply, respectively, conditions \eqref{eq:cond_L1} and \eqref{eq:cond_agg_super} in Proposition \ref{th:lin_FP}, with 
\begin{equation}
b(t,x):= {\bf b}(t,x,{\boldsymbol\mu}_t), \qquad \sigma(t,x):= {\boldsymbol\sigma}(t,x,{\boldsymbol\mu}_t).
\end{equation}
Thus the statement stems directly from Proposition \ref{th:lin_FP} and from the fact that $X$ and ${\boldsymbol\mu}$ are solutions to MKV$({\bf b},{\boldsymbol\sigma};\nu)$ and NL-FP$({\bf b},{\boldsymbol\sigma};\nu)$ if and only if they are solutions, respectively, to MKV$({\bf b}_{\boldsymbol\mu},{\boldsymbol\sigma}_{\boldsymbol\mu};0,\nu)$ and FP$({\bf b}_{\boldsymbol\mu},{\boldsymbol\sigma}_{\boldsymbol\mu};0,\nu)$, with ${\bf b}_{\boldsymbol\mu},{\boldsymbol\sigma}_{\boldsymbol\mu}$ as in \eqref{eq:def_frozen_coeff}.

%
\end{proof}

\begin{remark}[(!)]
Note that Lemma \ref{lem:fokker_superpo} is not enough to conclude that MKV$({\bf b},{\boldsymbol\sigma};\nu)$ admits a unique (in law) solution if and only if NL-FP$({\bf b},{\boldsymbol\sigma};\nu)$ admits a unique solution. In particular, uniqueness for MKV$({\bf b},{\boldsymbol\sigma};\nu)$ may fail even if it holds for NL-FP$({\bf b},{\boldsymbol\sigma};\nu)$. Indeed, Lemma \ref{lem:fokker_superpo} only implies that, under suitable assumptions on the coefficients, two solutions to MKV$({\bf b},{\boldsymbol\sigma};\nu)$ necessarily have the same time-marginals if the solution to NL-FP$({\bf b},{\boldsymbol\sigma};\nu)$ is unique. However, two solutions to 
a MKV SDE  may have the same time-marginals but different laws, if weak uniqueness does not hold for the linearized SDE.
\end{remark}

\begin{example}[Landau equation]
A notable case of non-linear PDE which can be represented as a Fokker-Planck equation is the 
Landau equation on $\R^3$, namely
\begin{equation}\label{eq:landau_kin}
\partial_t v(y) 
 = \text{div}_{y} \int_{\mathbb{R}^3} A_{\gamma}(y - w) \big( v(w) \nabla v(y) - v(y) \nabla v(w) \big) dw, 
\end{equation}
with $A_{\gamma}$ being the Coulomb collision potential
\begin{equation}
\mathcal{M}^{3\times 3}\ni A_{\gamma}(u) := |u|^{2+\gamma} \Big(  I_3 - \frac{ u \otimes u}{|u|^{2}}\Big),\qquad u\in\R^3,
\end{equation}
with $\gamma \in [-3,1]$. A direct computation shows that, formally, equation \eqref{eq:landau_kin} can be cast in the form \eqref{eq:nl_fp_cp} with 
\begin{equation}
{\bf b}(t, y,\nu) = \int_{\R^3} {\bf div}_y A_{\gamma}(y - z) \nu(d z), \quad  {\boldsymbol\sigma}(t, y,\nu) = \Big( \int_{\R^3}  A_{\gamma}(y - z) \nu(d z) \Big)^{\frac{1}{2}}.
\end{equation}

In the case $\gamma \in (-2,0]$, in \cite{fournier2016propagation}, the authors proved well-posedness for the Landau equation in Fokker-Planck form and pathwise well-posedness of the associated MKV SDE, together with some stability and regularity estimates, and also propagation of chaos for the associated particle system. Note that, in the regime $\gamma\in (-2,-1)$, the kernel $A_{\gamma}$ is only H\"older-continuous.


The most singular case, i.e. $\gamma = -3$, is arguably the most meaningful from the physical standpoint, as it represents Coulomb interactions between charged particles in a plasma. In this regime, the existence of global regular solutions to \eqref{eq:landau_kin}, together with Gaussian upper bounds, has been proved only recently in \cite{guillen2025landau}, solving a problem that had been standing for decades. 
\end{example}

\section{The mean-field particle system}\label{sec:part_sys}

In the previous section we presented the connection, studied by McKean in his 1966 seminal paper (\cite{mckean1966class}), between non-linear Fokker-Planck equations and the class of distribution-dependent SDEs in \eqref{eq:mkv_SDE}, which later became known as McKean-Vlasov equations. In his subsequent work (\cite{mckean1967propagation}),  McKean revealed the connection between \eqref{eq:mkv_SDE} and the particle system formally given by
\begin{equation}\label{eq:particle_sys}
\begin{cases}
dX^{i,N}_t = {\bf b}\big(t,X^{i,N}_t,\mu^{\text{emp}}_{ X_t^{N}}\big) dt + {\boldsymbol\sigma}\big(t,X^{i,N}_t,\mu^{\text{emp}}_{ X_t^{N}}\big) dW^i_t ,\qquad i=1,\dots, N \\
\mu^{\text{emp}}_{X_t^{N}}:= \frac{1}{N} \sum_{j=1}^N \delta_{X^{j,N}_t},
\end{cases}
\end{equation}
where $N\in\N$, the functions ${\bf b}$, ${\boldsymbol\sigma}$ are as in \eqref{eq:coefficients_mkv}, and $W^i$, $i=1,\dots, N$, are independent Brownian motions. Building on ideas that originated from Kac's pioneering work in kinetic theory (see \cite{kac1956foundations}), McKean realized that the MKV SDE \eqref{eq:mkv_SDE} can be understood as the limiting equation of \eqref{eq:particle_sys} as the number of particles $N$ tends to infinity. These types of results, known as \emph{propagation of chaos} (PoC), can be morally formulated as follows.

\begin{framed}\vspace{-0pt}
{\bf Propagation of Chaos: the philosophic statement.} If $X^{i,N}_0$, $i=1,\dots, N$, are i.i.d. with distribution $\nu$, as the number of particles $N\to \infty$,
\begin{itemize}
\item[(i)] the components $X^{i,N}$ (particles) are asymptotically independent,
\item[(ii)] and each component $X^{i,N}$ tends to $X$, the solution to the MKV$({\bf b}, {\boldsymbol\sigma};\nu)$,
\end{itemize}
or, somehow equivalently,
\begin{itemize}
\item[(iii)] the empirical law $\mu^{\text{emp}}_{X_t^{N}}$ tends to the law of $X_t$.
\end{itemize}
\vspace{-10pt}
\end{framed}

\begin{remark}
The equivalence between (i)-(ii) and (iii) can be morally understood as a Glivenko-Cantelli type result. Indeed, when ${\bf b}$, ${\boldsymbol\sigma}$ are independent of the measure variable, the random variables $X^{i,N}$, $i=1,\dots, N$, are independent and identically distributed for any finite $N$, and thus the convergence of the empirical law is ensured by Glivenko-Cantelli's theorem.
\end{remark}

Of course, the formal statement above has a number of ``problems" that need to be addressed, starting with the well-posedness of the particle system \eqref{eq:particle_sys}, and then continuing with the type of convergence considered in (ii), which could be in law or in a stronger sense, for instance in $L^p$. And then again, what type of convergence do we mean in (iii)? The empirical measure is a random probability, so its convergence in the weak topology can be itself weak (in the space of probability measures on probability measures) or stronger (e.g. in probability, in $L^p$, etc.). The next questions then involve the quantification of convergence rates, which become particularly relevant when combining PoC with a time-discretization method for the simulation of the particle system.

The answers to the questions above strongly depend on the underlying assumptions on the model, in particular on the coefficients ${\bf b}, {\boldsymbol\sigma}$. 
For this reason, contrary to Section \ref{sec:nonlin_fp}, where we have general theorems connecting MKV SDEs with non-linear FP PDEs under very mild assumptions on the coefficients, we do not have general PoC results without requiring any type of regularity assumption on the coefficients. As the focus of these notes is not on PoC, we refer to the classic reference \cite{Sznitman91} for a detailed introduction to this topic. We also refer to \cite{carmona2016lectures} for an overview of the standard results in the Lipschitz framework and to the recent, comprehensive, survey \cite{chaintron2022propagation,chaintron2022propagationb}.
 


\section{Density-dependent McKean-Vlasov SDEs}\label{sec:den_dep_mkv}

We discuss a relevant subclass of the MKV-SDEs described by \eqref{eq:mkv_SDE}, characterized by the following feature: the coefficients ${\bf b}$, ${\boldsymbol\sigma}$ depend on the pointwise evaluation of the density of $[X_t]$ at the point $X_t$. While, in principle, all the results of Sections \ref{sec:mkv_diffusions} and \ref{sec:nonlin_fp} do apply to this subclass, the following caveat must be taken into account: \vspace{-5pt}
\begin{framed}\noindent
{\bf(!)} The notions of solutions to the MKV-SDE, to the associated non-linear martingale problem, and to the associated non-linear Fokker-Planck equation should not depend on the specific version of the density of $[X_t]$ that one considers. 
\end{framed}\vspace{-5pt}
For this reason, it is convenient to introduce specific notations and definitions for this particular subclass, which is connected to a wide range of applications.

Consider the formal equation
\begin{equation}\label{eq:mkv_SDE_dens}
\hspace{-70pt}{\blue\text{[D-MKV$(b , \sigma)$]}}\qquad\qquad
\begin{cases}
dX_t  = b\big(t,X_t,v_t(X_t)\big) dt +\sigma\big(t,X_t, v_t(X_t)\big) dW_t, \\
[X_t] = v_t(x) \dd x,
\end{cases}
\end{equation}
with 
\begin{equation}
b: [0,\infty)\times \Rd \times \R\to \R^d,\qquad \sigma: [0,\infty)\times \Rd \times \R\to \mathcal{M}^{d\times q}
\end{equation}
being measurable functions. Namely, the function $v_t$ denotes the density of the time-marginal $[X_t]$, and the coefficients depend on the evaluation of $v_t$ at $X_t$. 

In order to cast \eqref{eq:mkv_SDE_dens} into the framework \eqref{eq:mkv_SDE}, we can define 
\begin{equation}
{\bf b}^*: [0,\infty)\times \Rd \times \mathcal{P}(\Rd)\to \R^d, \qquad {\boldsymbol\sigma}^*: [0,\infty)\times \Rd \times \mathcal{P}(\Rd)\to \mathcal{M}^{d\times q}
\end{equation}
as
\begin{equation}\label{eq:def_b_sig_dens}
\big[{\bf b}^*(t, x  , \nu) , {\boldsymbol\sigma}^*(t, x  , \nu)  \big] : = \begin{cases}
 \big[ b\big(t,x,v_{\nu}^* (x)\big), \sigma\big(t,x,v_{\nu}^* (x)\big) \big], & \text{if } \nu \in AC(\Rd), \\
 0  , & \text{if } \nu \notin AC(\Rd),
\end{cases}
\end{equation}
where 
\begin{equation}\label{eq:density_standard}
v_{\nu}^* (x) : = \begin{cases}
h(x), & \text{if } h(x)<\infty, \\
 0  , & \text{if } h(x)=\infty, 
\end{cases} 
\qquad \text{with}\quad
h(x):=\limsup_{r\downarrow0}\frac{\nu(B_r(x))}{|B_r(x)|}.
\end{equation}
Here $B_r(x)$ denotes the Euclidean ball of radius $r>0$, centered at $x$, and $|B_r(x)|$ denotes its Lebesgue measure. Notice that, if $ \nu \in AC(\Rd)$, then $v_{\nu}^{*}$ is a version of its density, i.e. 
\begin{equation}
\nu(H)  = \int_{H} v_{\nu}^{*}(x) \dd x, \qquad H\in\Bc.
\end{equation}
\begin{remark}[!]
The functions ${\bf b}^* , {\boldsymbol\sigma}^*$ defined by \eqref{eq:def_b_sig_dens} basically select one version of the density of $\nu$, when the latter is absolutely continuous with respect to the Lebesgue measure. Furthermore, they are measurable functions (we leave this as an exercise to the reader). Therefore, one would like to define a solution to D-MKV$(b , \sigma)$ as a solution to MKV$({\bf b}^* , {\boldsymbol\sigma}^*)$. However, one should make sure that this definition is independent of this specific choice of the density.
\end{remark}
Let $\tilde{\bf b} , \tilde{\boldsymbol\sigma}$ be defined as 
\begin{equation}\label{eq:def_b_sig_dens_gen}
\big[\tilde{\bf b}(t, x  , \nu) , \tilde{\boldsymbol\sigma}(t, x  , \nu)  \big] : = \begin{cases}
 \big[ b\big(t,x,\tilde v_{\nu} (x)\big), \sigma\big(t,x, \tilde v_{\nu} (x)\big) \big], & \text{if } \nu \in AC(\Rd), \\
 0  , & \text{if } \nu \notin AC(\Rd),
\end{cases}
\end{equation}
where $\tilde v_{\nu}$ is a chosen version of the density of $\nu$. 
\begin{remark}\label{rem:densities_ver}
Recall that $\tilde v_{\nu},v_{\nu}^* \in L^1(\Rd)$ and 
\begin{equation}\label{eq:v_eq_tildev_leb}
\tilde v_{\nu}=v_{\nu}^*\quad\text{Lebesgue-almost everywhere.}
\end{equation}
Therefore, recalling the definition of the frozen coefficients (see \eqref{eq:def_frozen_coeff}) for a fixed measurable flow ${\boldsymbol\nu}:[0,\infty)\to\Pc(\Rd)$, for any $t\in[0,\infty)$ we have one of the following cases: 
\begin{itemize}
\item If ${\boldsymbol\nu}_t \notin AC(\Rd)$, then 
\begin{equation}
\tilde{\bf b}_{\boldsymbol\nu}(t , x) = 0 ={\bf b}^*_{\boldsymbol\nu} (t , x ) , \quad    \tilde{\boldsymbol\sigma}_{\boldsymbol\nu}(t , x) = 0 ={\boldsymbol\sigma}^*_{\boldsymbol\nu} (t , x ), \qquad x\in\Rd.
\end{equation}
 \item If $\boldsymbol\nu_t \in AC(\Rd)$, then \eqref{eq:v_eq_tildev_leb} yields
 \begin{align}
\tilde{\bf b}_{\boldsymbol\nu}(t , \cdot) &= b\big(t , \cdot , \tilde v_{{\boldsymbol\nu}_t} ( t, \cdot)\big) =  b\big(t , \cdot ,  v_{{\boldsymbol\nu}_t}^* ( t, \cdot)\big) ={\bf b}^*_{\boldsymbol\nu} (t , \cdot ),\\
 \tilde{\boldsymbol\sigma}_{\boldsymbol\nu}(t , \cdot) &= \sigma\big(t , \cdot , \tilde v_{{\boldsymbol\nu}_t} ( t, \cdot)\big) =  \sigma\big(t , \cdot ,  v_{{\boldsymbol\nu}_t}^* ( t, \cdot)\big) ={\boldsymbol\sigma}^*_{\boldsymbol\nu} (t , \cdot ),
\end{align}
Lebesgue-almost everywhere on $\Rd$. 
\end{itemize}
\end{remark}
We have the following
\begin{lemma}[!]
A triple $( W, (\Fc_t)_t, X)$ is a (general) solution to MKV$({\bf b}^* , {\boldsymbol\sigma}^*)$ if and only if it is a (general) solution to MKV$(\tilde{\bf b} , \tilde{\boldsymbol\sigma})$ for any $\tilde{\bf b} , \tilde{\boldsymbol\sigma}$ as defined in \eqref{eq:def_b_sig_dens_gen}.
\end{lemma}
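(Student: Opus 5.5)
The plan is to reduce the claim to the frozen SDE and to observe that, along a solution, the time-marginals have densities, so the two families of frozen coefficients differ only on Lebesgue-null sets, which the process does not charge. Fix a triple $(W,(\Fc_t)_t,X)$ and set ${\boldsymbol\mu}_t=[X_t]$. By Definition \ref{def:sol_gen_MKV}, the triple solves MKV$({\bf b}^*,{\boldsymbol\sigma}^*)$ (resp. MKV$(\tilde{\bf b},\tilde{\boldsymbol\sigma})$) if and only if it solves the frozen SDE$({\bf b}^*_{\boldsymbol\mu},{\boldsymbol\sigma}^*_{\boldsymbol\mu};0)$ (resp. SDE$(\tilde{\bf b}_{\boldsymbol\mu},\tilde{\boldsymbol\sigma}_{\boldsymbol\mu};0)$). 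This is the same flow ${\boldsymbol\mu}$ in both cases, because it depends only on $X$. It therefore suffices to show that the integrals
\begin{equation}
\int_0^t {\bf b}^*_{\boldsymbol\mu}(s,X_s)\,ds,\qquad \int_0^t {\boldsymbol\sigma}^*_{\boldsymbol\mu}(s,X_s)\,dW_s
\end{equation}
are well defined and coincide almost surely with their tilde counterparts. The local integrability requirements in Definition \ref{def:sol_gen_SDE} then transfer as well.

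The key step is to show that ${\bf b}^*_{\boldsymbol\mu}(s,X_s)=\tilde{\bf b}_{\boldsymbol\mu}(s,X_s)$ and ${\boldsymbol\sigma}^*_{\boldsymbol\mu}(s,X_s)=\tilde{\boldsymbol\sigma}_{\boldsymbol\mu}(s,X_s)$ for $ds\otimes d\Pb$-almost every $(s,\omega)$. Consider the Borel set
\begin{equation}
N:=\{(s,x):\ {\bf b}^*_{\boldsymbol\mu}(s,x)\neq\tilde{\bf b}_{\boldsymbol\mu}(s,x)\ \text{or}\ {\boldsymbol\sigma}^*_{\boldsymbol\mu}(s,x)\neq\tilde{\boldsymbol\sigma}_{\boldsymbol\mu}(s,x)\}.
\end{equation}
It is jointly measurable by Remark \ref{rem:frozen_coeff}, provided $\tilde v$ is chosen so that $\tilde{\bf b},\tilde{\boldsymbol\sigma}$ are measurable, which is implicitly required for MKV$(\tilde{\bf b},\tilde{\boldsymbol\sigma})$ to make sense. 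By Fubini,
\begin{equation}
\Eb\int_0^\infty \mathbf 1_N(s,X_s)\,ds=\int_0^\infty {\boldsymbol\mu}_s(N_s)\,ds,
\end{equation}
where $N_s$ denotes the section of $N$ at time $s$. Now split according to Remark \ref{rem:densities_ver}. If ${\boldsymbol\mu}_s\notin AC(\Rd)$, both coefficients vanish identically, so $N_s=\emptyset$. If ${\boldsymbol\mu}_s\in AC(\Rd)$, then $N_s$ is Lebesgue-null by \eqref{eq:v_eq_tildev_leb}, and hence ${\boldsymbol\mu}_s(N_s)=0$ by absolute continuity. In either case the integral vanishes.

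Once this holds, the Lebesgue integrals agree pathwise almost surely. The stochastic integrals agree because their difference has quadratic variation $\int_0^t\|{\boldsymbol\sigma}^*_{\boldsymbol\mu}-\tilde{\boldsymbol\sigma}_{\boldsymbol\mu}\|^2(s,X_s)\,ds=0$ almost surely, by the It\^o isometry after localization. Thus the integrability conditions, the integral equation and the initial condition are identical for the two frozen SDEs, and the two directions follow symmetrically.

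The main, though mild, obstacle is the measurability of $N$ and the Fubini step. This is where we need the solution to be progressively measurable, so that $(s,\omega)\mapsto X_s(\omega)$ is jointly measurable. The second point to watch is the case distinction: it is essential that non-absolutely-continuous marginals produce the same (zero) coefficient in both definitions, since otherwise the null-set argument would be unavailable.
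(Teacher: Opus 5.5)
Your proof is correct and follows the paper's route. You reduce to the frozen SDE with the same flow ${\boldsymbol\mu}_t=[X_t]$, and you use Remark \ref{rem:densities_ver} to get agreement of the coefficients along $X$: they are zero off $AC(\Rd)$, and on $AC(\Rd)$ their discrepancy set is Lebesgue-null and hence ${\boldsymbol\mu}_t$-null. Your Fubini step, giving $ds\otimes d\Pb$-a.e. equality, supplies the justification the paper omits when it passes from ``the processes are modifications'' to ``the triple solves the other frozen SDE''.
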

\begin{proof}
Let $( W, (\Fc_t)_t, X)$ be a (general) solution to MKV$({\bf b}^* , {\boldsymbol\sigma}^*)$. We show that it is also a solution to  MKV$(\tilde{\bf b} , \tilde{\boldsymbol\sigma})$. The other way around is proved exactly in the same way. By Definition \ref{def:sol_gen_MKV}, $( W, (\Fc_t)_t, X)$ is a solution to SDE$({\bf b}^*_{\boldsymbol\mu} , {\boldsymbol\sigma}^*_{\boldsymbol\mu})$ with
\begin{equation}
{\boldsymbol\mu}_t  = [X_t], \qquad t\geq 0.
\end{equation}
Consider now the frozen coefficients ${\bf b}^*_{\boldsymbol\mu} , {\boldsymbol\sigma}^*_{\boldsymbol\mu}$ and $\tilde{\bf b}_{\boldsymbol\mu} , \tilde{\boldsymbol\sigma}_{\boldsymbol\mu}$. By Remark \ref{rem:densities_ver} with ${\boldsymbol\nu} = {\boldsymbol\mu}$, we have
\begin{equation}
\tilde{\bf b}_{\boldsymbol\mu}(t , X_t) = {\bf b}^*_{\boldsymbol\mu} (t , X_t ) , \quad    \tilde{\boldsymbol\sigma}_{\boldsymbol\mu}(t , X_t) = {\boldsymbol\sigma}^*_{\boldsymbol\mu} (t ,X_t ), \qquad \text{almost surely},
\end{equation}
for any $t\in[0,\infty)$. 
Thus the processes $[\tilde{\bf b}_{\boldsymbol\mu}(t , X_t), \tilde{\boldsymbol\sigma}_{\boldsymbol\mu}(t , X_t)]$ and $[{\bf b}^*_{\boldsymbol\mu}(t , X_t), {\boldsymbol\sigma}^*_{\boldsymbol\mu}(t , X_t)]$ are modifications. Therefore, $( W, (\Fc_t)_t, X)$ is also a  solution to SDE$(\tilde{\bf b}_{\boldsymbol\mu} , \tilde{\boldsymbol\sigma}_{\boldsymbol\mu})$, and thus a solution to MKV$(\tilde{\bf b}, \tilde{\boldsymbol\sigma})$.
\end{proof}

The previous lemma justifies the following definition.

\begin{framed}
\vspace{-10pt}
\begin{definition}[General solution to a density-dependent MKV-SDE]\label{def:sol_gen_MKV_den}
A (general) solution to D-MKV$(b,\sigma)$ is a triple $( W, (\Fc_t)_t, X)$ which is a solution to MKV$({\bf b}^*,{\boldsymbol\sigma}^*)$, with ${\bf b}^*,{\boldsymbol\sigma}^*$ given by \eqref{eq:def_b_sig_dens}-\eqref{eq:density_standard}, and such that
\begin{equation}
[X_t] \in AC(\Rd), \qquad t> 0. \vspace{-10pt}
\end{equation}
\end{definition}\end{framed}
We also give definitions of weak/pathwise solutions and weak/pathwise well-posedness, analogous to those of Section \ref{sec:mkv_diffusions}.

\begin{framed}
\vspace{-10pt}
\begin{definition}[Weak/pathwise solutions to a density-dependent MKV-SDE]\label{def:weak_path_sol_den_MKV}
The definition of weak (pathwise) solution to  D-MKV$(b, \sigma;\textcolor{black}{\nu})$ (to D-MKV$(b,\sigma;\textcolor{black}{ \xi, W, (\Fc_t)_t})$) is given as in Definition \ref{def:weak_sol_MKV} (Definition \ref{def:path_sol_MKV}), by replacing MKV$({\bf b},{\boldsymbol\sigma})$ with D-MKV$(b, \sigma)$. \vspace{-6pt}
\end{definition}
\end{framed}

\begin{framed}
\vspace{-10pt}
\begin{definition}[Weak/pathwise well-posedness for a density-dependent MKV-SDE]\label{def:defined_dens}
The definition of weak (pathwise) well-posedness for D-MKV$(b, \sigma)$ is given as in Definition \ref{def:well_posed}, by replacing MKV$({\bf b},{\boldsymbol\sigma})$, MKV$({\bf b},{\boldsymbol\sigma};\nu)$ and  MKV$({\bf b},{\boldsymbol\sigma};\textcolor{black}{\xi, W, (\Fc_t)_t})$, respectively, with D-MKV$(b , \sigma)$, D-MKV$(b , \sigma;\nu)$ and  D-MKV$(b , \sigma;\textcolor{black}{\xi, W, (\Fc_t)_t})$. \vspace{-6pt}
\end{definition}
\end{framed}


\begin{remark}[!]
Proposition \ref{prop:pathwise_weak}, Lemma \ref{lem:wat_yam_mkv} and Proposition \ref{prop:wea_strong_MKV} do hold, mutatis mutandis, also for D-MKV$(b , \sigma)$.
\end{remark}

We now want to define the non-linear martingale problem associated to the density-dependent MKV-SDE \eqref{eq:mkv_SDE_dens}. Similarly to how we defined solutions to D-MKV$(b , \sigma)$, we would like to define a solution to the density-dependent martingale problem with initial distribution $\nu$ as a solution to NL-MP$({\bf b}^* , {\boldsymbol\sigma}^*, \nu)$. Once more, we need to check that this definition is independent of the specific version of the density selected by ${\bf b}^* , {\boldsymbol\sigma}^*$.

\begin{lemma}
Let $\nu\in\Pc(\Rd)$. A probability measure $\Pb_{\nu}$, on $(C_T,\Bc)$, is a solution to NL-MP$({\bf b}^*,{\boldsymbol\sigma}^*;\nu)$ if and only if it is a solution to NL-MP$(\tilde{\bf b},\tilde{\boldsymbol\sigma};\nu)$ for any $\tilde{\bf b},\tilde{\boldsymbol\sigma}$ as defined in \eqref{eq:def_b_sig_dens_gen}.
\end{lemma}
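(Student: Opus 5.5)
The plan mirrors the proof of the previous lemma, replacing the It\^o-integral argument with the martingale-problem formulation; only one implication needs an argument, since the converse is symmetric. Let $\Pb_\nu$ solve NL-MP$({\bf b}^*,{\boldsymbol\sigma}^*;\nu)$ and set ${\boldsymbol\mu}_t:=\Pb_\nu(x_t\in \dd y)$. Because the constraint \eqref{eq:cond_non_lin_mp} involves only $\Pb_\nu$ and not the coefficients, the flow ${\boldsymbol\mu}$ is the same for both problems. It therefore suffices to show that $\Pb_\nu$ solves MP$(\tilde{\bf b}_{\boldsymbol\mu},\tilde{\boldsymbol\sigma}_{\boldsymbol\mu};0,\nu)$ whenever it solves MP$({\bf b}^*_{\boldsymbol\mu},{\boldsymbol\sigma}^*_{\boldsymbol\mu};0,\nu)$.

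By Definition \ref{def:MP_lin}, this reduces to two checks, for every $\varphi\in C_0^\infty(\Rd)$.
\begin{itemize}
\item The integrability conditions on the frozen coefficients transfer from one pair to the other. For this, $\int_0^t |\tilde{\bf b}_{\boldsymbol\mu}(s,x_s)|\,\dd s$ and the analogous quantity for $\tilde{\boldsymbol\sigma}\tilde{\boldsymbol\sigma}^\top$ should be finite $\Pb_\nu$-a.s.
\item The two processes
\begin{equation}
\varphi(x_t)-\varphi(x_0)-\int_0^t \mathcal{A}^{*}_s\varphi(x_s)\,\dd s \quad\text{and}\quad \varphi(x_t)-\varphi(x_0)-\int_0^t \tilde{\mathcal{A}}_s\varphi(x_s)\,\dd s
\end{equation}
are indistinguishable under $\Pb_\nu$. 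Here $\mathcal{A}^*_s$ and $\tilde{\mathcal{A}}_s$ denote the generators built from the respective frozen coefficients.
\end{itemize}

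Both checks follow from a single fact: the set
\begin{equation}
N:=\{(s,y): ({\bf b}^*_{\boldsymbol\mu},{\boldsymbol\sigma}^*_{\boldsymbol\mu})(s,y)\neq(\tilde{\bf b}_{\boldsymbol\mu},\tilde{\boldsymbol\sigma}_{\boldsymbol\mu})(s,y)\}
\end{equation}
satisfies $\Eb^{\Pb_\nu}\big[\int_0^\infty \mathds{1}_N(s,x_s)\,\dd s\big]=0$. To prove this, I would use Fubini, which rewrites the expectation as $\int_0^\infty{\boldsymbol\mu}_s(N_s)\,\dd s$, where $N_s$ is the $s$-section of $N$. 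Then I apply Remark \ref{rem:densities_ver} to each fixed $s$.
\begin{itemize}
\item If ${\boldsymbol\mu}_s\notin AC(\Rd)$, both pairs of frozen coefficients vanish identically, so $N_s=\emptyset$.
\item If ${\boldsymbol\mu}_s\in AC(\Rd)$, then $N_s$ is Lebesgue-null, and hence ${\boldsymbol\mu}_s$-null because ${\boldsymbol\mu}_s\ll \dd x$.
\end{itemize}
Consequently, $\Pb_\nu$-a.s., the paths $s\mapsto$ (coefficients evaluated at $x_s$) agree for Lebesgue-a.e.\ $s$. The time integrals in both checks then coincide almost surely, and the martingale property transfers.

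The main obstacle is measurability. Fubini requires $N$ to be jointly measurable in $(s,y)$, and this holds only if $(t,x)\mapsto\tilde{\bf b}_{\boldsymbol\mu}(t,x)$ is jointly measurable; that in turn requires the chosen versions $\tilde v_{{\boldsymbol\mu}_t}$ to depend measurably on $t$. My approach would be to assume, as is implicit in \eqref{eq:def_b_sig_dens_gen}, that $\tilde{\bf b},\tilde{\boldsymbol\sigma}$ are Borel functions. Remark \ref{rem:frozen_coeff} then gives joint measurability of the frozen coefficients, and the argument above goes through. Once this is in place, the remaining steps are routine.
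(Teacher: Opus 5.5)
Your proof is correct and follows the same route as the paper. You fix the flow ${\boldsymbol\mu}_t=\Pb_\nu(x_t\in \dd y)$, use Remark~\ref{rem:densities_ver} (both frozen pairs vanish when ${\boldsymbol\mu}_t\notin AC(\Rd)$, and agree Lebesgue-a.e., hence ${\boldsymbol\mu}_t$-a.e., when ${\boldsymbol\mu}_t\in AC(\Rd)$), and transfer the martingale property; your Fubini step, which upgrades the paper's ``for each $t$, $\Pb_\nu$-a.s.'' to ``$\Pb_\nu$-a.s., for a.e.\ $t$'', and your joint-measurability caveat just make explicit what the paper's ``this implies'' leaves implicit.
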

\begin{proof}
Let $\Pb_{\nu}$ be a solution to NL-MP$({\bf b}^*,{\boldsymbol\sigma}^*;\nu)$. We show that it is also a solution to NL-MP$(\tilde{\bf b},\tilde{\boldsymbol\sigma};\nu)$. The other way around is proved exactly in the same way. By Definition \ref{def:non_lin_MP}, $\Pb_{\nu}$ is a solution to MP$({\bf b}^*_{\boldsymbol\mu}, {\boldsymbol\sigma}^*_{\boldsymbol\mu};0,\nu)$, with 
\begin{equation}\label{eq:cond_non_lin_mp_den}
\boldsymbol\mu_t = \Pb_{\nu}(x_t \in \dd y), \qquad t\geq 0,
\end{equation}
where $(x_t)_{t\in[0,\infty)}$ denotes the canonical process.

Consider now the frozen coefficients ${\bf b}^*_{\boldsymbol\mu} , {\boldsymbol\sigma}^*_{\boldsymbol\mu}$ and $\tilde{\bf b}_{\boldsymbol\mu} , \tilde{\boldsymbol\sigma}_{\boldsymbol\mu}$. By Remark \ref{rem:densities_ver} with ${\boldsymbol\nu} = {\boldsymbol\mu}$, we have
\begin{equation}
\tilde{\bf b}_{\boldsymbol\mu}(t , X_t) = {\bf b}^*_{\boldsymbol\mu} (t , x_t ) , \quad    \tilde{\boldsymbol\sigma}_{\boldsymbol\mu}(t , x_t) = {\boldsymbol\sigma}^*_{\boldsymbol\mu} (t ,x_t ), \qquad \Pb_{\nu}\text{-almost surely},
\end{equation}
for any $t\in[0,\infty)$. This implies that $\Pb_{\nu}$ is also a solution to MP$(\tilde{\bf b}_{\boldsymbol\mu}, \tilde{\boldsymbol\sigma}_{\boldsymbol\mu};0,\nu)$, and thus a solution to NL-MP$(\tilde{\bf b},\tilde{\boldsymbol\sigma};\nu)$.
\end{proof}

The previous lemma justifies the following definition.

\begin{framed}
\vspace{-10pt}
\begin{definition}[density-dependent non-linear martingale problem]\label{def:non_lin_MP}
Let $\nu\in\Pc(\Rd)$. A probability measure $\Pb_{\nu}$, on $(C_T,\Bc)$, is a solution to D-NL-MP$(b,\sigma;\nu)$ if it is a solution to NL-MP$({\bf b}^*,{\boldsymbol\sigma}^*;\nu)$ such that
\begin{equation}
\Pb_{\nu}(x_t \in \dd y)\in AC(\Rd), \qquad t\geq 0 ,
\end{equation}
where $(x_t)_{t\in[0,\infty)}$ denotes the canonical process. \vspace{-6pt}
\end{definition}\end{framed}

We have the following

\begin{proposition}\label{prop:equivalence_nonlinear_mp_weak_sol_den}
Let $\nu\in\Pc(\Rd)$, and let $\Pb_{\nu}$ be a probability measure on $(C_{\infty}, \Bc )$, such that
\begin{equation}
 \Pb_{\nu}(x_t \in \dd y) = v_t(y) dy, 
 \qquad t> 0 ,
\end{equation}
and such that
\begin{equation}\label{eq:hyp_b_s_loc_bound}
\big[(t,x)\mapsto b\big(t,x,v_t( x)\big) v_t(x)\big], \ \big[ (t,x)\mapsto \sigma \sigma^\top \big(t,x,v_t( x)\big) v_t(x)\big] \in L^{1}_{\text{loc}}([0,\infty)\times\Rd).
\end{equation}
Then $\Pb_{\nu}$ is a solution to D-NL-MP$(b,\sigma;\nu)$ if and only if there exists a (weak) solution $X$ to D-MKV$(b,\sigma;\nu)$ such that $[X] =\Pb_{\nu} $.
\end{proposition}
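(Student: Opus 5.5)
The plan is to reduce the statement to Proposition \ref{prop:equivalence_nonlinear_mp_weak_sol}, applied with the coefficients ${\bf b}^*,{\boldsymbol\sigma}^*$ defined in \eqref{eq:def_b_sig_dens}. By Definition \ref{def:non_lin_MP} (density-dependent version), $\Pb_{\nu}$ solves D-NL-MP$(b,\sigma;\nu)$ if and only if two things hold: it solves NL-MP$({\bf b}^*,{\boldsymbol\sigma}^*;\nu)$, and its marginals are absolutely continuous. Similarly, by Definition \ref{def:sol_gen_MKV_den}, $X$ solves D-MKV$(b,\sigma;\nu)$ if and only if it solves MKV$({\bf b}^*,{\boldsymbol\sigma}^*;\nu)$ with $[X_t]\in AC(\Rd)$ for $t>0$. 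Both absolute-continuity requirements are read off the hypothesis $\Pb_\nu(x_t\in dy)=v_t(y)dy$, and $[X]=\Pb_\nu$ transfers it to $X$. So it suffices to show that NL-MP$({\bf b}^*,{\boldsymbol\sigma}^*;\nu)$ and MKV$({\bf b}^*,{\boldsymbol\sigma}^*;\nu)$ correspond through $[X]=\Pb_\nu$.

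The key step is to verify the integrability condition \eqref{eq:prop_l1_loc} for ${\bf b}^*_{\boldsymbol\mu}$ and ${\boldsymbol\sigma}^*{\boldsymbol\sigma}^{*\top}_{\boldsymbol\mu}$, where ${\boldsymbol\mu}_t=\Pb_\nu(x_t\in dy)$. For $t>0$ we have ${\boldsymbol\mu}_t=v_t\,dy$, with $v_t$ one version of the density and $v^*_{{\boldsymbol\mu}_t}$ the version selected in \eqref{eq:density_standard}. Since $v_t=v^*_{{\boldsymbol\mu}_t}$ Lebesgue-a.e., Remark \ref{rem:densities_ver} gives ${\bf b}^*(t,x,{\boldsymbol\mu}_t)=b(t,x,v_t(x))$ for a.e.\ $x$, and likewise for $\sigma$. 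For any compact $K\subset[0,\infty)\times\Rd$ this yields
\begin{equation}
\int_K |{\bf b}^*(t,x,{\boldsymbol\mu}_t)|\,\overline{\boldsymbol\mu}(dt,dx)=\int_K |b(t,x,v_t(x))|\,v_t(x)\,dx\,dt<\infty
\end{equation}
by \eqref{eq:hyp_b_s_loc_bound}, and the same computation handles the diffusion term. The slice $t=0$ has $dt$-measure zero in $\overline{\boldsymbol\mu}$, so it does not matter that ${\boldsymbol\mu}_0=\nu$ may fail to be absolutely continuous.

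One subtlety is that in the statement $v_t$ is an arbitrary version, and one should check that the product $b(t,x,v_t(x))v_t(x)$ does not depend on that choice. It does not, because changing versions only alters the product on a Lebesgue-null set. A further point to confirm is that $(t,x)\mapsto v_t(x)$ is jointly measurable, which the hypothesis \eqref{eq:hyp_b_s_loc_bound} implicitly requires; I expect this to be a minor technical point rather than a real obstacle. With \eqref{eq:prop_l1_loc} established, Proposition \ref{prop:equivalence_nonlinear_mp_weak_sol} gives the equivalence directly, and adding the absolute-continuity clauses from the first paragraph yields the claim in both directions.

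The only genuine work is the integrability transfer in the second paragraph; the rest is unwinding definitions.
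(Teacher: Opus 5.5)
Your proposal is correct and follows essentially the same route as the paper. Like the paper, you transfer \eqref{eq:hyp_b_s_loc_bound} into the $L^1_{\text{loc}}(\overline{\boldsymbol\mu})$ condition \eqref{eq:prop_l1_loc} for ${\bf b}^*,{\boldsymbol\sigma}^*$, apply Proposition \ref{prop:equivalence_nonlinear_mp_weak_sol}, and unwind the definitions of D-NL-MP and D-MKV. Your remarks on version-independence, the $t=0$ slice and joint measurability are more careful than the paper's one-line verification.

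One small point, which you share with the paper: the D-NL-MP definition as written requires absolute continuity of the marginals for $t\geq 0$, whereas D-MKV and the hypothesis only concern $t>0$. So the absolute-continuity clauses match only if that definition is read with $t>0$, which is presumably what is intended.
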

\begin{remark}\label{rem:huy_prop}
Assumption \eqref{eq:hyp_b_s_loc_bound} in Proposition \ref{prop:equivalence_nonlinear_mp_weak_sol_den} is independent of the specific version of the probability density $v_t$, as all the versions are equal Lebesgue almost everywhere on $\Rd$.
\end{remark}
\begin{proof}[Proof of Proposition \ref{prop:equivalence_nonlinear_mp_weak_sol_den}]
Let ${\boldsymbol\mu}:[0,\infty) \to \Pc(\Rd)$ be the measurable flow defined by 
\begin{equation}\label{eq:def_cont_flow_den_bis}
{\boldsymbol\mu}_t : = 
\begin{cases}
 \Pb_{\nu}(x_t \in \dd y),  & t>0,\\
\nu, & t=0 .
\end{cases}
\end{equation}
By 
\eqref{eq:hyp_b_s_loc_bound}, we have
\begin{equation}
[(t,x)\mapsto {\bf b}^*(t,x,{\boldsymbol\mu}_t)], \ [(t,x)\mapsto {\boldsymbol\sigma}^*  ({\boldsymbol\sigma}^*)^\top(t,x,{\boldsymbol\mu}_t)] \in L^{1}_{\text{loc}}([0,\infty)\times\Rd , \overline{\boldsymbol\mu} ),
\end{equation}
where ${\bf b}^*$, ${\boldsymbol\sigma}^*$ are defined as in \eqref{eq:def_b_sig_dens}-\eqref{eq:density_standard}, and with $\overline{\boldsymbol\mu}$ as in Notation \ref{not:measure_dt_dx}. 
Therefore, Proposition \ref{prop:equivalence_nonlinear_mp_weak_sol} yields that $\Pb_{\nu}$ is a solution to NL-MP$({\bf b}^*,{\boldsymbol\sigma}^*;\nu)$ if and only if there exists a (weak) solution $X$ to MKV$({\bf b}^*,{\boldsymbol\sigma}^*;\nu)$ such that $[X] =\Pb_{\nu} $. By Definitions \ref{def:weak_path_sol_den_MKV} and \ref{def:non_lin_MP}, this concludes the proof.
\end{proof}

%
%
The following result is an immediate consequence of Proposition \ref{prop:equivalence_nonlinear_mp_weak_sol_den}.
\begin{corollary}
Assume that 
\begin{equation}
\big[(t,x)\mapsto b(t,x,y)\big], \ \big[(t,x)\mapsto \sigma\sigma^\top (t,x,y)\big] \in L^{\infty}_{\text{loc}}([0,\infty)\times\Rd),
\end{equation}
uniformly w.r.t. $y\in \R$.
Then, D-MKV$(b,\sigma)$ is weakly well-posed if and only if D-NL-MP$(b,\sigma;\nu)$ admits a unique solution for any $\nu\in\Pc(\Rd)$.
\end{corollary}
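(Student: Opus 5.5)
The plan is to reduce the statement to Proposition \ref{prop:equivalence_nonlinear_mp_weak_sol_den}. The only thing to do is to check that its integrability hypothesis \eqref{eq:hyp_b_s_loc_bound} holds automatically, under the present uniform local boundedness assumption, for every probability measure that is a candidate solution of either problem.

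\emph{Step 1: the integrability condition is automatic.} Let $\Pb_{\nu}$ be any probability measure on $(C_\infty,\Bc)$ with $\Pb_{\nu}(x_t\in\dd y)=v_t(y)\dd y$ for $t>0$. This covers the law of any weak solution to D-MKV$(b,\sigma;\nu)$, by Definition \ref{def:sol_gen_MKV_den}, and any solution to D-NL-MP$(b,\sigma;\nu)$. Fix a compact set $[0,T]\times K\subset[0,\infty)\times\Rd$. By assumption there is $C_{T,K}$ such that $|b(t,x,y)|+\|\sigma\sigma^\top(t,x,y)\|\le C_{T,K}$ for all $(t,x)\in[0,T]\times K$ and all $y\in\R$. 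In particular this bound holds with $y=v_t(x)$, for whatever version of the density is chosen (cf. Remark \ref{rem:huy_prop}). Since $v_t\ge0$ and $\int v_t=1$, we get
\begin{equation}
\int_0^T\!\!\int_K \Big(|b(t,x,v_t(x))|+\|\sigma\sigma^\top(t,x,v_t(x))\|\Big)v_t(x)\,\dd x\,\dd t\le C_{T,K}\,T<\infty .
\end{equation}
Measurability of these maps is inherited from the measurability of $b,\sigma$ and of $(t,x)\mapsto v_t(x)$, where we take the version $v^*_{{\boldsymbol\mu}_t}$ selected in \eqref{eq:density_standard}. Hence \eqref{eq:hyp_b_s_loc_bound} holds.

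\emph{Step 2: transfer existence and uniqueness.} By Step 1 and Proposition \ref{prop:equivalence_nonlinear_mp_weak_sol_den}, for every $\nu\in\Pc(\Rd)$ the set of solutions to D-NL-MP$(b,\sigma;\nu)$ coincides with the set of laws $[X]$ of weak solutions $X$ to D-MKV$(b,\sigma;\nu)$. Existence for one problem is therefore equivalent to existence for the other. Uniqueness in law for D-MKV$(b,\sigma;\nu)$ means exactly that this set of laws has at most one element, which is uniqueness for D-NL-MP$(b,\sigma;\nu)$. Taking all $\nu$ gives the equivalence with weak well-posedness in the sense of Definition \ref{def:defined_dens}.

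\emph{Main obstacle.} The only delicate point is bookkeeping at $t=0$. Definition \ref{def:sol_gen_MKV_den} requires absolute continuity of $[X_t]$ only for $t>0$, whereas the martingale problem definition is phrased for $t\ge0$, and $\nu$ need not be absolutely continuous. I would handle this exactly as in the proof of Proposition \ref{prop:equivalence_nonlinear_mp_weak_sol_den}, by setting ${\boldsymbol\mu}_0=\nu$ as in \eqref{eq:def_cont_flow_den_bis}. The time slice $\{t=0\}$ has $\overline{\boldsymbol\mu}$-measure zero, so it does not affect the $L^1_{\rm loc}$ condition or the martingale problem. The two notions of solution are then read consistently, with absolute continuity required for $t>0$, and the two solution sets match.
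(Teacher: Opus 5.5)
Your proposal is correct and is essentially the paper's argument. The paper presents the corollary as an immediate consequence of Proposition \ref{prop:equivalence_nonlinear_mp_weak_sol_den}; your Step 1 (the uniform bound together with $v_t\ge 0$ and $\int v_t=1$ gives \eqref{eq:hyp_b_s_loc_bound} for every candidate law) and your Step 2 (matching the two solution sets for each $\nu$, with absolute continuity read consistently for $t>0$) spell out what the paper leaves implicit. One caveat: substituting $y=v_t(x)$ relies on reading the hypothesis, as you did, as a bound valid for all $y$ outside a $(t,x)$-null set that does not depend on $y$. A bound that holds only almost everywhere in $(t,x)$ for each fixed $y$ would not pass to the composition without some continuity in $y$, and the paper tacitly uses the same reading as you.
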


We now consider the Fokker-Planck Cauchy problem associated to the density-dependent MKV-SDE \eqref{eq:mkv_SDE_dens}, which reads as
\begin{equation}\label{eq:nl_fp_cp_den}
\hspace{-0pt}{\blue\text{[D-NL-FP$(b,\sigma;\nu)$]}}\qquad
\begin{cases}
 \partial_t v_t(y) = \text{div}_y \Big(- b\big(t,y, v_t(y)\big) \,  v_t(y)  +\frac{1}{2}\text{\bf div}_y \big(   \sigma \sigma^\top\big(t,y,v_t(y)\big) \, v_t(y)    \Big), \quad t> 0, \\
\lim_{t\to 0^+} v_t(y) dy = \nu \in \mathcal{M}(\Rd).
 \end{cases}
\end{equation}

\begin{framed}
\vspace{-10pt}
\begin{definition}[Weak solution to density-dependent non-linear Fokker-Planck Cauchy problem]\label{def:sol_nl_fp_den}
Let $\nu\in \mathcal{M}(\Rd)$. A 
function $v:(0,\infty)\to L^{1}(\Rd)$ is a (weak) solution to D-NL-FP$(b,\sigma;\nu)$ if it is a (weak) density solution to FP$(b_v,\sigma_v;\nu)$ (see Definition \ref{def:sol_lin_fp}) with
\begin{equation}
b_v(t,x):= b\big( t, x , v_t(x) \big), \quad \sigma_v(t,x):= \sigma\big( t, x , v_t(x) \big), \qquad (t,x)\in[0,\infty)\times\Rd.
\end{equation}

\vspace{-15pt}
\end{definition}\end{framed} 

Explicitly,  $v:(0,\infty)\to L^{1}(\Rd)$ is a (weak) solution to D-NL-FP$(b,\sigma;\nu)$ if
\begin{align}
\hspace{-6pt} \int_{\Rd} \varphi(y)& v_t(y) \,\dd y    = \int_{\Rd} \varphi(y) \nu (\dd y)\\ 
& + \int_0^{t} \int_{\Rd}  \Big[\big\langle b\big(s , y, v_s(y) \big) , \nabla \varphi(y)\big\rangle + \frac{1}{2} \text{tr}\big( \sigma\sigma^\top\big(s , y,v_s(y) \big) \nabla^2 \varphi(y) \big)   \Big]  v_s(y) \dd y \, \dd s, \quad t\geq 0,\\
\label{eq:def_sol_NL_FP_den}
\end{align}
for any $\varphi\in C^{\infty}_0(\Rd)$. 

\begin{remark}[!]
The definition above is well posed. Indeed, the validity of \eqref{eq:def_sol_NL_FP_den} (including the fact that all the integrals appearing therein are finite) is independent of the specific representative of $v_t \in L^1(\Rd)$. 
\end{remark}


In analogy with Section \ref{sec:nonlin_fp}, we can now establish the usual two-way link between the solutions to the non-linear Fokker-Planck equation \eqref{eq:nl_fp_cp_den} and those to the MKV SDE \eqref{eq:mkv_SDE_dens}. 

\begin{lemma}\label{lem:fokker_superpo_den}
Let $\nu\in\Pc(\Rd)$ and let $v:(0,\infty)\to L^{1}(\Rd)$ be a non-negative function
such that 
\begin{equation}\label{eq:hyp_b_s_loc_bound_bis}
\big[(t,x)\mapsto b\big(t,x,v_t( x)\big) v_t(x)\big], \ \big[ (t,x)\mapsto \sigma \sigma^*\big(t,x,v_t( x)\big) v_t(x) \big] \in L^{1}_{\text{loc}}([0,\infty)\times\Rd).
\end{equation}
Then:
\begin{itemize}
\item[(i)] If $X$ is a solution to MKV$(b,\sigma;\nu)$, with $ [X_t] = v_t (x) \dd x$ for any $t\geq0$, then $v$ is a weak solution to D-NL-FP$(b,\sigma;\nu)$.
\item[(ii)] [Superposition] Under the additional assumption that the functions 
\begin{equation}\label{eq:trevisan_cond_den}
\Big[ t \mapsto \int_{\Rd} \Big( \big|b\big(t,x,v_t(x)\big)\big| +\big\| \sigma \sigma^\top\big(t,x,v_t(x)\big) \big\| \Big)v_t(x) \, \dd x\Big], \ \Big[  t \mapsto  \int_{\Rd}v_t(x) \, \dd x\Big] \  \in L^{\infty}_{\text{loc}}([0,\infty),
\end{equation}
if $v$ is a (weak) solution to D-NL-FP$(b,\sigma;\nu)$, 
then there exists a solution $X$ to D-MKV$(b,\sigma;\nu)$ such that $ [X_t] =v_t(x) \dd x $.
\end{itemize}
\end{lemma}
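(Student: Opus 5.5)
The plan is to reduce the statement to Lemma \ref{lem:fokker_superpo}, applied to the coefficients ${\bf b}^*,{\boldsymbol\sigma}^*$ of \eqref{eq:def_b_sig_dens}--\eqref{eq:density_standard}. This parallels the proof of Proposition \ref{prop:equivalence_nonlinear_mp_weak_sol_den}, which reduced to Proposition \ref{prop:equivalence_nonlinear_mp_weak_sol}.

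The first step is to define the measurable flow ${\boldsymbol\mu}_t := v_t(x)\,\dd x$ for $t>0$ and ${\boldsymbol\mu}_0:=\nu$. For each $t>0$ the measure ${\boldsymbol\mu}_t$ belongs to $AC(\Rd)$. Hence $v^*_{{\boldsymbol\mu}_t}=v_t$ Lebesgue-a.e., and Remark \ref{rem:densities_ver} gives
\begin{equation}
{\bf b}^*(t,x,{\boldsymbol\mu}_t)=b\big(t,x,v_t(x)\big),\qquad {\boldsymbol\sigma}^*({\boldsymbol\sigma}^*)^\top(t,x,{\boldsymbol\mu}_t)=\sigma\sigma^\top\big(t,x,v_t(x)\big)
\end{equation}
for $\dd x$-a.e. $x$, and therefore $\overline{\boldsymbol\mu}$-a.e. on $(0,\infty)\times\Rd$. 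The time slice $\{0\}$ is $\dd t$-null, so it does not matter for $\overline{\boldsymbol\mu}=\dd t\otimes{\boldsymbol\mu}_t$; I would check this against Notation \ref{not:measure_dt_dx}. Integrals with respect to $\overline{\boldsymbol\mu}$ are then integrals against $v_t(x)\,\dd x\,\dd t$. Consequently \eqref{eq:hyp_b_s_loc_bound_bis} translates exactly into \eqref{eq:cond_L1_lo}, and \eqref{eq:trevisan_cond_den} translates into \eqref{eq:cond_superpo}. Comparing the explicit identity \eqref{eq:def_sol_NL_FP_den} with Definition \ref{def:sol_nl_fp}, which is in turn defined through Definition \ref{def:sol_lin_fp}, shows that $v$ is a weak solution to D-NL-FP$(b,\sigma;\nu)$ if and only if ${\boldsymbol\mu}$ is a weak solution to NL-FP$({\bf b}^*,{\boldsymbol\sigma}^*;\nu)$. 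This holds because the integrands agree a.e., and the identification of the initial condition is the same in both formulations.

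For part (i), I read the hypothesis as saying that $X$ solves D-MKV$(b,\sigma;\nu)$, that is, MKV$({\bf b}^*,{\boldsymbol\sigma}^*;\nu)$ with $[X_t]={\boldsymbol\mu}_t$. Lemma \ref{lem:fokker_superpo}(i) then shows that ${\boldsymbol\mu}$ solves NL-FP$({\bf b}^*,{\boldsymbol\sigma}^*;\nu)$, and by the equivalence above $v$ solves D-NL-FP$(b,\sigma;\nu)$. For part (ii), the equivalence turns $v$ into a solution ${\boldsymbol\mu}$ of NL-FP$({\bf b}^*,{\boldsymbol\sigma}^*;\nu)$, and Lemma \ref{lem:fokker_superpo}(ii) produces a solution $X$ to MKV$({\bf b}^*,{\boldsymbol\sigma}^*;\nu)$ with $[X_t]={\boldsymbol\mu}_t=v_t\,\dd x$. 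Since $[X_t]\in AC(\Rd)$ for every $t>0$, Definition \ref{def:sol_gen_MKV_den} is satisfied and $X$ solves D-MKV$(b,\sigma;\nu)$.

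The main point requiring care is the translation step. It must hold $\overline{\boldsymbol\mu}$-a.e., independently of the version of $v_t$, and it must correctly handle $t=0$, where ${\boldsymbol\mu}_0=\nu$ may fail to be absolutely continuous (in that case ${\bf b}^*=0$ there). Both issues are harmless because the slice $\{t=0\}$ has $\dd t$-measure zero. A second point to check is that Lemma \ref{lem:fokker_superpo} is applied with a flow that is merely measurable. Continuity of ${\boldsymbol\mu}$ at $0$ is part of the meaning of the weak formulation \eqref{eq:def_sol_NL_FP_den}, and it is exactly the requirement $\lim_{t\to0^+}v_t\,\dd y=\nu$.
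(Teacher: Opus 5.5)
Your proposal is correct and follows the paper's proof essentially verbatim. You set ${\boldsymbol\mu}_t=v_t\,\dd x$ for $t>0$ and ${\boldsymbol\mu}_0=\nu$, translate \eqref{eq:hyp_b_s_loc_bound_bis} and \eqref{eq:trevisan_cond_den} into \eqref{eq:cond_L1_lo} and \eqref{eq:cond_superpo} for ${\bf b}^*,{\boldsymbol\sigma}^*$, note that $v$ solves D-NL-FP$(b,\sigma;\nu)$ if and only if ${\boldsymbol\mu}$ solves NL-FP$({\bf b}^*,{\boldsymbol\sigma}^*;\nu)$, and then invoke Lemma \ref{lem:fokker_superpo}. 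The one point the paper states explicitly and you leave implicit is that ${\bf b}^*,{\boldsymbol\sigma}^*$ must be extended to arguments in $\mathcal{M}(\Rd)$ via the same formulas \eqref{eq:def_b_sig_dens}--\eqref{eq:density_standard}, because ${\boldsymbol\mu}_t=v_t\,\dd x$ is a priori only a non-negative finite measure rather than a probability.
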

\begin{proof}
Let the functions
\begin{equation}
{\bf b}^*: [0,\infty)\times \Rd \times \mathcal{M}(\Rd)\to \R^d, \qquad {\boldsymbol\sigma}^*: [0,\infty)\times \Rd \times \mathcal{M}(\Rd)\to \mathcal{M}^{d\times q}
\end{equation}
be defined as in \eqref{eq:def_b_sig_dens}-\eqref{eq:density_standard} (the definition was given for a measure argument in $\Pc(\Rd)$ but it naturally extends to a measure argument in $ \mathcal{M}(\Rd)$), and let ${\boldsymbol\mu}: [0,\infty) \to  \mathcal{M}_{+}(\Rd)$ be the measurable flow defined by 
\begin{equation}\label{eq:def_cont_flow_den}
{\boldsymbol\mu}_t : = 
\begin{cases}
v_t( x) dx,  & t>0,\\
\nu, & t=0 .
\end{cases}
\end{equation}


Conditions \eqref{eq:hyp_b_s_loc_bound_bis} and \eqref{eq:trevisan_cond_den} imply, respectively, \eqref{eq:cond_L1_lo} and \eqref{eq:cond_superpo} for ${\bf b}^*$ and ${\boldsymbol\sigma}^*$.
%
%
Finally, it is straightforward to observe that $v$ is a (weak) solution to D-NL-FP$(b,\sigma;\nu)$ if and only if ${\boldsymbol\mu}$ is a (weak) solution to NL-FP$({\bf b}^*,{\boldsymbol\sigma}^*;\nu)$ (see Definition \ref{def:sol_nl_fp}). Therefore, the statement stems directly from Lemma \ref{lem:fokker_superpo}.

\end{proof}

\begin{example}[Burgers' equation] Consider, for $\eps>0$, the non-linear PDE
\begin{equation}
\partial_t v_t (y)  = \eps\, \partial_{yy} v_t (y) - \textcolor{black}{v}_t (y)\, \partial_y v_t (y),
\end{equation}
or, in the Fokker-Planck form,
\begin{equation}\label{eq:burgers}
\partial_t v_t (y)  = \eps\, \partial_{yy} v_t (y) - \frac{1}{2}  \partial_y v^2_t (y) .
\end{equation}
This famous equation, known as the Burgers equation, arises in many applications, including fluid and gas dynamics. It describes the evolution of a velocity field subject to acceleration induced by viscosity. 
It can be cast into the form \eqref{eq:nl_fp_cp_den} by letting 
\begin{equation}
b(t,y,z) =\frac{z}{2} , \qquad \sigma(t, y , z)\equiv \sqrt{2 \eps} , \qquad (t,y,z)\in [0,\infty)\times \R \times \R.
\end{equation}
Note that, under mild integrability assumptions on the initial data, the Cole-Hopf transformation (\cite{Hopf1950,cole1951quasi}) yields an explicit solution in terms of the solution to the heat equation, i.e. 
\begin{equation}
v_t(y):= -2 \eps \,\partial_y\log\phi_t(y), \qquad t>0,
\end{equation}
solves \eqref{eq:burgers} pointwise on $\R$ whenever $\phi$ is a positive solution to the heat-equation
\begin{equation}
\partial_t\phi(y)=\eps\partial_{yy}\phi_t(y), \qquad t>0.
\end{equation}

Also, in \cite{bossy1996convergence} it was shown that the space derivative of $v$, i.e. ${\boldsymbol\mu}_t = \partial_y v_t(y)$, is a more amenable object than $v$. Indeed, a simple formal computation shows that ${\boldsymbol\mu}$ satisfies the Fokker-Planck equation \eqref{eq:nl_fp_cp} with 
\begin{equation}
 {\bf b}(t,y, \nu) = \int_{-\infty}^y \nu(dz), \qquad {\boldsymbol\sigma}(t,y, \nu)  \equiv \sqrt{2 \eps}.
\end{equation}
Therefore, ${\boldsymbol\mu}$ is associated to a more regular MKV SDE than $v$. Indeed, with respect to the measure variable, the cumulative distribution function enjoys substantially better regularity than the pointwise evaluation of the density. 
\end{example}



\chapter{Density-dependent MKV SDEs with degenerate noise}

In this chapter we analyze a class of density-dependent MKV SDEs with vanishing noise along some directions. Due to the pointwise evaluation of the density, the Lipschitz theory (see \cite{carmona2016lectures}, \cite{carmona2018probabilistic}, \cite{gobet2016monte} among others) does not apply to this class of equations. In order to perform the analysis, one must then rely on the regularizing properties of the corresponding PDEs. Therefore, a suitable H\"ormander condition on the drift is necessary in order to restore the noise in all the directions and define suitable regularizing semigroups. 

\section{Kinetic-type McKean-Vlasov models}\label{sec:kin_mkv}

We fix $d_0,d\in\N$ with $d_0\leq d$, and consider degenerate McKean-Vlasov SDEs of the form
\begin{equation}\label{eq:mkv_degenerate}
\dd X_t = \bigg[  B X_t + \left( 
\begin{matrix}
{\bf b}_0(t,X_t,[X_t]) \\
{\bf 0}_{(d - d_0) \times q}
\end{matrix}
\right)   \bigg] \dd t   +\left( 
\begin{matrix}
 {\boldsymbol\sigma}_0 (t,X_t,[X_t]) \\
{\bf 0}_{(d - d_0) \times q}
\end{matrix}
\right)  \,  \dd W_t    ,
\end{equation}
where the coefficients  
\begin{equation}
{\bf b}_0: [0,\infty)\times \Rd \times \mathcal{P}(\Rd)\to \R^{d_0}, \qquad {\boldsymbol\sigma}_0: [0,\infty)\times \Rd \times \mathcal{P}(\Rd)\to \mathcal{M}^{d_0\times q}
\end{equation}
are measurable functions, and
\begin{equation}\label{eq:def_B0_B1}
B:=  \left( 
\begin{matrix}
B_0 \\
B_1
\end{matrix}
\right) \in \mathcal{M}^{d\times d},\qquad \text{with}\quad B_0 \in \mathcal{M}^{d_0\times d} , \ B_1 \in \mathcal{M}^{(d - d_0)\times d} .
\end{equation}
To be clear, the solution to \eqref{eq:mkv_degenerate} is a process $X
$ with values in $\R^d \cong \R^{d_0} \times   \R^{d - d_0} $, but the diffusion directly acts only on the first $d_0$ components.

\begin{example}[!]\label{ex:kinetic} (Kinetic setting) Let $d = 2 d_0$ and 
\begin{equation}\label{eq:B1_kin}
B_1 =  \left( 
\begin{matrix}
I_{d_0}  &
{\bf 0}_{d_0 \times d_0}
\end{matrix}
\right).
\end{equation}
The MKV-SDE \eqref{eq:mkv_degenerate} reduces to the \emph{kinetic} MKV-SDE
\begin{equation}\label{eq:mkv_degenerate_kin}
\begin{cases}
\dd X^0_t =  \Big( {\bf b}_0(t,X_t,[X_t]) + B_0 X_t \Big)  \dd t   +  {\boldsymbol\sigma}_0 (t,X_t,[X_t])\,   \dd W_t       \\
\dd X^1_t =  X^0_t \dd t \\
\end{cases},
\end{equation}
where the components of the pair $X_t = (X^0_t , X^1_t) \in \R^{d_0} \times \R^{d_0}$ can be understood as the {\em velocity} and {\em position} of a particle subject to random acceleration. If 
\begin{equation}
{\bf b}_0 \equiv 0 , \quad {\boldsymbol\sigma}_0 \equiv \sigma>0,
\end{equation}
and 
\begin{equation}
B_0 = \alpha  \left( 
\begin{matrix}
I_{d_0}  & {\bf 0}_{d_0}
\end{matrix}
\right),
\end{equation}
then \eqref{eq:mkv_degenerate_kin} reduces to the classical Langevin model in \cite{langevin1908theorie}, which serves as a pilot example of more complex kinetic models (see \cite{imbert2021schauderSilvestre}, \cite{imbert2021schauder}). Recently, kinetic McKean-Vlasov models
have attracted the interest of several authors (e.g. \cite{bossy,hao2024singular,veretennikov2023weak,zhang2021second,hao2026second,pascucci2026existence} among others). 
\end{example}

In light of the previous example, we will refer to degenerate MKV SDEs in the form of \eqref{eq:mkv_degenerate} as \emph{kinetic-type MKV-SDEs}.


A particular subclass of \eqref{eq:mkv_degenerate} is given by density-dependent MKV-SDEs of the form
\begin{equation}\label{eq:mkv_degenerate_den}
\hspace{-8pt}{\blue\text{[Kin-D-MKV$(B,b_0,\sigma_0)$]}}\qquad\begin{cases}
\dd X_t = \bigg[  B X_t +\Bigg( 
\begin{matrix}
b_0\big(t,X_t,v_t(X_t)\big)   \\
{\bf 0}_{(d - d_0) \times q}
\end{matrix}
\Bigg)   \bigg] \dd t   +\Bigg( 
\begin{matrix}
\sigma_0 \big(t,X_t,v_t(X_t)\big)  \\
{\bf 0}_{(d - d_0) \times q}
\end{matrix}
\Bigg)  \,  \dd W_t       \\
[X_t] = v_t(x) \dd x
\end{cases},
\end{equation}
with 
\begin{equation}\label{eq:b_0_and_sigma_0}
b_0: [0,\infty)\times \Rd \times \R\to \R^{d_0},\qquad \sigma_0: [0,\infty)\times \Rd \times \R\to \mathcal{M}^{d_0\times q}
\end{equation}
being measurable functions. Equation \eqref{eq:mkv_degenerate_den} can be written in the form of \eqref{eq:mkv_SDE_dens} by setting
\begin{equation}\label{eq:sig_b_deg_den}
b(t,x, z) = B x + \left( 
\begin{matrix}
b_0(t,x , z) \\
{\bf 0}_{(d - d_0) \times 1}
\end{matrix}
\right), \qquad 
\sigma(t,x, z)=  \left( 
\begin{matrix}
\sigma_0(t,x , z)  \\
{\bf 0}_{(d - d_0) \times q}
\end{matrix}
\right).
\end{equation}
Thus we give the following
\begin{framed}
\vspace{-10pt}
\begin{definition}\label{def:sol_kinetic_density_MKV}
We will call 
weak (or pathwise) solution to Kin-D-MKV$(B,b_0,\sigma_0;\dots)$ any weak (or pathwise) solution to D-MKV$(b,\sigma;\dots)$, according to Definition \ref{def:weak_path_sol_den_MKV}, with $b$, $\sigma$ as in \eqref{eq:sig_b_deg_den}. Analogously, we will say that Kin-D-MKV$(B,b_0,\sigma_0)$ is weakly (or pathwise) well-posed if D-MKV$(b,\sigma)$ is so, according to Definition \ref{def:defined_dens}.\vspace{-8pt}
\end{definition}
\end{framed}

Recall from Section \ref{sec:den_dep_mkv} that \eqref{eq:sig_b_deg_den} can be understood as an instance of \eqref{eq:mkv_degenerate}, and thus of \eqref{eq:mkv_SDE}, with irregular coefficients, by letting 
\begin{equation}\label{eq:def_b_sig_dens_gen_kol}
\big[{\bf b}_0(t, x  , \nu) , {\boldsymbol\sigma}_0(t, x  , \nu)  \big] : = \begin{cases}
 \big[ b_0\big(t,x, v_{\nu} (x)\big), \sigma_0\big(t,x,  v_{\nu} (x)\big) \big], & \text{if } \nu \in AC(\Rd), \\
 0  , & \text{if } \nu \notin AC(\Rd),
\end{cases}
\end{equation}
where $ v_{\nu}$ is a chosen version of the density of $\nu$. 
\begin{remark}[!]
Even if $b$ and $\sigma$ are smooth, the functions ${\bf b}_0$ and ${\boldsymbol\sigma}_0$ in \eqref{eq:def_b_sig_dens_gen_kol} are discontinuous in the measure argument with respect to the weak topology, even when restricted to $AC(\Rd)$.
\end{remark}

\begin{framed}
\noindent {\bf(!)} When the coefficients ${\bf b}_0$, ${\boldsymbol\sigma}_0$ lack the regularity requested in the Lipschitz framework, it is necessary to rely on the regularizing properties of the underlying PDEs, which typically require some non-degeneracy conditions on the diffusion coefficient. 
\end{framed}

Although the diffusion coefficient in \eqref{eq:mkv_degenerate} (and in \eqref{eq:mkv_degenerate_den})  is fully degenerate, we will work under a H\"ormander-type condition which ensures (actually means) that the semigroup associated to the standard SDE obtained by setting ${\boldsymbol\sigma}_0 \equiv I_{d_0\times d_0}$ (thus $q=d_0$) and ${\bf b}_0\equiv 0$ in \eqref{eq:mkv_degenerate} is smooth. The following assumption is in force throughout this chapter. 

\begin{assumption}[Weak H\"ormander condition]\label{assump:hor}
The vector fields $\partial_{x_1},\dots,\partial_{x_{d_0}}$ and 
\begin{equation}\label{eq:def_Y}
Y = Y_{s,x}: = \partial_s + \langle B x , \nabla_x \rangle, \qquad x \in \Rd,
\end{equation}
satisfy 
\begin{equation}\label{horcon}
\text{rank } \text{Lie}(\partial_{x_1},\dots,\partial_{x_{d_0}},Y)  = d+1.
\end{equation}
\end{assumption}

Condition \eqref{horcon} means that the vector fields $\partial_{x_1},\dots,\partial_{x_{d_0}}, Y$ and their iterated Lie brackets, i.e.
\begin{equation}
[\partial_{x_i}, Y] , \ [\partial_{x_{i_1}},[ \partial_{x_{i_2}}, Y]],\ \dots,   [\partial_{x_{i_1}},[ \partial_{x_{i_2}}, \dots , [ \partial_{x_{i_k}} ,  Y] \dots ]] , \ \dots, \quad \text{with } i, i_1, \dots , i_k , \ldots \in \{ 1, \dots, d_0  \},
\end{equation}
span the entire tangent space $\R^{d+1}$ at every point $x\in\Rd$.

\begin{example}[!]\label{ex:kinet_hor} Consider once more the kinetic setting in Example \ref{ex:kinetic}. The general result in Theorem \ref{th:hor} below shows that the H\"ormander condition \eqref{horcon} is satisfied, because $B_1$ as in \eqref{eq:B1_kin} has full rank. In the particular case when $B_0 = {\bf 0}_{d_0 \times 2 d_0}$,  
the vector field in \eqref{eq:def_Y} reads as
\begin{equation}
Y = \partial_s + \sum_{i=0}^{d_0} x_i \partial_{x_{i+d_0}},
\end{equation}
and it is straightforward to verify that
\begin{equation}
[\partial_{x_i} , Y ] = \partial_{x_{i+d_0}} , \qquad i = 1,\cdots, d_0,
\end{equation}
which yields \eqref{horcon}.
\end{example}

\section{Kinetic-type semigroups}\label{sec:kinet_semigroups}

Consider the kinetic-type diffusion obtained from \eqref{eq:mkv_degenerate} by letting ${\bf b}_0\equiv 0$ and 
\begin{equation}
 {\boldsymbol\sigma}_0 (t,X_t,[X_t]) \equiv \sigma I_{d_0\times d_0}, \qquad \sigma >0.
\end{equation}
In particular, $W$ is a formal $d_0$-dimensional Brownian motion and \eqref{eq:mkv_degenerate} reduces to
\begin{equation}\label{eq:linear_SDE}
\dd X_t = B X_t\, \dd t   + \bigg( 
\begin{matrix}
\sigma I_{d_0\times d_0} \\ 
{\bf 0}_{(d - d_0)  \times d_0}
\end{matrix}\bigg)   \, \dd W_t      ,
\end{equation}
where the matrix $B$ is as in \eqref{eq:def_B0_B1}.

Note that the equation above has linear drift and additive noise. Therefore, it fits the usual Lipschitz framework and pathwise well-posedness is ensured. Denoting by $X^{s,x}=X$ the unique solution starting at time $s$ from $x$, It\^o formula yields 
\begin{equation}
X^{s,x}_t = X_t = e^{(t-s) B } \bigg( x + \sigma \int_s^t  e^{- (r-s) B }\bigg( 
\begin{matrix}
I_{d_0\times d_0} \\
{\bf 0}_{(d - d_0)  \times d_0}
\end{matrix}\bigg) dW_r  \bigg), \qquad t\geq s.
\end{equation}
In particular, 
$X^{s,x}_t$ 
 is normally distributed. Precisely, 
\begin{equation}\label{eq:conditional_law_kol}
X^{s,x}_t \sim \mathcal{N}\big( e^{(t-s)B} x , \mathcal{C}(t-s)\big), \qquad 0\leq s \leq t, \ x\in\Rd,
\end{equation}
where the covariance matrix $\mathcal{C}$ is given by
\begin{equation}\label{eq:cov_C}
\mathcal{C}(r) = \sigma^2 \int_0^r  e^{\tau B } \bigg( 
\begin{matrix}
I_{d_0\times d_0} & {\bf 0}_{d_0 \times (d - d_0) }  \\
{\bf 0}_{(d - d_0)  \times d_0} & {\bf 0}_{(d - d_0)  \times (d - d_0) }
\end{matrix}\bigg)  e^{\tau B^\top } d\tau      .
\end{equation}
Also note that the generator of $X$ is the backward Kolmogorov operator
\begin{equation}\label{eq:kol}
\Kcc_{\sigma}=\Kcc 
:= Y_{s,x} + \sigma^2 \sum_{i = 1}^{d_0} \partial^2_{x_i} =: Y_{s,x} + \sigma^2 \Delta_{x,d_0},\qquad x\in\Rd,
\end{equation}
where $Y$ is the vector field defined in \eqref{eq:def_Y}.

The following result is a particular case of the well-known H\"ormander Theorem (\cite{hoermander}). In general, H\"ormander's result holds by replacing, in the condition \eqref{horcon} and in $\Kcc$, the vector fields $\partial_{x_1},\dots,\partial_{x_{d_0}}$ with arbitrary smooth vector fields.  We also refer to \cite{hairer2011malliavin} for a probabilistic proof. 
\begin{theorem}\label{th:hor}
Assumption \ref{assump:hor} is equivalent to the following conditions:
\begin{itemize}
\item[(i)] the Kolmogorov operator $\Kcc$ in \eqref{eq:kol} is hypoelliptic;
\item[(ii)] the covariance matrix $\mathcal{C}(\tau)$ in \eqref{eq:cov_C} is strictly positive definite for any $\tau > 0$.
\end{itemize}
\end{theorem}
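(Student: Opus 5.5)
The strategy is to pass through the Kalman rank condition, which links all three statements. Set $E := \big(\begin{smallmatrix} I_{d_0} \\ {\bf 0}_{(d-d_0)\times d_0}\end{smallmatrix}\big)$, so that $\partial_{x_i} = \langle E e_i, \nabla_x\rangle$ for $i=1,\dots,d_0$. The plan is to show that each of Assumption \ref{assump:hor}, (i) and (ii) is equivalent to
\begin{equation}
\text{rank}\,\big( E \ \ BE \ \ B^2E \ \cdots \ B^{d-1}E \big) = d .
\end{equation}

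\emph{Step 1: Assumption \ref{assump:hor} $\Leftrightarrow$ Kalman.} Compute the brackets directly. Constant fields commute, so all brackets among the $\partial_{x_i}$ vanish. We have $[\partial_{x_i}, Y] = \langle B E e_i, \nabla_x\rangle$, and iterating gives $[\partial_{x_{i_1}},[\partial_{x_{i_2}},Y]]=0$. The richer brackets come from repeatedly bracketing with $Y$:
\begin{equation}
[\,\langle v,\nabla_x\rangle , Y] = \langle Bv, \nabla_x\rangle ,
\end{equation}
so $\text{Lie}(\partial_{x_1},\dots,\partial_{x_{d_0}},Y)$ at any point is spanned by $Y$ together with the constant fields $\langle B^k E e_i,\nabla_x\rangle$, $k\ge0$. (Brackets of the form $[\partial_{x_{i_1}},\dots[\partial_{x_{i_k}},Y]]$ with only the $\partial_{x_i}$ outside produce nothing new, so the Lie algebra must be read as including brackets with $Y$ in the outer slots.) Since $Y$ has a nonzero $\partial_s$ component and the other fields have none, the rank equals $1+\dim \text{span}\{B^kEe_i\}$. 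By Cayley--Hamilton, it suffices to take $k\le d-1$, and the rank equals $d+1$ if and only if the Kalman condition holds.

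\emph{Step 2: Kalman $\Leftrightarrow$ (ii).} For $\xi\in\Rd$,
\begin{equation}
\langle \mathcal{C}(\tau)\xi,\xi\rangle = \sigma^2\int_0^\tau |E^\top e^{rB^\top}\xi|^2 dr .
\end{equation}
This vanishes if and only if $E^\top e^{rB^\top}\xi=0$ for all $r\in[0,\tau]$. By analyticity, differentiating at $r=0$ shows this is equivalent to $E^\top (B^\top)^k\xi=0$ for all $k$, i.e. $\xi\perp \text{Im}(B^kE)$ for all $k$. Hence $\mathcal{C}(\tau)>0$ for some (equivalently, every) $\tau>0$ if and only if Kalman holds. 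The argument does not depend on $\tau$, which gives the "for any $\tau$" clause.

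\emph{Step 3: (i).} For the direction "Assumption \ref{assump:hor} $\Rightarrow$ (i)", apply H\"ormander's theorem \cite{hoermander}, since $\Kcc$ is a sum of squares of the fields $\sigma\partial_{x_i}$ plus the drift $Y$. For the converse, which I expect to be the main obstacle, assume (ii) fails; by Step 2 so does Kalman. Choose $\xi\ne0$ orthogonal to $\text{span}\{B^kE\}$. This subspace is invariant under $B^\top$ and contained in $\ker E^\top$. The plan is to produce a non-smooth solution of $\Kcc u=0$. Take $u(s,x)=g(\langle e^{-sB^\top}\xi, x\rangle)$ with $g$ non-smooth. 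Then $\partial_{x_i}u = g'\cdot\langle e^{-sB^\top}\xi, e_i\rangle = 0$ for $i\le d_0$, because $e^{-sB^\top}\xi$ stays in the invariant subspace and is therefore annihilated by $E^\top$. Also $Yu = g'\cdot(-\langle B^\top e^{-sB^\top}\xi,x\rangle+\langle e^{-sB^\top}\xi,Bx\rangle)=0$. Hence $\Kcc u=0$ in the distributional sense, even for $g$ merely continuous or discontinuous, while $u$ is not $C^\infty$. This contradicts hypoellipticity.

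The delicate points are the distributional justification, for instance taking $g$ to be a Heaviside-type function and verifying $\Kcc u=0$ against test functions through a change of variables, and keeping the Lie-algebra bookkeeping in Step 1 consistent with the convention used in \eqref{horcon}.
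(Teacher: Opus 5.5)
Your proof is correct. It takes a different and more complete route than the paper, which gives no argument beyond calling the statement a particular case of H\"ormander's theorem and pointing to Hairer's Malliavin-calculus proof.

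H\"ormander's general theorem only gives Assumption \ref{assump:hor} $\Rightarrow$ (i). For arbitrary smooth vector fields the bracket condition is not necessary for hypoellipticity, so the converse genuinely depends on the linear-drift, constant-diffusion structure. Your invariant non-smooth solution $u=g(\ell)$, with $\ell(s,x):=\langle e^{-sB^\top}\xi,x\rangle$, is exactly what exploits that structure.

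Similarly, in the probabilistic route the link with (ii) comes from the fact that the Malliavin covariance of this linear SDE is the deterministic matrix $\mathcal{C}(\tau)$. Your Gramian identity $\langle\mathcal{C}(\tau)\xi,\xi\rangle=\sigma^2\int_0^\tau|E^\top e^{rB^\top}\xi|^2\,dr$, combined with analyticity, reduces this to the Kalman condition by pure linear algebra. It also shows directly that positivity does not depend on $\tau$.

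Funnelling everything through the Kalman rank condition, as in \cite{polidoro1994class}, gives a self-contained proof that uses only the sufficiency half of H\"ormander's theorem. The price is that it is specific to this linear class.

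Your bookkeeping remark is also correct. The brackets listed after \eqref{horcon}, with $\partial_{x_i}$ in all outer slots, vanish beyond first order. So for $\rr\geq 2$ one needs brackets such as $[[\partial_{x_1},Y],Y]$.

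The distributional step is easier than you expect, and no change of variables is needed. Take $g(z)=|z|$ and mollifications $g_n$. Then $g_n\circ\ell\to g\circ\ell$ locally uniformly and $\Kcc(g_n\circ\ell)=0$, so $\Kcc u=0$ in $\mathcal{D}'$. Meanwhile $u$ is not differentiable on the nonempty set $\{\ell=0\}$, because $\nabla_x\ell=e^{-sB^\top}\xi\neq 0$.
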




If the (equivalent) conditions in Theorem \ref{th:hor} are satisfied, by \eqref{eq:conditional_law_kol}, the 
law of $X^{s,x}_t$ has a Gaussian density, $p_{\Kcc_{\sigma}}(t-s, x  , \cdot)$, where
\begin{equation}\label{eq:kol_dens}
p_{\Kcc_{\sigma}}(r, x  , y) = p_{\Kcc}(r, x  , y) = \frac{1}{ (2 \pi)^{\frac{d}{2}} \sqrt{ |\mathcal{C}(r)|} } \exp \Big(-\frac{1}{2} \big\langle \mathcal{C}^{-1}(r) \big(y - e^{r B }x\big) ,  y - e^{r B} x \big\rangle \Big), \qquad r>0, \ x,y\in\Rd.
\end{equation}
The transition kernel $p_{\Kcc_{\sigma}}(t-s, x  , y)$ coincides with the fundamental solution of $\Kcc$, in the backward variables $(s,x)$. Namely, the function $(s,x)\mapsto p_{\Kcc}(t-s, x , y) $ solves the \emph{backward Kolmogorov} equation
\begin{equation}
\Kcc u = 0, \quad \text{on}\ (-\infty,t)\times \R^d
\end{equation}
with the terminal condition
\begin{equation}
p_{\Kcc}(t-s, z  , y) \dd y {\longrightarrow} \delta_x(\dd y) \ \text{weakly},\quad \text{as } (s,z)\to (t^-,x).
\end{equation}
Therefore, defining the \emph{backward semigroup} $(\Pcc_r)_{r> 0}$ as 
\begin{equation}\label{eq:def_semigroup_back}
\Pcc^{\Kcc_{\sigma}}_{r} f = \Pcc_{r} f := \int_{\Rd} p_{\Kcc}(r, \cdot , y) \varphi(y) \dd y,
\qquad \varphi\in L^{\infty}(\Rd),
\end{equation}
 the function
 \begin{equation}
\label{eq:classical PF cauchy solution}
v(s,x):= [\Pcc_{t-s} \varphi](x)  - \int_{s}^t  [\Pcc_{\tau - s} g(\tau,\cdot)](x) \dd \tau ,\qquad t>s,\ z \in\Rd,
\end{equation}  
is an (internally) smooth solution to the backward Cauchy problem
\begin{equation}\label{eq:example_degenerate_pdes}
\begin{cases}
\Kcc' v = g, \qquad \text{on $(-\infty, t)\times \Rd$}, \\
v(t,\cdot) = \varphi ,
\end{cases}
\end{equation}
for any $\varphi \in \Cc_b(\Rd)$ and $g
$ sufficiently smooth.

Furthermore, as a function of the forward variables $(t,y)$, the transition kernel $p_{\Kcc}(t-s, x , y)$ is the fundamental solution of the formal adjoint of $\Kcc$, i.e. the Fokker-Planck operator
\begin{equation}
\Kcc' := 
 - \partial_t   - \text{div}_y ( B y\, \cdot)+ \frac{\sigma^2}{2} \sum_{i = 1}^{d_0} \partial^2_{y_i} 
 = - 
 Y_{t,y}  -Tr(B) + \frac{\sigma^2}{2}\Delta_{y,d_0}.
\end{equation}
Namely, the function $(t,y)\mapsto p_{\Kcc}(t-s , x , y)$ solves the Fokker-Planck equation (or \emph{forward Kolmogorov equation})
\begin{equation}
\Kcc' u = 0, \quad \text{on}\ (s,\infty)\times \R^d
\end{equation}
with the initial condition
\begin{equation}
p_{\Kcc}(t-s, x , z) \dd x {\longrightarrow} \delta_y(\dd x) \ \text{weakly},\quad \text{as } (t,z)\to (s^+,y).
\end{equation}
Therefore, defining the \emph{forward semigroup} $(\Pcc'_r)_{r> 0}$ as 
\begin{equation}\label{eq:def_semigroup_for}
\Pcc'^{\Kcc_{\sigma}}_{r} f = \Pcc'_{r} f := \int_{\Rd} p_{\Kcc}(r, x , \cdot ) \varphi(x) \dd x,
\qquad \varphi\in L^{\infty}(\Rd),
\end{equation}
 the function
 \begin{equation}
\label{eq:classical PF cauchy solution}
v(t,y):= [\Pcc'_{t-s} \varphi](y)  - \int_{s}^t  [\Pcc'_{t - \tau } g(\tau,\cdot)](y) \dd \tau ,\qquad t>s,\ y \in\Rd,
\end{equation}  
is an (internally) smooth solution to the forward Cauchy problem
\begin{equation}\label{eq:example_degenerate_pdes_forw}
\begin{cases}
\Kcc' v = g, \qquad \text{on $(s , + \infty)\times \Rd$}, \\
v(s,\cdot) = \varphi ,
\end{cases}
\end{equation}
for any $\varphi \in \Cc_b(\Rd)$ and $g
$ sufficiently smooth.

\begin{remark}(!)
The semigroups $\Pcc_r$ and $\Pcc'_r$ regularize to arbitrary order for any positive time $r$. Precisely, for any $\varphi\in L^{\infty}(\Rd)$, the functions
\begin{equation}
(r, x)\mapsto [\Pcc_r \varphi] (x), \quad (r, y)\mapsto [\Pcc'_r \varphi] (y)
\end{equation}
are smooth on $(0,\infty)\times \Rd$. In particular, the two semigroups regularize both in the time and space variables. Furthermore, they  are both continuous at $t=0$, in the sense that, when $\varphi\in \Cc_b(\Rd)$,
\begin{equation}
\|\Pcc_r \varphi - \varphi \|_{0} + \|\Pcc'_r \varphi - \varphi \|_{0} \longrightarrow 0 ,\qquad \text{as } r\to 0^+.
\end{equation}
\end{remark}

\begin{remark}[Heat semigroup]\label{rem:heat_sem} When $d= d_0$, then the component in $X^1$ in \eqref{eq:linear_SDE} disappears and the process $X=X^0$ is an Ornstein-Uhlenbeck-type process. Furthermore, if $B_0 = {\bf 0}_{d\times d}$, then $X_t = \sigma W_t$ and $p_{\Kcc}(r, x  , y) $ reduces to the Gaussian density 
\begin{equation}
p_{\Kcc}(r, x  , y) = \frac{1}{ (2 \pi \sigma^2 r)^{\frac{d}{2}}  } \exp \Big(- \frac{|x-y|^2}{2 \sigma^2 r } \Big) =: p_{\text{heat}}(r, x  , y)  , \qquad r>0, \ x,y\in\Rd,
\end{equation}
and the backward and forward semigroups coincide with the usual heat semigroup, i.e.
\begin{equation}
\Pcc_r = \Pcc'_r =:  \Pcc^{\text{heat}}_r.
\end{equation}
\end{remark}


\section{Anisotropic and intrinsic spaces
}\label{sec:ani_int}

The regularity properties of the kinetic-type semigroups $\Pcc$ and $\Pcc'$ introduced in Section \ref{sec:kinet_semigroups} must be studied in suitable function spaces that reflect the non-Euclidean geometry induced by the weak H\"ormander condition. When only regularity in space is considered, the correct structure is identified by the so-called \emph{anisotropic spaces}, which are constructed in analogy with the usual H\"older spaces based on an anisotropic non-Euclidean distance that reflects the different scaling properties of the covariance matrix $\mathcal{C}$ in \eqref{eq:cov_C}. The full regularity structure, which also includes the regularity in the time variable, is given by the so-called \emph{intrinsic spaces}. These should be regarded as the kinetic version of the usual parabolic spaces. They are defined by specifying the regularity only along the H\"ormander fields $\partial_{x_1}, \dots, \partial_{x_{d_0}}$ and $Y$ in \eqref{eq:def_Y}, and the regularity along all the other (time-space) directions is obtained by exploiting the weak H\"ormander condition \eqref{horcon}. Critically, they reduce to the aforementioned anisotropic spaces when only regularity in space is concerned. 

In the remainder of this section, and of this Chapter, we will employ the following notation. For $N\in\N$ and $\Omega\subset \R^N$ an open set, we denote by $C^{\nu}(\Omega)$ the usual H\"older-Zygmund space of order $\nu$, with $\nu>0$. In particular, it coincides with the classical H\"older space of order $\nu$, when $\nu\notin\N$. 
Namely, for $n\in\N_0$ and $\alpha\in(0,1)$, $C^{n+\alpha}(\Omega)$ is the space of functions $f:\Rd \to \R$ such that 
\begin{equation}
\| f \|_{\Omega, n+\alpha}  \asymp \sum_{\substack{\eta \in \N^d_0\\  |\eta| \leq n}} \| \partial^{\eta} f \|_{\infty} + \sum_{\substack{\eta \in \N^d_0\\  |\eta| = n}}  \sup_{x,y\in\Omega} \frac{| \partial^{\eta}f(x) -\partial^{\eta}f(y) | }{|x - y|^{\alpha}} <\infty,
\end{equation}
where $\partial^{\eta}$ denotes the partial derivative
\begin{equation}
\partial^\eta := \partial^{\eta_1}_{x_1} \cdots \partial^{\eta_d}_{x_N},
\end{equation}
and with $|\eta|$ denoting the length (with a slight abuse of notation) of the multi-index $\eta\in \N^{N}_0$. 

When $\Omega = \R^N$ and $N$ is clear from the context, to ease the notation we will write $\| f \|_{\nu} $ in place of $\| f \|_{\Omega, \nu} $. 

We also set $C^{0}(\Omega):= \Cc_b(\Omega)$, equipped with $\| \cdot \|_{0} : = \| \cdot \|_{\infty}$.


\subsection{Anisotropic spaces}\label{sec:anisot_spaces}

In the case of the heat semigroup (cf. Remark \ref{rem:heat_sem}), the following global Schauder estimates hold true: for any $\alpha, \gamma\geq 0$, there exists $C=C(\alpha, \gamma)>0$ such that 
\begin{equation}\label{eq:schauder_heat}
\|\Pcc^{\text{heat}}_r \varphi  \|_{\alpha+\gamma}  \leq C r^{-\frac{\alpha}{2}} \| \varphi \|_{\gamma}, \qquad r>0,
\end{equation}
for any $\varphi\in C_{b}(\Rd)$.  
%
These estimates can be understood as follows:
\begin{framed}
\noindent {\bf(!)} A gain of (space) regularity of order $\alpha$ comes at the cost of a (small time) singularity of order $\alpha/2$.  
\end{framed}

These types of estimates are crucial in the study of parabolic PDEs, and they are typically satisfied beyond the H\"older setting, for instance in the case of $\gamma<0$, which corresponds to Besov spaces with negative regularity (see \cite{gubinelli2015paracontrolled}). Furthermore, Schauder-type estimates akin to \eqref{eq:schauder_heat} can be proved for more general settings that include the heat semigroup as a particular case, for instance for semigroups associated to non-degenerate diffusions with H\"older continuous diffusion. 

In light of this, one would like to define a H\"older norm 
such that the Schauder estimate \eqref{eq:schauder_heat} holds for the kinetic-type semigroups $\Pcc$ and $\Pcc'$ defined in \eqref{eq:def_semigroup_back} and \eqref{eq:def_semigroup_for}. As it turns out, the correct H\"older norms are induced by a suitable anisotropic distance on $\Rd$ that plays the same role played by the Euclidean distance in the construction of the standard isotropic H\"older spaces. We start by deriving some heuristics that help to understand the main ideas underlying the construction of these spaces.  

\paragraph{The Kolmogorov case.} Consider the prototypical kinetic model studied by Kolmogorov in the 1930's, given by setting $d_0 = 1$, $d=2$, and 
\begin{equation}\label{eq:B_Kolm}
B_0 = \begin{pmatrix}
0 &0 
  \end{pmatrix}, \quad B_1=\begin{pmatrix}
 1& 0 
  \end{pmatrix} \quad \Longrightarrow \quad B = \begin{pmatrix}
0 &0 \\
1 &0
  \end{pmatrix}.
\end{equation}
In this case, the equivalent conditions of Theorem \ref{th:hor} are satisfied (see Remark \ref{ex:kinet_hor}). In particular, we have
\begin{equation}
e^{tB} = I_{2\times 2} + t B  = \begin{pmatrix}
1 &0 \\
t  &1
  \end{pmatrix}.
\end{equation}
The Gaussian kernel $p_{\Kcc}(r, x  , y)$ in \eqref{eq:kol_dens} then becomes 
\begin{equation}
p_{\Kcc}(r, x  , y) = \frac{\sqrt{12}}{ 2 \pi r^2} \exp \Big(-\frac{1}{2} \big\langle \mathcal{C}^{-1}(r) \big(y_1 - x_1 , y_2 - x_2 - r x_1\big)^\top ,  \big(y_1 - x_1 , y_2 - x_2 - r x_1\big)^\top \big\rangle \Big), \qquad r>0, \ x,y\in\Rd, 
\end{equation}
where the covariance matrix $C(\tau)$ 
and its inverse read
\begin{equation}\label{eq:cov_C_kol}
\mathcal{C}(r) = \sigma^2  \begin{pmatrix}
r &\frac{r^2}{2} \\
\frac{r^2}{2}  & \frac{r^3}{3}
  \end{pmatrix}, \qquad \mathcal{C}^{-1}(r) =  \frac{1}{\sigma^2}  \begin{pmatrix}
4 r^{-1} &- 6 r^{-2} \\
- 6 r^{-2}  & 12 r^3
  \end{pmatrix}  .
\end{equation}
The density of this kinetic model was first derived by Kolmogorov in 1934 (\cite{kolmogoroff1934zufallige}). For this reason, the density above is often referred to as \emph{Kolmogorov density}. Note, in particular, that the two marginals are Gaussian densities with variances that are proportional to $r$ and $r^3$, respectively, namely
\begin{align}
p^{(1)}_{\Kcc}(r, x  , y_1) &: = \int_{\R} p_{\Kcc}(r, x  , y_1, y_2) \dd y_2 = \frac{1}{ \sqrt{2 \pi \sigma^2 r} } \exp \Big(-\frac{(y_1 - x_1)^2}{2 \sigma^2 r}\Big) = p_{\text{heat}}(r, x_1, y_1) , \\
p^{(2)}_{\Kcc}(r, x  , y_2) &: = \int_{\R} p_{\Kcc}(r, x  , y_1, y_2) \dd y_1 = \frac{\sqrt{3}}{ \sqrt{2 \sigma^2 \pi r^3} } \exp \Big(-3\frac{(y_2 - x_2 - r x_1)^2}{2\sigma^2 r^3}\Big)= p_{\text{heat}}(r^3 /3, x_2 +  r x_1 , y_2),
\end{align}
for any $x= (x_1 ,x_2), y=(y_1,y_2) \in \R^2$ and $r>0$. 

Now, if $\varphi \in \Cc_b(\R^2)$ is constant in $x_2$, i.e. $\varphi(x_1, x_2)=\varphi(x_1)$, then the action of $\Pcc$ on $\varphi$ is exactly the one of the heat-semigroup $\Pcc^{\text{heat}}$. Therefore, the estimate \eqref{eq:schauder_heat} holds for $\Pcc_r$ in place of $\Pcc^{\text{heat}}$ and with the usual (isotropic) H\"older norms, w.r.t the non-degenerate space variable $x_1$. On the other hand, if $\varphi \in \Cc_b(\R^2)$ is constant in $x_1$, i.e. $\varphi(x_1, x_2)=\bar\varphi(x_2)$, then we have 
\begin{equation}\label{eq:semigr_kolm}
[\Pcc_r \varphi] (x) = \big[\Pcc^{\text{heat}}_{\frac{r^3}{3}} \bar\varphi\big] (x_2 + r x_1 ), \qquad x=(x_1,x_2)\in\R^2, \ r>0.
\end{equation}
Freezing $x_2$ and looking at the regularity in $x_1$, \eqref{eq:schauder_heat} and \eqref{eq:semigr_kolm} then yield 
\begin{equation}\label{eq:heur}
\| [\Pcc_r \varphi] (\cdot , x_2)\|_{\alpha + \gamma} \leq C r^{\alpha + \gamma} r^{- 3\frac{\alpha+\gamma - \gamma/3}{2} } \| \bar\varphi \|_{\gamma/3} = C r^{-\frac{ \alpha}{2}} \| \bar\varphi \|_{\gamma/3}.
\end{equation}
When one freezes $x_1$ and looks at the regularity in $x_2$,  \eqref{eq:schauder_heat} and \eqref{eq:semigr_kolm} then yield 
\begin{equation}\label{eq:heur_bis}
\| [\Pcc_r \varphi] (x_1 , \cdot)\|_{(\alpha + \gamma)/3} \leq C  r^{- \frac{\alpha}{2} }  \| \bar\varphi \|_{\gamma/3}.
\end{equation}
Therefore, defining the anisotropic H\"older norm as
\begin{equation}\label{eq:anis_holder}
\| f \|_{B,\nu}: = \sup_{x_2 \in \R} \| f(\cdot, x_2) \|_{\nu} + \sup_{x_1 \in \R} \| f(x_1, \cdot) \|_{\nu/3}, \qquad \nu>0,
\end{equation}
by \eqref{eq:heur} and \eqref{eq:heur_bis} we finally obtain, for any $\alpha, \gamma\geq 0$, 
\begin{equation}\label{eq:schauder_kol}
\| \Pcc_r \varphi \|_{B, \alpha + \gamma} \leq C  r^{- \frac{\alpha}{2} }  \| \varphi \|_{B,\gamma},
\end{equation}
which is the Schauder-type estimate we were aiming for. A more precise analysis, at the level of the joint density, allows one to prove \eqref{eq:schauder_kol} for a generic function $\varphi\in \Cc_b(\R^2)$, not necessarily constant in the $x_1$ variable. Critically, the Schauder estimate \eqref{eq:schauder_kol} shows that a gain of (space) regularity of order $\alpha/3$ in the degenerate variable $x_2$ produces a (small time) singularity of order $\alpha /2$.  

Let us examine more closely the anisotropic H\"older norm in \eqref{eq:anis_holder}. If $\alpha\in (0,1]$, then 
\begin{equation}
\| f \|_{B,\nu} \asymp \| f \|_{\infty} + \sup_{x,y\in \R^2} \frac{|f(x) - f(y)|}{|x - y|^{\alpha}_B}, 
\end{equation}
where $| \cdot |_B$ is the anisotropic ``norm"\footnote{Strictly speaking, this is not a norm because homogeneity does not hold.}
\begin{equation}\label{eq:anis_norm}
| x |_B =  | x_1 | + |x_2|^{\frac{1}{3}}, \qquad x = (x_1,x_2) \in \R^2.
\end{equation}
Furthermore, similarly to the isotropic H\"older spaces, one can recover the mixed regularity at any positive order. Precisely, it is known (see \cite[Th. 6.2]{dachkovski2003anisotropic}, \cite{triebel92}) that the norm defined in \eqref{eq:anis_holder} is equivalent to the anisotropic H\"older-Zygmund norm induced by $|\cdot |_B$. In particular, for any $n\in\N_0$ and $\alpha \in (0,1)$,
\begin{align}
\| f \|_{B,n+\alpha} & \asymp \sum_{\substack{i,j\in \N_0\\  i + 3 j \leq n}} \| \partial^i_{x_1}\partial^j_{x_2} f \|_{\infty} + \sum_{\substack{i,j\in \N_0\\  i + 3 j = n }}  \sup_{x,y\in \R^2} \frac{|\partial^i_{x_1}\partial^j_{x_2} f(x) - \partial^i_{x_1}\partial^j_{x_2} f(y)|}{|x - y|^{\alpha}_B} 
\\
& + \sum_{\substack{i,j\in \N_0\\  k\in\{0,1\} \\  i + 3 j = n-(2-k) }} \sup_{x\in \R^2\atop h\in\R} \frac{|\partial^i_{x_1}\partial^j_{x_2} f(x_1, x_2+h) - \partial^i_{x_1}\partial^j_{x_2} f(x)|}{|h|^{\frac{\alpha+2-k}{3}} }. 
\end{align}
In particular, one can check that
\begin{align}
\| f \|_{B,1+\alpha}& \asymp \|  f \|_{\infty}+  \| \partial_{x_1} f \|_{B, \alpha} + \sup_{x\in \R^2\atop h\in\R} \frac{|f(x_1, x_2+h) - f(x)|}{|h|^{\frac{1+\alpha}{3}} },\\
\| f \|_{B,2+\alpha}& \asymp \|  f \|_{\infty}+  \| \partial_{x_1} f \|_{B,1+ \alpha} + \sup_{x\in \R^2\atop h\in\R} \frac{|f(x_1, x_2+h) - f(x)|}{|h|^{\frac{2+\alpha}{3}} },
\end{align}
and, for $n\geq 3$,
\begin{equation}
\| f \|_{B,n+\alpha} \asymp  \|  f \|_{\infty}+   \| \partial_{x_1} f \|_{B,n-1+ \alpha} +  \| \partial_{x_2} f \|_{B,n-3+ \alpha} .
\end{equation}
\begin{remark}
Notice that the first-order partial derivative with respect to the degenerate variable $x_2$ appears for the first time when $n=3$. In general, the $k$-th order partial derivative with respect to $x_2$ appears when $n\geq 3k$.
\end{remark}
Finally, if $\| f \|_{B,n+\alpha}<\infty$, 
then we have the anisotropic Taylor formula 
\begin{equation}\label{eq:tay_kol_anis}
\Big| f(y) -  \sum_{\substack{i,j\in \N_0\\  i + 3 j \leq n }}   \frac{ \partial^i_{x_1}\partial^j_{x_2} f(x)}{i! j!} {(y_1 - x_1)^i (y_2 - x_2)^j}   \Big| \leq C \| f \|_{B,n+\alpha} \, |y-x|_B^{n+\alpha}, \qquad x,y\in\R^2.
\end{equation}
\paragraph{General case.} To define the anisotropic spaces in the general setting, it is convenient to introduce an appropriate stratification of the state space $\Rd$. In \cite{polidoro1994class} it was proved that,  
up to permuting the indices, the matrix $B_1\in \mathcal{M}^{(d-d_0)\times d}$ admits the block-form
\begin{equation}\label{B}
  B_1=\begin{pmatrix}
 \mathcal{B}_1 & \ast &\cdots& \ast & \ast \\
 0 & \mathcal{B}_2 &\cdots& \ast& \ast \\
 \vdots & \vdots &\ddots& \vdots&\vdots \\
 0 & 0 &\cdots& \mathcal{B}_{\rr}& \ast
  \end{pmatrix}
\end{equation}
where the $*$-blocks are arbitrary and $\mathcal{B}_j$ is a $(d_j \times d_{j-1})$-matrix of rank $d_j$ with
\begin{equation}\label{eq:elements_d}
d_{0}\geq d_1\geq\dots\geq d_{\rr}\geq1,\qquad \sum_{i=1}^{\rr} d_i=d-d_0.
\end{equation}
We will thus assume that $B_1$ admits the block-decomposition above. 
Accordingly, we will employ the notation 
\begin{equation}\label{eq:notation_r}
\Rd \ni x=(x^0,\cdots, x^{\rr})\in\R^{d_0} \times \cdots \R^{d_{\rr}},
\end{equation}
to denote the projections of $x\in\Rd$ on each of the spaces $\R^{d_i}$. Clearly, we have
\begin{equation}
x^0_i = x_i , \qquad i= 1, \dots, d_0.
\end{equation}

For any $\nu>0$, we then set $C^{\nu}_B(\Rd)$ as the space of functions $f:\Rd \to \R$ such that 
\begin{equation}\label{eq:anis_holder_gen}
\| f \|_{B,\nu}: = \sum_{i=0}^{\rr} \ \sup_{x^{j} \in \R^{d_j}, j\neq i} \| f(x^0,\, \dots \, ,x^{i-1}, \,\cdot\, ,x^{i+1},\, \dots \,,   x^{\rr} ) \|_{\frac{\nu}{2i +1}} < \infty,
\end{equation}
We also set $C^{0}_B(\Rd):= \Cc_b(\Rd)$, equipped with $\| \cdot \|_{B,0} : = \| \cdot \|_{\infty}$. 
Note that the norm in \eqref{eq:anis_holder_gen} reduces to \eqref{eq:anis_holder} in the Kolmogorov case. 

Similarly to the Kolmogorov case, we can characterize the anisotropic H\"older spaces $C^{\nu}_B(\Rd)$ in terms of a suitable anisotropic distance, namely
\begin{equation}\label{eq:anis_norm_gen}
| x -y |_B = \sum_{i=0}^{\rr}  | x^{i} - y^{i}  |^{\frac{1}{2i+1}} , \qquad x = (x^{0}, \dots, x^{\rr}), y = (y^{0}, \dots, y^{\rr})  \in \Rd,
\end{equation}
which reduces to the one defined by \eqref{eq:anis_norm} when $\rr = 1$ and $d_0 = d_1 = 1$. Precisely, we have the following result (see \cite[Th. 6.2]{dachkovski2003anisotropic}). 
\begin{theorem}
For any $\nu>0$, the norm defined by \eqref{eq:anis_holder_gen} is equivalent to the H\"older-Zygmund norm induced by $|\cdot|_B$. In particular, for any $n\in\N_0$ and $\alpha\in(0,1)$, we have
\begin{align}
\| f \|_{B,n+\alpha} & \asymp \sum_{\substack{\eta\in \N^d_0\\  |\eta  |_{B} \leq n}} \| \partial^\eta f \|_{\infty} +\sum_{\substack{\eta\in \N_0^d\\  |\eta|_B = n  }}  \sup_{x,y\in \R^d} \frac{|\partial^{\eta} f(x) - \partial^{\eta} f(y)|}{|x - y|^{\alpha}_B} 
\\
&\quad +\sum_{i=1}^{\rr }\sum_{\substack{\eta\in \N_0^d\\ k\in\{0,1\} \\  |\eta|_B = n - (2 i - k) }}  \sup_{x,y\in \R^d} \frac{|\partial^{\eta} f(x) - \partial^{\eta} f(x^0 ,\, \dots \,, x^{i-1}, y^{i}, \, \dots \, ,  y^{\rr})|}{|x - (x^0 ,\, \dots \,, x^{h-1}, y^{h}, \, \dots \, ,  y^{\rr})|^{\alpha + 2i - k}_B},
\end{align}
where $|\eta|_B$ is the \emph{anisotropic length} of $\eta = (\eta^0, \dots, \eta^{\rr}) \in \N^d_0$, defined as
\begin{equation}\label{eq:anis_length}
|\eta|_B := \sum_{i=0}^{\rr} (2i + 1) |\eta^{i}|.
\end{equation}
\end{theorem}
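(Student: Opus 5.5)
Both norms are invariant under the anisotropic dilations $\delta_\lambda x := (\lambda x^0, \lambda^3 x^1, \dots, \lambda^{2\rr+1} x^{\rr})$, since $|\delta_\lambda x|_B = \lambda |x|_B$ and $|\delta_\lambda \eta|$-weighted derivatives scale like $\lambda^{|\eta|_B}$. I will therefore prove the equivalence through a Littlewood--Paley characterization adapted to $\delta_\lambda$, rather than by a direct manipulation of difference quotients. Fix an anisotropic dyadic partition of unity $\{\hat\psi_k\}_{k\geq 0}$ in Fourier variables, with $\hat\psi_k(\xi) = \hat\psi_1(\delta^*_{2^{-k+1}}\xi)$ supported where the dual quasi-norm $|\xi|_{B^*} := \sum_i |\xi^i|^{1/(2i+1)}$ is comparable to $2^k$. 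Let $\Delta_k f := \psi_k * f$. The strategy is to show that both sides of the claimed equivalence are comparable to the anisotropic Zygmund--Besov norm $\sup_k 2^{k\nu}\|\Delta_k f\|_\infty$.

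\textbf{Step 1: the norm \eqref{eq:anis_holder_gen} is comparable to the Besov norm.} I use the classical one-dimensional fact (in $\R^{d_i}$) that $\|g\|_{\nu/(2i+1)} \asymp \sup_m 2^{m\nu/(2i+1)} \|\Delta^{(i)}_m g\|_\infty$ for isotropic blocks in the variable $x^i$.

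For the bound $\text{Besov} \lesssim$ \eqref{eq:anis_holder_gen}, split each $\psi_k$, $k\geq1$, as a finite sum $\sum_i \psi_k^{(i)}$. Here the Fourier support of $\psi_k^{(i)}$ lies where $|\xi^i| \sim 2^{k(2i+1)}$, so $\psi^{(i)}_k$ has vanishing moments of all orders in the $x^i$ variable. Frozen in the other variables, $\psi_k^{(i)} * f$ is then bounded by $2^{-k\nu}\sup_{x^j,\,j\neq i}\|f(\dots,\cdot,\dots)\|_{\nu/(2i+1)}$, using the isotropic moment/Taylor estimate on $\R^{d_i}$.

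For the converse, freeze $x^j$, $j\neq i$, and sum $\sum_k \Delta_k f$ restricted to the $i$-th variable. Each piece is smooth in $x^i$ with $\|\partial^{\beta}_{x^i}\Delta_k f\|_\infty \lesssim 2^{k(2i+1)|\beta|}\|\Delta_k f\|_\infty$ (Bernstein). The standard low/high frequency splitting then gives the isotropic H\"older--Zygmund bound of order $\nu/(2i+1)$.

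\textbf{Step 2: the right-hand side is comparable to the Besov norm.} For $\lesssim$, take $|\eta|_B\le n$ and write $\partial^\eta f=\sum_k \partial^\eta\Delta_k f$. Bernstein gives $\|\partial^\eta\Delta_k f\|_\infty\lesssim 2^{k|\eta|_B}2^{-k\nu}$, so the sup terms converge since $|\eta|_B\le n<\nu$. For the difference quotient terms, split at $2^{k}\sim |x-y|_B^{-1}$. For low $k$, use the mean value theorem along each block $x^i$; the $i$-th block costs $2^{k(2i+1)}|x^i-y^i|\le (2^k|x-y|_B)^{2i+1}$. For high $k$, use the trivial bound. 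The partial-increment terms with exponent $\alpha+2i-k$ are treated identically, since the increment only involves blocks $\geq i$, whose cost is at least $2i+1>\alpha+2i-k$ when combined with the remaining regularity.

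For $\gtrsim$, bound $\|\Delta_k f\|_\infty$ by writing $\Delta_k f(x)=\int\psi_k(x-y)\big(f(y)-T_x f(y)\big)dy$. Here $T_x f$ is the anisotropic Taylor polynomial of order $n$, which $\psi_k$ annihilates by its vanishing moments. The remainder is then controlled via an anisotropic Taylor estimate, the general-$\rr$ analogue of \eqref{eq:tay_kol_anis}, which costs $|x-y|_B^{n+\alpha}$. Integrating against $|\psi_k|$ yields $2^{-k(n+\alpha)}$.

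\textbf{Main obstacle.} The hardest step is establishing this anisotropic Taylor remainder bound from the listed seminorms alone, since mixed derivatives appear only through $|\eta|_B=n$ and partial increments. I would prove it by induction on $n$, moving from $x$ to $y$ one block at a time, starting with the highest block $x^{\rr}$. At each stage I apply the one-block Taylor formula, and the partial-increment seminorms with exponent $\alpha+2i-k$ control exactly the error of the blocks not yet expanded. Once this is done, the case $\nu\in\N$ follows from the Zygmund (second-difference) version of the same argument.
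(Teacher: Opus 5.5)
Your argument is sound and gives a self-contained proof. The paper does not prove this statement; it only quotes \cite[Th.~6.2]{dachkovski2003anisotropic}, and it never defines the ``H\"older--Zygmund norm induced by $|\cdot|_B$''. Taking that norm to be the anisotropic $B^{\nu}_{\infty,\infty}$ norm $\sup_k 2^{k\nu}\|\Delta_k f\|_\infty$, as you do, your Step~1 proves the first sentence for every $\nu>0$. That step splits $\psi_k$ into pieces with $|\xi^i|\sim 2^{k(2i+1)}$ for one direction, and uses anisotropic Bernstein plus the one-block Littlewood--Paley characterization for the other; when $\nu/(2i+1)\in\N$ you need the usual difference-operator version of the moment estimate. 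The Bernstein half of Step~2 correctly handles the partial-increment terms. The low-frequency cost of a block $j\ge i$ is $(2^k|x-z|_B)^{2j+1}\le (2^k|x-z|_B)^{2i+1}$, and $2i+1$ exceeds the target order $\alpha+2i-\kappa$, $\kappa\in\{0,1\}$. The citation buys the paper brevity; your route buys a complete argument in the same Fourier-analytic framework as the cited reference.

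You are, however, working harder than necessary at the step you call the main obstacle. The explicit characterization does not need the anisotropic Taylor remainder. Your Step~2 ``$\lesssim$'' and the first half of Step~1 give RHS $\lesssim$ Besov $\lesssim$ \eqref{eq:anis_holder_gen}. The reverse bound, \eqref{eq:anis_holder_gen} $\lesssim$ RHS, is a pure restriction argument.

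Fix a block $i$ and set $m_i=\lfloor (n+\alpha)/(2i+1)\rfloor$. Then $(2i+1)m_i\le n$, so every $\partial^\beta_{x^i}f$ with $|\beta|\le m_i$ appears among the sup norms on the right. For $|\beta|=m_i$, the residual order $\rho:=n-(2i+1)m_i+\alpha$ lies in $(0,2i+1)$, and there are two cases.
\begin{itemize}
\item If $\rho=\alpha$, restrict the full $\alpha$-quotient to increments in $x^i$.
\item Otherwise $n-(2i+1)m_i=2l-\kappa$ with $1\le l\le i$, $\kappa\in\{0,1\}$, so that $|\beta|_B=n-(2l-\kappa)$. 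The $l$-th partial-increment seminorm allows increments in every block $\ge l$, hence in block $i$ alone. Restricted this way, it is exactly the $\rho/(2i+1)$-H\"older quotient of $\partial^\beta_{x^i}f$ in $x^i$.
\end{itemize}
Since $(n+\alpha)/(2i+1)\notin\N$, no Zygmund case arises, so your closing remark about $\nu\in\N$ is not needed for the explicit formula.

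If you still want the Taylor bound (it yields \eqref{eq:tay_anis} as a by-product), your block-by-block induction works, but the bookkeeping differs from your description. When block $j$ is moved, the top-order coefficients $\partial^\beta_{x^j}f$ with $|\beta|=m_j$ are controlled by the one-block increment estimate just described, with the seminorm index $l\le j$. The lower-order coefficients are expanded in the blocks $<j$ by the induction hypothesis applied to $\partial^\beta_{x^j}f$, whose right-hand-side seminorms are among those of $f$.
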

\begin{remark}
It can be checked, by induction, that an equivalent, recursive, characterization of the anisotropic H\"older spaces is given by:
\begin{align}
\hspace{-10pt}\| f \|_{B,\alpha}& \asymp \| f \|_{\infty} + \sup_{x,y\in \Rd} \frac{|f(x) - f(y)|}{|x - y|^{\alpha}_B}, \\
\hspace{-10pt}\| f \|_{B,n+\alpha}& \asymp \| f \|_{\infty} + \sum_{\substack{\eta \in\N_0 \\ |\eta|_B\leq n }}  \| \partial^{\eta} f \|_{B,n-|\eta|_B+\alpha}  + \sup_{x,y\in \Rd} \frac{|f(x) - f(x^0 ,\, \dots \,, x^{i-1}, y^{i}, \, \dots \, ,  y^{\rr})|}{|x - (x^0 ,\, \dots \,, x^{i-1}, y^{i}, \, \dots \, ,  y^{\rr})|^{n+\alpha}_B}, \quad n=1,\dots , 2 \rr, \\
\hspace{-10pt}\| f \|_{B,n+\alpha}& \asymp \| f \|_{\infty} + \sum_{\substack{\eta \in\N_0 \\ |\eta|_B\leq n }}  \| \partial^{\eta} f \|_{B,n-|\eta|_B+\alpha}, \quad n\geq 2 \rr +1,
\end{align}
for any $\alpha\in(0,1]$.
\end{remark}

\begin{remark}
Let $n\in\N_0$ and $\alpha\in(0,1)$. If $f\in C^{n+ \alpha}_B(\Rd)$, then the following Taylor formula holds:
\begin{equation}\label{eq:tay_anis}
\Big| f(y) -  \sum_{\substack{\eta\in \N^d_0\\  |\eta|_B \leq n }}   \frac{ \partial^{\eta} f(x)}{\eta!} {(y-x)^{\eta}}   \Big| \leq C \| f \|_{B,n+\alpha} \, |y-x|_B^{n+\alpha}, \qquad x,y\in\Rd,
\end{equation}
for a positive constant $C=C(n)$. Here, 
\begin{equation}
\eta! :=  \eta_1 ! \cdots \eta_d !,  \qquad x^{\eta} =  x^{\eta_1}_1  \cdots  x^{\eta_d}_d, \qquad \eta
\in\N_0^d, \  x\in\Rd.
\end{equation}
The formula above can be obtained as a particular case of the Taylor formula proved in \cite{pagliarani2016intrinsic} and \cite{pagliarani2022intrinsic} for the intrinsic spaces, which generalize the anisotropic ones (see Section \ref{sec:intrinsic_spaces} below). The reader may prove  \eqref{eq:tay_anis} directly as an exercise.
\end{remark}

The following Schauder estimates, which were proved in \cite[Th. 2.10]{issoglio2026degenerate} within the general context of (possibly negative) Besov-H\"older spaces, extend the one stated in \eqref{eq:schauder_kol} in the Kolmogorov case.
\begin{theorem}\label{th:Schauder_est}
For any $\alpha,\gamma\geq 0 $ and $T>0$, there exists $C=C(\alpha,\gamma,T,B)>0$ such that 
\begin{equation}\label{eq:Schauder_est}
\|\Pcc_r \varphi\|_{B,\gamma+\alpha} + \|\Pcc'_r \varphi\|_{B,\gamma+\alpha} \leq C\, r^{-\frac{\alpha}{2}} \|\varphi\|_{B,\gamma}, \qquad    r\in (0,T], 
\end{equation}
for any $\varphi\in C^{b}(\Rd)$.
\end{theorem}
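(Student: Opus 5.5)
We would prove Theorem \ref{th:Schauder_est} by reducing it, through the intrinsic dilations of the Kolmogorov operator, to a single estimate at unit time, and then recovering the full range of $\alpha,\gamma$ by interpolation.

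\textbf{Step 1: the principal part and its scaling.} Write $B=B_\ast+(B-B_\ast)$, where $B_\ast$ keeps only the subdiagonal blocks $\mathcal{B}_j$ of \eqref{B} (and $B_0$ is set to zero). Let $D_\lambda:=\mathrm{diag}(\lambda I_{d_0},\lambda^3 I_{d_1},\dots,\lambda^{2\rr+1}I_{d_\rr})$. Then
\begin{equation}
D_\lambda e^{\lambda^2 t B_\ast}D_\lambda^{-1}=e^{tB_\ast}, \qquad \mathcal{C}_\ast(\lambda^2 t)=D_\lambda \mathcal{C}_\ast(t) D_\lambda .
\end{equation}
Hence the kernel satisfies $p_{\Kcc_\ast}(r,x,y)=|D_{\sqrt r}|^{-1}\,p_{\Kcc_\ast}\big(1,D_{\sqrt r}^{-1}x,D_{\sqrt r}^{-1}y\big)$. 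Moreover, by \eqref{eq:anis_norm_gen} we have $|D_\lambda x|_B=\lambda|x|_B$. It follows that, for $\varphi_\lambda:=\varphi\circ D_\lambda$, homogeneous seminorms of order $\nu$ scale as $[\varphi_\lambda]_{B,\nu}=\lambda^\nu[\varphi]_{B,\nu}$. This is the precise form of the heuristic \eqref{eq:heur}--\eqref{eq:heur_bis}.

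\textbf{Step 2: the unit-time estimate.} At $r=1$ we must show $\|\Pcc^\ast_1\varphi\|_{B,\gamma+\alpha}\le C\|\varphi\|_{B,\gamma}$. We would first treat integer orders. For an anisotropic derivative $\partial^\eta$ with $|\eta|_B\le\alpha$, integrate by parts onto the Gaussian kernel, using that $\partial_x p$ equals $p$ times a polynomial in $\mathcal{C}^{-1}(1)(y-e^{B}x)$. Since $\mathcal{C}(1)>0$ by Theorem \ref{th:hor}, every such moment is bounded. The H\"older part of $\|\varphi\|_{B,\gamma}$ is then absorbed by a \emph{cancellation}: subtract the anisotropic Taylor polynomial of $\varphi$ from \eqref{eq:tay_anis} (applied along the flow point $e^{B}x$), which is annihilated by high enough derivatives of the kernel. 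The remainder is controlled by $\|\varphi\|_{B,\gamma}\int |y-e^Bx|_B^\gamma|\partial^\eta p|\,dy<\infty$. Combining Steps 1 and 2 gives the homogeneous bound $[\Pcc^\ast_r\varphi]_{B,\gamma+\alpha}\le Cr^{-\alpha/2}[\varphi]_{B,\gamma}$. The sup-norm part is trivial since $\Pcc_r$ is a contraction. The forward semigroup $\Pcc'_r$ is handled identically: $\int p(r,x,\cdot)\varphi(x)\,dx$ has the same structure after the change of variables $x\mapsto e^{-rB}$ of its arguments.

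\textbf{Step 3: lower-order terms and non-integer indices.} This is the step we expect to be the main obstacle. For general $B$, the matrix $e^{rB}$ differs from $e^{rB_\ast}$ by terms that are of lower order in the dilation, namely $D_{\sqrt r}^{-1}(e^{rB}-e^{rB_\ast})D_{\sqrt r}=O(1)$ for $r\le T$. Likewise, $\mathcal{C}(r)$ is comparable to $\mathcal{C}_\ast(r)$ uniformly on $(0,T]$, a fact known from \cite{polidoro1994class}. Redoing Step 2 with the rescaled kernel $|D_{\sqrt r}|\,p_\Kcc(r,D_{\sqrt r}\cdot,D_{\sqrt r}\cdot)$ then gives constants depending on $T$ and $B$; the technical difficulty is precisely these uniform comparisons.

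Finally, for arbitrary real $\gamma,\alpha\ge0$ we would either run the cancellation argument directly with the H\"older--Zygmund characterization of the preceding theorem, or interpolate between integer orders. The interpolation is legitimate because $C^\nu_B$ are real interpolation spaces, by \cite{dachkovski2003anisotropic} and \cite{triebel92}.
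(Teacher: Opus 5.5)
The notes give no proof of this theorem. It is quoted from \cite[Th. 2.10]{issoglio2026degenerate}, where it is proved in the Besov--H\"older scale (including negative $\gamma$), and it is only motivated in the Kolmogorov case through the reduction \eqref{eq:semigr_kolm} to the heat semigroup. Your dilation-plus-cancellation argument is therefore a genuinely different, self-contained route. For the principal part $B_\ast$, Steps 1--2 are sound in substance. One slip: $D_\lambda B_\ast D_\lambda^{-1}=\lambda^2 B_\ast$, so the identity should read $D_\lambda^{-1}e^{\lambda^2 tB_\ast}D_\lambda=e^{tB_\ast}$; your kernel-scaling formula is nevertheless correct.

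\textbf{The gap is Step 3.} The engine of Steps 1--2 is a unit-scale bound on \emph{homogeneous} seminorms. It rests on two facts: $e^{B_\ast}$ maps polynomials of anisotropic degree $\le m$ to polynomials of the same degree, so the Taylor polynomial is killed exactly; and $|e^{B_\ast}w|_B\lesssim|w|_B$ for $|w|_B\ge1$. For general $B$, the rescaled flow $D_{\sqrt r}^{-1}e^{rB}D_{\sqrt r}=e^{B_\ast+E_r}$ has non-zero blocks on and above the block diagonal, coming from $B_0$ and the $\ast$-blocks in \eqref{B}. These blocks raise anisotropic degree, so both facts fail, and the homogeneous bound itself is false.

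Here is an example. Take the Kolmogorov setting with $B_0=(0\ \ c)$, $c>0$, and $\varphi(y)=R\sin(y_1/R)$. Then $\Pcc_r\varphi(x)=h(a_rx_1+b_rx_2)$, where $b_r=\sqrt{c}\sinh(r\sqrt{c})\sim cr$ and $h$ is a Gaussian mollification of $R\sin(\cdot/R)$. For $\gamma\in(1,2)$ and $\gamma+\alpha\in(2,3)$:
\begin{itemize}
\item $[\varphi]_{B,\gamma}\sim R^{1-\gamma}\to0$;
\item the $x_2$-increment part of $[\Pcc_r\varphi]_{B,\gamma+\alpha}$ is $\gtrsim b_r^{(\gamma+\alpha)/3}R^{1-(\gamma+\alpha)/3}\to\infty$ as $R\to\infty$.
\end{itemize}
Inhomogeneous norms do not scale either: an inhomogeneous unit-time bound for the rescaled operator only returns $r^{-(\gamma+\alpha)/2}\|\varphi\|_\infty$. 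So redoing Step 2 with the rescaled kernel cannot work. The comparisons $e^{rB}\approx e^{rB_\ast}$ and $\mathcal{C}(r)\approx\mathcal{C}_\ast(r)$ are standard and are not the real difficulty.

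\textbf{What is missing} is an argument that carries the inhomogeneous norm throughout.
\begin{itemize}
\item Subtract the anisotropic Taylor polynomial $\mathcal{T}_{x_0}$ of $\varphi$ at $e^{rB}x_0$, in the original variables.
\item For $|\eta|_B>\lfloor\gamma\rfloor$, the term $\partial^\eta_x\int p_{\Kcc}(r,x,y)\,\mathcal{T}_{x_0}(y)\,dy$ at $x=x_0$ no longer vanishes. However, every non-zero contribution must use a block of $e^{rB}$ in position $(i,j)$ with $j>i$. That block has size $O(r)$, whereas the dilation would allow $r^{-(j-i)}$.
\item The resulting extra factor $r^{1+j-i}$ more than compensates the increase in degree, so this term is $O(r)\sum_{|\beta|_B\le\lfloor\gamma\rfloor}\|\partial^\beta\varphi\|_\infty$.
\item Combine this with $|\partial^\eta_x p_{\Kcc}(r,x,y)|\lesssim r^{-|\eta|_B/2}\,p_{\Kcc_{2\sigma}}(r,x,y)$, which is where your comparisons are actually used. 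This gives $|\partial^\eta\Pcc_r\varphi|\lesssim r^{-(|\eta|_B-\gamma)/2}\|\varphi\|_{B,\gamma}$.
\item The H\"older quotients then follow by splitting increments at $|x-y|_B\sim\sqrt r$ and at $|x-y|_B\sim1$. The bound $|e^{rB}w|_B\lesssim|w|_B$ is only available for $\sqrt r\lesssim|w|_B\le1$; beyond that you must use $2\|\varphi\|_\infty$.
\end{itemize}
That last step is legitimate only because $r\le T$, and it, rather than the covariance comparison, is where $C$ acquires its $T$-dependence.

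Two smaller points:
\begin{itemize}
\item In Step 2, the derivatives controlled by cancellation are those with $|\eta|_B>\lfloor\gamma\rfloor$, not those with $|\eta|_B\le\alpha$.
\item If you interpolate, do it between non-integer orders. Your Taylor-based cancellation via \eqref{eq:tay_anis} does not yield the integer-order Zygmund estimates you would need to interpolate between integer orders.
\end{itemize}
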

Recalling the definition of anisotropic H\"older norm in \eqref{eq:anis_holder_gen}, and its subsequent characterization, we can understand the estimates above as follows:  
\begin{framed}
\noindent {\bf(!)} A gain of (space) regularity of order $\alpha/(2i +1)$ in the variables $x^{i}$ comes at the cost of a (small time) singularity of order $\alpha /2$.  
\end{framed}


\subsection{Intrinsic spaces}\label{sec:intrinsic_spaces}

We start once more by an analogy with the heat semigroup. When mixed time-space regularity is concerned, the Schauder estimate \eqref{eq:schauder_heat} becomes as follows: for any $T>0$ and $\alpha, \gamma\geq 0$, there exists $C=C(\alpha, \gamma)>0$ such that 
\begin{equation}\label{eq:schauder_heat_time}
\|\Pcc^{\text{heat}}_\cdot \varphi  \|_{\Cc^{\alpha+\gamma}((r,T)\times\Rd)}  \leq C r^{-\frac{\alpha}{2}} \| \varphi \|_{\gamma}, \qquad r\in (0,T),
\end{equation}
for any $\varphi\in C_{b}(\Rd)$, where the norm on the left-hand side is the \emph{parabolic H\"older norm} defined, for a function $f:(r,T)\times \Rd\to \R$, as
\begin{equation}\label{eq:parab_norm_mix}
\| f  \|_{\Cc^{\nu}((r,T)\times\Rd)} := \sum_{\substack{k\in\N_0\\ 2 k < \nu}} \sup_{t\in (r,T)} \| \partial^{k}_{t} f(t , \cdot) \|_{\nu-2k} + \sum_{\substack{\eta \in\N^d_0\\ |\eta| < \nu}}  \sup_{y\in\Rd} \| \partial_y^{\eta} f(\cdot, y) \|_{(r,T),(\nu-|\eta|)/2},
\end{equation} 
when $\nu>0$, and as usual, as $\| f  \|_{\Cc^{0}((r,T)\times\Rd)} : = \| f \|_{\infty} $. 
This norm introduces the usual parabolic scaling, which can be read within the Schauder estimate \eqref{eq:schauder_heat_time} as follows:

\begin{framed}
\noindent {\bf(!)} A gain of space and time regularity of order $\alpha$ and $\alpha/2$, respectively, comes at the cost of a (small time) singularity of order $\alpha/2$.  
\end{framed}

%

The Schauder estimate \eqref{eq:schauder_heat_time} reduces to \eqref{eq:schauder_heat} when only space regularity is concerned, as the parabolic H\"older norm reduces to the standard H\"older norm when the function is time independent. Likewise, we would like to define an intrinsic norm, which extends the anisotropic norm introduced in Section \ref{sec:anisot_spaces} to functions of both time and space, allowing to prove estimates akin to \eqref{eq:schauder_heat_time} for the kinetic-type semigroups $\Pcc$ and $\Pcc'$ defined in \eqref{eq:def_semigroup_back} and \eqref{eq:def_semigroup_for}. 

In analogy with the parabolic case, one would expect the intrinsic H\"older norm to be induced by a distance on $\Rdd$ that extends the anisotropic distance, respecting the parabolic scaling. For instance, when $\nu\in (0,1)$, definition \eqref{eq:parab_norm_mix} yields
\begin{equation}\label{eq:hold_par_zero}
|f(s,x)-f(t,y)| \lesssim \| f  \|_{(r,T),\alpha} \big( |t-s|^{1/2} + |y-x| \big)^{\nu}, \qquad (s,x),(t,y)\in (r,T)\times\Rd.
\end{equation}
One could expect that the intrinsic version of \eqref{eq:hold_par_zero} is given by replacing $|\cdot|$ with $|\cdot|_B$ on the right-hand side. However, this is not entirely correct, as the correct distance should be defined in terms of the translation 
\begin{equation}
( t - s   , y - e^{(t - s) B}x  ),
\end{equation}
as opposed to the Euclidean translation. 

Again, we first derive some heuristics in the Kolmogorov case, and then present the general construction.


\paragraph{The Kolmogorov case.} Consider again the particular case given by setting $d_0 = 1$, $d=2$, and $B$ as in \eqref{eq:B_Kolm}. We also assume $\sigma=1$ to ease notation. Now fix $\varphi\in \Cc_b(\R^2)$ and set 
\begin{equation}\label{eq:v_kol}
v(t,y):= [\Pcc'_t \varphi](y), \qquad (t,y)\in (0,\infty)\times\R^2.
\end{equation}
Recalling \eqref{eq:example_degenerate_pdes_forw}, we have 
\begin{equation}
 Y v = \partial^2_{y_1} v,  \qquad \text{on $(0 , + \infty)\times \R^2$},
\end{equation}
with
\begin{equation}
Y = \partial_t + y_1 \partial_{y_2}.
\end{equation}
By 
\begin{equation}
Y \partial_{y_1} =  \partial_{y_1} Y - \partial_{y_2},
\end{equation}
we also obtain
\begin{equation}
Y^2 v = Y \partial^2_{y_1} v =  \big(   \partial_{y_1} Y - \partial_{y_2}  \big) \partial_{y_1} v =    (  \partial^2_{y_1} Y  - 2 \partial_{y_2}\partial_{y_1}  \big)  v =  (  \partial^4_{y_1}  - 2 \partial_{y_2}\partial_{y_1}  \big)  v , \qquad \text{on $(0 , + \infty)\times \R^2$}.
\end{equation}
Iterating, one obtains, for any $k\in\N_0$,
\begin{equation}
Y^{k} v = \sum_{j\in\N_0, 3 j \leq 2 k}  c_{k,j}   \partial^j_{y_2}\partial^{2k-3j}_{y_1} v, \qquad \text{on $(0 , + \infty)\times \R^2$}.
\end{equation}
Let us now fix $\alpha, \gamma\geq0$. For any $k\in\N_0$ with $2k< \alpha+ \gamma$, we have 
\begin{equation}\label{eq:bound_Yv}
 \| Y^{k} v(t , \cdot) \|_{B,\gamma+\alpha-2k} \lesssim \sum_{j\in\N_0, 3 j \leq 2 k} \|  \partial^j_{y_2}\partial^{2k-3j}_{y_1} v(t,\cdot) \|_{B,\gamma+\alpha-2k} \lesssim \|   v(t,\cdot) \|_{B,\gamma+\alpha} \lesssim  r^{-\frac{\alpha}{2}} \|\varphi  \|_{B,\gamma} , \qquad t\in (r,T),
\end{equation}
where the last inequality follows from \eqref{eq:Schauder_est}. 
Similarly, a more precise analysis, at the level of the transition density, allows one to control the regularity of the space derivatives of $v$ along the integral curves of the vector field $Y$, i.e.
\begin{equation}
e^{\delta Y} (t,y):= (t + \delta , e^{\delta B} y ) = (t+\delta , y_1 , y_2 + \delta y_1), \qquad (t,y)\in \R\times\R^2, \quad \delta\in\R.
\end{equation}
Precisely, one can prove
\begin{equation}\label{eq:bound_space_int}
\| \partial^{i}_1\partial^{k}_2 v( \cdot, y_1 , y_2 + \cdot \,y_1) 
\|_{(r,T),(\gamma+\alpha-|\eta|)/2} \lesssim r^{-\frac{\alpha}{2}} \|\varphi  \|_{B,\gamma} , \qquad y\in \R^2,
\end{equation}
for any $i,k \in\N_0$ with $i+2k<\gamma+\alpha$, where $\partial^{n}_j v$ denotes the $n$-th order partial derivative with respect to the $j$-th space variable.

By combining \eqref{eq:bound_Yv} with \eqref{eq:bound_space_int}, we obtain the intrinsic version of \eqref{eq:schauder_heat_time}, namely 
\begin{equation}\label{eq:schauder_intr_kol}
\| \Pcc'_\cdot \varphi  \|_{\Cc_B^{\alpha+\gamma}((r,T)\times\R^2)}  \lesssim  r^{-\frac{\alpha}{2}} \| \varphi \|_{B,\gamma}, \qquad r\in (0,T],
\end{equation}
where, for $\nu>0$, we set
\begin{equation}\label{eq:intr_norm_kol}
\|f  \|_{\Cc_B^{\nu}((r,T)\times\R^2)} := \sum_{\substack{k\in\N_0\\ 2 k < \nu}} \sup_{t\in (r,T)}
 \| Y^{k} f(t,\cdot) \|_{B,\nu-2k} +  \sup_{y\in \R^2} \sum_{\substack{i,j \in\N_0\\ i + 3 j < \nu}}   \| \partial^{i}_1 \partial^{j}_2 f(\cdot, y_1 , y_2 + \cdot \,y_1) \|_{(r,T),(\nu - i - 3j)/2}.\quad
\end{equation}


The norm defined above is called the \emph{intrinsic H\"older norm} and can be regarded as a generalization of the parabolic H\"older norm in \eqref{eq:parab_norm_mix}, where the field $\partial_t$ is replaced by $Y$ and the space regularity is anisotropic as opposed to Euclidean (see Table \ref{tab:parab_int}). Also, it reduces to the anisotropic norm in \eqref{eq:anis_holder} when $f$ is constant in time.
\begin{table}[h]
\begin{center}
\begin{tabular}{c|c|c}
 &parabolic & intrinsic \\
   \hline
   ``time" direction & $\partial_t$ & $Y$ \\
   space regularity & $\| \cdot \|_{\nu}$ (Euclidean) & $\| \cdot \|_{B,\nu}$ (anisotropic) 
    \end{tabular}
    \end{center}
  \caption{Parabolic vs. intrinsic regularity.}
      \label{tab:parab_int}
\end{table} 
\begin{remark}[!]
Although the function $v$ in \eqref{eq:v_kol} is smooth, and thus $Y^k v(t,y)$ is well defined as a combination of Euclidean space derivatives, 
the term $Y^k v(t,y)$ in \eqref{eq:intr_norm_kol} should be understood in a weaker sense, namely as $k$-th order $Y$-Lie derivative. Recall that the $Y$-Lie derivative of $f$ at $(t,y)$ is defined as 
\begin{equation}\label{eq:Lie_der}
Y f (t,y) : =  \lim_{\delta \to 0} \frac{f\big(e^{\delta Y}(t,y)\big) - f(t,y)}{\delta} =  \lim_{\delta \to 0} \frac{f(t+\delta , y_1 , y_2 + \delta y_1) -f(t,y)}{\delta}.
\end{equation}
For instance, if $\|f  \|_{\Cc_B^{\nu}((r,T)\times\R^2)} < \infty$ with  $\nu \in (2,3]$, then $f$ supports a first order $Y$-Lie derivative but not a first order $\partial_{2}$ partial derivative, which is only supported when $\nu>3$.
\end{remark}
We now look for some useful characterizations of the intrinsic H\"older norm. The triangle inequality gives
\begin{equation}
|f(t,y_1 , y_2) - f(s,x_1 , x_2)|  \leq |  f(t,y_1 , y_2) -  f(t,x_1 , y_2 - (t-s) x_1 ) | + |  f(t,x_1 , y_2 - (t-s) x_1 ) - f(s,x_1 , x_2)|.
\end{equation}
Thus, if $\nu\in (0,1)$, \eqref{eq:intr_norm_kol} directly yields
\begin{equation}
|f(t,y_1 , y_2) - f(s,x_1 , x_2)| \lesssim \|f  \|_{\Cc_B^{\nu}((r,T)\times\R^2)} \big( | (y_1 - x_1 , y_2 - x_2 - (t-s) x_1) |_B + |t-s|^{\frac{1}{2}} \big)^{\nu}, 
\end{equation}
which is the intrinsic version of \eqref{eq:hold_par_zero}. More generally, proving the following intrinsic Taylor formula is a straightforward, though lengthy, exercise. If $n\in\N_0$, $\alpha\in(0,1)$ and $\|f  \|_{\Cc_B^{n+\alpha}((r,T)\times\R^2)} < \infty$, then
\begin{align}
\Big| f(t,y) -  \sum_{\substack{i,j,k\in \N_0\\  i   +3j + 2k \leq n }}&   \frac{Y^{k} \partial^{i}_{x_1}\partial^{j}_{x_2}f (s,x_1,x_2) }{k!\, i!\, j!} {(y_1 - x_1)^i \big(y_2 - x_2 - (t-s)x_1)\big)^j (t-s)^k}   \Big|\\
& \lesssim  \| f \|_{\Cc_B^{n+\alpha}((r,T)\times\R^2)} \,\big( | (y_1 - x_1 , y_2 - x_2 - (t-s) x_1) |_B + |t-s|^{\frac{1}{2}} \big)^{n+\alpha}, \label{eq:tay_kol_int}
\end{align}
for any $(t,y),(s,x)\in(r,T)\times\R^2$. This formula obviously reduces to \eqref{eq:tay_kol_anis} when $f$ is independent of time.

Remarkably, the H\"ormander condition allows one to control all the terms in the intrinsic H\"older norm \eqref{eq:intr_norm_kol} 
by only controlling the H\"older norms along the H\"ormander fields $\partial_{y_1}$ and $Y$. We provide the full argument when the regularity index is smaller than one, namely when no derivatives are involved. Fix $z = (s,x) \in \R \times \Rd$, $h,k \in \R$, and set 
\begin{align}
z_0 &= z, \\
z_1 &:= z + (0 , h , 0) = (s, x_1 + h ,  x_2 ),\\
  z_2&: = e^{k Y} z_1 = \big(s + k , x_1 + h , x_2 + k (x_1 + h) \big) , \\ 
  z_3& := z_2 - (0, h , 0) = \big(s + k , x_1 , x_2 + k (x_1 + h) \big) , \\ 
   z_4& := e^{- k Y}z_3 = \big(s  , x_1 , x_2 + k (x_1 + h) - k x_1 \big) = (s, x_1,  x_2 + h k  ). 
\end{align}
The points $z_i$ form the following path: move from $z$ along the field $\partial_{y_1}$, with increment $h$, then along the field $Y$, with increment $k$, and finally again along the same directions, with opposite increments. By telescoping on a function $f$, this path provides a discretization of the commutator $[\partial_{y_1}, Y]f$:
\begin{equation}
f(s, x_1 , x_2 + h k ) - f(s, x)  = \sum_{i=1}^4  f(z_i) - f(z_{i-1})  ,
\end{equation}
which readily yields
\begin{equation}\label{eq:telesc_hor}
|f(s, x_1 , x_2 + h k ) - f(s, x) | 
\lesssim h^{p}  \sup_{\substack{t\in (r,T)\\ y_2\in\R}} \| f(t, \cdot , y_2) \|_{p} + k^{q}  \sup_{y\in\R^2} \| f(\cdot, y_1 , y_2 + \cdot \,y_1)\|_{(r,T),q} .
\end{equation}
Fix now $\nu\in (0,1)$ and 
set $p = \nu$, $q = \nu/2$ and $k = h^2$ in \eqref{eq:telesc_hor}. We obtain
\begin{equation}
|f(s, x_1 , x_2 + h k) - f(s, x) | \lesssim (h k)^{\frac{\nu}{3}}  \bigg(    \sup_{\substack{t\in (r,T)\\ y_2\in\R}} \| f(t, \cdot , y_2) \|_{\nu} + \sup_{y\in\R^2} \| f(\cdot, y_1 , y_2 + \cdot \,y_1)\|_{(r,T),\nu / 2}   \bigg),
\end{equation}
or, equivalently,
\begin{equation}
  \sup_{\substack{t\in (r,T)\\ y_1\in\R}} \| f(t , y_1, \cdot) \|_{\nu/3} \lesssim   \sup_{\substack{t\in (r,T)\\ y_2\in\R}} \| f(t, \cdot , y_2) \|_{\nu} + \sup_{ y\in\R^2} \| f(\cdot, y_1 , y_2 + \cdot \,y_1)\|_{(r,T),\nu / 2}.
\end{equation}
Therefore, the  $(\nu/3)$-H\"older norm in the degenerate space variable is controlled by the $\nu$-H\"older norm in the non-degenerate space variable and by the $(\nu/2)$-H\"older norm along the vector field $Y$.
Recalling definitions \eqref{eq:intr_norm_kol} and \eqref{eq:anis_holder}, we conclude that 
\begin{equation}\label{eq:intr_charact_zero_kol}
\|f  \|_{\Cc_B^{\nu}((r,T)\times\R^2)}  \asymp  \sup_{\substack{t\in (r,T)\\ y_2\in\R}} \| f(t, \cdot , y_2) \|_{\nu} + \sup_{ y\in\R^2} \| f(\cdot, y_1 , y_2 + \cdot \,y_1)\|_{(r,T),\nu / 2}, \qquad \nu\in (0,1). 
\end{equation}
In general, 
it was proved in \cite{pagliarani2016intrinsic,pagliarani2022intrinsic} that 
\begin{align}
\|f  \|_{\Cc_B^{\nu}((r,T)\times\R^2)}  &\asymp  \|\partial_1 f  \|_{\Cc_B^{\nu -1 }((r,T)\times\R^2)} + \sup_{y\in\R^2} \| f(\cdot, y_1 , y_2 + \cdot \,y_1)\|_{(r,T),\nu / 2} , && \nu \in (1,2), \\
\|f  \|_{\Cc_B^{\nu}((r,T)\times\R^2)}  &\asymp \|  f \|_{\infty}+   \|\partial_1 f  \|_{\Cc_B^{\nu -1 }((r,T)\times\R^2)} +  \|Y f  \|_{\Cc_B^{\nu -2 }((r,T)\times\R^2)} , && \nu > 2, \nu\notin\N.
\end{align}
These, together with \eqref{eq:intr_charact_zero_kol}, show that the intrinsic H\"older norm is controlled by the H\"older norms in the non-degenerate space variable and along the vector field $Y$.

\paragraph{The general case.} Recalling the notation 
\eqref{eq:anis_length}, for any $\nu>0$ and $r<T$ we set $C^{\nu}_B((r,T)\times\Rd)$ as the space of functions $f:(r,T)\times\Rd \to \R$ such that 
\begin{equation}\label{eq:intr_holder_gen}
\| f \|_{C^{\nu}_B((r,T)\times\Rd)}: = \sum_{\substack{k\in\N_0\\ 2 k < \nu}} \sup_{t\in (r,T)}
 \| Y^{k} f(t,\cdot) \|_{B,\nu-2k} +\sum_{\substack{\eta \in\N^d_0\\ |\eta|_B < \nu}}  \sup_{y\in\Rd}   \| \partial^{\eta} f(\cdot, e^{\cdot\, B } y) \|_{(r,T),(\nu-|\eta|_B)/2} < \infty,\qquad
\end{equation}
where $Y^k$ denotes the $k$-th order Y-Lie derivative along the vector field $Y$ (see 
\eqref{eq:Lie_der}), and $ \partial^{\eta}$ denotes the partial derivative $\partial^{\eta_1}_{1} \cdots \partial^{\eta_d}_{d}$ with respect to the space variables. 
We also set $C^{0}_B((r,T)\times\Rd):= \Cc_b((r,T)\times\Rd)$, equipped with $\| \cdot \|_{C^{0}_B((r,T)\times\Rd)} : = \| \cdot \|_{\infty}$. 
Note that the norm in \eqref{eq:intr_holder_gen} reduces to \eqref{eq:intr_norm_kol} in the Kolmogorov case. 
\begin{remark}
The intrinsic H\"older space $C^{\nu}_B((r,T)\times\Rd)$ extends the anisotropic H\"older space $C^{\nu}_B(\Rd)$ to functions that depend on both time and space. Indeed, the intrinsic norm \eqref{eq:intr_holder_gen} reduces to the anisotropic norm \eqref{eq:anis_holder_gen} when $f$ is independent of the time variable.
\end{remark}

\begin{remark}
As in the Kolmogorov case, one can prove the following Taylor formula. 
Let $n\in\N_0$ and $\alpha\in(0,1)$. If $f\in C^{n+ \alpha}_B((r,T)\times\Rd)$, there exists a positive constant $C=C(n)$ such that
\begin{equation}\label{eq:tay_int_gen}
\Big| f(t,y) -  \sum_{\substack{k\in\N, \eta\in \N^d_0\\ 2k+ |\eta|_B \leq n }}   \frac{ Y^k \partial_x^{\eta} f(s,x)}{k!\,\eta!} {(t-s)^k\big(y - e^{(t-s)B}x\big)^{\eta}}   \Big| \leq C \| f \|_{C^{n+\alpha}_B((r,T)\times\Rd)} \, \big( |t-s|^{\frac{1}{2}} + |y-e^{(t-s)B}x|_B \big)^{n+\alpha}, 
\end{equation}
for any $(t,y),(s,x)\in (r,T)\times \Rd$. Here, 
\begin{equation}
\eta! :=  \eta_1 ! \cdots \eta_d !,  \qquad x^{\eta} =  x^{\eta_1}_1  \cdots  x^{\eta_d}_d, \qquad \eta\in\N_0^d, \  x\in\Rd.
\end{equation}
The proof of \eqref{eq:tay_int_gen}, with $n=0$, is exactly the same as in the Kolmogorov case, while it is a lengthy exercise for $n\in\N$.  We refer to \cite{pagliarani2016intrinsic} and \cite{pagliarani2022intrinsic} for a detailed proof. 

Note that \eqref{eq:tay_int_gen} reduces to \eqref{eq:tay_kol_int} in the Kolmogorov case. In general, it reduces to \eqref{eq:tay_anis} when $f$ does not depend on time.
\end{remark}

In analogy with what we observed in the Kolmogorov case, it can be proved that the intrinsic H\"older norm is controlled by the H\"older norms in the non-degenerate space variables, i.e. the first $d_0$ space variables, and along the vector field $Y$. The following result was proved in \cite{pagliarani2016intrinsic,pagliarani2022intrinsic}.
\begin{theorem}
Recalling notation \eqref{eq:notation_r}, we have
\begin{align}\label{eq:intr_charact_zero_kol_int}
\|f  \|_{\Cc_B^{\nu}((r,T)\times\Rd)}&  \asymp  \sup_{\substack{t\in (r,T)\\ (y^1, \dots, y^{\rr}) \in\R^{d-d_0}}} \| f(t, \cdot , y^{1}, \dots, y^{\rr}) \|_{\nu} + \sup_{y\in\Rd} \| f(\cdot, e^{\cdot \, B} y)\|_{(r,T),\nu / 2}, && \nu\in (0,1), \\
\|f  \|_{\Cc_B^{\nu}((r,T)\times\Rd)}  &\asymp \sum_{i=1}^{d_0}  \|\partial_i f  \|_{\Cc_B^{\nu -1 }((r,T)\times\Rd)} +\sup_{y\in\Rd} \| f(\cdot, e^{\cdot \, B} y)\|_{(r,T),\nu / 2}, && \nu \in (1,2), \\
\|f  \|_{\Cc_B^{\nu}((r,T)\times\Rd)}  &\asymp \|  f \|_{\infty}+ \sum_{i=1}^{d_0}   \|\partial_i f  \|_{\Cc_B^{\nu -1 }((r,T)\times\Rd)} +  \|Y f  \|_{\Cc_B^{\nu -2 }((r,T)\times\Rd)} , && \nu > 2, \nu\notin\N.
\end{align}
\end{theorem}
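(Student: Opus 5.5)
Each equivalence has a trivial direction and a nontrivial one. In every case the right-hand side is dominated by the left: the intrinsic norm \eqref{eq:intr_holder_gen} already contains the space regularity (through the anisotropic norm, whose $x^0$-part is the Euclidean $\nu$-norm), the $Y$-direction regularity $\sup_y\|f(\cdot,e^{\cdot B}y)\|_{\nu/2}$, and the norms of $\partial_i f$ and $Yf$ of lowered order. So the work is the reverse bound: control every term of \eqref{eq:intr_holder_gen} using only regularity along $\partial_{x_1},\dots,\partial_{x_{d_0}}$ and $Y$. I would proceed by induction on the strata $i=0,\dots,\rr$ of \eqref{eq:notation_r}, and then by induction on $n=\lfloor\nu\rfloor$.

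\emph{Case $\nu\in(0,1)$.} I follow the Kolmogorov telescoping argument \eqref{eq:telesc_hor}, generalized with iterated commutator paths. By the block structure \eqref{B}, $[\partial_{x_j},Y]$ generates directions in $x^1$ through $\mathcal{B}_1$. Inductively, suppose the regularity in $x^{i-1}$ is already known, of order $\nu/(2i-1)$. A path that moves by $h$ in an $x^{i-1}$-direction, by $k$ along $e^{kY}$, and then back by $-h$ and $-k$ produces a displacement $hk\,\mathcal{B}_i(\cdot)$ in $x^i$, plus lower-order errors in higher strata $x^{i+1},\dots$, coming from the $\ast$-blocks and the higher powers of $kB$ in $e^{kB}$. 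I choose $k=|h|^{2/(2i-1)}$, so that $|h|^{\nu/(2i-1)}\sim k^{\nu/2}\sim |hk|^{\nu/(2i+1)}$. This gives $\nu/(2i+1)$-regularity in $x^i$ along the range of $\mathcal{B}_i$, which is all of $\R^{d_i}$ since $\mathcal{B}_i$ has rank $d_i$. Once all the strata are controlled, the increment in $(t,y)$ follows by the triangle inequality, as in the Kolmogorov case.

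\emph{Case $\nu\in(1,2)$.} The derivatives $\partial_i f$, $i\le d_0$, are controlled by hypothesis. The only terms of \eqref{eq:intr_holder_gen} not yet covered are the space H\"older norms in the degenerate strata, of order $\nu/(2i+1)<1$ for $i\ge1$, together with the $Y$-direction term, which is given. I would handle the degenerate strata by rerunning the telescoping path with a first-order Taylor correction in $x^0$. Since $\partial_{x^0}f$ is $(\nu-1)$-H\"older intrinsically, the $x^0$-legs contribute increments of order $|h|^{\nu}$ after subtracting $h\cdot\partial_{x^0}f$. The two subtracted linear terms then nearly cancel, because they are evaluated at points that are $k$-close along $Y$. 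Their difference is bounded by $|h|\,k^{(\nu-1)/2}$, which has the correct homogeneity.

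\emph{Case $\nu>2$, $\nu\notin\N$.} Here I would argue by induction using the commutator identities $\partial_{x^{i}}$-derivatives $=[\cdot,Y]$, in the spirit of $Y\partial_{y_1}=\partial_{y_1}Y-\partial_{y_2}$ in the Kolmogorov computation. Every derivative $\partial^\eta$ with $|\eta|_B<\nu$ and every $Y^k$ can be written as a combination of $\partial_{x^0}$ and $Y$ applied to lower-order objects. The anisotropic norms of $Y^kf$ and $\partial^\eta f$ then reduce to the intrinsic norms of $\partial_i f$ and $Yf$ at orders $\nu-1$ and $\nu-2$, which are controlled by the induction hypothesis.

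\emph{Main obstacle.} I expect the hardest part to be the first case with general $\rr$ and arbitrary $\ast$-blocks. There, the higher-order terms of $e^{kB}$ and the upper-triangular couplings make the commutator path land at a displaced point in the higher strata, not exactly at $(x^0,\dots,x^i+hk\,\mathcal{B}_i\,\cdot,\dots)$. I would first try to absorb these errors by a backward induction from $x^{\rr}$ downward, using the fact that their size $|h|k^{j}$ is of lower intrinsic order. Failing that, I would cite the path construction of \cite{pagliarani2016intrinsic,pagliarani2022intrinsic}, where the argument is done in full.
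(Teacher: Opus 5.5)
\textbf{Where your route matches the notes, and where it breaks.} Your overall strategy is the one the notes use: they carry out the telescoping along $\partial_{y_1}$ and $Y$ only in the Kolmogorov case, with $k=h^2$, and defer everything else to \cite{pagliarani2016intrinsic,pagliarani2022intrinsic}. However, the step you add for general $B$ fails as soon as $\rr\ge 2$, in both your first and second cases.

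Your path built from $he$, with $e$ in stratum $i-1$, ends at $x+(I-e^{-kB})he$. For $m\ge 2$, the term $\frac{(-1)^{m+1}}{m!}k^mB^mhe$ has a component in stratum $i-1+m$ equal to a multiple of $hk^m\mathcal{B}_{i-1+m}\cdots\mathcal{B}_i e$, which is generically non-zero. With your choice $k=|h|^{2/(2i-1)}$, its intrinsic size is $(|h|k^m)^{1/(2(i-1+m)+1)}=|h|^{1/(2i-1)}$. This is exactly the intrinsic size $(|h|k)^{1/(2i+1)}$ of the main displacement.

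So these errors are not of lower intrinsic order: by homogeneity they are of the same order. Each costs $|h|^{\nu/(2i-1)}$, the full target, and can be absorbed only if the $\nu/(2j+1)$-regularity in the strata $j>i$ is already known. Your upward induction has not produced that regularity at this point. The $\ast$-blocks and $B_0$, by contrast, only push errors into strata $\le i$, where they are harmless; you attributed the higher-strata errors to the wrong source.

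A concrete instance: take $d_0=d_1=d_2=1$ and $Bx=(0,x_1,x_2)$. Your path with $k=h^2$ controls $f(s,x_1,x_2+h^3,x_3-h^5/2)-f(s,x)$, and the spurious $x_3$-shift $h^5$ costs $h^{\nu}$, the same as the target $(h^3)^{\nu/3}$. Running your induction backward instead is circular, because in your scheme the $x^{\rr}$-regularity is itself derived from legs in $x^{\rr-1}$.

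\textbf{What is missing.} You need to reorganise the paths; two options work in the homogeneous case.
\begin{itemize}
\item Realise each stratum-$i$ translation using only $\partial_{x^0}$-legs and $Y$-legs, as an $i$-fold iterated group commutator with legs of length $\sim\delta$ and $\sim\delta^2$. Proceed from $i=\rr$ downward, and correct the same-order residual in strata $>i$ with the regularity already obtained there.
\item Cancel the powers $k^mB^m$, $m\ge2$, by symmetrising. The path for $(h,k)$ followed by the one for $(-h,-k)$ displaces by $(e^{kB}-e^{-kB})he$, and Richardson-type combinations in $k$ then remove the remaining odd powers.
\end{itemize}
Both devices give exact translations only when $B$ is nilpotent and homogeneous, that is, with no $\ast$-blocks and $B_0=0$. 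For general $B$, $e^{kB}$ is not a polynomial, and the $\ast$-blocks and $B_0$ create residuals in lower strata that obstruct the top-down ordering. This is exactly the content the notes delegate to \cite{pagliarani2016intrinsic,pagliarani2022intrinsic}.

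\textbf{Your case $\nu>2$.} It has a related gap. The commutator identities yield $\partial_{x^i}f$ only when $\nu>2i+1$. Even then, you must first prove that this derivative exists, since the hypotheses give only Lie derivatives along $\partial_{x^0}$ and $Y$. The identities say nothing about the fractional terms of $\|f(t,\cdot)\|_{B,\nu}$ in the degenerate strata, for instance the $\nu/3$-regularity in $x^1$ when $\nu\in(2,3)$. Those terms again require the telescoping argument, now with second-order Taylor corrections in $x^0$ and first-order corrections along $Y$, and they are subject to the same same-order residual issue.
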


We conclude with the Schauder estimates that motivated the introduction of the intrinsic spaces, which were derived heuristically in the Kolmogorov case. 

\begin{theorem}\label{th:Schauder_est_intr}
For any $\alpha,\gamma\geq 0 $ and $T>0$, there exists $C=C(\alpha,\gamma,T,B)>0$ such that 
\begin{align}\label{eq:Schauder_est_int}
 \|\Pcc'_{\cdot}\, \varphi\|_{\Cc_B^{\alpha+\gamma}((r,T)\times\Rd)}  &\leq C\, r^{-\frac{\alpha}{2}} \|\varphi\|_{B,\gamma},\\ 
\|\Pcc_{T-\cdot}\, \varphi\|_{{\Cc_B^{\alpha+\gamma}((0,T-r)\times\Rd)} }& \leq C\, (T-r)^{-\frac{\alpha}{2}} \|\varphi\|_{B,\gamma}, 
\end{align}
for any  $r\in (0,T)$ and for any $\varphi\in C^{b}(\Rd)$.
\end{theorem}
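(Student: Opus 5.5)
The plan is to reduce the intrinsic estimate to the anisotropic Schauder estimate of Theorem \ref{th:Schauder_est}, applied at each fixed time, together with the characterization of the intrinsic norm in terms of the H\"ormander fields $\partial_{x_1},\dots,\partial_{x_{d_0}}$ and $Y$. I only treat the forward estimate, for $v(t,\cdot):=\Pcc'_t\varphi$. The backward estimate follows by the same argument applied to $u(s,\cdot)=\Pcc_{T-s}\varphi$, with $(s,x)\mapsto(T-s,x)$ exchanging $r$ and $T-r$.

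\emph{Step 1: space part of the norm.} For $t\in(r,T)$, Theorem \ref{th:Schauder_est} gives $\|v(t,\cdot)\|_{B,\gamma+\alpha}\le C t^{-\alpha/2}\|\varphi\|_{B,\gamma}\le C r^{-\alpha/2}\|\varphi\|_{B,\gamma}$. Since $v$ is smooth and $\Kcc' v=0$, we have $Yv=\frac{\sigma^2}{2}\Delta_{y,d_0}v-\mathrm{Tr}(B)v$. Iterating as in the Kolmogorov heuristics, using the commutators $[\partial_{y_i},Y]=\sum_j B_{ji}\partial_{y_j}$ and the block structure \eqref{B}, I expect
\begin{equation}
Y^k v=\sum_{|\eta|_B\le 2k} c_{k,\eta}\,\partial^\eta v .
\end{equation}
The point is that each commutator raises the anisotropic weight by exactly $2$, so that the total weight stays at most $2k$. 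By the recursive characterization of $C^\nu_B(\Rd)$, the operator $\partial^\eta$ maps $C^{\nu}_B$ into $C^{\nu-|\eta|_B}_B$. Hence $\sup_t\|Y^kv(t,\cdot)\|_{B,\nu-2k}\lesssim r^{-\alpha/2}\|\varphi\|_{B,\gamma}$ with $\nu=\alpha+\gamma$, which controls the first sum in \eqref{eq:intr_holder_gen}.

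\emph{Step 2: regularity along the integral curves of $Y$.} Fix $\eta$ with $\beta:=\nu-|\eta|_B>0$, and set $g_t:=\partial^\eta v(t,\cdot)$, which lies in $C^\beta_B$ uniformly for $t\ge r$ by Step 1. I would use the semigroup property $v(t+\delta)=\Pcc'_\delta v(t)$. Commuting $\partial^\eta$ with $\Pcc'_\delta$ through the identity $\nabla_y p_{\Kcc}(\delta,x,y)$ versus $e^{\delta B^\top}\nabla_x$ produces only lower-order terms with polynomial factors in $\delta$. This reduces matters to the bound
\begin{equation}
\big|[\Pcc'_\delta g](e^{\delta B}y)-g(y)\big|\lesssim \delta^{\beta/2}\|g\|_{B,\beta}, \qquad \beta\in(0,2).
\end{equation}
This bound follows from the Gaussian representation \eqref{eq:kol_dens} after subtracting $g(y)$. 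The mass defect $e^{-\delta\mathrm{Tr}B}-1=O(\delta)$ is harmless, and the anisotropic Taylor formula \eqref{eq:tay_anis} is used together with the scaling $\int |z-e^{\delta B}x|_B^{\beta}p_{\Kcc}(\delta,x,z)\,dz\lesssim\delta^{\beta/2}$. For $\beta\in[1,2)$, the odd first-order term vanishes by symmetry of the Gaussian. This gives the second sum in \eqref{eq:intr_holder_gen} whenever $\nu-|\eta|_B<2$.

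\emph{Step 3: higher orders, the main obstacle.} When $\beta\ge2$, the time H\"older norm of order $\beta/2$ along $e^{\cdot B}y$ requires Taylor expansion in $\delta$, that is, control of $Y^k g$. Rather than grinding through this, I would use the last characterization in the theorem preceding the statement,
\begin{equation}
\|f\|_{\Cc^\nu_B}\asymp\|f\|_\infty+\sum_{i\le d_0}\|\partial_if\|_{\Cc^{\nu-1}_B}+\|Yf\|_{\Cc^{\nu-2}_B}, \qquad \nu>2,\ \nu\notin\N,
\end{equation}
and induct on $\lfloor\nu\rfloor$. The key observations are the following. First, $Yv(t)=\Pcc'_{t-r/2}\big(\mathscr L v(r/2)\big)$, with $\mathscr L$ the Fokker--Planck generator, because $\mathscr L$ commutes with its own semigroup. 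Second, $\partial_i v(t)$ can be written as $\Pcc'_{t-r/2}$ applied to a finite combination of derivatives of $v(r/2)$, by the same commutation identity as in Step 2. In both cases the datum $\mathscr L v(r/2)$, respectively $\partial v(r/2)$, lies in $C^{\gamma+\alpha-2}_B$, respectively $C^{\gamma+\alpha-1}_B$, with norm $\lesssim r^{-\alpha/2}\|\varphi\|_{B,\gamma}$ by Theorem \ref{th:Schauder_est}. The inductive hypothesis, applied with $\alpha'=0$ on the interval $(r,T)$ after the time shift by $r/2$, then closes the estimate. Integer values of $\nu$ are recovered either by interpolation or by working directly with the Zygmund version of the norms.

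The delicate point is keeping the commutation bookkeeping of $\partial^\eta$ with $\Pcc'_\delta$ and with $Y$ consistent with the anisotropic weights $|\eta|_B$. This is where the block structure of $B_1$ enters, and I expect it to be the main source of technical work.
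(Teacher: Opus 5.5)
Your Step 1 is exactly the paper's argument for the first sum in \eqref{eq:intr_holder_gen} (compare \eqref{eq:bound_Yv}), and the core bound of Step 2 is sound. The proof breaks in Step 3.

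\textbf{Step 3: the $Y$ part.} Since $\mathscr L$ commutes with $\Pcc'$, the function $\Pcc'_{t-r/2}\big(\mathscr L v(r/2)\big)=\mathscr L v(t)$ is $\partial_t v(t)$. It is not $Yv(t)=\partial_t v(t)+\langle By,\nabla v(t)\rangle$, and neither $Y$ nor $\langle By,\nabla\rangle$ commutes with $\Pcc'$. Moreover, $\mathscr L v(r/2)$ contains $-\langle By,\nabla v(r/2)\rangle$, whose coefficient grows linearly, so it lies in no $C^\mu_B(\Rd)$. For instance, in the Kolmogorov case with $\varphi(x)=\sin x_2$ one finds $v(t,y)=e^{-\sigma^2t^3/6}\sin(y_2-ty_1)$. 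Then $\partial_t v$ grows like $|y_1|$, whereas $Yv=-\frac{\sigma^2t^2}{2}\,v$ is bounded.

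\textbf{Step 3: the $\partial_i$ part.} This half of the induction does not close either. The identity $\partial_i\Pcc'_\tau h=\sum_j (e^{-\tau B})_{ji}\,\Pcc'_\tau\partial_j h$, with $\tau=t-r/2$, has $t$-dependent coefficients. It also involves derivatives $\partial_j$ in the blocks $x^k$, of anisotropic weight $2k+1$. For these, $\|\partial_j v(r/2)\|_{B,\nu-1}$ is only bounded by $r^{-(\alpha+2k)/2}$. The coefficient is of order $\tau^k$, so it is not small once $t$ is away from $r$, and you lose a factor $r^{-k}$.

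\textbf{The same issue in Step 2.} The commutator terms $\delta^m\,\Pcc'_\delta\partial^{\eta'}v(t)$, with $|\eta'|_B\le|\eta|_B+2m$, are not lower order. They have exactly the same intrinsic homogeneity, and their weight may exceed $\nu$. Already for Kolmogorov, $\partial_1\Pcc'_\delta g=\Pcc'_\delta\partial_1 g-\delta\,\Pcc'_\delta\partial_2 g$, where $\partial_2$ has weight $3>\nu$. Bounding $\|\partial^{\eta'}v(t)\|_\infty$ via Theorem \ref{th:Schauder_est} only works for $\delta\lesssim t$. For $\delta>t$ (when $|\eta|_B<\gamma$), you must move the offending $\partial_{x_j}$ onto $p_{\Kcc}(\delta,\cdot,y)$. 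You then use $\int\partial_{x_j}p_{\Kcc}(\delta,x,y)P(x)\,dx=0$ for every anisotropic Taylor polynomial $P$ of order below the weight of $\partial_j$. This is a kernel-level estimate in the degenerate directions, of the type \eqref{eq:bound_gauss_for_Y}. It is the route the paper indicates for the regularity along $t\mapsto(t,e^{tB}y)$, following \cite{lucertini2023optimal}.

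\textbf{How to close the argument.} Once Step 2 is repaired this way, Step 3 is unnecessary. We have $\frac{d}{dt}\,\partial^\eta v(t,e^{tB}y)=(Y\partial^\eta v)(t,e^{tB}y)$, and by your own Step 1, $Y\partial^\eta v=\sum_{|\eta'|_B\le|\eta|_B+2}c_{\eta'}\,\partial^{\eta'}v$. An induction on the integer part of the time-H\"older order then reduces every term of the second sum in \eqref{eq:intr_holder_gen} to orders in $(0,1)$, which is Step 2, up to Zygmund endpoints.

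\textbf{Backward estimate.} Your time reversal gives the rate $r^{-\alpha/2}$, because $T-s\ge r$ on $(0,T-r)$. That is the correct rate; the printed $(T-r)^{-\alpha/2}$ cannot hold as $r\to0^+$.
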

Recalling the definition of intrinsic H\"older norm in \eqref{eq:intr_holder_gen}, it is clear that Theorem \ref{th:Schauder_est_intr} extends Theorem \ref{th:Schauder_est}. The estimates above can be understood as follows:  
\begin{framed}
\noindent {\bf(!)} A gain of space regularity of order $\alpha/(2i +1)$ in the variables $x^{i}$, and of order $\alpha/2$ along $Y$, comes at the cost of a (small time) singularity of order $\alpha /2$.  
\end{framed}

For $\gamma\in (0,2)$ and $\alpha=2$, the estimates in Theorem \ref{th:Schauder_est_intr} are particular instances of those in \cite[Th. 2.7]{lucertini2023optimal}, which are obtained in the case of non-constant diffusion. In general, they can be obtained, following the arguments in \cite{lucertini2023optimal}, as consequences of
the following regularity estimates, at the level of the transition density $p_{\Kcc_{\sigma}}$. 
\begin{lemma}[Gaussian estimates] For any $n,m\in\N_0$ and $\gamma\in(0,1)$, and for any $\sigma,T>0$, there exists a positive constant $C=C(n,m, \gamma, T,\sigma,B)$ such that the upper bounds
\begin{equation}\label{eq:bound_Grad_kol}
\big| \nabla^{n}_{y^0} \nabla^{m}_{x^0} p_{\Kcc_{\sigma}}(r,x,y)  \big|  \leq  \frac{C}{r^{\frac{m+n }{2}}}  p_{\Kcc_{2\sigma}}(r, x  , y) ,
\end{equation}
the regularity bounds in forward variables
\begin{align}
\big| \nabla^{n}_{y^0} \nabla^{m}_{x^0} p_{\Kcc_{\sigma}}(r,x,y) - \nabla^{n}_{y^0}\nabla^{m}_{x^0} p_{\Kcc_{\sigma}}(r,x,y') \big| & \leq C \, \frac{|y - y'|^{\gamma}_B}{r^{\frac{m+n + \gamma}{2}}} \big( p_{\Kcc_{2\sigma}}(r, x  , y) + p_{\Kcc_{2\sigma}}(r, x  , y') \big), \\
\label{eq:bound_gauss_for_grad} \\
\big| \nabla^{n}_{y^0} \nabla^{m}_{x^0}  p_{\Kcc_{\sigma}}(r,x,y) - \nabla^{n}_{y^0} \nabla^{m}_{x^0}  p_{\Kcc_{\sigma}}\big(r',x, e^{(r'-r) B} y\big) \big| & \leq C \bigg(\frac{r'}{r} \bigg)^{Q/2}  \frac{(r'-r)^{\gamma}}{r^{\frac{m}{2}+\gamma}} \, p_{\Kcc_{2\sigma}}(r', x  , y),\\
\label{eq:bound_gauss_for_Y}
\end{align}
and the regularity bounds in backward variables
\begin{align}
\big| \nabla^{n}_{y^0} \nabla^{m}_{x^0} p_{\Kcc_{\sigma}}(r,x,y) - \nabla^{n}_{y^0}\nabla^{m}_{x^0} p_{\Kcc_{\sigma}}(r,x',y) \big| & \leq C \, \frac{|x - x'|^{\gamma}_B}{r^{\frac{m+n + \gamma}{2}}} \big( p_{\Kcc_{2\sigma}}(r, x  , y) + p_{\Kcc_{2\sigma}}(r, x'  , y) \big), \\
\big| \nabla^{n}_{y^0} \nabla^{m}_{x^0}  p_{\Kcc_{\sigma}}(r',x,y) - \nabla^{n}_{y^0} \nabla^{m}_{x^0}  p_{\Kcc_{\sigma}}\big(r,e^{(r'-r) B}x,  y\big) \big| & \leq C \bigg(\frac{r'}{r} \bigg)^{Q/2}  \frac{(r'-r)^{\gamma}}{r^{\frac{m}{2}+\gamma}} \, p_{\Kcc_{2\sigma}}(r', x  , y),
\end{align}
hold for any $r'>r>0$ and $x,x',y,y'\in\Rd$. Here, $p_{\Kcc,\cdot}$ is the Gaussian density in \eqref{eq:kol_dens} and $Q$ is the so-called intrinsic dimension, i.e.
\begin{equation}
Q := \sum_{j = 0}^{\rr} d_j (2j +1).
\end{equation}
\end{lemma}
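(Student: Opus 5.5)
The plan is to reduce every bound to explicit computations on the Gaussian kernel \eqref{eq:kol_dens}, after a change of variables that exposes the intrinsic dilations. Set $D(r):=\text{diag}\big(r^{\frac{1}{2}}I_{d_0}, r^{\frac{3}{2}}I_{d_1},\dots,r^{\frac{2\rr+1}{2}}I_{d_{\rr}}\big)$. The first step is the standard scaling lemma for Kolmogorov-type covariances under Assumption \ref{assump:hor} (block form \eqref{B}): on $(0,T]$, uniformly in $r$,
\begin{equation}
\mathcal{C}(r)= D(r)\,\mathcal{C}_*(r)\,D(r),\qquad c^{-1}I_d\le \mathcal{C}_*(r)\le c\, I_d .
\end{equation}
One proves it by expanding $e^{\tau B}$ and using that the $\mathcal{B}_j$ have full rank, so that the leading homogeneous part is the covariance of the principal-part operator. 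Strict positivity at $\tau=1$ follows from Theorem \ref{th:hor}(ii), and compactness of $[0,T]$ then handles the lower-order terms of $B$. Consequences: $|\mathcal{C}(r)|\asymp r^{Q}$, and writing $w:=D(r)^{-1}(y-e^{rB}x)$ we get $p_{\Kcc_\sigma}(r,x,y)\asymp r^{-Q/2}e^{-c|w|^2}$ with $|w|\asymp$ the $|\cdot|_B$-size of $y-e^{rB}x$ divided by $r^{1/2}$ (up to the polynomial equivalence between $|w|^2$ and $\big(|y-e^{rB}x|_B/\sqrt r\big)^{2}$ at large scales, absorbed by the Gaussian).

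For \eqref{eq:bound_Grad_kol}, note that $\nabla_{y^0}$ and $\nabla_{x^0}$ acting on the exponent produce the factors $\big(\mathcal{C}^{-1}(r)(y-e^{rB}x)\big)^0$ and $\big((e^{rB})^\top\mathcal{C}^{-1}(r)(y-e^{rB}x)\big)^0$. By the scaling these equal $r^{-1/2}$ times a linear function of $w$ with bounded coefficients; in the $x^0$ case we use that $(e^{rB})^\top D(r)^{-1}$ restricted to the $0$-block is still $O(r^{-1/2})$, because higher blocks of $e^{rB}$ carry compensating powers of $r$. Higher derivatives also hit these linear factors, each time giving $r^{-1/2}\times$bounded. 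The result is $r^{-\frac{m+n}{2}}P(w)\,p_{\Kcc_\sigma}$ with $P$ a polynomial. Finally, $P(w)e^{-|w|^2/(2\sigma^2\cdot)}\lesssim e^{-|w|^2/(8\sigma^2\cdot)}$ and $|\mathcal{C}_{2\sigma}|=2^{2d}|\mathcal{C}_\sigma|$ give the comparison with $p_{\Kcc_{2\sigma}}$.

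For the Hölder bounds in $y$ (and, symmetrically, in $x$), split into two cases.
\begin{itemize}
\item[(a)] If $|y-y'|_B\ge \sqrt r$, bound each term separately by \eqref{eq:bound_Grad_kol} and insert $1\le (|y-y'|_B/\sqrt r)^\gamma$.
\item[(b)] If $|y-y'|_B<\sqrt r$, use the mean value theorem along each block direction $y^i$. A derivative in $y^i$ costs $r^{-(2i+1)/2}$ by the same computation, while $|y^i-y'^i|\le |y-y'|_B^{2i+1}\le r^{\frac{2i+1-\gamma}{2}}|y-y'|_B^{\gamma}$. The intermediate points remain within intrinsic distance $\sqrt r$, so the Gaussian at the intermediate point is controlled by $p_{\Kcc_{2\sigma}}$ at $y$ or at $y'$; this is where doubling $\sigma$ is used again.
\end{itemize}
For $x$, the same argument applies after observing that $x\mapsto e^{rB}x$ distorts $|\cdot|_B$ only by bounded constants on $(0,T]$.

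For the time bounds along $Y$, write the difference as $\int_r^{r'}\frac{d}{d\rho}\big[\nabla^n\nabla^m p(\rho,x,e^{(\rho-r)B}y)\big]d\rho$. The derivative along the curve is $Y$ applied to $p$, and by the Kolmogorov equation $\Kcc' p=0$ it equals a second $y^0$-derivative plus a bounded multiple of $p$. Hence the integrand is $\lesssim \rho^{-1-\frac{m+n}{2}}p_{\Kcc_{2\sigma}}$. Interpolating this with the trivial bound from \eqref{eq:bound_Grad_kol} gives the factor $(r'-r)^\gamma r^{-\gamma}$, which is the $\gamma$-interpolation of $\min(1,(r'-r)/r)$. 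The prefactor $(r'/r)^{Q/2}$ appears when the normalizations $|\mathcal{C}(\rho)|^{-1/2}$ for different $\rho$ are compared to the single kernel at time $r'$.

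I expect the main obstacle to be the scaling lemma with the lower-order $*$ blocks and $B_0\neq 0$. It is needed to show that $(e^{rB})^\top\mathcal{C}^{-1}$ and the Gaussian comparisons across different points and times hold uniformly on $(0,T]$, which is why the constants depend on $T$ and $B$. The path is first to establish it for the principal (homogeneous) part via exact dilation invariance, and then to treat the full $B$ as a perturbation that is uniformly controlled for small $r$.
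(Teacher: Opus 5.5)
The paper does not prove this lemma itself. It refers to \cite{lucertini2022optimal} (Lemma 3.4 and Proposition A.8) for $m,n\le 2$ and notes that no detailed reference is known for higher orders. Your route is the standard mechanism behind those references, and it has the advantage of covering all $m,n$ at once. It consists of:
\begin{itemize}
\item the dilation lemma $\mathcal{C}(r)=D(r)\mathcal{C}_*(r)D(r)$, with $\mathcal{C}_*$ bounded and uniformly positive on $(0,T]$;
\item explicit differentiation of the Gaussian, absorbing polynomial factors by passing from $\sigma$ to $2\sigma$;
\item a two-regime mean-value argument at scale $\sqrt r$.
\end{itemize}
The upper bound and the two spatial H\"older bounds go through as you describe, with one correction. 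It is not true that $x\mapsto e^{rB}x$ distorts $|\cdot|_B$ by bounded constants: for Kolmogorov's $B$, $|(\epsilon,0)|_B=\epsilon$ but $|e^{rB}(\epsilon,0)|_B=\epsilon+(r\epsilon)^{1/3}$. What you need, and already use for $\nabla_{x^0}$, is $\sup_{r\in(0,T]}\|D(r)^{-1}e^{rB}D(r)\|<\infty$. This follows from the block structure of $B_1$ and gives $|D(r)^{-1}e^{rB}(x-x')|\lesssim|D(r)^{-1}(x-x')|$, which is exactly the control needed in your regime (b).

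\emph{The genuine gap is in the two time-increment bounds.} You claim your argument yields them as written, but it does not, and as written they are false.

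First, your integrand bound $\rho^{-1-\frac{m+n}{2}}$ interpolates to $r^{-\frac{m+n}{2}-\gamma}(r'-r)^{\gamma}$, not to $r^{-\frac m2-\gamma}(r'-r)^{\gamma}$. The factor $r^{-n/2}$ cannot be dropped. Take the heat kernel ($d=d_0=1$, $B=0$) with $m=0$, $n=1$, $x=0$, $y=\sigma\sqrt r$, $r'=2r$. As $r\to0$ the left-hand side is $\asymp r^{-1}$ while the right-hand side is $\asymp r^{-1/2}$.

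Second, in the forward bound the right-hand kernel cannot be $p_{\Kcc_{2\sigma}}(r',x,y)$. For $m=n=0$ and $y=e^{rB}x$, the left-hand side equals $(2\pi)^{-d/2}\big(|\mathcal{C}(r)|^{-1/2}-|\mathcal{C}(r')|^{-1/2}\big)>0$ (determinants) for every $x$. For Kolmogorov's $B$, however, $e^{rB}x-e^{r'B}x=(0,-(r'-r)x_1)$, so the right-hand side tends to $0$ as $|x_1|\to\infty$. Your last step, comparing ``with the single kernel at time $r'$'', is therefore exactly the step that fails. You only compared the normalisations $|\mathcal{C}(\rho)|^{-1/2}$, but your integrand is controlled by $p_{\Kcc_{2\sigma}}(\rho,x,e^{(\rho-r)B}y)$, whose centre moves with $\rho$.

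What your method does prove is
\begin{equation*}
\big|\nabla^{n}_{y^0}\nabla^{m}_{x^0}p_{\Kcc_{\sigma}}(r,x,y)-\nabla^{n}_{y^0}\nabla^{m}_{x^0}p_{\Kcc_{\sigma}}\big(r',x,e^{(r'-r)B}y\big)\big|\le C\Big(\frac{r'}{r}\Big)^{Q/2}\frac{(r'-r)^{\gamma}}{r^{\frac{m+n}{2}+\gamma}}\,p_{\Kcc_{2\sigma}}\big(r',x,e^{(r'-r)B}y\big).
\end{equation*}
The missing ingredient is the identity
\begin{equation*}
\mathcal{C}(r')=\mathcal{C}(r'-\rho)+e^{(r'-\rho)B}\mathcal{C}(\rho)e^{(r'-\rho)B^\top}\ge e^{(r'-\rho)B}\mathcal{C}(\rho)e^{(r'-\rho)B^\top},
\end{equation*}
which yields $p_{\Kcc_{2\sigma}}(\rho,x,e^{(\rho-r)B}y)\le C(r'/\rho)^{Q/2}p_{\Kcc_{2\sigma}}(r',x,e^{(r'-r)B}y)$.

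In the backward bound the argument $y-e^{r'B}x$ is the same at every point of the curve $\rho\mapsto(\rho,e^{(r'-\rho)B}x)$. There $\mathcal{C}(\rho)\le\mathcal{C}(r')$ suffices and $p_{\Kcc_{2\sigma}}(r',x,y)$ is correct, but again the power must be $r^{-\frac{m+n}{2}-\gamma}$. The corrected forward form is all that the later application needs, since it uses $m=1$, $n=0$ and integrates in $dx$.

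Finally, for $n\ge1$ in the forward case (resp. $m\ge1$ in the backward case), the derivative along the curve is not simply $\nabla^n\nabla^m$ of ``second derivative plus a multiple of $p$''. The commutators $[\partial_{y_i},Y]=\sum_k B_{ki}\partial_{y_k}$, $i\le d_0$, produce derivatives in the $y^1$-block. Each such term trades the $Y$-derivative and one $\partial_{y^0}$ (joint cost $\rho^{-3/2}$) for one derivative along $Be_i$ (cost at most $\rho^{-3/2}$). So the order $\rho^{-1-\frac{m+n}{2}}$ is preserved, but your proof needs to state this.
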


For a proof of the estimates above we refer to \cite[Lemma 3.4 and Proposition A.8]{lucertini2022optimal} (see also \cite{difpas}), for $m,n=0,1,2$. Although these estimates are well-known, we are not aware of a reference containing a detailed proof for $m,n>2$. 


\section{A result for density-dependent MKV-SDEs}\label{sec:kin_den_result}

We present here a result for the density-dependent kinetic-type MKV-SDE \eqref{eq:mkv_degenerate_den}. We follow the approach recently developed by Issoglio, Russo and their coauthors (see \cite{le2019forward}, \cite{lieber2018well}, \cite{issoglio2023mckean} and \cite{issoglio2026degenerate}, among others), which mainly relies on two ingredients: the superposition principle for the non-linear Fokker-Planck equation, and the characterization of its weak (or distributional) solutions as mild solutions. In particular, the underlying assumptions, and the main results, of this section are very close to those in \cite{lieber2018well}. In addition to studying the well-posedness of MKV-SDE \eqref{eq:mkv_degenerate_den}, we also employ known regularity bounds for the kinetic transition kernels to establish the intrinsic (thus in time and space) regularity of the solutions. 

We also point out that the framework considered here is not the most general one available in the current state of the art on density-dependent MKV SDEs. For instance, recent developments, see \cite{issoglio2023mckean} in the case of non-degenerate diffusion and \cite{issoglio2026degenerate} for the current kinetic framework, allow the drift coefficient $b_0$ to be distributional.

Hereafter, $T>0$ is a fixed time horizon. For $\gamma\geq 0$ and $p\in [1,\infty]$, we also  denote by $L^{\infty}_{T}\Cc^{\nu}_B(\Rd)$ and $L^{\infty}_{T}L^{p}(\Rd)$, respectively, the set of measurable functions $f:(0,T)\times \Rd \to \R$ such that
\begin{align}
\| f \|_{L^{\infty}_{T}\Cc^{\nu}_B(\Rd)} &: =  \sup_{t\in (0,T)} \| f(t, \cdot) \|_{B,\nu} < \infty, \\
\| f \|_{L^{\infty}_{T}L^{p}(\Rd)} &: =  \sup_{t\in (0,T)} \| f(t, \cdot) \|_{L^{p}(\Rd)} < \infty,
\end{align} 

Recall as well the weak H\"ormander condition \eqref{horcon}, which is in force throughout this section.

\begin{assumption}\label{assum:sigma_0}
The function $\sigma_0=\sigma_0(t,x,z)$ in \eqref{eq:b_0_and_sigma_0} is independent of $z$ and belongs to $L^{\infty}_{T}\Cc^{\alpha}_B(\Rd)$ for some $\alpha \in (0,1)$. Furthermore, there exists $\lambda>0$ such that the following coercivity condition holds:
\begin{equation}
\lambda^{-2} |\eta|^2 \leq \big\langle \sigma_0 \sigma_0^\top (t,x) \eta , \eta \big\rangle \leq \lambda^2 |\eta|^2, \qquad   \eta\in\R^{d_0},\ (t,x)\in(0,T)\times\Rd.
\end{equation}
\end{assumption}

\begin{assumption}\label{assum:b_0}
The function $b_0$ in \eqref{eq:b_0_and_sigma_0} is measurable and such that: 
\begin{itemize}
\item[(a)] The function $b_0(t, x, \cdot)$ is Lipschitz continuous on $\R$, uniformly w.r.t. $(t,x)\in(0,T)\times\Rd$.
\item[(b)] [(b-loc)] The function 
\begin{equation}\label{eq:con_b0_bou}
\R\ni y\mapsto \| b_0(\cdot, \cdot, y) \|_{L^\infty((0,t)\times \Rd)}
\end{equation}
is bounded [bounded on compacts]. 
\end{itemize}
\end{assumption}
\begin{example}
It is a simple exercise to show that the function $b_0$ defined by
\begin{equation}
b_0(t, x , y) :=\beta(t,x) F(y), \qquad (t,x,y)\in (0,T)\times\Rd\times\R, 
\end{equation}
for $\beta\in L^{\infty}_{T}L^{\infty}(\Rd)$ and $F$ Lipschitz continuous, 
satisfies Assumption \ref{assum:b_0}-(a),(b-loc). In particular, $F$ can be the identity function and thus the kinetic-Burgers-type equation
\begin{equation}\label{eq:mkv_degenerate_den_bur}
\begin{cases}
\dd X_t = \bigg[  B X_t + \Bigg( 
\begin{matrix}
\beta(t,X_t) v_t(X_t) \\
{\bf 0}_{(d - d_0) \times q}
\end{matrix}
\Bigg)  \bigg] \dd t   +\Bigg( 
\begin{matrix}
 \sigma_0 \big(t,X_t\big) \\
{\bf 0}_{(d - d_0) \times q}
\end{matrix}
\Bigg)  \,  \dd W_t       \\
[X_t] = v_t(x) \dd x
\end{cases},
\end{equation} 
is included in the analysis. If $F$ is also bounded, then $b_0$ satisfies Assumption \ref{assum:b_0}-(a),(b).
\end{example}

The following remark sheds light on the role of Assumption \ref{assum:b_0}-(b), or alternatively Assumption \ref{assum:b_0}(b-loc), within our analysis. It will be recalled several times in the proofs below.

\begin{remark}\label{rem:b0_comp_v}
Let $v:(0,T)\times \Rd \to \R$ be a measurable function. Under 
Assumption  \ref{assum:b_0}-(a), together with 
either Assumption \ref{assum:b_0}-(b), or Assumption \ref{assum:b_0}-(b-loc) with $v\in L^{\infty}_{T}L^{\infty}(\Rd)$, 
we have 
\begin{equation}\label{eq:b0_comp_v}
\|(t,x)\mapsto  b_0\big( t,x , v_t(x) \big) \|_{L^{\infty}((0,T)\times \Rd)} \leq \sup_{|y|\leq \| v \|_{L^{\infty}_{T}L^{\infty}(\Rd)}} \|(t,x)\mapsto  b_0\big( t,x ,y \big) \|_{L^{\infty}((0,T)\times \Rd)}   <\infty. 
\end{equation}
\end{remark}

Under the assumptions above, we can prove that \eqref{eq:mkv_degenerate_den} admits a unique (in law) weak solution and its marginal density is intrinsically H\"older continuous.

\begin{theorem}[Existence and uniqueness of weak solutions]\label{th_den_MKV_kin}
Let Assumptions \ref{assump:hor}, \ref{assum:sigma_0} and \ref{assum:b_0}-(a) be in force. Then:
\begin{itemize}
\item[(i)] Under Assumption \ref{assum:b_0}-(b),  Kin-D-MKV$(B,b_0,\sigma_0)$ is weakly well-posed (see Definition \ref{def:sol_kinetic_density_MKV}).
\item[(ii)] Under Assumption \ref{assum:b_0}-(b-loc), for any $\nu\in\mathcal{P}(\Rd)$ such that $\nu = v_0(x) dx$ with $v_0 \in L^{\infty}(\Rd)\cap L^{1}(\Rd)$,
there exists a unique (in law) weak solution to Kin-D-MKV$(b_0,\sigma_0; \nu)$ (see Definition \ref{def:sol_kinetic_density_MKV})  such that $v\in L^{\infty}_{T}L^{\infty}(\Rd)$, 
where $v$ denotes the flow of time-marginal densities of the solution.
\end{itemize}
\end{theorem}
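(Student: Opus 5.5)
The plan is to reduce everything to the density-dependent non-linear Fokker–Planck equation D-NL-FP$(b,\sigma;\nu)$, with $b,\sigma$ as in \eqref{eq:sig_b_deg_den}, and then return to the SDE via Lemma \ref{lem:fokker_superpo_den}. Fix $\nu$; in case (ii) we also assume $\nu=v_0\,dx$ with $v_0\in L^\infty\cap L^1$. For a fixed density flow $v$, the frozen drift $b_0(t,x,v_t(x))$ is bounded, either by Assumption \ref{assum:b_0}-(b) or by Remark \ref{rem:b0_comp_v}. The frozen diffusion $\sigma_0$ is anisotropically H\"older and uniformly elliptic in the first $d_0$ directions. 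Hence weak well-posedness of the frozen kinetic SDE (Step 1 of the ``stairway'') follows from well-posedness of the linear martingale problem for a Kolmogorov operator with H\"older diffusion and bounded measurable drift. I would obtain this from the parametrix construction of the fundamental solution $\Gamma$ of the operator with frozen-at-a-point $\sigma_0$. This $\Gamma$ satisfies Gaussian bounds of the type \eqref{eq:bound_Grad_kol} and is the fundamental solution of the drift-free operator $\Kcc_{\sigma_0}$. The bounded drift is then added by a Girsanov argument, or by a second Duhamel perturbation, since only one $x^0$-derivative falls on the kernel and it costs $r^{-1/2}$, which is integrable. By Remark \ref{rem:solu}, the law of a solution is then determined by its marginal flow. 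Weak uniqueness therefore reduces to uniqueness of the marginal density flow, i.e. uniqueness of weak solutions to D-NL-FP in the relevant class.

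Next I would characterize weak solutions of D-NL-FP as mild solutions. Let $\Gamma(s,x;t,y)$ be the fundamental solution of the linear operator without $b_0$. The goal is to show that $v$ is a weak solution with $b_0(\cdot,\cdot,v)v\in L^\infty_TL^1$ if and only if
\[
v_t(y)=\int \Gamma(0,x;t,y)\,\nu(dx)+\int_0^t\!\!\int \big\langle b_0\big(s,x,v_s(x)\big)v_s(x),\nabla_{x^0}\Gamma(s,x;t,y)\big\rangle\,dx\,ds .
\]
The implication ``mild $\Rightarrow$ weak'' is a direct computation. For ``weak $\Rightarrow$ mild'', I would test the weak formulation against $\varphi(x)=\Gamma(s,x;t,y)$ after a mollification in $y$; this is justified by uniqueness for the linear FP equation with bounded drift, i.e. Proposition \ref{th:lin_FP} combined with the Step 1 uniqueness. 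Existence and uniqueness for the mild equation then follow from a contraction in $L^\infty_TL^1$ in case (i), and in $L^\infty_TL^1\cap L^\infty_TL^\infty$ in case (ii). The key estimate is
\[
\big\|\nabla_{x^0}\Gamma(s,\cdot;t,y)\big\|\lesssim (t-s)^{-1/2}\,p_{\Kcc_{2\lambda}}(t-s,\cdot,y).
\]
Combined with the Lipschitz property in Assumption \ref{assum:b_0}-(a), this gives
\[
\big|b_0(s,x,u)u-b_0(s,x,w)w\big|\le C(|u|+|w|+1)\,|u-w|.
\]
This bound must be handled with care: in $L^1$ alone the product $b_0(v)v$ is not Lipschitz. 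For (i) I would use boundedness of $b_0$ together with a Lipschitz bound on $z\mapsto b_0(\cdot,\cdot,z)z$ on bounded ranges; for this it helps to first show a priori $v\in L^\infty$ for $t>0$, with blow-up $t^{-Q/2}$, from the Gaussian bounds. In case (ii) one works directly in the ball of $L^\infty_TL^\infty$ determined by $\|v_0\|_\infty$. A weighted norm $\sup_t e^{-\kappa t}\|\cdot\|$, or iteration over short time intervals using the integrable singularity $(t-s)^{-1/2}$, yields the contraction. Positivity and unit mass of $v_t$ follow either from the mass conservation of the mild formula or, a posteriori, from superposition.

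Finally, given the unique mild solution $v$, the integrability conditions \eqref{eq:hyp_b_s_loc_bound_bis} and \eqref{eq:trevisan_cond_den} hold because $b_0(v)$ and $\sigma_0$ are bounded and $\int v_t=1$. Lemma \ref{lem:fokker_superpo_den}(ii) then yields a solution of D-MKV with marginals $v$. For uniqueness, any weak solution $X$ has, by Lemma \ref{lem:fokker_superpo_den}(i), a marginal flow that is a weak solution of D-NL-FP and hence coincides with $v$. In case (ii) this holds within the class $v\in L^\infty_TL^\infty$ required in the statement. Step 1 and Remark \ref{rem:solu}(i) then give uniqueness in law. I expect the main obstacle to be the weak-to-mild equivalence together with the uniqueness of the mild equation in case (i). The concern is that a priori a weak solution is only known to satisfy $v_t\in L^1$, which is where the nonlinearity $b_0(v)v$ is delicate. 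I would first try to exploit the boundedness of $b_0$, which makes the frozen linear problem uniquely solvable, and then compare two solutions through the Duhamel formula using only the Lipschitz property in $z$ together with the smoothing estimate on $\nabla_{x^0}\Gamma$.
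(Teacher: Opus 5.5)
\textbf{Main gap: signed uniqueness for the linear Fokker--Planck equation.} Your architecture is the paper's: superposition in both directions, the pseudo-mild formula with $\nabla_{x^0}\Gamma$, a contraction in an $L^\infty_TL^\infty$ ball for (ii), and weak uniqueness of the frozen SDE. What it lacks is the ingredient the paper isolates in Lemma \ref{lem:sde_kin_dens}-(iii): uniqueness of \emph{signed} weak solutions of the linear Fokker--Planck equation, both drift-free and with the bounded frozen drift ${\bf b}_v$. The paper obtains this by forward--backward duality, using Schauder estimates for the backward Kolmogorov problem. You replace it with Proposition \ref{th:lin_FP} plus weak uniqueness of the frozen SDE. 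That only covers non-negative flows, since superposition requires $\mu_t\ge 0$; the paper stresses this in the remark following Lemma \ref{lem:sde_kin_dens}. Two steps break as a result.
\begin{itemize}
\item \emph{Positivity of the fixed point.} Mass conservation of the mild formula gives only $\int v_t=1$. Deriving positivity ``a posteriori from superposition'' is circular, because Lemma \ref{lem:fokker_superpo_den}-(ii) takes $v\ge 0$ as a hypothesis. Without positivity you cannot produce the McKean--Vlasov solution at all. The paper gets $v\ge0$ by viewing the mild solution as a weak solution of FP$({\bf b}_v,\sigma;0,\nu)$ and identifying it, through signed uniqueness, with the marginal flow of SDE$({\bf b}_v,\sigma;0,\nu)$.
\item \emph{Weak $\Rightarrow$ mild.} The discrepancy $u+w-v$ is a signed solution of the drift-free FP equation with zero data. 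Concluding that it vanishes is again signed uniqueness. Your alternative of testing against $\Gamma(s,x;t,y)$ is viable. However, justifying a time-dependent test function $(s,x)\mapsto\int\Gamma(s,x;t,y)\varphi(y)\,dy$ that is only intrinsically regular, with $\sigma_0$ merely measurable in time, is exactly the duality/approximation argument. Superposition does not provide it.
\end{itemize}

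\textbf{Part (i): your workaround does not close.} For marginal flows, the a priori bound $\|v_s\|_\infty\lesssim s^{-Q/2}$ does follow from the Gaussian bounds of the frozen kernel. But the Lipschitz estimate on bounded ranges then gives
$\|v_t-\tilde v_t\|_{L^1}\lesssim\int_0^t (t-s)^{-1/2}(1+s^{-Q/2})\|v_s-\tilde v_s\|_{L^1}\,ds$.
This kernel is not integrable at $s=0$, because $Q\ge d_0+3d_1\ge 4$ as soon as $d>d_0$; take $\nu=\delta_x$ to see the bound attained.
\begin{itemize}
\item Exponential weights and short-time iteration cannot remove a singularity located at $s=0$.
\item Feeding back the crude bound $\|v_s-\tilde v_s\|_{L^1}\lesssim s^{1/2}$ does not help either, since $\int_0^t (t-s)^{-1/2}s^{(1-Q)/2}\,ds$ still diverges.
\end{itemize}
So your plan yields neither uniqueness nor, by contraction, existence for a general $\nu\in\mathcal{P}(\mathbb{R}^d)$.

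Your diagnosis is correct, and it applies to the paper as well. The paper only asserts that Steps 1--2 can be rerun in $L^\infty_TL^1$. Yet Step 2 uses $|u_s|\le M$ to control $|u_s|\,|b_0(s,x,u_s)-b_0(s,x,\tilde u_s)|$. Moreover, $z\mapsto z\,b_0(t,x,z)$ need not be Lipschitz under Assumption \ref{assum:b_0}-(a),(b); take $b_0=(\sin z,0,\dots,0)$. Part (i) with measure initial data therefore needs an idea that neither your proposal nor the paper's sketch supplies. Part (ii) follows from your plan once the signed linear uniqueness is added.
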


\begin{theorem}[Regularity of the density]\label{th:reg} Let the assumptions of Theorem \ref{th_den_MKV_kin}-(ii) be in force, and assume, additionally,  the diffusion coefficient $\sigma_0\equiv\sigma>0$ be constant. 
If $v_0 \in C^{\gamma}_B$ with $\gamma\in (0,1)$, then $v \in \Cc_B^{\gamma}((0,T)\times \Rd)$.
\end{theorem}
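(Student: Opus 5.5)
The plan is to work with the mild (Duhamel) representation of the density $v$, which the proof of Theorem \ref{th_den_MKV_kin}-(ii) is expected to produce via the forward semigroup of Section \ref{sec:kinet_semigroups}. With $\sigma_0\equiv\sigma$ constant, the frozen Fokker-Planck operator is $\Kcc'$ plus a first-order perturbation in the non-degenerate variables. Any weak solution $v\in L^\infty_TL^\infty(\Rd)$ should therefore satisfy
\begin{equation}
v_t = \Pcc'_t v_0 - \int_0^t \sum_{i=1}^{d_0}\partial_{y_i}\Pcc'_{t-s}\big( g^i_s \big)\,\dd s, \qquad g_s(y):= b_0\big(s,y,v_s(y)\big)\,v_s(y).
\end{equation}
Here the derivative is moved onto the kernel through the forward variables $y^0$. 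I would first justify this identity by testing the weak formulation \eqref{eq:def_sol_NL_FP_den} against $\varphi=\Pcc_{t-s}\psi$, which solves the backward Kolmogorov equation, and then integrating in $s$. By Remark \ref{rem:b0_comp_v} and $v\in L^\infty_TL^\infty$, the forcing satisfies $g\in L^\infty((0,T)\times\Rd)$; it is also in $L^\infty_TL^1$, since $v_s$ is a probability density.

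The two terms are then estimated separately in the intrinsic norm $\Cc_B^\gamma((0,T)\times\Rd)$, for $\gamma\in(0,1)$. For this I use the characterization \eqref{eq:intr_charact_zero_kol_int}, which reduces the task to two bounds: a uniform $\gamma$-Hölder bound in $y^0$, and a $\gamma/2$-Hölder bound in $t$ along the curves $t\mapsto(t,e^{tB}y)$. The homogeneous term $\Pcc'_\cdot v_0$ is handled by Theorem \ref{th:Schauder_est_intr} with $\alpha=0$. Since the constant there does not blow up as $r\to0$, the bound holds uniformly on $(0,T)$. For the Duhamel term $w_t:=\int_0^t\nabla_{y^0}\Pcc'_{t-s}g_s\,\dd s$, the space regularity comes from the Gaussian estimates \eqref{eq:bound_gauss_for_grad} with $n=1$, $m=0$. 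These give
\begin{equation}
|w_t(y)-w_t(y')|\lesssim \|g\|_\infty\int_0^t \frac{|y-y'|_B^{\beta}}{(t-s)^{\frac{1+\beta}{2}}}\,\dd s,
\end{equation}
where the inner integral of $p_{\Kcc_{2\sigma}}$ in $x$ is bounded because the Gaussian is a density in $x$ up to a factor $e^{-t\,\mathrm{Tr}B}$. The time integral is finite for every $\beta<1$. Hence $w$ is in fact $\beta$-Hölder in the anisotropic distance for any $\beta<1$, in particular for $\beta=\gamma$ in the $y^0$ direction.

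The step I expect to be the main obstacle is the $\gamma/2$-regularity of $w$ along $Y$. Write, for $t'>t$,
\begin{equation}
w_{t'}(e^{(t'-t)B}y)-w_t(y)=\int_t^{t'}(\dots)\,\dd s+\int_0^t\big[\nabla_{y^0}p(t'-s,x,e^{(t'-t)B}y)-\nabla_{y^0}p(t-s,x,y)\big]g_s(x)\,\dd x\,\dd s .
\end{equation}
The first piece is bounded by $\int_t^{t'}(t'-s)^{-1/2}\,\dd s\lesssim(t'-t)^{1/2}$. For the second, I split according to whether $t-s\le t'-t$ or $t-s>t'-t$. On the near region, both gradients are integrated crudely, which gives $(t'-t)^{1/2}$. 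On the far region, I apply \eqref{eq:bound_gauss_for_Y} with exponent $\gamma$ and $m=0$ (roles of $r,r'$ as stated). The ratio $(r'/r)^{Q/2}$ stays bounded there, and one is left with $\int (t-s)^{-\frac12-\gamma}(t'-t)^{\gamma}\,\dd s$. Finiteness of this integral needs care near $s=t$, since it diverges as $(t'-t)^{1/2-\gamma}$ from the cutoff when $\gamma>1/2$. The cutoff still gives a total of $(t'-t)^{\gamma}(t'-t)^{1/2-\gamma}=(t'-t)^{1/2}$, which is fine. If the exponent bookkeeping in \eqref{eq:bound_gauss_for_Y} does not close for $\gamma$ near $1$, I would use the interpolated exponent $\gamma'$ with $\gamma/2<\gamma'$, taking $\gamma'<1/2$; this is where care is required. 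Combining the two terms through \eqref{eq:intr_charact_zero_kol_int}, together with $\|v\|_\infty<\infty$, yields $v\in\Cc_B^\gamma((0,T)\times\Rd)$.
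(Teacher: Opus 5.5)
\textbf{Verdict: there is a genuine but local error.} Your overall strategy matches the paper's proof (Step 4 of Proposition \ref{prop:wellposed_mild}): Theorem \ref{th:Schauder_est_intr} with $\alpha=0$ for $\Pcc'_\cdot v_0$, Gaussian bounds for the anisotropic increments of the Duhamel term, and, along $Y$, a split into the piece on $(t,t')$ plus a near/far decomposition of the piece on $(0,t)$. The problem is that the representation formula on which you run all the estimates is false in the kinetic setting.

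\textbf{Why the formula fails.} The kernel $p_{\Kcc_\sigma}(r,x,y)$ depends on $(x,y)$ only through $y-e^{rB}x$. Hence $\nabla_x p_{\Kcc_\sigma}(r,x,y)=-(e^{rB})^\top\nabla_y p_{\Kcc_\sigma}(r,x,y)$, and
\[
\Pcc'_r(\partial_{x_i}g)=\sum_{k=1}^{d}(e^{rB})_{ki}\,\partial_{y_k}\Pcc'_r g,\qquad i=1,\dots,d_0 .
\]
This contains derivatives in the degenerate directions $y^1,\dots,y^{\rr}$ as well. In the Kolmogorov case, for instance, $\Pcc'_r\partial_{x_1}g=(\partial_{y_1}+r\,\partial_{y_2})\Pcc'_r g$. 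The forward semigroup does not commute with $\nabla_{x^0}$, so your formula is correct only when $e^{rB}=I$, i.e. in the heat case $B=0$. Consequently $-\int_0^t\sum_{i\le d_0}\partial_{y_i}\Pcc'_{t-s}g^i_s\,\dd s$ is not the Duhamel term of $v$. The bounds you derive from \eqref{eq:bound_gauss_for_grad} and \eqref{eq:bound_gauss_for_Y} with $n=1$, $m=0$ therefore concern a function other than $v$.

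\textbf{The repair.} Do not move the derivative at all. The duality argument you describe, with $\varphi_s=\Pcc_{t-s}\psi$, yields
\[
\int_{\Rd}v_t\,\psi\,\dd y=\int_{\Rd}v_0\,\Pcc_t\psi\,\dd x+\int_0^t\!\!\int_{\Rd}g_s(x)\cdot\nabla_{x^0}\Pcc_{t-s}\psi(x)\,\dd x\,\dd s .
\]
This is exactly the pseudo-mild identity \eqref{eq:pseudo_mild}, with the gradient acting on the backward variable $x^0$ of $p_{\Kcc_\sigma}(t-s,x,y)$. With this form, your estimates go through unchanged, provided you use the Gaussian bounds with $m=1$, $n=0$. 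The factor $(t-s)^{-1/2}$ now comes from $\nabla_{x^0}$, and the inner $\dd x$-integrals are still uniformly bounded (by $e^{-r\,\mathrm{Tr}B}$, as you noted). The argument then coincides with the paper's.

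\textbf{Comparison with the paper.} The only genuine difference is how you reach the identity. You derive it directly from the weak formulation by duality with the backward semigroup. This needs a cut-off argument, since $\Pcc_{t-s}\psi$ is time-dependent and not compactly supported. The paper instead constructs the pseudo-mild solution by a fixed point in $L^\infty_TL^\infty$ and identifies it with $v$ through Proposition \ref{prop:equiv_weak} and uniqueness.

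\textbf{Minor point.} In the far region you can avoid any case distinction in $\gamma$ by applying \eqref{eq:bound_gauss_for_Y} with exponent $\gamma/2$, as the paper does. This gives the singularity $(t-s)^{-(1+\gamma)/2}$, which is integrable on all of $(0,t)$.
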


\begin{remark}[!]
Note that the uniqueness result in 
Theorem \ref{th_den_MKV_kin}-(i) 
 is stronger than the one in 
Theorem \ref{th_den_MKV_kin}-(ii)
, as it is obtained under a stronger boundedness assumption on $b_0$. 
In particular: 
\begin{itemize}
\item under Assumption \ref{assum:b_0}-(b-loc), uniqueness is ensured only within the 
weak solutions whose flow of time marginals are in $L^{\infty}_{T}L^{\infty}(\Rd)$, while existence requires the initial distribution to have a bounded density;
\item under Assumption \ref{assum:b_0}-(b), uniqueness is ensured over all possible weak solutions, and existence is ensured for any initial distribution.
\end{itemize}
Note also that the regularity result in Theorem \ref{th:reg} holds under Assumption \ref{assum:b_0}-(b-loc), and thus under Assumption \ref{assum:b_0}-(b), provided that the initial distribution is H\"older continuous.
\end{remark}

%

The proof of Theorems \ref{th_den_MKV_kin} and \ref{th:reg} strongly relies on the following propositions, whose proof is postponed until Section \ref{sec:proof_prop_fp}.

\begin{proposition}[!]\label{prop:fp_kinetic_Density}
Let Assumptions \ref{assump:hor}, \ref{assum:sigma_0} and \ref{assum:b_0}-(a) be in force. Let also $\sigma$, $b$ be as in \eqref{eq:sig_b_deg_den} and $\nu \in \mathcal{P}(\Rd)$. Then
\begin{itemize}
\item[(i)] Under Assumption \ref{assum:b_0}-(b), 
there exists a (weak) solution $v$ to D-NL-FP$(b,\sigma; \nu )$ (see Definition \ref{def:sol_nl_fp_den}).
\item[(ii)] Under Assumption \ref{assum:b_0}-(b-loc) and assuming $\nu = v_0(x) dx$ with $v_0 \in L^{\infty}(\Rd)\cap L^{1}(\Rd)$, 
there exists a unique (weak) solution $v$ to D-NL-FP$(b,\sigma;\nu )$ (see Definition \ref{def:sol_nl_fp_den}) 
in $L^{\infty}_{T}L^{\infty}(\Rd)$. 
%
 \end{itemize}
 In both cases, $v$ is non-negative and belongs to $L^{\infty}_{T}L^{1}(\Rd)$.
\end{proposition}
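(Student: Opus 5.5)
The plan is to recast D-NL-FP$(b,\sigma;\nu)$ as a fixed-point problem for \emph{mild solutions} built on the fundamental solution of the linear kinetic operator with diffusion $\sigma_0$. Let $\Gamma(s,x;t,y)$ denote the fundamental solution of the linear Fokker--Planck operator associated with $B$ and $\sigma_0\sigma_0^\top$. Since $\sigma_0\in L^\infty_T\Cc^\alpha_B$ is uniformly elliptic on the first $d_0$ components and Assumption \ref{assump:hor} holds, $\Gamma$ can be built by the parametrix method starting from the Gaussian kernels $p_{\Kcc_\sigma}$ in \eqref{eq:kol_dens}. The Gaussian estimates of the Lemma then yield $\Gamma\ge 0$, $\int\Gamma(s,x;t,y)\,dy=1$, and
\begin{equation}
|\nabla_{x^0}\Gamma(s,x;t,y)|\le C(t-s)^{-1/2}\,p_{\Kcc_{2\lambda}}(t-s,x,y).
\end{equation}
Since the nonlinearity acts only on the first $d_0$ coordinates, we call $v$ a mild solution if
\begin{equation}
v_t(y)=\int\Gamma(0,x;t,y)\,\nu(dx)+\int_0^t\!\!\int \big\langle b_0\big(s,x,v_s(x)\big)v_s(x),\nabla_{x^0}\Gamma(s,x;t,y)\big\rangle\,dx\,ds .
\end{equation}
Testing against $\varphi\in C_0^\infty$ and using the backward equation of $\Gamma$ (a Duhamel argument) shows that mild solutions with $v\in L^\infty_T L^1$ and $b_0(\cdot,\cdot,v)$ bounded are weak solutions. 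For uniqueness we need the converse: a weak solution is mild. For this, test the weak formulation against $\varphi(s,x)=\int\Gamma(s,x;t,y)\psi(y)\,dy$, which solves the backward linear equation. This requires a standard approximation argument, since $\varphi$ is not compactly supported and is only intrinsically regular in time; the Gaussian bounds make it work, because the drift term is bounded times an $L^1_{\rm loc}$ function.

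For existence in (i), the integrand $b_0(s,x,v_s(x))$ is globally bounded by Assumption \ref{assum:b_0}-(b), say by $M$. Consider the map $\Phi(v)$ given by the right-hand side above, acting on $L^\infty_T L^1$ (and for $t>0$ also on weighted $L^\infty$ once $v$ is smoothed by $\Gamma$). We have
\begin{equation}
\|\Phi(v)_t-\Phi(w)_t\|_{L^1}\le C\int_0^t (t-s)^{-1/2}\,\big\|b_0(s,\cdot,v_s)v_s-b_0(s,\cdot,w_s)w_s\big\|_{L^1}\,ds .
\end{equation}
Here the Lipschitz bound on $b_0\cdot z$ fails globally, since $z\mapsto b_0(z)z$ is Lipschitz only on bounded sets. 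So for (i) I plan to regularize: replace $\nu$ by $\nu*\rho_\eps$, which has an $L^1\cap L^\infty$ density; solve the problem by (ii)-type arguments; and pass to the limit. The key a priori bound is $\|v^\eps_t\|_{L^1}=1$, with tightness and equicontinuity of $t\mapsto v^\eps_t$ uniform in $\eps$ because $|b_0|\le M$. This gives $L^1$ compactness for $t\ge\delta>0$ via the regularizing kernel (uniform intrinsic H\"older bounds on $[\delta,T]$). Passing to the limit in the nonlinear term $b_0(v^\eps)v^\eps$ then uses a.e. convergence together with the bound $M$. Non-negativity is preserved along the approximation: write the linear equation with frozen drift $b_0(v)$, and note that its solution is the law of a diffusion, which is non-negative by the superposition principle, or by a positivity-preserving Picard scheme.

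For (ii), with $v_0\in L^1\cap L^\infty$, work in $\mathcal{X}_R=\{v:\|v\|_{L^\infty_TL^\infty}\le R,\ \|v\|_{L^\infty_TL^1}\le R\}$. On $|z|\le R$, the map $z\mapsto b_0(t,x,z)z$ is Lipschitz with constant $L_R$ by (a) and (b-loc). The $(t-s)^{-1/2}$ singularity is integrable, so on a short interval $[0,\tau]$, with $\tau$ depending on $R$ and $\|v_0\|_\infty$, the map $\Phi$ is a contraction on $\mathcal{X}_R$ in both the $L^1$ and $L^\infty$ norms. The main obstacle is extending to all of $[0,T]$: I need an a priori $L^\infty$ bound independent of the iteration. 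I would first try a Gr\"onwall argument on $\|v_t\|_\infty$. The difficulty is that the Duhamel term is only controlled by $\int_0^t(t-s)^{-1/2}\,\|b_0(v_s)\|_\infty\|v_s\|_\infty\,ds$, and $\|b_0(v_s)\|_\infty$ itself depends on $\|v_s\|_\infty$ under (b-loc). If this cannot be closed with $b_0$ merely locally bounded in $z$, I would instead exploit the non-negative $L^1$ mass conservation together with a truncation $b_0^{(R)}(z)=b_0(z\wedge R)$. For the truncated drift, (b) holds, and a De Giorgi-free bound on $\|v\|_\infty$ follows from $\|b_0^{(R)}\|_\infty$ alone through the fractional Gr\"onwall lemma. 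Once this is established, the solution is global. Uniqueness in $L^\infty_TL^\infty$ then follows from the weak-implies-mild step and the same local Lipschitz contraction estimate, iterated over the subintervals. Finally, $v\ge0$ and $\int v_t=1$ follow as in (i).
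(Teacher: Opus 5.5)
Your architecture is the paper's. You use the pseudo-mild formulation \eqref{eq:pseudo_mild} with the $x^0$-gradient placed on the kernel. You prove equivalence with weak solutions by duality; the paper phrases the converse direction as uniqueness for the linear equation solved by $u+w-v$, via Lemma \ref{lem:sde_kin_dens}-(iii). For (ii) you run a contraction in $L^\infty_TL^\infty$, as the paper does.

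For (i) you depart from the paper, and rightly so. The paper claims that under Assumption \ref{assum:b_0}-(b) the contraction can be run directly in $L^\infty_TL^1$. As you note, $z\mapsto z\,b_0(t,x,z)$ is only locally Lipschitz, so the difference estimate contains $L\int|u_s|\,|u_s-\tilde u_s|\,dx$. This term is not controlled by $\|u_s-\tilde u_s\|_{L^1}$ without an $L^\infty$ bound, and such a bound is unavailable near $t=0$ for a general $\nu$ (e.g.\ a Dirac mass). Your regularization/compactness route gives existence only, which is all (i) asserts.

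One step there needs repair. The uniform H\"older bounds on $[\delta,T]$ do not follow from $\|v^\eps_t\|_{L^1}=1$, because $\nabla_{x^0}p$ maps $L^1$ to $L^\infty$ with the non-integrable singularity $(t-s)^{-(1+Q)/2}$. You have two ways to fix it:
\begin{itemize}
\item Use $\|v^\eps_t\|_{L^\infty}\lesssim t^{-Q/2}$ uniformly in $\eps$. This follows from Lemma \ref{lem:sde_kin_dens}-(ii) applied to the frozen SDE, whose drift is bounded by $\sup_z\|b_0(\cdot,\cdot,z)\|_\infty$ independently of $\eps$.
\item Alternatively, obtain $L^1$-compactness directly: restart the mild formula at time $t-\eta$, so that the Duhamel remainder is $O(\sqrt\eta)$ in $L^1$.
\end{itemize}

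\textbf{The genuine gap is extending (ii) from a short interval to all of $[0,T]$.} Your truncation argument is circular. For $b_0^{(R)}(z)=b_0(z\wedge R)$, fractional Gr\"onwall gives $\|v^R\|_{L^\infty_TL^\infty}\lesssim\|v_0\|_\infty\exp(C\,m_R^2\,T)$, where $m_R:=\sup_{|z|\le R}\|b_0(\cdot,\cdot,z)\|_{L^\infty}$. Under Assumption \ref{assum:b_0}-(a),(b-loc), $m_R$ can grow like $LR$; the kinetic Burgers case $b_0=\beta z$ is an example. To remove the truncation you need this bound to be at most $R$, which fails for large $R$. You therefore only recover existence on a time interval depending on $\|v_0\|_\infty$. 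Mass conservation cannot rescue this: with $|b_0|\sim|z|$ the nonlinearity is quadratic, hence supercritical with respect to the conserved $L^1$ norm under the intrinsic scaling as soon as $Q>1$. Mass alone therefore gives no $L^\infty$ control.

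The paper's own proof of (ii) is likewise carried out only for $T$ small, since the contraction needs $\sqrt T\lesssim((M+\kappa_1)\kappa_0)^{-1}$. The exponentially weighted norm it invokes for general $T$ hits the same obstruction under (b-loc). A $\rho$-weighted ball of radius $M$ only gives $\|u_t\|_\infty\le Me^{\rho T}$, so the relevant constant $\sup_{|z|\le Me^{\rho T}}\|b_0(\cdot,\cdot,z)\|_\infty$ outgrows the gain $\rho^{-1/2}$. What this method actually yields is one of two things. Either (ii) holds on a short interval, or it holds on all of $[0,T]$ under (b). In the latter case $\|v_t\|_\infty\le\kappa_0\|v_0\|_\infty+C\sup_z\|b_0(\cdot,\cdot,z)\|_\infty\int_0^t(t-s)^{-1/2}\|v_s\|_\infty\,ds$ closes with a constant independent of the solution.

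Finally, non-negativity cannot be justified by the superposition principle, which presupposes a non-negative solution. What is needed, and what the paper uses, is uniqueness of signed weak solutions of the linear equation with frozen bounded drift $b_0(\cdot,\cdot,v)$ (Lemma \ref{lem:sde_kin_dens}-(iii)). This identifies $v$ with the time marginals of the frozen SDE, which are non-negative.
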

\begin{proposition}[!]\label{prop:reg_sol_weak}
 Let the assumptions of Proposition \ref{prop:fp_kinetic_Density}-(ii) be in force, and assume, additionally,  the diffusion coefficient $\sigma_0\equiv\sigma>0$ be constant. 
If $v_0 \in C^{\gamma}_B$ with $\gamma\in (0,1)$, then $v \in \Cc_B^{\gamma}((0,T)\times \Rd)$.
\end{proposition}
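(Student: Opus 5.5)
The plan is to work with the mild (Duhamel) formulation of the Fokker--Planck equation that Proposition \ref{prop:fp_kinetic_Density}-(ii) provides. With $\sigma_0\equiv\sigma$ constant, the frozen Fokker--Planck operator is the kinetic operator $\Kcc'_\sigma$ perturbed by a first-order divergence term in the non-degenerate directions. I expect that the unique solution $v\in L^\infty_T L^\infty(\Rd)\cap L^\infty_T L^1(\Rd)$ satisfies
\begin{equation}
v_t = \Pcc'_t v_0 + \int_0^t \sum_{i=1}^{d_0} \big[\partial_{y_i}\Pcc'_{t-s}\big]\big( g^i_s\big)\, \dd s, \qquad g_s(y):= b_0\big(s,y,v_s(y)\big)\, v_s(y),
\end{equation}
where $\partial_{y_i}\Pcc'_{t-s}$ stands for the operator with kernel $\partial_{y_i}p_{\Kcc_\sigma}(t-s,x,y)$, obtained after integrating by parts in $x^0$. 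If the mild form is not produced explicitly in Section \ref{sec:proof_prop_fp}, I will derive it by testing the weak formulation \eqref{eq:def_sol_NL_FP_den} against $\varphi=\Pcc_{t-s}\psi$, which is a smooth backward solution. By Remark \ref{rem:b0_comp_v} and $v\in L^\infty_TL^\infty$, the source $g$ is bounded: $\|g\|_\infty\le \|v\|_{L^\infty_TL^\infty}\sup_{|y|\le \|v\|}\|b_0(\cdot,\cdot,y)\|_\infty<\infty$. Only boundedness of $g$ will be used, not any regularity of it.

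\emph{Free term.} By Theorem \ref{th:Schauder_est_intr} with $\alpha=0$, we get $\|\Pcc'_\cdot v_0\|_{\Cc^\gamma_B((r,T)\times\Rd)}\le C\|v_0\|_{B,\gamma}$ uniformly in $r\in(0,T)$. Letting $r\downarrow0$ gives the bound on $(0,T)\times\Rd$.

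\emph{Duhamel term.} I will use the characterization \eqref{eq:intr_charact_zero_kol_int} for $\nu=\gamma\in(0,1)$. This reduces the task to two estimates for $u_t:=\int_0^t \partial_{y^0}\Pcc'_{t-s}g_s\,\dd s$:
\begin{itemize}
\item[(a)] $\gamma$-H\"older regularity in $y^0$, uniformly in $t$ and in the other variables;
\item[(b)] $\gamma/2$-H\"older regularity along the curves $t\mapsto (t,e^{tB}y)$.
\end{itemize}
Boundedness is immediate from \eqref{eq:bound_Grad_kol}: $|u_t|\le C\|g\|_\infty\int_0^t (t-s)^{-1/2}\dd s$.

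For (a), I split the time integral at $t-s=|y-y'|_B^2$. On the near-diagonal part I bound each term by \eqref{eq:bound_Grad_kol}, which gives $\int (t-s)^{-1/2}\dd s\lesssim |y-y'|_B$. On the complementary part I apply the forward H\"older bound \eqref{eq:bound_gauss_for_grad} with $n=1$, $m=0$ and exponent $\gamma'\in(\gamma,1)$. This gives $|y-y'|^{\gamma'}_B\int (t-s)^{-(1+\gamma')/2}\dd s\lesssim |y-y'|_B^{\gamma'}\,|y-y'|_B^{1-\gamma'}$ when $|y-y'|_B\le 1$; for larger $|y-y'|_B$ boundedness suffices. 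The Gaussian integrals $\int p_{\Kcc_{2\sigma}}(\cdot,x,y)\dd x$ are bounded uniformly, since integrating in the backward variable yields $e^{-r\,\mathrm{tr}B}$. Hence $u$ is actually Lipschitz-type in the anisotropic distance, which is more than enough.

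For (b), I compare $u_{t'}(e^{(t'-t)B}y)$ with $u_t(y)$ for $t<t'$ and split into three pieces:
\begin{equation}
\int_t^{t'}\!(\cdots)\,\dd s,\qquad \int_0^{t-(t'-t)}\!\big(\cdots\big)\,\dd s,\qquad \int_{t-(t'-t)}^{t}\!\big(\cdots\big)\,\dd s .
\end{equation}
The first piece is the new time contribution and is $\lesssim (t'-t)^{1/2}$ by \eqref{eq:bound_Grad_kol}. The third piece, where the kernels are near-singular, is bounded term by term by $(t'-t)^{1/2}$ in the same way. The second piece is the difference of kernels along the $Y$-flow, controlled by \eqref{eq:bound_gauss_for_Y}. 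That estimate carries the factor $(r'/r)^{Q/2}$, which is bounded because $r\ge t'-t$ there, so $r'/r\le2$, and it yields $\int (t'-t)^{\gamma'}(t-s)^{-1/2-\gamma'}\dd s\lesssim (t'-t)^{1/2}$.

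The main obstacle I anticipate is this $Y$-direction estimate. The kernel bound \eqref{eq:bound_gauss_for_Y} is stated with $\nabla_{y^0}$ acting on the forward variable, whereas after integrating by parts the derivative sits on the backward variable $x^0$. I therefore need the version with $m=1$, $n=0$, which the lemma covers since it allows general $m,n$. Care is also needed that the shift $e^{(r'-r)B}$ acts consistently on the forward variable. If that bookkeeping fails, the fallback is to place the derivative on $\Pcc'$ acting in $y$, using the forward bounds directly and the semigroup identity $\Pcc'_{t'-s}=\Pcc'_{t'-t}\Pcc'_{t-s}$. Combining the free term and (a)--(b) gives $v\in\Cc^\gamma_B((0,T)\times\Rd)$.
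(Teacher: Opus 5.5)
Your proposal is correct and follows essentially the paper's own argument (Step 4 in the proof of Proposition \ref{prop:wellposed_mild}, combined with Proposition \ref{prop:equiv_weak}): the pseudo-mild representation \eqref{eq:pseudo_mild}, Theorem \ref{th:Schauder_est_intr} with $\alpha=0$ for the free term, and the kernel bounds \eqref{eq:bound_Grad_kol}, \eqref{eq:bound_gauss_for_grad}, \eqref{eq:bound_gauss_for_Y} with $m=1$, $n=0$ for the Duhamel term, splitting at $t-s=t'-t$ in the $Y$-direction. There are two harmless slips. First, the Duhamel kernel is $\nabla_{x^0}p_{\Kcc_\sigma}(t-s,x,y)$, not $\partial_{y_i}p_{\Kcc_\sigma}$; these differ because $\nabla_x p=-(e^{rB})^{\top}\nabla_y p$, so you should use $m=1$, $n=0$ in (a) as well. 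Second, in (a) the far-region integral $\int(t-s)^{-(1+\gamma')/2}\,\dd s$ is only bounded, not $\lesssim|y-y'|_B^{1-\gamma'}$, so you get $\gamma'$-H\"older rather than Lipschitz-type regularity. That is still enough, and the paper simply integrates the $\gamma$-H\"older bound over all of $(0,t)$ without splitting.
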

%
%

The following lemma is crucial to prove Proposition \ref{prop:fp_kinetic_Density}. The part of the statement dealing with weak uniqueness for the SDE is also crucial in the proof of Theorem \ref{th_den_MKV_kin}.

\begin{lemma}[!]\label{lem:sde_kin_dens}
Let Assumptions \ref{assump:hor} and \ref{assum:sigma_0} be in force. 
Setting
\begin{equation}\label{eq:b_sig_lin}
\beta(t,x) : = B x +  \left( 
\begin{matrix}
\beta_0(t,x)  \\
{\bf 0}_{(d - d_0) \times 1}
\end{matrix}
\right)  , \quad \sigma(t,x) : = \left( 
\begin{matrix}
\sigma_0 (t,x)  \\
{\bf 0}_{(d - d_0) \times 1}
\end{matrix}
\right), \qquad (t,x)\in(0,T)\times\Rd,
\end{equation}
with $\beta_0\in L^{\infty}((0,T)\times \Rd)$, we have:
\begin{itemize}
\item[(i)] SDE$(\beta,\sigma)$ is weakly well-posed (see Definition \ref{def:well_posed_lin}). For any $(s,x)\in [0,T)$, the law of the (unique in law) solution to SDE$(\beta,\sigma;s,\delta_x)$, at time $t\in (s,T)$, has a density $p(s,x; t, \cdot)$.
\item[(ii)] 
There exists $C>0$, depending only on $\| \sigma_0 \|_{L^{\infty}_{T}\Cc^{\alpha}_B(\Rd)}$, $\| \beta_0 \|_{L^{\infty}((0,T)\times \Rd)}$, $\lambda$, $T$ and $B$, such that
\begin{equation}\label{eq:gaussian_1}
 |  \nabla^{m}_{x^0} p(s,x;t,y)  |  \leq C (t-s)^{-\frac{m}{2}} \, p_{\Kcc_{2\lambda}}(t-s, x  , y).
\end{equation}
for any $0\leq s < t < T$, any $x,y\in\Rd$ and $m=0,1,2$. Here, $p_{\Kcc,\cdot}$ is the Gaussian density in \eqref{eq:kol_dens}.
\item[(iii)] Furthermore, for any $s\in[0,T)$ and $\nu\in\mathcal{P}(\Rd)$, the flow of time-marginal densities of the solution to SDE$(\beta,\sigma;s,\nu)$ is the unique (weak) solution to FP$(\beta,\sigma;s, \nu)$ (see Definition \ref{def:sol_lin_fp}). In particular, for any $x\in\Rd$, $(p(s,x; t, \cdot))_{t\in(s,T)}$ is a density solution to FP$(\beta,\sigma;s, \delta_x)$.
\end{itemize}
\end{lemma}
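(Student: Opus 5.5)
The plan is to treat SDE$(\beta,\sigma)$ as a bounded measurable perturbation of the Kolmogorov-type SDE with H\"older diffusion SDE$(Bx,\sigma)$, and to argue in three layers: the unperturbed operator, then the drift $\beta_0$, then the Fokker-Planck equation.

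\emph{Step 1 (unperturbed operator).} With $\beta_0\equiv 0$, the operator $\langle Bx,\nabla\rangle+\frac12\mathrm{tr}(\sigma_0\sigma_0^\top\nabla^2_{x^0})$ has H\"older coefficients in the intrinsic/anisotropic sense, by Assumption \ref{assum:sigma_0}, and it is uniformly elliptic in the $x^0$ directions. I would build its fundamental solution $\Gamma(s,x;t,y)$ by the parametrix method. The parametrix is the Gaussian kernel $p_{\Kcc_\sigma}$ of \eqref{eq:kol_dens}, frozen at the coefficient $\sigma_0(t,y)$, and one iterates with the Levi series. The singularity $|x-y|_B^\alpha(t-s)^{-1}$ is integrable thanks to the Gaussian estimates (Lemma on Gaussian estimates), applied with $m=0,1,2$. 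This gives $\Gamma$ together with
\begin{equation}
|\nabla^m_{x^0}\Gamma|\le C(t-s)^{-m/2}p_{\Kcc_{2\lambda}}(t-s,x,y),\qquad m=0,1,2.
\end{equation}
By the usual argument, $\Gamma$ provides unique solutions to the backward Cauchy problem for smooth data. Here I would rely on the known results in \cite{lucertini2023optimal,difpas} instead of reproving everything.

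\emph{Step 2 (adding the drift $\beta_0$).} I would define $p$ through the Duhamel series
\begin{equation}
p=\Gamma+\sum_{k\ge1}\Gamma\otimes(\beta_0\cdot\nabla_{x^0}\Gamma)^{\otimes k},
\end{equation}
where each convolution in time gains the factor $\int_s^t(\tau-s)^{-1/2}d\tau$. The Chapman–Kolmogorov property of the Gaussian $p_{\Kcc_{2\lambda}}$ then makes the series converge, with a Gamma-function type bound, to a kernel satisfying \eqref{eq:gaussian_1} for $m=0,1$. The bound for $m=2$ is the main obstacle, since $\beta_0$ is merely bounded and the second derivative of the last term produces the non-integrable factor $(t-\tau)^{-1}$. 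My first attempt would place the $x^0$-derivatives on the backward variable of the first factor $\Gamma(s,x;\tau,z)$. The derivatives then fall on $\nabla^2_{x^0}\Gamma$ in the first slot, where the singularity is $(\tau-s)^{-1}$, while the remaining factor provides $(t-\tau)^{-1/2}$. I would split the time integral at $(s+t)/2$. On the half near $s$, I would exploit the cancellation $\int\nabla^2_{x^0}\Gamma(s,x;\tau,z)\,dz\approx0$, coming from the H\"older regularity in $x$, combined with the H\"older continuity in $z$ of $\nabla_{z^0}$ of the subsequent kernel. This is the standard argument showing that a bounded drift preserves second-order bounds in the backward variable.

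\emph{Step 3 (well-posedness and Fokker-Planck).} Weak existence follows by Girsanov from the solution with $\beta_0=0$, which exists weakly by Stroock–Varadhan since the non-degenerate part is continuous in space and the linear part is Lipschitz. Alternatively, it follows by superposition (Proposition \ref{th:lin_FP}) applied to $t\mapsto\int p(s,x;t,y)\nu(dx)$, once we verify that this is a weak FP solution, which in turn follows from the backward equation satisfied by $p$. For uniqueness of weak solutions, and hence for (iii), I would use duality. Given any measure-valued FP solution $\mu$ and $\varphi\in C_0^\infty$, set $u(\tau,x)=\int p(\tau,x;t,y)\varphi(y)dy$. Step 2 shows that $u$ solves the backward equation, with $\nabla_{x^0}u,\nabla^2_{x^0}u$ bounded by $(t-\tau)^{-1/2}$ and $(t-\tau)^{-1}$. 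After mollification in time, $u$ is an admissible test function, which yields $\int\varphi\,d\mu_t=\int u(s,\cdot)\,d\nu$. Hence the FP solution is unique, and marginal uniqueness gives uniqueness in law via \cite[Th.~6.2.3]{stroock_multidimensional_1979}. Finally, Itô's formula identifies the marginals of any weak SDE solution with this FP solution, which proves that the law at time $t$ has density $p(s,x;t,\cdot)$ and completes (i) and (iii).
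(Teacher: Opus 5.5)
The paper does not reprove (i)--(ii). It cites \cite{chaudru2022regularization} and \cite{lucertini2022optimal}, and gets (iii) from It\^o's formula plus a duality argument. Your reconstruction breaks down exactly where you flag the difficulty, and the proposed repair does not work.

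\textbf{The $m=2$ bound in Step 2.} Consider the term $\int_s^t\!\int\nabla^2_{x^0}\Gamma(s,x;\tau,z)\,\beta_0(\tau,z)\cdot\nabla_{z^0}(\cdots)(\tau,z;t,y)\,dz\,d\tau$. The cancellation $\int\nabla^2_{x^0}\Gamma(s,x;\tau,z)\,dz\approx 0$ only helps if the \emph{whole} factor multiplying $\nabla^2_{x^0}\Gamma$ is H\"older in $z$. That factor contains $\beta_0(\tau,z)$, which is merely bounded measurable.

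No other argument can rescue this step, because for a merely bounded drift the bound in (ii) with $m=2$ is false in general. Take $d=d_0=q=1$, $B=0$, $\sigma_0\equiv 1$, and put $u(s,x)=\int p(s,x;t,y)\varphi(y)\,dy$. The $m=2$ bound would give $|\partial^2_x u(s,\cdot)|\le C(t-s)^{-1}\|\varphi\|_\infty$. On the other hand, writing $u$ by Duhamel around the heat kernel, $\partial^2_x u(s,0)$ contains the term $\int_s^t\!\int\partial^2_x p_{\text{heat}}(\tau-s,0,z)\,\beta_0(\tau,z)\,\partial_z u(\tau,z)\,dz\,d\tau$.

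Since $\partial^2_x p_{\text{heat}}(r,0,z)=(z^2-r)\,r^{-2}\,p_{\text{heat}}(r,0,z)$, make the following choices:
\begin{itemize}
\item $\beta_0(\tau,z)=\text{sgn}(z^2-(\tau-s))$ for $\tau\in(s,s+\delta)$, and $\beta_0=0$ afterwards;
\item $\varphi$ such that $\partial_z u(\tau,\cdot)\ge\frac12$ near the origin for those $\tau$ (e.g.\ $\varphi(y)=y$ on a large interval, with $\delta$ small).
\end{itemize}
Then the inner integral is $\gtrsim(\tau-s)^{-1}$, so the time integral diverges and $\partial_x u(s,\cdot)$ is not even Lipschitz at $0$. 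Second-order bounds of this kind are obtained when the first-order coefficient is H\"older as well. Note that the paper only ever uses (ii) with $\beta_0\equiv 0$, which your Step 1 covers.

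\textbf{Consequence for Step 3.} Your duality argument tests the Fokker--Planck equation against $u$ built from the \emph{perturbed} kernel and uses $|\nabla^2_{x^0}u|\lesssim(t-\tau)^{-1}$. Without that bound, $u$ is not an admissible test function against an arbitrary measure $\mu_\tau$. Schauder estimates for the full backward problem are likewise unavailable when $\beta_0$ is only bounded.

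\textbf{How to fix it.} Never differentiate the perturbed kernel twice. Test instead with $\psi(\tau,x)=\int\Gamma(\tau,x;t,y)\varphi(y)\,dy$, for which the second-order bounds do hold. This shows that any weak solution of FP$(\beta,\sigma;s,\nu)$ satisfies the Volterra equation
\[
\mu_t(dy)=\Big(\int\nu(dx)\,\Gamma(s,x;t,y)+\int_s^t\!\!\int\mu_r(dz)\,\beta_0(r,z)\cdot\nabla_{z^0}\Gamma(r,z;t,y)\,dr\Big)dy ,
\]
which is the measure-valued analogue of the pseudo-mild formulation in Proposition \ref{prop:equiv_weak}.
\begin{itemize}
\item For two solutions with locally bounded total variation, this yields $\|\mu_t-\tilde\mu_t\|_{\mathrm{TV}}\le C\int_s^t(t-r)^{-1/2}\|\mu_r-\tilde\mu_r\|_{\mathrm{TV}}\,dr$, hence uniqueness, using only the first-order bound on $\Gamma$.
\item Your Duhamel series, which you do control for $m=0,1$, is exactly the solution of this equation with $\nu=\delta_x$. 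This identifies it with the marginal density.
\item For the SDE part of (i), Girsanov gives uniqueness in law as well as existence. This is because $\beta_0$ lies in the range of $\sigma_0$ and $\sigma_0\sigma_0^\top$ is invertible.
\item Existence for $\beta_0\equiv 0$ is not literally Stroock--Varadhan, since $\sigma_0$ is only measurable in time. Time mollification plus tightness, or the reference \cite{chaudru2022regularization}, handles this.
\end{itemize}
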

\begin{proof}
Part (i) was proved in  \cite[Th. 1]{chaudru2022regularization}. Part (ii) was proved in \cite[Th. 1.8]{lucertini2022optimal}. Concerning Part (iii), Proposition \ref{th:lin_FP}-(i) yields that the flow of time-marginal densities of the solution to SDE$(\beta,\sigma;0,\nu)$ is a (weak) solution to FP$(\beta,\sigma; 0, \nu)$. To obtain uniqueness for the solutions to FP$(\beta,\sigma; \nu)$, one can use a standard argument based on forward-backward duality, owing to the Schauder estimates in \cite[Th. 2.7]{lucertini2023optimal} for the solution to the backward Kolmogorov Cauchy problem associated to SDE$(\beta,\sigma)$. We leave the details as an exercise for the reader.
\end{proof}
\begin{remark}
Note that, in Lemma \ref{lem:sde_kin_dens}, uniqueness for the solutions to FP$(\beta,\sigma; \nu)$ does not stem immediately from the weak uniqueness of the solutions to SDE$(\beta,\sigma;0,\nu)$, as solutions to FP$(\beta,\sigma; \nu)$ that are not non-negative could exist in principle.
\end{remark}

We are now in the position to prove Theorems \ref{th_den_MKV_kin} and \ref{th:reg}.

\begin{proof}[Proof of Theorem \ref{th_den_MKV_kin}]
Let $v
$ be the non-negative unique (weak) solution to D-NL-FP$(b,\sigma; \nu )$, with $\sigma$ and $b$ defined as in \eqref{eq:sig_b_deg_den}, in Proposition \ref{prop:fp_kinetic_Density}. First note that condition \eqref{eq:hyp_b_s_loc_bound_bis} in Lemma \ref{lem:fokker_superpo_den} holds true in light of the boundedness assumption on $\sigma_0$, together with \eqref{eq:b0_comp_v}. 

\emph{Existence:} It stems from Lemma \ref{lem:fokker_superpo_den}-(ii), once condition \eqref{eq:trevisan_cond_den} is checked. The latter holds true because 
$v\in L^{\infty}_{T}L^{1}(\Rd)$ and because of the boundedness assumption on $\sigma_0$, together with \eqref{eq:b0_comp_v}. 

\emph{Uniqueness:} Let 
$X, \tilde X$ be two solutions to Kin-D-MKV$(b_0,\sigma_0;\nu)$. 
By Lemma \ref{lem:fokker_superpo_den}-(i) and by the uniqueness result of Proposition \ref{prop:fp_kinetic_Density}, their flows of time-marginal densities coincide with $v$. Thus, by definition, both $X$ and $\tilde X$ are weak solutions to the frozen equation SDE$({\bf b}_{v},\sigma; 0, v_0(x) dx)$, with $\sigma$ as in \eqref{eq:sig_b_deg_den} and
\begin{equation}\label{eq:b_0_kin}
{\bf b}_{v}(t,x) = B x + \left( 
\begin{matrix}
b_0\big( t,x , v_t(x) \big) \\
{\bf 0}_{(d - d_0) \times 1}
\end{matrix}
\right), \qquad (t,x)\in (0,T)\times\Rd.
\end{equation}
However, by \eqref{eq:b0_comp_v}, 
Lemma \ref{lem:sde_kin_dens}-(i) yields uniqueness in law for the frozen SDE and thus $[X] = [\tilde X]$.
\end{proof}

\begin{proof}[Proof of Theorem \ref{th:reg}]
In the proof of Theorem \ref{th_den_MKV_kin}, we identified the flow of time-marginals of the solution to Kin-D-MKV$(b_0,\sigma_0; \nu)$ as the unique (weak) solution to D-NL-FP$(b,\sigma;\nu )$ (see Definition \ref{def:sol_nl_fp_den}) 
in $L^{\infty}_{T}L^{\infty}(\Rd)$. Therefore, the statement is a direct consequence of Proposition \ref{prop:reg_sol_weak}.
\end{proof}

\vspace{2pt}

A natural question is whether the assumptions of Theorem \ref{th_den_MKV_kin} are enough to prove an analogous result for the pathwise solutions to the kinetic-type MKV-SDE \eqref{eq:mkv_degenerate_den}. 
In the general kinetic setting, the answer seems to be no (see Remark \ref{rem:path_kin}). However, in the non-degenerate case, which is when $d= d_0$ and thus the MKV-SDE \eqref{eq:mkv_degenerate_den} reduces to 
\begin{equation}\label{eq:mkv_degenerate_non_den}
\begin{cases}
\dd X_t = \Big( b_0\big(t,X_t,v_t(X_t)\big) + B_0 X_t \Big) \dd t   +\sigma_0 \big(t,X_t,v_t(X_t)\big)  \,  \dd W_t     \\
[X_t] = v_t(x) \dd x
\end{cases},
\end{equation}
we have the result below. 
\begin{corollary}\label{cor:pathwise_den}
Let $d=d_0$ and let Assumptions \ref{assump:hor}, \ref{assum:sigma_0} and \ref{assum:b_0}-(a) be in force. Furthermore, assume 
$\sigma_0(t, \cdot)$ is Lipschitz-continuous on $\R^{d_0}$, uniformly with respect to $t\in (0,T)$. 
Then:
\begin{itemize}
\item[(i)] Under Assumption \ref{assum:b_0}-(b),  Kin-D-MKV$(B,b_0,\sigma_0)$ is pathwise well-posed (see Definition \ref{def:sol_kinetic_density_MKV}).
\item[(ii)] Under Assumption \ref{assum:b_0}-(b-loc), for any \textcolor{black}{$\xi\in m\Fc_0$} 
with a density $v_0 \in L^{\infty}(\Rd)\cap L^{1}(\Rd)$, and for any $(W, (\Fc_t)_t )$ $d_0$-dimensional Brownian motion, there exists a (pathwise) unique solution $X$ to D-MKV$(b_0,\sigma_0;\xi, W, (\Fc_t)_t)$ (see Definition \ref{def:weak_path_sol_den_MKV})  
whose flow of time-marginal densities belongs to $L^{\infty}_{T}L^{\infty}(\Rd)$.
\end{itemize}
\end{corollary}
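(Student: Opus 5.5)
The plan is to follow the ``stairway'' of Section \ref{sec:mkv_diffusions}, transferred to the density-dependent setting, and to use Lemma \ref{lem:wat_yam_mkv} (which holds mutatis mutandis for D-MKV$(b,\sigma)$). Weak existence and uniqueness are already given by Theorem \ref{th_den_MKV_kin}. In case (i) uniqueness is among all weak solutions, so we take $\Cc$ to be the set of all measurable flows. In case (ii) we take
\begin{equation}
\Cc := \big\{ {\boldsymbol\mu} : {\boldsymbol\mu}_t = v_t(x)\,\dd x,\ v\in L^{\infty}_{T}L^{\infty}(\Rd) \big\}.
\end{equation}
Indeed, Theorem \ref{th_den_MKV_kin}-(ii) shows that the time-marginals of the weak solution are unique within $\Cc$. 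In both cases we fix the weak solution $X$ with initial law $\nu=[\xi]$ and its marginal densities $v$.

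It then remains to check the hypotheses on the frozen SDE in Lemma \ref{lem:wat_yam_mkv}. The frozen equation is SDE$({\bf b}_v,\sigma;0)$, with ${\bf b}_v$ as in \eqref{eq:b_0_kin} and $d=d_0$, so its drift is
\begin{equation}
{\bf b}_v(t,x) = B_0x + \beta_0(t,x), \qquad \beta_0(t,x):=b_0\big(t,x,v_t(x)\big).
\end{equation}
By Remark \ref{rem:b0_comp_v}, $\beta_0\in L^\infty((0,T)\times\Rd)$. For case (ii) this uses $v\in L^\infty_TL^\infty$. For case (i) it follows directly from the global bound in Assumption \ref{assum:b_0}-(b).

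Weak existence for the frozen SDE, for every initial law, comes from Lemma \ref{lem:sde_kin_dens}-(i). Pathwise uniqueness is the key input. In the non-degenerate case the diffusion $\sigma_0$ is uniformly elliptic (Assumption \ref{assum:sigma_0}) and Lipschitz in $x$ uniformly in $t$, while the drift is the sum of a linear term $B_0x$ and a bounded measurable term $\beta_0$. This is exactly the setting of Veretennikov's theorem, and of its extensions such as Zvonkin's transform and Krylov--R\"ockner type results with a Lipschitz, non-degenerate diffusion. These give pathwise uniqueness, and in fact strong existence.

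The linear unbounded part $B_0x$ can be handled in one of two ways. One option is localization, stopping at exit times from balls: there the drift is bounded, and the pathwise uniqueness of the localized solutions patches together since solutions do not explode. The alternative is to write the equation in the variable $e^{-tB_0}X_t$, which turns $B_0x$ into a time-dependent bounded-on-compacts modification of the coefficients that preserves ellipticity and Lipschitz continuity. I expect this step to be the main obstacle: one must quote a version of Zvonkin/Veretennikov that allows merely Lipschitz (not smooth) $\sigma_0$, time-dependent and only bounded measurable drift, and the extra linear drift, and verify its hypotheses carefully.

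With weak existence and pathwise uniqueness for SDE$({\bf b}_v,\sigma;0)$ in hand, Lemma \ref{lem:wat_yam_mkv} yields, for every Brownian motion $(W,(\Fc_t)_t)$ and every $\xi\in m\Fc_0$ with $[\xi]=\nu$, a pathwise unique solution whose time-marginals lie in $\Cc$. In case (i) this holds for arbitrary $\nu\in\Pc(\Rd)$, which is pathwise well-posedness in the sense of Definition \ref{def:defined_dens}. Alternatively, case (i) follows from Proposition \ref{prop:wea_strong_MKV}, since in that case every frozen SDE arising from any weak solution has a bounded drift by the global bound in Assumption \ref{assum:b_0}-(b). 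In case (ii) it gives the restricted statement, with marginal densities in $L^\infty_TL^\infty(\Rd)$.
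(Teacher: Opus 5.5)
Your proposal is correct and follows the paper's proof. Weak well-posedness (or uniqueness restricted to $L^\infty_TL^\infty$ marginals) comes from Theorem \ref{th_den_MKV_kin}; weak existence and pathwise uniqueness for the frozen SDE with drift $B_0x+b_0(t,x,v_t(x))$, bounded by Remark \ref{rem:b0_comp_v}, come from Veretennikov's theorem; the conclusion then follows from Lemma \ref{lem:wat_yam_mkv}. Your explicit choice of $\Cc$ and your treatment of the linear drift just spell out what the paper leaves implicit in its citation; note that the substitution $e^{-tB_0}X_t$ actually yields a globally bounded drift, since $T<\infty$.
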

\begin{proof}
Let 
$v$ be the flow of densities in Theorem \ref{th_den_MKV_kin}. 
 By the Lipschitz and non-degeneracy assumption on $\sigma_0$, together with \eqref{eq:b0_comp_v}, weak existence and pathwise uniqueness hold for the standard SDE$({\bf b}_{0,v},0,\sigma_0)$ (see \cite [Theorem 2]{veretennikov1984stochastic})
, with 
\begin{equation}
{\bf b}_{0,v}(t,x) = B_0 x + b_0\big( t,x , v_t(x) \big), \qquad (t,x)\in (0,T)\times\R^{d_0}.
\end{equation} 
The statement then stems from Theorem \ref{th_den_MKV_kin} together with Lemma \ref{lem:wat_yam_mkv}. 
%
\end{proof}

\begin{remark}\label{rem:path_kin}
In the kinetic case, the proof of Corollary \ref{cor:pathwise_den} breaks down because pathwise uniqueness of the kinetic-type frozen SDE$({\bf b}_{v},\sigma_0)$ typically requires stronger regularity assumptions on the drift coefficient (see \cite{de2017strong}). 
For instance, recalling notation \eqref{eq:notation_r}, ${\bf b}_{v}(t,x)$ has to be at least $2/3$-H\"older continuous in the space variables $x^{1}$. Though it is not known whether these regularity thresholds are sharp, as no counter example has been provided so far, they appear to be sharp relative to the Zvonkin-type (PDEs) techniques employed to prove pathwise uniqueness. Proving that ${\bf b}_{v}(t,x)$  satisfies these extra regularity conditions is not a trivial task; it requires not only stronger assumptions on $b_0$ but also to prove a higher regularity result for $v_t$.
\end{remark}

\subsection{Proof of Propositions \ref{prop:fp_kinetic_Density} and \ref{prop:reg_sol_weak}}\label{sec:proof_prop_fp}

Assumptions \ref{assump:hor}, \ref{assum:sigma_0} and \ref{assum:b_0}-(a) are in force throughout this section.
For the reader's convenience, we write explicitly the non-linear Fokker-Planck equation associated to the density-dependent kinetic-type MKV 
\eqref{eq:mkv_degenerate_den}, under the assumption that $\sigma_0(t,x,y) = \sigma_0(t,x)$, i.e.
\begin{equation}\label{eq:nl_fp_cp_den_kin}
\begin{cases}
 \partial_t v_t(y) =  - \text{div}_y \big( B y\, v_t(y)\big) +\frac{1}{2}\text{div}_{y^0}\Big(\text{\bf div}_{y^0} \big(   \sigma_0 \sigma_0^\top(t,y) \, v_t(y) \big)\Big) - \text{div}_{y^0} \Big(b_0\big(t,y, v_t(y)\big) \,  v_t(y) \Big)     , \quad t\in (0,T), \\
\lim_{t\to 0^+} v_t(y) dy = \nu \in \mathcal{P}(\Rd).
 \end{cases}
\end{equation}
Here, $\text{div}$ and $\text{\bf div}$ denote the standard and matrix divergence operators (see \eqref{eq:div_stand} and \eqref{eq:div_matrix}). In particular, the superscript in $y^0$ means that the operator acts only on the $y^0$ component of $y\in \Rd$, namely on the first $d_0$ space variables.

To study the Fokker-Planck equation above, we introduce the concept of \emph{pseudo-mild solutions}. The latter is a modification of the known concept of mild solutions, previously employed by several authors in similar settings, employed also in \cite{lieber2018well}. This turns out to be an amenable notion of solution within the current setting, which can be proved to be equivalent to the one of weak solution in Definition \ref{def:sol_nl_fp}.

\paragraph{Heuristic idea:} Re-write the Fokker Planck equation \eqref{eq:nl_fp_cp_den_kin} as 
\begin{equation}
 \Kcc_{\sigma_0,B}' v_t(x)=  - \text{div}_{x^0} \Big(b_0\big(t,x, v_t(x)\big) \,  v_t(x) \Big)     , \qquad t\in (0,T),
\end{equation}
where $\Kcc_{\sigma_0,B}'$ is the Fokker Planck operator associated to the diffusion 
\begin{equation}\label{eq:mkv_degenerate_den_mult}
\dd X_t = B X_t \,  \dd t   + \left( 
\begin{matrix}
\sigma_0 \big(t,X_t\big)  \\
{\bf 0}_{(d - d_0) \times 1}
\end{matrix}
\right) \,  \dd W_t   ,   
\end{equation}
namely
 \begin{equation}
 \Kcc_{\sigma_0,B}' =  \partial_t  + \text{div}_x \big( B x\, \cdot\big) -\frac{1}{2}\text{div}_{x^0}\Big(\text{\bf div}_{x^0} \big(   \sigma_0 \sigma_0^\top(t,x) \, \cdot \big)\Big).
\end{equation}
Denoting by $p_{B,\sigma_0}=p_{B,\sigma_0}(s,x; t, y)$ the transition density of  the diffusion \eqref{eq:mkv_degenerate_den_mult} (see Lemma \ref{lem:sde_kin_dens}-(i)), the forward semigroup of $\Kcc_{\sigma_0,B}'$ is defined, for any $\mu\in\mathcal{P}(\Rd)$, as 
\begin{equation}
\Pcc'^{B,\sigma_0}_{s,t} \mu : = \int_{\Rd} \mu(dx) p_{B,\sigma_0}(s,x; t, \cdot)  , \qquad 0\leq s<t< T .
\end{equation}
Assuming that the latter is regular enough, Duhamel's principle yields
\begin{equation}\label{eq:mild_sol}
v_t = \Pcc'^{B,\sigma_0}_{0,t} \nu + \int_0^t \Pcc'^{B,\sigma_0}_{s,t} \big[ \text{div}_{x^0} \big(b_0\big(s,x, v_s(x)\big)v_s(x)\big) dx \big] ds, \qquad t\in(0,T).
\end{equation}
The formula above is the standard concept of mild solution. However, notice that, 
under the current assumptions on $b_0$, the 
divergence term above is a distribution. 
Therefore, in order to perform the analysis, one would have to extend the forward semigroup $\Pcc'^{B,\sigma_0}_{s,t}$ on a negative Besov space and prove appropriate Schauder estimates for the latter. In the case of multiplicative noise that we are considering here, this would require an extension of the results in \cite{menozzi2026heat}, where the program above has been realized for the backward semigroup. In order to avoid negative Besov spaces, the idea is to use integration by-parts and charge the derivatives onto the semigroup. Indeed, assuming that the transition density $p_{B,\sigma_0}(s,x; t, y) $ enjoys sufficient regularity in the $x$ variable, one can write $v$ in \eqref{eq:mild_sol} as
\begin{equation}\label{eq:pseudo_mild}
v_t = \Pcc'^{B,\sigma_0}_{0,t} \nu + \int_0^t  \int_{\Rd}  v_s(x) \big\langle b_0\big(s,x, v_s(x)\big) , \nabla_{x^0} p_{B,\sigma_0}(s,x; t, \cdot) \big\rangle dx \, ds, \qquad t\in (0,T).
\end{equation}
The formula above will be our notion of \emph{pseudo-mild solution} to \eqref{eq:nl_fp_cp_den_kin} (see also \cite{lieber2018well}). 

\vspace{5pt}

To prove Proposition \ref{sec:proof_prop_fp}, we implement the following program:
\begin{itemize}
\item Prove existence and uniqueness of pseudo-mild solutions, 
and study their regularity.
\item Show that 
the notion of pseudo-mild solution is equivalent to the one of weak solution.
\end{itemize}

We start with the following
\begin{remark}
By Lemma \ref{lem:sde_kin_dens}-(ii) with $\beta_0\equiv 0$, there exists $C>0$, depending only on $\| \sigma_0 \|_{L^{\infty}_{T}\Cc^{\alpha}_B(\Rd)}$, $\lambda$, $T$ and $B$, such that
\begin{equation}\label{eq:gaussian_1}
 |  \nabla^{m}_{x^0} p_{B,\sigma_0}(s,x;t,y)  |  \leq C (t-s)^{-\frac{m}{2}} \, p_{\Kcc_{2\lambda}}(t-s, x  , y),
\end{equation}
for any $0\leq s < t < T$, any $x,y\in\Rd$ and $m=0,2$. Here, $p_{\Kcc,\cdot}$ is the Gaussian density in \eqref{eq:kol_dens}.
\end{remark}

We can now give the following
\begin{definition}\label{def:psudo_mild}
Let $\nu\in\mathcal{P}(\Rd)$. A function 
$v:(0,T)\to L^{1}(\Rd)$ 
is called a \emph{pseudo-mild solution} to \eqref{eq:nl_fp_cp_den_kin} if \eqref{eq:pseudo_mild} holds true.
\end{definition}

Proposition \ref{prop:fp_kinetic_Density} is a direct consequence of the following two propositions.

\begin{proposition}\label{prop:equiv_weak}
Let $v:(0,T)\to L^{1}(\Rd)$, and let also $\sigma$, $b$ be as in \eqref{eq:sig_b_deg_den} and $\nu \in \mathcal{P}(\Rd)$. If either
\begin{itemize}
\item[(i)] Assumption \ref{assum:b_0}-(b) is satisfied,
\item[(ii)] or, Assumption \ref{assum:b_0}-(b-loc) is satisfied and $v\in L^{\infty}_{T}L^{\infty}(\Rd)$,
 \end{itemize}
then  $v$ is a \emph{pseudo-mild solution} to \eqref{eq:nl_fp_cp_den_kin} if and only if it is a weak solution to D-NL-FP$(b,\sigma; \nu)$ (see Definition \ref{def:sol_nl_fp_den}). 
\end{proposition}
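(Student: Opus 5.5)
The plan is to reduce the non-linear statement to a linear one by freezing the drift. Given $v$ as in the statement, set $\beta_0(t,x):=b_0\big(t,x,v_t(x)\big)$. By Remark \ref{rem:b0_comp_v}, in either case (i) or (ii) we have $\beta_0\in L^\infty((0,T)\times\Rd)$. Then $v$ is a weak solution to D-NL-FP$(b,\sigma;\nu)$ if and only if it is a weak density solution to the linear problem FP$(\beta,\sigma;0,\nu)$, with $\beta,\sigma$ as in \eqref{eq:b_sig_lin}. Likewise, $v$ is pseudo-mild if and only if it satisfies the linear Duhamel identity
\begin{equation}
v_t = \Pcc'^{B,\sigma_0}_{0,t}\nu + \int_0^t\int_{\Rd} v_s(x)\big\langle \beta_0(s,x),\nabla_{x^0}p_{B,\sigma_0}(s,x;t,\cdot)\big\rangle\,dx\,ds .
\end{equation}
The statement thus reduces to an equivalence for a linear equation with bounded measurable first-order perturbation. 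In what follows, write $g_s:=\beta_0(s,\cdot)\,v_s$. Since $v\in L^\infty_TL^1$ (or at least $v_s\in L^1$ with the integrals finite), $g\in L^1_{\rm loc}$.

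\emph{Pseudo-mild $\Rightarrow$ weak.} Test the identity against $\varphi\in C^\infty_0(\Rd)$. Using Fubini, which is justified by the Gaussian bound \eqref{eq:gaussian_1} with $m=1$ and the integrability of $s\mapsto (t-s)^{-1/2}$, we get
\begin{equation}
\int \varphi\, v_t = \int \Pcc^{B,\sigma_0}_{0,t}\varphi\, d\nu + \int_0^t\int \big\langle g_s(x),\nabla_{x^0}\Pcc^{B,\sigma_0}_{s,t}\varphi(x)\big\rangle\, dx\, ds ,
\end{equation}
where $\Pcc_{s,t}\varphi(x)=\int p_{B,\sigma_0}(s,x;t,y)\varphi(y)\,dy$ is the backward semigroup. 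The function $u(s,x)=\Pcc_{s,t}\varphi(x)$ solves the backward Kolmogorov equation $\partial_s u+\mathcal A_s u=0$ with $u(t)=\varphi$, where $\mathcal A_s$ is the generator of \eqref{eq:mkv_degenerate_den_mult}. I then compute $\frac{d}{dt}$ of the right-hand side, or more robustly compare with the weak formulation integrated against the time-dependent test function $u$. Doing so recovers \eqref{eq:def_sol_NL_FP_den}. A cleaner route is uniqueness: by Lemma \ref{lem:sde_kin_dens}(iii), FP$(\beta,\sigma;0,\nu)$ has a unique weak solution, namely the law density $p^\beta$ of SDE$(\beta,\sigma;\nu)$. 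It then suffices to show that this density satisfies the Duhamel identity and that the Duhamel identity has at most one $L^1$-solution.

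\emph{Weak $\Rightarrow$ pseudo-mild.} Given a weak solution $v$, fix $t$ and $\varphi\in C_0^\infty$, and apply the weak formulation in time-dependent form with test function $u(s,\cdot)=\Pcc_{s,t}\varphi$ on $[0,t-\varepsilon]$. The terms involving $\mathcal A_s$ cancel against $\partial_s u$, leaving only the perturbation term $\int\langle g_s,\nabla_{x^0}u\rangle$. Letting $\varepsilon\to0$, with weak continuity of $v$ and $\Pcc_{t-\varepsilon,t}\varphi\to\varphi$ uniformly, yields the tested Duhamel identity. Since $\varphi$ is arbitrary, \eqref{eq:pseudo_mild} follows a.e.

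\emph{Main obstacle.} The delicate step is justifying the use of $u=\Pcc_{s,t}\varphi$ as a test function. This function is not compactly supported and, since $\sigma_0$ is only $\alpha$-Hölder in the anisotropic sense, it is regular only in the intrinsic sense. I plan to handle this with the Schauder estimates of \cite[Th.~2.7]{lucertini2023optimal}, which give $u\in C^{2+\alpha}_B$ uniformly on $[0,t-\varepsilon]$; these should allow extending the weak formulation from $C_0^\infty$ functions of $x$ to such $u$. Concretely, I would mollify $u$ along the fields $\partial_{x^0}$ and $Y$, multiply by a spatial cutoff, and pass to the limit. The error terms are controlled by $v\in L^\infty_TL^1$ and the boundedness of $\sigma_0$ and $\beta_0$. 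The term $\langle Bx,\nabla u\rangle v$ needs care because of the linear growth of $Bx$; it is only controlled together with $\partial_s u$ as the Lie derivative $Yu$, which is why the mollification must follow $Y$ rather than the coordinate directions.
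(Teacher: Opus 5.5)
Your argument is correct, modulo the same implicit integrability of $v$ that the paper also uses, but it is organized differently from the paper's proof.

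\textbf{The paper's route.} The paper never freezes the drift and never tests against the backward semigroup. Write $u$ for the free term and $w$ for the Duhamel term. The paper shows by a purely forward computation that, for \emph{any} $v$, the sum $u+w$ satisfies the weak formulation of the unperturbed linear equation (drift $Bx$, diffusion $\sigma_0$) plus the term $\int_0^t\int\langle b_0(s,y,v_s(y)),\nabla_{y^0}\varphi\rangle v_s\,dy\,ds$. The only inputs are:
\begin{itemize}
\item the integrated forward equation of the kernel $p_{B,\sigma_0}(s,x;\cdot,\cdot)$ started at $\delta_x$ (Lemma \ref{lem:sde_kin_dens}-(iii) with $\beta_0\equiv 0$);
\item differentiation in $x^0$ under the integral, justified by the gradient bound with $m=1$;
\item Fubini.
\end{itemize}
Pseudo-mild $\Rightarrow$ weak is then read off directly. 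Weak $\Rightarrow$ pseudo-mild follows from uniqueness of signed solutions of the unperturbed linear Fokker--Planck equation, applied to $u+w-v$. This is your first, only sketched, idea (differentiate the tested Duhamel formula in $t$), carried out in integrated form. Note that it needs the forward equation in the terminal variable, not the backward equation for $\Pcc_{s,t}\varphi$.

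\textbf{Your weak $\Rightarrow$ pseudo-mild direction.} Your duality argument is essentially the proof of the uniqueness that the paper imports from Lemma \ref{lem:sde_kin_dens}-(iii) and leaves as an exercise. So your version is more self-contained precisely at the delicate point, at the price of the test-function extension you describe.

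\textbf{Your pseudo-mild $\Rightarrow$ weak direction.} Your ``cleaner route'' is sound. The density $p^\beta$ is a weak solution by It\^o's formula. By the direction you proved, it therefore satisfies the linear Duhamel identity with the same frozen $\beta_0$. A Volterra--Gronwall argument with kernel $(t-s)^{-1/2}$ then gives $v=p^\beta$. The uniqueness part of Lemma \ref{lem:sde_kin_dens}-(iii) is not even needed. However, this makes the easy direction depend on the hard one, and on weak existence for the SDE with bounded measurable drift. Using the paper's forward computation for this direction and your duality argument for the other gives the most economical proof.

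\textbf{Two small remarks.} First, since $Bx$ is linear, ordinary space--time mollification already works: $Y(f*\rho_\varepsilon)-(Yf)*\rho_\varepsilon\to 0$ locally uniformly for continuous $f$. So you need not mollify along $Y$. Second, in case (ii), $v\in L^\infty_TL^1$ is not assumed. There you should control the cutoff and mollification errors by the sup norm of $v$ together with the $L^1(dx)$ bounds on $\Pcc_{s,t}\varphi$ and its $x^0$-derivatives that the Gaussian estimates provide.
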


\begin{proposition}\label{prop:wellposed_mild} Let $\nu\in\mathcal{P}(\Rd)$. Then: 
\begin{itemize}
\item[(i)] Under Assumption \ref{assum:b_0}-(b), 
there exists a unique pseudo-mild solution $v$ to \eqref{eq:nl_fp_cp_den_kin}. 
\item[(ii)] Under Assumption \ref{assum:b_0}-(b-loc), and if $\nu = v_0(x) dx$ with $v_0 \in L^{\infty}(\Rd)\cap L^{1}(\Rd)$, 
there exists a unique pseudo-mild solution $v$ to \eqref{eq:nl_fp_cp_den_kin} in $L^{\infty}_{T}L^{\infty}(\Rd)$. 

Furthermore, under the additional assumption that the diffusion coefficient $\sigma_0\equiv\sigma>0$ is constant, if $v_0 \in C^{\gamma}_B$ with $\gamma\in (0,1)$, then $v \in \Cc_B^{\gamma}((0,T)\times \Rd)$.
 \end{itemize}
In both cases, $v$ is non-negative and belongs to $L^{\infty}_{T}L^{1}(\Rd)$.
\end{proposition}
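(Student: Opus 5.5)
The plan is a Banach fixed-point argument for the map
\begin{equation}
\Phi(v)_t := \Pcc'^{B,\sigma_0}_{0,t} \nu + \int_0^t \int_{\Rd} v_s(x) \big\langle b_0\big(s,x,v_s(x)\big), \nabla_{x^0} p_{B,\sigma_0}(s,x;t,\cdot)\big\rangle \dd x\, \dd s .
\end{equation}
In case (i) I would work on $L^\infty_T L^1(\Rd)$; in case (ii), on a ball of $L^\infty_T L^1(\Rd)\cap L^\infty_T L^\infty(\Rd)$. The integrand of the Duhamel term is controlled by the Gaussian bound \eqref{eq:gaussian_1} with $m=1$, which follows from Lemma \ref{lem:sde_kin_dens}-(ii). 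Since $p_{\Kcc_{2\lambda}}(t-s,x,\cdot)$ integrates to $1$ both in $y$ and, up to a constant, in $x$, Young-type bounds give
\begin{equation}
\|\Phi(v)_t - \Phi(w)_t\|_{L^p} \leq C\int_0^t (t-s)^{-1/2}\, \big\| v_s b_0(s,\cdot,v_s) - w_s b_0(s,\cdot,w_s)\big\|_{L^p}\, \dd s, \qquad p\in\{1,\infty\}.
\end{equation}
For the difference $v b_0(v) - w b_0(w) = (v-w) b_0(v) + w\,(b_0(v)-b_0(w))$, I would use the boundedness of $b_0$ (Remark \ref{rem:b0_comp_v}) and its Lipschitz property from Assumption \ref{assum:b_0}-(a).

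\textbf{Case (i).} Here $\|b_0\|_\infty$ is bounded globally, but the term $w(b_0(v)-b_0(w))$ is only bounded by $|w||v-w|$, which is not Lipschitz in $L^1$. I would therefore first establish a priori bounds and then argue carefully. Either I would construct a solution by regularizing $\nu$ (mollifying it to a bounded density), using case (ii) with uniform $L^1$ bounds and passing to the limit via the Gaussian estimates, or I would prove uniqueness through a weighted-in-time norm. Using $\int_0^t (t-s)^{-1/2}\dd s<\infty$ and a Gronwall/Volterra iteration, I would also show that the solution gains boundedness for positive times: $\|v_t\|_\infty \lesssim t^{-Q/2}$ on a small interval, by the Gaussian bound. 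I expect this step, obtaining uniqueness without an $L^\infty$ assumption, to be the main obstacle. As a fallback, I would try to exploit the exact positivity and mass conservation of the solution together with the linear uniqueness from Lemma \ref{lem:sde_kin_dens}-(iii).

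\textbf{Case (ii).} Taking $v,w$ in an $L^\infty$ ball of radius $R$, the local Lipschitz and boundedness constants on $[-R,R]$ make $\Phi$ a contraction on a short interval $[0,\tau]$, because of the factor $\tau^{1/2}$. The estimate with $p=\infty$ also shows that $\Phi$ maps the ball into itself, since $\|\Pcc'_{0,t} v_0\|_\infty \leq C\|v_0\|_\infty$. The difficulty is to iterate to all of $[0,T]$ without blow-up of $R$. For this I would use the fact that the fixed point is, by Proposition \ref{prop:equiv_weak}, the marginal of the frozen SDE with bounded drift. Lemma \ref{lem:sde_kin_dens}-(ii) then gives $\|v_t\|_\infty\leq C\|v_0\|_\infty$ with $C$ depending only on $\|b_0(\cdot,\cdot,v)\|_\infty$. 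That quantity still depends on $R$, so I would need a maximum-principle-type argument, or a growth assumption, to close the bound. Non-negativity and $\int v_t=1$ follow because $v$ coincides with the marginal density of the frozen SDE, again via Lemma \ref{lem:sde_kin_dens}-(iii). Uniqueness in $L^\infty_T L^\infty$ follows from the same contraction estimate combined with Gronwall.

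\textbf{Regularity.} With $\sigma_0\equiv\sigma$ constant, $p_{B,\sigma_0}=p_{\Kcc_\sigma}$. The first term, $\Pcc'_\cdot v_0$, lies in $\Cc^\gamma_B$ by Theorem \ref{th:Schauder_est_intr} with $\alpha=0$. For the Duhamel term, the datum $f_s=v_s b_0(s,\cdot,v_s)$ is bounded, so I would estimate increments using the regularity bounds \eqref{eq:bound_gauss_for_grad} and \eqref{eq:bound_gauss_for_Y} with $m=1$. This gives singularities of order $(t-s)^{-(1+\gamma)/2}$ in space and a corresponding bound along $Y$. Both are integrable since $\gamma<1$. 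The usual splitting of the time integral near $s\approx t$ handles the remaining piece, and together these yield the intrinsic $\gamma$-Hölder bound through the characterization \eqref{eq:intr_charact_zero_kol_int}.
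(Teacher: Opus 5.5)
Your concrete steps match the paper's proof:
\begin{itemize}
\item a Banach fixed point for the Duhamel map on an $L^\infty$ ball over a short interval, using \eqref{eq:gaussian_1} with $m=1$ and the Lipschitz property of $b_0$;
\item uniqueness in $L^\infty_T L^\infty(\Rd)$ by a singular Gronwall argument;
\item non-negativity by identifying $v$, via Proposition \ref{prop:equiv_weak} and Lemma \ref{lem:sde_kin_dens}-(iii), with the marginal density of the frozen SDE;
\item regularity from Theorem \ref{th:Schauder_est_intr}, the bounds \eqref{eq:bound_gauss_for_grad}--\eqref{eq:bound_gauss_for_Y}, and the splitting $t'-t\lessgtr t-s$.
\end{itemize}
The one difference is minor: you build the $L^1$ norm into the ball, while the paper recovers $v\in L^\infty_T L^1(\Rd)$ afterwards by Picard iteration and Fatou. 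Both work.

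As a proof of the statement, however, the proposal has two genuine gaps, which you flag yourself. For (i) you list strategies but prove neither existence for a general $\nu$ nor uniqueness. For (ii) you only get existence on $[0,\tau]$, with $\tau$ depending on $\|v_0\|_\infty$. The a priori bound needed to restart does not close under Assumption \ref{assum:b_0}-(b-loc), because the constant of Lemma \ref{lem:sde_kin_dens}-(ii) depends on $\sup_{|y|\le R}\|b_0(\cdot,\cdot,y)\|_{L^\infty}$, which grows with $R$.

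The paper does not supply the missing ideas either. It takes $T$ small and asserts that general $T$ follows from an exponentially weighted norm. For the case of Assumption \ref{assum:b_0}-(b), it asserts that the contraction runs directly on $L^\infty_T L^1(\Rd)$. Both assertions fail:
\begin{itemize}
\item Your objection to the $L^1$ contraction is right. It needs $y\mapsto y\,b_0(t,x,y)$ to be globally Lipschitz, which (a) and (b) do not give; take $b_0(t,x,y)=\sin y$.
\item The weight $e^{-\rho t}$ gains only a factor of order $\rho^{-1/2}$, but it lets $\|u_t\|_\infty$ reach $Me^{\rho t}$. Under (b-loc), both the invariance constant $\sup_{|y|\le Me^{\rho T}}\|b_0(\cdot,\cdot,y)\|_{L^\infty}$ and the factor $|u|$ in $|u|\,|b_0(u)-b_0(\widetilde u)|$ then grow with $\rho$, so nothing closes.
\end{itemize}

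What does work is your own fallback, under (b) with $v_0\in L^\infty(\Rd)$. The constant of Lemma \ref{lem:sde_kin_dens}-(ii) then depends only on $\sup|b_0|$, so $\|v_t\|_\infty\le C\|v_0\|_\infty$ uniformly and the local solution extends to any $[0,T]$. Equivalently, the map sending $u$ to the marginal density of the SDE with drift $Bx+(b_0(t,x,u_t(x)),0)$ preserves a fixed $L^\infty$ ball and contracts in a weighted norm.

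Under (b-loc) alone these tools give no global bound, so existence on an arbitrary $[0,T]$ remains unproved. Heuristically, for $b_0=\beta(t,x)y$ the nonlinearity is quadratic and the conserved $L^1$ norm is scaling-supercritical when $Q>1$.

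For (i) with general $\nu$, your mollification route can give existence, provided you add a compactness step yielding a.e.\ convergence so that $b_0(v^n)v^n\to b_0(v)v$ in $L^1_{\text{loc}}$. Uniqueness is the real obstruction. Every pseudo-mild solution satisfies $\|v_t\|_\infty\lesssim t^{-Q/2}$, by Proposition \ref{prop:equiv_weak} and Lemma \ref{lem:sde_kin_dens}. But the resulting kernel $(t-s)^{-1/2}s^{-Q/2}$ is not integrable at $s=0$ for $Q\ge 2$, and is scale-critical for $Q=1$ (its integral is $\pi$ on every interval, so there is no smallness). Gronwall therefore fails and a different idea is needed. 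The exception is when $\nu$ has a bounded density: then the same Gaussian bound makes every solution bounded, and the $L^\infty$ argument applies.
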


We conclude this section with the proof of Propositions \ref{prop:wellposed_mild} and \ref{prop:equiv_weak}.

\begin{proof}[Proof of Proposition \ref{prop:equiv_weak}]
Set
\begin{equation}
u_t:= \Pcc'^{B,\sigma_0}_{0,t} \nu ,\qquad 
w_t:=  \int_0^t  \int_{\Rd}  v_s(x) \big\langle b_0\big(s,x, v_s(x)\big) , \nabla_{x^0} p_{B,\sigma_0}(s,x; t, \cdot) \big\rangle dx \, ds, 
 \qquad t\in(0,T),
\end{equation}

Fix now $\varphi\in C^{\infty}_0(\Rd)$. By Lemma \ref{lem:sde_kin_dens}-(iii), with $\beta_0\equiv 0$, $u$ satisfies
\begin{align}
\int_{\Rd} u_t(y) \varphi(y) dy &=  \int_{\Rd} \varphi(y) \nu (\dd y)\\ 
& + \int_0^{t} \int_{\Rd}  \Big[\big\langle B y , \nabla \varphi(y)\big\rangle + \frac{1}{2} \text{tr}\big( \sigma_0\sigma_0^\top (s , y ) \nabla^2_{y^0} \varphi(y) \big)   \Big]  u_s(y) \dd y \, \dd s,
\label{eq:def_sol_NL_FP_den_bis}
\end{align}
for any $t\in (0,T)$. 
By applying once more Lemma \ref{lem:sde_kin_dens}-(iii), with $\beta_0\equiv 0$ and $\nu = \delta_x$, we also obtain
\begin{align}
\int_{\Rd} p_{B,\sigma_0}(s,x; t, y) \varphi(y) dy &= \varphi(x) \\
&\quad +  \int_s^{t} \int_{\Rd}  \Big[\big\langle B y , \nabla \varphi(y)\big\rangle + \frac{1}{2} \text{tr}\big( \sigma_0\sigma_0^\top (r , y ) \nabla^2_{y^0} \varphi(y) \big)   \Big] p_{B,\sigma_0}(s,x; r, y)  \dd y \, \dd r,\\
& =: \varphi(x)  + \tilde w_{s,t}(x),
\end{align}
for any $0<s<t<T$. Therefore, employing Fubini Theorem and swapping integration and gradient signs, every time we need it, yields
\begin{align}
\int_{\Rd} w_t(y) \varphi(y) dy &=  \int_0^t  \int_{\Rd}  v_s(x) \Big\langle b_0\big(s,x, v_s(x)\big) , \nabla_{x^0} \int_{\Rd} p_{B,\sigma_0}(s,x; t, y) \varphi(y) dy \Big\rangle dx \, ds 
\intertext{(by \eqref{eq:def_sol_NL_FP_den_ter})}
&=  \int_0^t  \int_{\Rd}  v_s(x) \big\langle b_0\big(s,x, v_s(x)\big) , \nabla_{x^0} \big( \varphi(x) + \tilde w_{s,t}(x) \big) \big\rangle dx \, ds\\
&=  \int_0^t  \int_{\Rd}  v_s(x) \big\langle b_0\big(s,x, v_s(x)\big) , \nabla_{x^0}  \varphi(x)  \big\rangle dx \, ds \\
&\quad +\int_0^t  \int_{\Rd}   \Big[\big\langle B y , \nabla \varphi(y)\big\rangle + \frac{1}{2} \text{tr}\big( \sigma_0\sigma_0^\top (r , y ) \nabla^2_{y^0} \varphi(y) \big)   \Big] w_r(y) dy dr .  \label{eq:def_sol_NL_FP_den_ter}
\end{align}
Note that the application of Fubini Theorem, as well as swapping gradient $\nabla_{x^0}$ with integral signs, are fully justified by the Gaussian estimate \eqref{eq:gaussian_1} with $m=1$ and by the boundedness assumptions on $\sigma_0$ and $b_0$. See, in particular,  \eqref{eq:b0_comp_v} in Remark \ref{rem:b0_comp_v}.

Combining \eqref{eq:def_sol_NL_FP_den_bis} with \eqref{eq:def_sol_NL_FP_den_ter}, finally yields
\begin{align}
\int_{\Rd}\big( u_t(y) + w_t(y) \big) \varphi(y) dy &=  \int_{\Rd} \varphi(y) \nu (\dd y)\\ 
&\quad + \int_0^{t} \int_{\Rd}  \Big[\big\langle B y , \nabla \varphi(y)\big\rangle + \frac{1}{2} \text{tr}\big( \sigma_0\sigma_0^\top (s , y ) \nabla^2_{y^0} \varphi(y) \big)   \Big]  \big( u_s(y) + w_s(y) \big)  \dd y \, \dd s \\
& \quad + \int_0^t  \int_{\Rd}   \big\langle b_0\big(s,y, v_s(y)\big) , \nabla_{y^0}  \varphi(y)  \big\rangle v_s(y) dy \, ds.
 \label{eq:def_sol_NL_FP_den_quat}
\end{align}
Therefore, if $v$ is a \emph{pseudo-mild solution} to \eqref{eq:nl_fp_cp_den_kin}, which is $v = u + w$, we have
\begin{align}
\hspace{-10pt}\int_{\Rd} v_t(y)  \varphi(y) dy &=  \int_{\Rd} \varphi(y) \nu (\dd y)\\ 
& + \int_0^{t} \int_{\Rd}  \Big[\big\langle b_0\big(s,y, v_s(y)\big) , \nabla_{y^0}  \varphi(y)  \big\rangle + \big\langle B y , \nabla \varphi(y)\big\rangle + \frac{1}{2} \text{tr}\big( \sigma_0\sigma_0^\top (s , y ) \nabla^2_{y^0} \varphi(y) \big)   \Big]  v_s(y)   \dd y \, \dd s,\\
\label{eq:weak_kin}
\end{align}
for any $t\in(0,T)$, which means $v$ is a weak solution to D-NL-FP$(b,\sigma; \nu)$ with $\sigma$, $b$ as in \eqref{eq:sig_b_deg_den}, according to  Definition \ref{def:sol_nl_fp_den}. 

On the other hand, if \eqref{eq:weak_kin} is true, then \eqref{eq:def_sol_NL_FP_den_quat} gives
\begin{equation}\label{eq:def_sol_NL_FP_den_pent}
\int_{\Rd} z_t(y) \varphi(y) dy =  \int_0^{t} \int_{\Rd}  \Big[\big\langle B y , \nabla \varphi(y)\big\rangle + \frac{1}{2} \text{tr}\big( \sigma_0\sigma_0^\top (s , y ) \nabla^2_{y^0} \varphi(y) \big)   \Big]  z_t(y)  \dd y \, \dd s,
\end{equation}
for any $t\in(0,T)$, where we set $z : = u + w - v$. As $\varphi \in C_0^{\infty}(\Rd)$ is arbitrary, 
\eqref{eq:def_sol_NL_FP_den_pent} forces $z\equiv 0$ (by Lemma \ref{lem:sde_kin_dens}-(iii) with $\beta_0 \equiv 0$), and thus $v = u + w$.
\end{proof}

\begin{proof}[Proof of Proposition \ref{prop:wellposed_mild}] 
We first prove Part (ii). For simplicity, we assume $T<<1$ suitably small. The proof for a general $T$ can be obtained by introducing an exponentially-weighted norm, similarly to \cite[Proposition 3.5 and Lemma 3.6]{issoglio2023mckean}. For ease of reading, we organize the proof in several steps.

\emph{Step 1}:
For any $u \in L^{\infty}_{T}L^{\infty}(\Rd)$, set
\begin{equation}
J(u)_t =  \Pcc'^{B,\sigma_0}_{0,t} \nu + \int_0^t  \int_{\Rd}  u_s(x) \big\langle b_0\big(s,x, u_s(x)\big) , \nabla_{x^0} p_{B,\sigma_0}(s,x; t, \cdot) \big\rangle dx \, ds, \qquad t\in (0,T).
\end{equation}

 We first prove that $J$ is well defined as a map from $L^{\infty}_{T}L^{\infty}(\Rd)$ onto itself. As $v_0 \in L^{\infty}(\Rd)$ and employing \eqref{eq:gaussian_1} with $m=0$, we have
\begin{equation}
\big\| \Pcc'^{B,\sigma_0}_{0,t} \nu \big\|_{L^{\infty}(\Rd)} \lesssim \| v_0 \|_{L^{\infty}(\Rd)} \int_{\Rd} p_{\Kcc_{2\lambda}}(s, x  , \cdot) dx \lesssim \kappa  \| v_0 \|_{L^{\infty}(\Rd)}, \qquad t\in(0,T),
\end{equation}
with
\begin{equation}
\kappa_0(\lambda) = \kappa_0  : = \sup_{(s,y)\in(0,1)\times\Rd} \int_{\Rd} p_{\Kcc_{2\lambda}}(s, x  , y) dx <\infty .
\end{equation}
Furthermore, for $u\in L^{\infty}_T L^{\infty}(\Rd)$, by \eqref{eq:gaussian_1} with $m=1$ we have
\begin{equation}\label{eq:pseudo_mild_bound_bis}
\big |   u_s(x) \big\langle b_0\big(s,x, u_s(x)\big) , \nabla_{x^0} p_{B,\sigma_0}(s,x; t, \cdot) \big\rangle \big| 
 \lesssim 
(t-s)^{-\frac{1}{2}}\big|   p_{\Kcc_{2\lambda}}(t-s, x  , \cdot)\big|  
 \| u_s \|_{L^{\infty}(\Rd)} 
 \| b_0\big(s, \cdot, u_s(\cdot)\big) \|_{L^{\infty}(\Rd)} 
\end{equation}
for any $(s,x)\in (0,t)\times\Rd$, 
and thus, by integrating,
\begin{align}
&\Big\|  \int_0^t  \int_{\Rd}  u_s(x) \big\langle b_0\big(s,x, u_s(x)\big) , \nabla_{x^0} p_{B,\sigma_0}(s,x; t, \cdot) \big\rangle dx \, ds \Big\|_{L^{\infty}(\Rd)} \\
&\qquad \lesssim \sqrt{T}\,
  \| u \|_{L^{\infty}_T L^{\infty}(\Rd)} 
  \sup_{|y|\leq \| v \|_{L^{\infty}_T L^{\infty}(\Rd)}}
 \| b_0(\cdot, \cdot, y) \|_{L^{\infty}((0,T)\times\Rd)},
\end{align}
for any $t\in(0,T)$. Therefore, recalling Remark \ref{rem:b0_comp_v}, $J(u)\in L^{\infty}_T L^{\infty}(\Rd)$. 

In particular, taking $M\gtrsim  \kappa  \| v_0 \|_{L^{\infty}(\Rd)}$ and assuming $\sqrt{T}\lesssim \kappa^{-1}_1$, with
\begin{equation}
\kappa_1 := \sup_{|y|\leq M}
 \| b_0(\cdot, \cdot, y) \|_{L^{\infty}((0,T)\times\Rd)}<\infty,
\end{equation} 
one has $J(B_{M})\subset B_M$, 
where $B_M$ denotes the ball of radius $M$ in $L^{\infty}_T L^{\infty}(\Rd)$, centered at zero.
\vspace{2pt}

\emph{Step 2}:
We now prove that $J$ is a contraction on $B_{M}$. For any $u,\tilde u \in B_{M}$ we have
\begin{align}
\big|\big(J(u) - J(\tilde u)\big)_t\big| & \leq \int_0^t  \int_{\Rd}\big|  u_s(x)  b_0\big(s,x, u_s(x)\big) - \tilde u_s(x)  b_0\big(s,x, \tilde u_s(x)\big)\big| \, \big| \nabla_{x^0} p_{B,\sigma_0}(s,x; t, \cdot) \big| dx \, ds \\
&\leq  \int_0^t  \int_{\Rd}|  u_s(x)|\, \big|  b_0\big(s,x, u_s(x)\big) -  b_0\big(s,x, \tilde u_s(x)\big)\big| \, \big| \nabla_{x^0} p_{B,\sigma_0}(s,x; t, \cdot) \big| dx \, ds \\
&  \quad +  \int_0^t  \int_{\Rd} |  u_s(x)  - \tilde u_s(x)| \, \big|  b_0\big(s,x, \tilde u_s(x)\big)\big| \, \big| \nabla_{x^0} p_{B,\sigma_0}(s,x; t, \cdot) \big| dx \, ds 
\intertext{(as $u\in B_M$ and by employing Assumption \ref{assum:b_0}-(a))}
& \lesssim (M + \kappa_1) \int_0^t  \int_{\Rd} |  u_s(x)  - \tilde u_s(x)| \,  \big| \nabla_{x^0} p_{B,\sigma_0}(s,x; t, \cdot) \big| dx \, ds, \qquad t\in(0,T). \label{eq:est_contr}
\end{align}
Therefore, Gaussian estimate \eqref{eq:gaussian_1} with $m=1$ and integrating in both $dx$ and $ds$ yields
\begin{equation}
\| J(u) - J(\tilde u) \|_{L^{\infty}_T L^{\infty}(\Rd)} \lesssim  (M + \kappa_1) \kappa_0 \sqrt{T} \| u - \tilde u \|_{L^{\infty}_T L^{\infty}(\Rd)}.
\end{equation}
Thus, the contraction property is true if $\sqrt{T} \lesssim \big( (M + \kappa_1) \kappa_0 \big)^{-1}$. By the Banach fixed-point theorem, $J$ admits a unique fixed point $v$ on $B_ M$. Equivalently, there exists a unique $v\in B_ M$ that satisfies \eqref{eq:pseudo_mild}. 
\vspace{2pt}

\emph{Step 3}: We prove that $v$ in Step 2 is the unique pseudo-mild solution $v$ to \eqref{eq:nl_fp_cp_den_kin} in $L^{\infty}_{T}L^{\infty}(\Rd)$, and $v\in L^{\infty}_T L^{1}(\Rd)$. 
Assume that $\tilde v$ is another function in $L^{\infty}_T L^{\infty}(\Rd)$ (not in $B_M$) that satisfies \eqref{eq:pseudo_mild}. By \eqref{eq:est_contr}, and then by applying the Gaussian estimate \eqref{eq:gaussian_1} with $m=1$ and integrating in $dx$, we obtain
\begin{equation}
\| v - \tilde v \|_{L^{\infty}_t L^{\infty}(\Rd)} \lesssim  (M + \kappa_1) \kappa_0 \int_0^t  (t-s)^{-1/2} \| v - \tilde v \|_{L^{\infty}_s L^{\infty}(\Rd)} ds, \qquad t\in(0,T).
\end{equation}
Gr\"onwall's inequality thus yields $\| v - \tilde v \|_{L^{\infty}_T L^{\infty}(\Rd)} = 0$. Therefore, there exists a unique $v \in L^{\infty}_T L^{\infty}(\Rd)$ that satisfies \eqref{eq:pseudo_mild}.

We now prove that $v \in L^{\infty}_T L^{1}(\Rd)$. We do this by Picard iteration. Let us define
\begin{equation}
v^{(0)}\equiv v_0 , \qquad v^{(n)}: = J\big(v^{(n-1)}\big), \quad n\in N. 
\end{equation}
We have
\begin{align}
\| v^{(n)}_t \|_{L^1(\Rd)} &\leq \int_{\Rd} [\Pcc'^{B,\sigma_0}_{0,t} \nu ](y) dy + \int_{\Rd} \Big| \int_0^t  \int_{\Rd}  v^{(n-1)}_s(x) \big\langle b_0\big(s,x, v^{(n-1)}_s(x)\big) , \nabla_{x^0} p_{B,\sigma_0}(s,x; t, y) \big\rangle dx \, ds \Big| dy \\
& \leq 1 + \int_{\Rd}  \int_0^t  \int_{\Rd} \big|v^{(n-1)}_s(x)\big|\, \big|  b_0\big(s,x, u_s(x)\big| \, \big |\nabla_{x^0} p_{B,\sigma_0}(s,x; t, y) \big| dx \, ds\,  dy
\intertext{(by the Gaussian estimate \eqref{eq:gaussian_1} with $m=1$, and recalling that $v\in B_M$)}
& \leq 1 + C \kappa_1  \int_0^t   (t-s)^{-1/2} \int_{\Rd}   \int_{\Rd} \big|v^{(n-1)}_s(x)\big| \, p_{\Kcc_{2\lambda}}(t-s, x  , y) dy\, dx \, ds \\
& =  1 + C \kappa_1  \int_0^t   (t-s)^{-1/2} \| v^{(n-1)}_s \|_{L^1(\Rd)}  \, ds, \qquad t\in(0,T).
\end{align}
Iterating in $n$, this yields
\begin{equation}
\| v^{(n)} \|_{L^{\infty}_T L^1(\Rd)} \leq   \sum_{k =0}^{\infty}  \big( \sqrt{\pi} C \kappa_1 \big)^{k}
\frac{T^{\frac{k}{2}}}{\Gamma_{\text{Euler}}\left(\frac{k+2}{2}\right)} < \infty.
\end{equation}
Therefore, as $v^{(n)}$ converges to $v$ almost everywhere, Fatou's Lemma immediately gives
\begin{equation}
\|v\|_{L^{\infty}_T L^1(\mathbb{R}^d)}
\leq
\sup_{n\in\mathbb{N}} \|v^{(n)}\|_{L_T^\infty L^1(\mathbb{R}^d)}
< \infty.
\end{equation}

\vspace{2pt}

\emph{Step 4}: we set $\gamma\in (0,1)$ and show that $v \in \Cc_B^{\gamma}((0,T)\times \Rd)$ under the additional assumptions:
\begin{equation}\label{eq:assump_add_noise}
v_0 \in C^{\gamma}_B, \qquad \sigma_0 \equiv \sigma>0 \ \text{(additive noise)}.
\end{equation}
Under \eqref{eq:assump_add_noise}, the diffusion \eqref{eq:mkv_degenerate_den_mult} reduces to the linear one \eqref{eq:linear_SDE}, and thus the transition density 
$p_{B,\sigma_0}(s,x; t, y)$ and the forward semigroup $\Pcc'^{B,\sigma_0}_{s,t}$ reduce to those in  Section \ref{sec:kinet_semigroups}, precisely
\begin{equation}
p_{B,\sigma_0}(s,x; t, y) = p_{\Kcc_{\sigma}}(t-s,x, y) , \qquad \Pcc'^{B,\sigma_0}_{s,t} = \Pcc'^{\Kcc,\sigma}_{s,t},
\end{equation}
with $p_{\Kcc_{\sigma}}$, $\Pcc'^{\Kcc,\sigma}$ as in \eqref{eq:kol_dens}, \eqref{eq:def_semigroup_for}, respectively. Now, Theorem \ref{th:Schauder_est_intr} (kinetic Schauder estimates) readily yields
\begin{equation}\label{eq:bund_kin_se}
\|\Pcc'_{\cdot}\, v_0\|_{\Cc_B^{\gamma}((0,T)\times\Rd)}<\infty.
\end{equation}
We now study the regularity of the time-convolution in \eqref{eq:pseudo_mild}. 
Setting 
\begin{equation}
w_t : = \int_0^t  \int_{\Rd}  v_s(x) \big\langle b_0\big(s,x, v_s(x)\big) , \nabla_{x^0} p_{\Kcc_{\sigma}}(t-s,x, \cdot)  \big\rangle dx \, ds,  \qquad t\in(0,T),
\end{equation}
we have
\begin{align}
|w_t(y) - w_t(y')|&\leq \bigg|  \int_0^t  \int_{\Rd}  v_s(x) \big\langle b_0\big(s,x, v_s(x)\big) , \nabla_{x^0} p_{\Kcc_{\sigma}}(t-s,x, y) - \nabla_{x^0} p_{\Kcc_{\sigma}}(t-s,x, y')  \big\rangle dx \, ds \bigg| \\
& \leq \int_0^t  \int_{\Rd} \big| v_s(x) b_0\big(s,x, v_s(x)\big| \, \big| \nabla_{x^0} p_{\Kcc_{\sigma}}(t-s,x, y) - \nabla_{x^0} p_{\Kcc_{\sigma}}(t-s,x, y')  \big| dx \, ds
\intertext{(by the Gaussian estimate \eqref{eq:bound_gauss_for_grad} with $m=1,n=0$, and recalling that $v\in B_M$)}
& \leq M \kappa_1 | y - y' |^{\gamma}_B  \int_0^t \frac{1}{(t-s)^{\frac{1+\gamma}{2}}}  \int_{\Rd} \big( p_{\Kcc_{2\sigma}}(t-s,x, y) + p_{\Kcc_{2\sigma}}(t-s,x, y')  \big) dx \, ds \\
& \lesssim M \kappa_1 \kappa_0(2 \sigma)  | y - y' |^{\gamma}_B, 
\end{align}
for any $(t,y,y')\in(0,T)\times \Rd \times \Rd$, which gives 
\begin{equation}
\sup_{t\in(0,T)} \|w_t\|_{B,\gamma} < \infty.
\end{equation}
Fix now $0<t<t'<T$. We have 
\begin{equation}\label{eq:incr_Y_w}
\big| w_{t'}\big( e^{(t'-t)B} \cdot \big) - w_t  \big|  \leq   \int_{t}^{t'}  J_{1}(t',s)  \, ds  + \int_0^t J_{2}(t,t',s)  \, ds  \\
\end{equation}
with 
\begin{align}
J_{1}(t',s)& = \bigg|  \int_{\Rd}  v_s(x) \big\langle b_0\big(s,x, v_s(x)\big) , \nabla_{x^0} p_{\Kcc_{\sigma}}(t'-s,x, \cdot)  \big\rangle dx \bigg| ,\\
J_{2}(t,t',s)& = \bigg|  \int_{\Rd}  v_s(x) \big\langle b_0\big(s,x, v_s(x)\big) , \nabla_{x^0} p_{\Kcc_{\sigma}}(t'-s,x,  e^{(t'-t)B} \cdot) - \nabla_{x^0} p_{\Kcc_{\sigma}}(t-s,x,   \cdot)  \big\rangle dx \bigg|. 
\end{align}
By the Gaussian estimate \eqref{eq:bound_Grad_kol} with $n=0,m=1$, and recalling that $v\in B_M$, we obtain 
\begin{equation}
J_{1}(t',s) \leq \frac{M \kappa_1 }{(t'-s)^{\frac{1}{2}}}   \int_{\Rd} p_{\Kcc_{2\sigma}}(t'-s,x, y)   dx  \leq \frac{ M \kappa_1 \kappa(0)(2 \sigma) }{(t'-s)^{\frac{1}{2}}}  , 
\end{equation}
and thus 
\begin{equation}\label{eq:est_J1_int}
 \int_{t}^{t'}  J_{1}(t',s)  \, ds \lesssim M \kappa_1 \kappa(0)(2 \sigma) \sqrt{t'-t}.
\end{equation}
To bound $ J_{2}$, we consider two different regimes, namely:
\\
\underline{Case $t' - t \leq t -s$:} applying Gaussian estimate \eqref{eq:bound_gauss_for_Y} with $m=1,n=0$ and $r=t-s$, $r' = t' -s$, and noticing that $(t'-s)/(t-s) \leq 2$, yields
\begin{equation}
 J_{2}(t,t',s) \lesssim  M \kappa_1 \frac{ (t' -t)^{\frac{\gamma}{2}} }{(t-s)^{\frac{1+\gamma}{2}}}  \int_{\Rd} p_{\Kcc_{2\sigma}}(t'-s,x, \cdot)   dx \leq M \kappa_1 \kappa_0(2\sigma)  \frac{ (t' -t)^{\frac{\gamma}{2}} }{(t-s)^{\frac{1+\gamma}{2}}} .
\end{equation}
\underline{Case $t' - t > t -s$:} Gaussian estimate \eqref{eq:bound_Grad_kol} with $n=0,m=1$ together with the triangle inequality yields
\begin{equation}
 J_{2}(t,t',s) \lesssim   \frac{M \kappa_1}{(t-s)^{\frac{1}{2}}}   \int_{\Rd}  \big(  p_{\Kcc_{2\sigma}}(t'-s,x,  e^{(t'-t)B} \cdot) + p_{\Kcc_{2\sigma}}(t-s,x,   \cdot)  \big)   dx \leq  \frac{M \kappa_1  \kappa_0(2\sigma)  }{(t-s)^{\frac{1}{2}}}.
\end{equation}
Combining the two regimes, we obtain
\begin{align}
 \int_{0}^{t}  J_{2}(t,t',s)  \, ds &\lesssim M \kappa_1 \kappa(0)(2 \sigma)  \bigg( \int_0^{0\vee(t -(t'-t) )}  \frac{ (t' -t)^{\frac{\gamma}{2}} }{(t-s)^{\frac{1+\gamma}{2}}}  ds + \int_{0\vee(t -(t'-t))}^t  \frac{1  }{(t-s)^{\frac{1}{2}}}ds \bigg)\\
& \lesssim  M \kappa_1 \kappa(0)(2 \sigma)  \big( (t' -t)^{\frac{\gamma}{2}} + (t' -t)^{\frac{1}{2}} \big). \label{eq:est_J2_int}
\end{align}

Plugging \eqref{eq:est_J1_int}-\eqref{eq:est_J2_int} thus proved into \eqref{eq:incr_Y_w} yields
\begin{equation}
\sup_{y\in\Rd} \big\|w_{\cdot}(e^{\cdot B} y)\big\|_{(0,T),\gamma/2} < \infty.
\end{equation}
Recalling the definition of intrinsic norm in \eqref{eq:intr_holder_gen}, the latter together with \eqref{eq:bund_kin_se} prove
\begin{equation}
\|v\|_{\Cc_B^{\gamma}((0,T)\times\Rd)}<\infty,
\end{equation}
which completes the proof of Part (i).
\vspace{2pt} 

To prove Part (ii), owing to Assumption \ref{assum:b_0}-(b), it is possible to run the contraction argument directly on $L_T^{\infty} L^1(\Rd)$. We omit the details because the computations are nearly identical to those in Part (i), Steps 1 and 2. 

\vspace{2pt}

To conclude, we need to show that, in both cases, $v$ is non-negative. By Proposition \ref{prop:equiv_weak}, $v$ is a weak solution to D-NL-FP$(b,\sigma; \nu)$ where $\sigma$, $b$ are as in \eqref{eq:sig_b_deg_den}. In particular, it is a weak density solution to the linear FP$({\bf b}_{v},\sigma;0, \nu)$, with ${\bf b}_{v}$ as in \eqref{eq:b_0_kin}. However, by Remark \ref{rem:b0_comp_v}, we have ${\bf b}_{v}\in L^{\infty}((0,T)\times \Rd)$ and thus, by Lemma \ref{lem:sde_kin_dens}-(iii) with $\beta = {\bf b}_{v}$, $v$ is the flow of time-marginal densities of the solution to SDE$({\bf b}_{v},\sigma;0, \nu)$. In particular, $v$ is non-negative.

\end{proof}






\bibliographystyle{siam}
\bibliography{merged_bib}

\appendix

\chapter{Elements of $d$-dimensional diffusion theory}  

We recall some basic elements of the theory of diffusion processes on $\Rd$. In particular, we recall the basic notions that compose the three main building blocks that are utilized in the body of the notes: It\^o stochastic differential equations (SDEs), martingale problems (MPs) and Fokker-Planck equations (FPs).

Hereafter, we fix $T\in [0,\infty]$ and the functions 
\begin{equation}
b: [0,T)\times \Rd \to \R^d, \qquad \sigma: [0,T)\times \Rd \to \mathcal{M}^{d\times q},
\end{equation}
and assume that they are Borel measurable. 

We also employ the following 
\begin{notation}\label{not:measure_dt_dx}
Let $s\geq 0$. For a given measurable flow ${\boldsymbol\mu}:[s,T) \to \mathcal{M}_{+}(\Rd)$, we denote by $\overline{\boldsymbol\mu}=\overline{\boldsymbol\mu} (\dd t , \dd x) $ the measure on $[s,T)\times \Rd$ defined as 
\begin{equation}
\overline{\boldsymbol\mu} ([s,t]\times H) : = \int_{s}^t \int_H {\boldsymbol\mu}_r(dx) dr, \qquad t\in[s,T), \ H\in \Bc(\Rd).
\end{equation}
\end{notation}

\section{Ito SDEs}

Let us consider the following It\^o stochastic differential equation (SDE), formally written as:

\begin{equation}
\hspace{-90pt}{\blue\text{[SDE$({\bf b},{\boldsymbol\sigma})$]}}\qquad\qquad d X_t = b(t,X_t) dt + \sigma(t,X_t) dW_t ,
\end{equation}
It\^o SDEs can be formulated in the weak sense, where the ($q$-dimensional) Brownian motion is part of the solution and an initial distribution is assigned, or in the pathwise sense, where the Brownian motion is fixed and an initial random variable is assigned.

\begin{definition}[general solution to an SDE starting at time $s$]\label{def:sol_gen_SDE}
Let $s\in [0,T)$. A (general) solution to SDE$(b,\sigma;s)$ is a 
triple $( (\Fc_t)_{t},W, X)$, where: 
\begin{itemize}
\item[(i)] $(\Fc_t)_{t\in [s,T)}$ is a filtration on a probability space $(\Omega , \Fc, \Pb)$, satisfying the usual assumptions of completeness and continuity from the right;
\item[(ii)] $W=(W_t)_{t\in [s,T)}$ is a $q$-dimensional (uncorrelated) Brownian motion (starting at $s$) with respect to $(\Fc_t)_{t\in [s,T)}$;
\item[(iii)] $X=(X_t)_{t\in [s,T)}$ is an $\R^d$-valued process, adapted to  $(\Fc_t)_{t\in [s,T)}$, such that: 
\begin{itemize}
\item the processes $(b(t,X_t))_{t\in [s,T)}$ and  $(\sigma(t,X_t))_{t\in [s,T)}$ are progressively measurable and such that 
\begin{equation}
\int_s^t |b(r,X_r)| dr + \int_s^t \big|\sigma\sigma^\top(r,X_r)\big| dr  <\infty, \qquad \Pb\text{-almost surely.}
\end{equation}
\item 
the following representation holds $\Pb$-almost surely:
\begin{equation}\label{eq:gen_sol}
X_t = X_s + \int_s^t b(r,X_r) dr + \int_s^t \sigma(r,X_r) dW_r ,\qquad t\in [s,T).
\end{equation}
\end{itemize} 
\end{itemize}
\end{definition}

For ease of reading, we will sometimes refer to a solution simply by the $X$ compnent of the triple. However, the reader should bear in mind that the filtration, the Brownian motion, and hence the underlying probability space, are part of the solution.

\begin{definition}[Weak solution to an SDE]\label{def:weak_sol_SDE}
Let $s\in [0,T)$ and \textcolor{black}{$\nu\in \mathcal{P}(\R^d)$}. A (weak) solution to SDE$(b,\sigma;s,\textcolor{black}{\nu})$ 
is a 
(general) solution to SDE$(b,\sigma;s)$ such that
\begin{equation}
[X_s] =  \nu.
\end{equation}
\end{definition}

\begin{definition}[Pathwise solution to an SDE]\label{def:path_sol_SDE}
Let $(W,(\Fc_t)_t)$ be a Brownian motion and \textcolor{black}{$\xi\in m\Fc_s$}. A (pathwise) solution to SDE$(b,\sigma;\textcolor{black}{s, \xi, W, (\Fc_t)_t})$ 
is a 
process $X$ such that the triple $(\textcolor{black}{W}, (\Fc_t)_t, X)$ 
is a (general) solution to SDE$(b,\sigma;s)$ with 
\begin{equation}
\textcolor{black}{X_s = \xi}. 
\end{equation}
\end{definition}

Consequently, we have a weak and a pathwise notion of well-posedness (existence and uniqueness) for SDEs.

\begin{definition}
\label{def:well_posed_lin}
Let $s\in [0,T)$. We say that SDE$(b,\sigma;s)$ enjoys
\begin{itemize}
\item[-] \emph{\textcolor{black}{weak} existence}, if there exists a solution to SDE$(b,\sigma;s,\textcolor{black}{\nu})$ for any $\nu\in \Pc(\R^d)$;
\item[-] \emph{\textcolor{black}{weak} uniqueness}, if, for any $\nu\in \Pc(\R^d)$, two solutions to SDE$(b,\sigma;s,\textcolor{black}{\nu})$ have the same law;
\item[-] \emph{\textcolor{black}{pathwise} existence}, if there exists a solution to SDE$(b,\sigma;s,\textcolor{black}{\xi, W, (\Fc_t)_t})$ for any $(W, (\Fc_t)_t )$ Brownian motion  and any \textcolor{black}{$\xi\in m\Fc_s$};
\item[-] \emph{\textcolor{black}{pathwise} uniqueness}, if, for any pair of general solutions $(( \Fc_t)_{t},W, X)$, $( (\Fc_t)_{t},W, X')$ to SDE$(b,\sigma;s)$ with $X_s = X'_s$ almost surely, $X = X'$ almost surely.
\end{itemize} 
%
We say that SDE$(b,\sigma;s)$ is weakly (or pathwise) well-posed if weak (or pathwise) existence and uniqueness hold.

Finally, we say that SDE$(b,\sigma)$ is weakly (or pathwise) well-posed if SDE$(b,\sigma;s)$ is weakly (or pathwise) well-posed for any $s\in [0,T)$.
\end{definition}

\begin{remark}
Note that pathwise well-posedness implies weak well-posedness. However, this is not a trivial claim. In order to show that almost sure uniqueness of pathwise solutions implies uniqueness in law of weak solutions, one needs to compare two solutions $X,X'$ with respect to two different Brownian motions. To do this, one has to construct two new solutions, $\tilde X$ and $\tilde X'$, with respect to a common Brownian motion and having the same laws as $X$ and $X'$, respectively.
\end{remark}

The following important theorem is due to Watanabe-Yamada (see \cite[Th. 1.1, Ch. 4]{ikeda2014stochastic})
\begin{theorem}[Watanabe-Yamada]\label{th:watanabe_yamada}
Let $s\in [0,T)$ and assume that weak existence and pathwise uniqueness hold for SDE$(b,\sigma;s,\textcolor{black}{\nu})$.
Then SDE$(b,\sigma;s)$ is pathwise well-posed.
\end{theorem}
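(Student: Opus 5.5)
The plan is the classical Yamada--Watanabe argument: transport two weak solutions onto a common probability space driven by the same Brownian motion and the same initial datum, and then use pathwise uniqueness. Weak existence gives, for $\nu=[\xi]$, a solution $((\Fc_t)_t,W,X)$ on some space. The task is to produce a pathwise solution on an arbitrary prescribed filtered space carrying $(W',(\Fc'_t)_t)$ and $\xi'\in m\Fc'_s$. The paper states pathwise uniqueness as an assumption, so what remains is pathwise existence.

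Step 1 (regular conditional laws). Work on the canonical space $\Theta:=\R^d\times C_{[s,T)}(\R^q)\times C_{[s,T)}(\R^d)$ and consider the joint law $Q$ of $(X_s,W,X)$. Since $W$ is an $(\Fc_t)$-Brownian motion independent of $\Fc_s\ni X_s$, the marginal of $(X_s,W)$ is $\nu\otimes\mathbb W$, where $\mathbb W$ is Wiener measure. Disintegrate to obtain a regular conditional kernel $K(x,w;dy)$ for $X$ given $(X_s,W)=(x,w)$. The adaptedness property to verify is the following: for each $t$, $K(x,w;A)$ is measurable with respect to $\sigma(x,w_r,\,r\le t)$ (completed) whenever $A\in\Bc_t$. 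This holds because the future increments of $W$ are independent of $\Fc_t\supset\sigma(X_s,X_r,W_r,\,r\le t)$.

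Step 2 (coupling). On the product space $\Theta\times C_{[s,T)}(\R^d)$, define
\begin{equation}
\Pi(dx,dw,dy,dy'):=\nu(dx)\,\mathbb W(dw)\,K(x,w;dy)\,K(x,w;dy').
\end{equation}
Let $\mathcal G_t$ be the usual augmentation of $\sigma(x,w_r,y_r,y'_r,\,r\le t)$. Using Step 1, check that $w$ is a $(\mathcal G_t)$-Brownian motion and that both $(\mathcal G_t,w,y)$ and $(\mathcal G_t,w,y')$ have the same joint law with $w$ as $(W,X)$. Consequently, both are general solutions of SDE$(b,\sigma;s)$ in the sense of Definition \ref{def:sol_gen_SDE}, with $y_s=y'_s=x$. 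The stochastic integral identity is a statement about joint laws of the integrand and integrator, so it transfers. Pathwise uniqueness then gives $y=y'$ $\Pi$-a.s.

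Step 3 (strong solution map). Because $y=y'$ a.s. under the product of two conditionally independent copies, the kernel $K(x,w;\cdot)$ is a Dirac mass $\delta_{F(x,w)}$ for $\nu\otimes\mathbb W$-a.e. $(x,w)$. Here $F$ is measurable and, by the adaptedness in Step 1, $F(\cdot,w)_{|[s,t]}$ depends only on $(x,w_{|[s,t]})$ up to null sets. Finally set $X':=F(\xi',W')$ on the given space. Since $\xi'\in m\Fc'_s$ is independent of $W'$, the triple $(\xi',W',X')$ has law $Q$. Hence $X'$ satisfies \eqref{eq:gen_sol} almost surely and is $(\Fc'_t)$-adapted by the non-anticipativity of $F$. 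This gives a pathwise solution, and uniqueness is the hypothesis.

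The main obstacle is Step 2, namely verifying that $w$ remains a Brownian motion for the enlarged filtration $\mathcal G_t$, which contains information on both $y$ and $y'$. This requires the conditional independence structure of $K$ (the measurability of Step 1). Deducing $K=\delta_F$ from $y=y'$ in Step 3 is then a short argument: a measure $\mu$ with $\mu\otimes\mu$ concentrated on the diagonal is a Dirac mass.
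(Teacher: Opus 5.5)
Your proposal is correct and is the classical Yamada--Watanabe construction: the regular conditional law $K(x,w;dy)$ of $X$ given $(X_s,W)$, the product coupling $\nu(dx)\,\mathbb{W}(dw)\,K(x,w;dy)\,K(x,w;dy')$, pathwise uniqueness forcing $K(x,w;\cdot)=\delta_{F(x,w)}$, and finally $X'=F(\xi',W')$; this is the proof in \cite[Th.~1.1, Ch.~4]{ikeda2014stochastic}, which the paper cites rather than reproduces. The step you flag as the main obstacle goes through as you indicate: $(x,w)\mapsto K(x,w;C)$ is $\overline{\Bc(\Rd)\otimes\Bc_t}$-measurable for $C\in\Bc_t$, and the future increments of $w$ are independent of $\Bc(\Rd)\otimes\Bc_t$ under $\nu\otimes\mathbb{W}$, so the $\Pi$-integral factors and $w$ is a $(\mathcal G_t)$-Brownian motion.
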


\begin{remark}[!]
The Watanabe-Yamada Theorem is actually stronger than the version stated in Theorem \ref{th:watanabe_yamada}. Under the same assumptions, it states the existence of \emph{strong solutions}, namely solutions adapted to the standard Brownian filtration. For simplicity, in these lecture notes we prefer not to discuss the notion of strong solutions. 
\end{remark}

\section{Martingale problem}

We start with the classical definition by Strook-Varadhan (\cite[Chapter 6]{stroock_multidimensional_1979}). Hereafter, we will always denote by $(x_t)_{t\in [s,T)}$ the canonical process on $(C_{[s,T)},\Bc)$, for a given $s\in [0,T)$.
\begin{definition}\label{def:MP_lin}
Let $s\in [0,T)$ and $\nu\in\Pc(\Rd)$. A probability measure $\Pb=\Pb_{s,\nu}$, on $(C_{[s,T)},\Bc)$, is a solution to MP$(b,\sigma;s,\nu)$ if: 
\begin{itemize}
\item[(i)] $x_{s} \sim  \nu$ under $\Pb$;
\item[(ii)] for any $\varphi\in C_0^{\infty}(\Rd)$, the process defined by 
\begin{equation}
M_t : =\varphi(x_{t}) - \varphi(x_{s}) - \int_{s}^t \Big(   \langle  b(r, x_r) , \nabla \varphi(x_{r})  \rangle + \frac{1}{2}\text{tr}\big(\sigma \sigma^\top (r , x_r)  \nabla^2 \varphi(x_r)  \big) \Big) dr , \qquad t\in [s, T),
\end{equation}
is a martingale w.r.t. the natural filtration of $(x_t)_{t\in [s,T)}$.
\end{itemize} 
\end{definition}

The following result can be found in \cite[Th. 8.1.1 and Cor. 8.1.2]{stroock_multidimensional_1979} (see also \cite[Th. 18.1.3]{pascucci2024probability}).

\begin{theorem}\label{th:equiv_MP_weak}
Let $s\in [0,T)$, $\nu\in\Pc(\Rd)$, and let $\Pb$ be a probability measure on $(C_{[s,T)}, \Bc )$, such that
\begin{equation}\label{eq:cond_coeff_MP}
b, \sigma \sigma^\top \in L^{1}_{\text{loc}}([s,T) \times \Rd , \overline{\boldsymbol\mu}), 
\end{equation}
where $\overline{\boldsymbol\mu}$ is as in Notation \ref{not:measure_dt_dx}, with ${\boldsymbol\mu}_t:=\Pb(x_t \in dx)$, $t\in [s,T)$.
Then $\Pb$ is a solution to MP$({\bf b},{\boldsymbol\sigma};s,\nu)$ if and only if there exists a (weak) solution $X$ to SDE$({\bf b},{\boldsymbol\sigma};s,\nu)$ such that $[X] =\Pb$.
\end{theorem}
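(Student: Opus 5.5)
The statement is the classical equivalence between martingale problems and weak solutions. I will prove the two implications separately, on the time interval $[s,T)$. Condition \eqref{eq:cond_coeff_MP} guarantees that all integrals below are well defined: $\int_s^t |b(r,x_r)|\,dr$ and $\int_s^t \|\sigma\sigma^\top(r,x_r)\|\,dr$ are finite $\Pb$-a.s.

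\emph{SDE $\Rightarrow$ MP.} Let $X$ be a weak solution with $[X]=\Pb$. I apply It\^o's formula to $\varphi(X_t)$, $\varphi\in C_0^\infty(\Rd)$. This shows that
\begin{equation}
\varphi(X_t)-\varphi(X_s)-\int_s^t L_r\varphi(X_r)\,dr=\int_s^t \nabla\varphi(X_r)^\top\sigma(r,X_r)\,dW_r,
\end{equation}
where $L_r$ is the generator appearing in Definition \ref{def:MP_lin}. The right-hand side is a local martingale. Its quadratic variation is bounded by $\|\nabla\varphi\|_\infty^2\int_s^t \|\sigma\sigma^\top(r,X_r)\|\mathbf 1_{\mathrm{supp}\,\varphi}(X_r)\,dr$, and this has finite expectation by \eqref{eq:cond_coeff_MP}, because $\mathrm{supp}\,\varphi$ is compact. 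So it is a true martingale with respect to $(\Fc_t)$, hence also with respect to the smaller natural filtration of $X$. The martingale property is a statement about finite-dimensional laws of $X$, so it transfers to the canonical process under $\Pb$. Finally, $x_s\sim\nu$ holds trivially.

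\emph{MP $\Rightarrow$ SDE.} This is the main step. I first localize with stopping times $\tau_n$, the exit times of $x$ from the ball of radius $n$. Applying the martingale property to $\varphi$ equal to $x_i$ and to $x_ix_j$ on that ball gives two facts.
\begin{itemize}
\item $M_t:=x_t-x_s-\int_s^t b(r,x_r)\,dr$ is a continuous local martingale.
\item $\langle M^i,M^j\rangle_t=\int_s^t (\sigma\sigma^\top)_{ij}(r,x_r)\,dr$.
\end{itemize}
The quadratic variation formula is obtained by expanding $x^ix^j$ with the product rule and comparing with the martingale problem for $\varphi=x^ix^j$.

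I then build the Brownian motion by the martingale representation theorem, in the form of Doob's representation. I enlarge the canonical space with an independent $q$-dimensional Brownian motion $\tilde W$. Let $\sigma^+$ denote the Moore–Penrose pseudoinverse of $\sigma$, and $\Pi:=\sigma^+\sigma$ the orthogonal projection onto the row space of $\sigma$. I define
\begin{equation}
W_t:=\int_s^t \sigma^+(r,x_r)\,dM_r+\int_s^t \big(I-\Pi(r,x_r)\big)\,d\tilde W_r .
\end{equation}
I will check three things.
\begin{itemize}
\item $\langle W\rangle_t=(t-s)I$, using the identities of the pseudoinverse. L\'evy's characterization then shows that $W$ is a Brownian motion in the completed, right-continuous enlarged filtration.
\item $\int_s^t \sigma\,dW=M_t$, because the martingale $M-\int\sigma\,dW$ has zero quadratic variation.
\item $\sigma^+$ is Borel measurable, which follows from a limit formula for the pseudoinverse, so the integrands are progressively measurable.
\end{itemize}
These give $x_t=x_s+\int_s^t b\,dr+\int_s^t\sigma\,dW$, with $[x]=\Pb$.

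I expect the main obstacle to be exactly this reconstruction in the degenerate case, where $\sigma\sigma^\top$ is not invertible: it requires the extension of the probability space and the measurable-selection details. For these I would follow \cite[Th. 8.1.1]{stroock_multidimensional_1979} or \cite[Th. 18.1.3]{pascucci2024probability} closely.
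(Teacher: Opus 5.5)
Your proposal is correct and follows the same classical route as the references the paper relies on. The paper gives no proof of its own and cites \cite[Th. 8.1.1 and Cor. 8.1.2]{stroock_multidimensional_1979} and \cite[Th. 18.1.3]{pascucci2024probability}. In your argument, one direction uses It\^o's formula together with the integrability of $b$ and $\sigma\sigma^\top$ over $[s,t]\times\mathrm{supp}\,\varphi$. The other direction uses localization, identification of $\langle M^i,M^j\rangle$ through $\varphi=x_ix_j$, and Doob's representation on an extended space via the Moore--Penrose pseudoinverse. Your handling of the weak hypothesis $b,\sigma\sigma^\top\in L^{1}_{\text{loc}}([s,T)\times\Rd,\overline{\boldsymbol\mu})$ is exactly what that hypothesis requires. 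This covers the almost-sure finiteness of the drift and diffusion integrals, and the true-martingale property of the stopped terms and of the stochastic integral.
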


%
%
%

\section{Linear Fokker-Planck equations}

Consider the Fokker-Planck Cauchy problem
\begin{equation}\label{eq:lin_fp_cp}
\hspace{-30pt}{\blue\text{[FP$(b,\sigma;s,\nu)$]}}\qquad\qquad
\begin{cases}
 \partial_t {\boldsymbol\mu}_t = \text{div}_y \Big(-b(t,y) \,  {\boldsymbol\mu}_t  +\frac{1}{2}\text{\bf div}_y \big(   \sigma\sigma^\top(t,y) \, {\boldsymbol\mu}_t \big)   \Big), \quad t\in [s,T), \\
 {\boldsymbol\mu}_s = \nu \in \Pc(\Rd),
 \end{cases}
\end{equation}
where $\text{div}_y $ denotes the usual divergence operator acting on a function $f:\Rd \to \Rd$, i.e.
\begin{equation}\label{eq:div_stand}
\text{div}_y f = \partial_{y_1} f_1 + \cdots + \partial_{y_d} f_d ,
\end{equation}
and $\text{\bf div}_y$ denotes the divergence operator acting on a matrix-valued function $f:\Rd \to \mathcal{M}^{d\times d}$ as
\begin{equation}\label{eq:div_matrix}
\text{\bf div}_y f =\left( \begin{matrix}
 \partial_{y_1} f_{1,1} +  \cdots + \partial_{y_d} f_{1,d} \\
\vdots \\
 \partial_{y_1} f_{d,1} + \cdots + \partial_{y_d} f_{d,d}
\end{matrix}\right).
\end{equation}

\begin{definition}\label{def:sol_lin_fp}
Let $s\in [0,T)$ and $\nu\in \mathcal{M}(\Rd)$. A measurable flow ${\boldsymbol\mu}:[s,T)\to \mathcal{M}(\Rd)$ is a (weak) solution to FP$(b,\sigma;s,\nu)$ if
\begin{equation}
\int_{\Rd} \varphi(y) {\boldsymbol\mu}_t (\dd y)    = \int_{\Rd} \varphi(y) \nu (\dd y) + \int_{s}^{t} \int_{\Rd}  \Big[\big\langle b(r , y) , \nabla \varphi(y)\big\rangle + \frac{1}{2} \text{tr}\big( \sigma\sigma^\top(r , y ) \nabla^2 \varphi(y) \big)   \Big]  {\boldsymbol\mu}_r (\dd y) \dd r, \qquad t\in [s,T),
\label{eq:def_sol_Lin_FP}
\end{equation}
for any $\varphi\in C^{\infty}_0(\Rd)$. 

Furthermore, if ${\boldsymbol\mu}_t$ is absolutely continuous with respect to the Lebesgue measure, namely
\begin{equation}
{\boldsymbol\mu}_t (\dd x) = v_t(x) \dd x, \qquad t\in [s,T), 
\end{equation}
then the density flow $v:[s,T)\to L^{1}(\Rd)$ will be called a density solution to FP$(b,\sigma;\nu)$.
\end{definition}

\begin{remark}\label{rem:sol_weak_fp}
In Definition \ref{def:sol_lin_fp}, it is implicit that all the integrals in \eqref{eq:def_sol_Lin_FP} must be well defined and finite. Also, \eqref{eq:def_sol_Lin_FP} implies that ${\boldsymbol\mu}$ is continuous. 
\end{remark}

\begin{proposition}\label{th:lin_FP}
Let $s\in [0,T)$, $\nu\in\Pc(\Rd)$ and let ${\boldsymbol\mu}:[s,T)\to \mathcal{M}_{+}(\Rd)$ (continuous, with ${\boldsymbol\mu}_{s} = \nu$) be such that
\begin{equation}\label{eq:cond_L1}
b,\sigma \in L^{1}_{\text{loc}}([s,T) \times \Rd , \overline{\boldsymbol\mu}),
\end{equation}
with $\overline{\boldsymbol\mu}$ as in Notation \ref{not:measure_dt_dx}.
Then:
\begin{itemize}
\item[(i)] If $X$ is a solution to SDE$(b,\sigma;s,\nu)$, with $[X_t]={\boldsymbol\mu}_t $ for $t\in [s,T)$, then ${\boldsymbol\mu}$ is a weak solution to FP$(b,\sigma;s,\nu)$.
\item[(ii)] [Superposition] Under the additional assumption that 
\begin{equation}\label{eq:cond_agg_super}
\Big[ t \mapsto \int_{\Rd} \Big( |b(t,x)| + \| \sigma\sigma^\top(t,x) \| \Big) {\boldsymbol\mu}_t (\dd x)\Big], \  \big[ t \mapsto {\boldsymbol\mu}_{t} (\Rd) \big]  \in L^{\infty}_{\text{loc}}([s,T) ,
\end{equation}
if ${\boldsymbol\mu}$ is a (weak) solution to FP$(b,\sigma;s,\nu)$, 
 then there exists a solution $X$ to SDE$(b,\sigma;s,\nu)$ such that $ [X_t] = {\boldsymbol\mu}_t $.
\end{itemize}
\end{proposition}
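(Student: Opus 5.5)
For part (i) the plan is to apply the It\^o formula to $\varphi(X_t)$, with $\varphi\in C_0^{\infty}(\Rd)$, and take expectations. Let $(W,(\Fc_t)_t,X)$ be a solution to SDE$(b,\sigma;s,\nu)$ with $[X_t]={\boldsymbol\mu}_t$. By Definition \ref{def:sol_gen_SDE}, $X$ is a continuous semimartingale, so It\^o's formula gives
\begin{equation}
\varphi(X_t)=\varphi(X_s)+\int_s^t \Big(\langle b(r,X_r),\nabla\varphi(X_r)\rangle+\tfrac12\text{tr}\big(\sigma\sigma^\top(r,X_r)\nabla^2\varphi(X_r)\big)\Big)dr+\int_s^t \nabla\varphi(X_r)^\top\sigma(r,X_r)\,dW_r .
\end{equation}
Two facts are needed before taking expectations.
\begin{itemize}
\item[(a)] The $dr$-integrand is integrable. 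Since $\nabla\varphi$ and $\nabla^2\varphi$ are bounded and supported in a compact $K$, the integrand is bounded by $C\big(|b|+\|\sigma\sigma^\top\|\big)(r,X_r)\mathds{1}_K(X_r)$. Its expectation over $[s,t]$ equals the integral of this function against $\overline{\boldsymbol\mu}$ on $[s,t]\times K$, which is finite by \eqref{eq:cond_L1}. I read that hypothesis as covering $\sigma\sigma^\top$; if only $\sigma$ is assumed, the integrability of $\sigma\sigma^\top$ must be added, and this is consistent with Theorem \ref{th:equiv_MP_weak}.
\item[(b)] The stochastic integral has zero mean. Its quadratic variation has expectation at most $\|\nabla\varphi\|_\infty^2\,\Eb\int_s^t\|\sigma\sigma^\top\|(r,X_r)\mathds{1}_K(X_r)\,dr<\infty$, so it is a true $L^2$-martingale.
\end{itemize}
Taking expectations and using Fubini with $[X_r]={\boldsymbol\mu}_r$ then gives exactly \eqref{eq:def_sol_Lin_FP}, with ${\boldsymbol\mu}_s=\nu$.

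For part (ii) I would reduce the statement to the Ambrosio--Figalli--Trevisan superposition principle (\cite{figalli2008existence}, and \cite{trevisan2016well} for the present integrability level). The first step is to normalise the total mass. Testing \eqref{eq:def_sol_Lin_FP} with cut-offs $\varphi_R(y)=\varphi(y/R)$, where $\varphi\equiv1$ near $0$, the derivative terms carry factors $R^{-1}$ and $R^{-2}$. By \eqref{eq:cond_agg_super} they are dominated by an integrable function of $r$ on $[s,t]$. Letting $R\to\infty$ by dominated convergence gives ${\boldsymbol\mu}_t(\Rd)=\nu(\Rd)=1$, so ${\boldsymbol\mu}$ is a weakly continuous curve of probability measures; continuity follows from Remark \ref{rem:sol_weak_fp}.

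Under \eqref{eq:cond_agg_super}, which is precisely the global integrability condition $\int_s^t\int(|b|+\|\sigma\sigma^\top\|)\,d{\boldsymbol\mu}_r\,dr<\infty$ required by Trevisan's theorem, the superposition principle yields a probability measure $\Pb$ on $(C_{[s,t']},\Bc)$ for every $t'<T$ with two properties: $\Pb(x_r\in dy)={\boldsymbol\mu}_r$ for all $r$, and $\Pb$ solves MP$(b,\sigma;s,\nu)$ in the sense of Definition \ref{def:MP_lin}. Since $[s,T)$ may be open, I would apply the principle on each $[s,T_n]$ with $T_n\uparrow T$ and use Kolmogorov consistency to obtain a measure on $C_{[s,T)}$. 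Alternatively, one can cite the theorem directly on $[s,T)$ with locally bounded coefficients, as in \cite{trevisan2016well}.

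Since the marginals of $\Pb$ are ${\boldsymbol\mu}_r$, condition \eqref{eq:cond_coeff_MP} coincides with \eqref{eq:cond_L1}. Theorem \ref{th:equiv_MP_weak} then provides a weak solution $X$ to SDE$(b,\sigma;s,\nu)$ with $[X]=\Pb$, hence $[X_t]={\boldsymbol\mu}_t$.

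The main obstacle is the superposition step itself, specifically checking that its hypotheses match ours. The $L^\infty_{\text{loc}}$-in-time control in \eqref{eq:cond_agg_super} implies the $L^1([s,t])$ bound Trevisan uses, but the mass normalisation and the passage from compact horizons to $[s,T)$ must be handled explicitly. This is the step where I would spend the care; the rest is bookkeeping.
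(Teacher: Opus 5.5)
Your proposal follows the paper's proof. Part (i) is It\^o's formula combined with \eqref{eq:cond_L1}, and you are right to read that hypothesis with $\sigma\sigma^\top$ in place of $\sigma$, as in Theorem \ref{th:equiv_MP_weak} and Lemma \ref{lem:fokker_superpo}. Part (ii) is the superposition Lemma \ref{lem:super_lin} followed by Theorem \ref{th:equiv_MP_weak}. Your explicit mass normalisation with cut-offs $\varphi(\cdot/R)$ is correct and supplies a step the paper leaves implicit.

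One caveat concerns the passage to the open horizon. The first option you offer does not work as stated: applying superposition separately on each $[s,T_n]$ and then invoking Kolmogorov consistency. Superposition solutions are not unique, so the independently produced $\Pb_n$ need not be consistent. In other words, the image of $\Pb_{n+1}$ under restriction of paths to $[s,T_n]$ need not equal $\Pb_n$, and without a projective family there is nothing to extend.

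To repair this, build the family inductively. The restriction ${\boldsymbol\mu}|_{[T_n,T_{n+1}]}$ solves FP$(b,\sigma;T_n,{\boldsymbol\mu}_{T_n})$, so superposition gives a law $Q$ on $C_{[T_n,T_{n+1}]}$. Disintegrate $Q$ with respect to $x_{T_n}$, and concatenate the conditional laws with $\Pb_n$. The conditional laws of an MP solution solve the MP started from $\delta_y$ for almost every $y$, so the concatenated measure again solves the MP, has marginals ${\boldsymbol\mu}_t$, and restricts to $\Pb_n$.

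Your second option, citing superposition directly on $[s,T)$, is exactly what the paper does through Lemma \ref{lem:super_lin}. The gluing argument above is what justifies that lemma from the compact-interval version of the superposition principle.
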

\begin{remark} Note that the first condition in \eqref{eq:cond_agg_super} is stronger than \eqref{eq:cond_L1}. In particular, assumption $b,\sigma \in L^{\infty}_{\text{loc}}([s,T)\times\Rd)$ is enough for condition \eqref{eq:cond_L1} to be satisfied but does not ensure \eqref{eq:cond_agg_super}.
\end{remark}
The proof of Proposition \ref{th:lin_FP}-(ii) is based on the following result, proved in \cite[Th. 2.5]{trevisan2016well} (see also  \cite{figalli2008existence}).
\begin{lemma}[Superposition]\label{lem:super_lin}Let $s\in [0,T)$, $\nu\in\Pc(\Rd)$ and ${\boldsymbol\mu}$ be a (weak) solution to FP$(b,\sigma;s,\nu)$ satisfying \eqref{eq:cond_agg_super}. Then, there exists a solution $\Pb$ to MP$(b,\sigma;s,\nu)$ such that $ \Pb(x_t \in x) = {\boldsymbol\mu}_t$, $t\in [s,T)$.
\end{lemma}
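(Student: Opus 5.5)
The plan is to follow the Figalli--Trevisan regularization scheme: approximate ${\boldsymbol\mu}$ by smooth, strictly positive flows that solve Fokker--Planck equations with regular coefficients, realize each approximation as the law of a diffusion, and pass to the limit in the martingale problem. Without loss of generality take $s=0$.

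\emph{Step 1 (mass).} Testing \eqref{eq:def_sol_Lin_FP} with cut-offs $\varphi_R(y)=\varphi(y/R)\uparrow 1$ and using \eqref{eq:cond_agg_super} shows that the drift term vanishes as $R\to\infty$, since $|\nabla\varphi_R|\lesssim R^{-1}$ and $|\nabla^2\varphi_R|\lesssim R^{-2}$. Hence ${\boldsymbol\mu}_t(\Rd)=1$, and each ${\boldsymbol\mu}_t$ is a probability measure.

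\emph{Step 2 (regularization).} Take a Gaussian mollifier $\rho_\eps$ and set
\begin{equation}
{\boldsymbol\mu}^\eps_t:={\boldsymbol\mu}_t\ast\rho_\eps,\qquad b^\eps:=\frac{(b\,{\boldsymbol\mu}_t)\ast\rho_\eps}{{\boldsymbol\mu}^\eps_t},\qquad a^\eps:=\frac{(\sigma\sigma^\top{\boldsymbol\mu}_t)\ast\rho_\eps}{{\boldsymbol\mu}^\eps_t}.
\end{equation}
Since ${\boldsymbol\mu}^\eps_t$ is smooth and strictly positive, $b^\eps$ and $a^\eps$ are smooth in $x$. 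Moreover, ${\boldsymbol\mu}^\eps$ is again a weak solution of FP$(b^\eps,(a^\eps)^{1/2};0,\nu\ast\rho_\eps)$, because convolution commutes with the distributional equation. Jensen's inequality, applied to the convex map $(m,v)\mapsto |v|^p/m^{p-1}$, gives
\begin{equation}
\int |b^\eps|\,d{\boldsymbol\mu}^\eps_t\le \int|b|\,d{\boldsymbol\mu}_t,
\end{equation}
and the analogous bound for $a^\eps$, uniformly in $\eps$. If $a^\eps$ is degenerate, I will add $\eps I$ through a further Gaussian convolution in order to keep ellipticity. For these coefficients the Fokker--Planck equation is well posed in the class of flows with the above integrability. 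I expect to prove this by duality with the backward Kolmogorov equation, or by citing the bounded-coefficient result of Figalli after truncating. It follows that the martingale problem MP$(b^\eps,(a^\eps)^{1/2})$ has a solution $\Pb^\eps$, built from the SDE with locally Lipschitz coefficients, whose marginals are exactly ${\boldsymbol\mu}^\eps_t$. Non-explosion of this SDE should also follow from the uniform integral bound.

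\emph{Step 3 (tightness).} The family of marginals is tight, since ${\boldsymbol\mu}^\eps_t\to{\boldsymbol\mu}_t$ narrowly, uniformly on compact time intervals. For path increments I will use the criterion based on $\Eb^\eps\big[\int_0^T(|b^\eps|+|a^\eps|)(r,x_r)\,dr\big]$, which is uniformly bounded by \eqref{eq:cond_agg_super}. Combined with a test-function/Lyapunov argument (Stroock--Varadhan tightness via $\varphi(x_t)$ for a countable dense set of $\varphi$), this gives tightness of $(\Pb^\eps)$ on $C_{[0,T)}$. Let $\Pb$ be a limit point; then $\Pb(x_t\in\cdot)={\boldsymbol\mu}_t$.

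\emph{Step 4 (passing to the limit; main obstacle).} We must show that $M^\varphi$ is a $\Pb$-martingale. The difficulty is that $b$ and $\sigma$ are merely measurable, so the functional $\int_u^t \langle b,\nabla\varphi\rangle(r,x_r)\,dr$ is not continuous on path space. The remedy is to approximate $b$ by a continuous compactly supported $c$ in $L^1(\overline{\boldsymbol\mu})$ and define $c^\eps$ by the same convolution formula. The Jensen contraction then gives
\begin{equation}
\|b^\eps-c^\eps\|_{L^1(\overline{{\boldsymbol\mu}^\eps})}\le\|b-c\|_{L^1(\overline{\boldsymbol\mu})},
\end{equation}
and likewise under the limit $\Pb$, because its marginals are ${\boldsymbol\mu}_t$. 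In addition, $c^\eps\to c$ locally uniformly. Testing the martingale identity against bounded continuous $\mathcal F_u$-measurable functionals therefore lets me pass to the limit up to an error of order $\|b-c\|_{L^1}$, and similarly for $\sigma\sigma^\top$; this error is then sent to zero. This step, together with the well-posedness and identification of marginals in Step 2 for unbounded coefficients, is where I expect the real work to lie.
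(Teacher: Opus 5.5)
Your outline is the Figalli--Trevisan regularization scheme, and that is exactly what the paper relies on: it does not prove the lemma but imports it from \cite[Th.~2.5]{trevisan2016well}. Step 1, the regularization with the Jensen contraction, and the approximation strategy in Step 4 are the right ingredients. However, the step that carries the weight for unbounded coefficients is not supplied, and the routes you propose for it do not work as stated.

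Well-posedness of FP$(b^\eps,(a^\eps)^{1/2})$ \emph{in the class of flows with $\int(|b^\eps|+|a^\eps|)\,d{\boldsymbol\mu}_t\in L^\infty_{loc}$} cannot identify $[X^\eps_t]$ with ${\boldsymbol\mu}^\eps_t$. Nothing tells you a priori that the marginals of the SDE solution $X^\eps$, which might even explode, lie in that class: your integral bounds are with respect to ${\boldsymbol\mu}^\eps$, not $[X^\eps_t]$. For the same reason, deriving non-explosion ``from the uniform integral bound'' is circular. Truncating the coefficients and invoking Figalli's bounded result does not help, since ${\boldsymbol\mu}^\eps$ does not solve the truncated equation. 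Duality with the backward Kolmogorov equation also fails: it needs global bounds on $\nabla u,\nabla^2 u$, uniform in the truncation radius, and these are not available when $b^\eps,a^\eps$ and their derivatives grow arbitrarily fast at infinity.

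What is needed is a one-sided comparison that uses only ${\boldsymbol\mu}^\eps$. For instance, with $a^\eps$ uniformly elliptic:
\begin{itemize}
\item[(1)] Solve the backward Dirichlet problem on a ball $B_R$ with terminal datum $\varphi\ge 0$ and extend it by zero.
\item[(2)] Use the sign of its normal derivative on $\partial B_R$ in the weak formulation to obtain $\Pb(X^\eps_t\in\cdot\,,\ t<\zeta)\le{\boldsymbol\mu}^\eps_t$, where $\zeta$ is the explosion time.
\item[(3)] Only now do the integral bounds transfer to $X^\eps$; a Lyapunov function such as $\log(1+|x|^2)$ then gives $\zeta=\infty$.
\item[(4)] Equality of masses gives $[X^\eps_t]={\boldsymbol\mu}^\eps_t$.
\end{itemize}
To get that ellipticity, the further convolution must be with the time-dependent heat kernel of covariance $\eps t I$. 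A fixed-width convolution commutes with the equation and leaves the diffusion matrix unchanged.

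Step 3 has a second gap. A uniform bound on $\Eb^\eps\int_0^{T'}(|b^\eps|+|a^\eps|)(r,x_r)\,dr$ (for $T'<T$), even one uniform in $t$ on the marginal integrals, does not control the modulus of continuity of $\varphi(x_t)$. It does not exclude rare, large, short-lived bursts of drift or quadratic variation. The Stroock--Varadhan criterion needs $L^\eps\varphi\ge -A_\varphi$ uniformly in $\eps$, i.e.\ locally bounded coefficients, which you do not have.

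The missing ingredient is uniform integrability. By de la Vall\'ee-Poussin, pick a convex, nondecreasing, superlinear $\theta$ with $\int_0^{T'}\int\theta(|b|+|a|)\,d{\boldsymbol\mu}_t\,dt<\infty$. Your Jensen argument then gives $\int\theta(|b^\eps|+|a^\eps|)\,d{\boldsymbol\mu}^\eps_t\le\int\theta(|b|+|a|)\,d{\boldsymbol\mu}_t$. Set $g^\eps_r:=(|b^\eps|+|a^\eps|)(r,x_r)$ and $\psi(M):=\inf_{u\ge M}\theta(u)/u\to\infty$. The bound $\sup_{|t-s|\le\delta}\int_s^t g^\eps_r\,dr\le\delta M+\psi(M)^{-1}\int_0^{T'}\theta(g^\eps_r)\,dr$ then controls the drift parts and the quadratic variations uniformly in $\eps$.

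Finally, in Step 4 the claim that $c^\eps\to c$ locally uniformly is false away from the support of ${\boldsymbol\mu}_t$: for ${\boldsymbol\mu}_t=\delta_0$ one gets $c^\eps(t,\cdot)\equiv c(t,0)$. What you need, and what holds by uniform continuity of $c$, is $\int_0^{T'}\int|c^\eps-c|\,d{\boldsymbol\mu}^\eps_t\,dt\le\int_0^{T'}\iint|c(t,y)-c(t,x)|\,\rho_\eps(x-y)\,{\boldsymbol\mu}_t(dy)\,dx\,dt\to 0$.
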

\begin{proof}[Proof of Proposition \ref{th:lin_FP}]
\emph{Part (i)}. By definition, $X$ is an It\^o process of the form \eqref{eq:gen_sol}. Thus, \eqref{eq:def_sol_Lin_FP} stems from It\^o formula together with \eqref{eq:cond_L1}.

\emph{Part (ii)}. It is a direct consequence of Lemma \ref{lem:super_lin} together with Theorem \ref{th:equiv_MP_weak}.
\end{proof}

%


\end{document}